\titleformat{\section}[hang]{\large\bfseries}{\thesection\quad}{0pt}{}[]
\titleformat{\subsection}[hang]{\normalsize\bfseries}{\thesubsection\quad}{0pt}{}[]
\newtheorem{theorem}{Theorem}[section]
\newtheorem{lemma}{Lemma}[section]
\newtheorem{proposition}[theorem]{Proposition}
\newtheorem{definition}[theorem]{Definition}
\newtheorem{remark}[theorem]{Remark}
\newtheorem{assumption}[theorem]{Assumption}
\newtheorem{examples}[theorem]{Examples}
\newenvironment{proof}{{\noindent\it Proof}\quad}{\hfill $\square$\par}
\numberwithin{equation}{section}
\begin{document}
	\title{\textbf{General large deviations and functional iterated logarithm law for multivalued McKean-Vlasov stochastic differential equations}}
	
	\author{Lingyan Cheng$^\sharp$, Wei Liu$^\dag$, Huijie Qiao$^\ddag$ and Fengwu Zhu$^\dag$}
	\affil{$^\sharp$  School of Mathematics and Statistics, Nanjing University of Science and Technology,
		Nanjing, Jiangsu 210094, PR China \\
		$^\dag$ School of Mathematics and Statistics, Wuhan University, Wuhan, Hubei 430072, PR China\\
		$^\ddag$  School of Mathematics, Southeast University, Nanjing, Jiangsu 211189, PR China\\
		cly@njust.edu.cn,\ \ \ wliu.math@whu.edu.cn,\ \ \ hjqiaogean@seu.edu.cn,\ \ \ fwzhu$\_$math@whu.edu.cn}
	
	\date{}
	
	\maketitle
	
	\noindent{\bf Abstract}
	\quad In this paper, we present sufficient conditions and criteria to establish general large and moderate deviation principles for multivalued McKean-Vlasov stochastic differential equations (SDEs in short) by means of the weak convergence approach, under non-Lipschit assumptions on the coefficents of the equations. Furthermore, by applying the large deviation estimates we obtain the functional iterated logarithm law for the solutions of multivalued McKean-Vlasov SDEs.
	\vskip0.3cm
	
	\noindent {\bf Keywords}{\quad Multivalued operator, McKean-Vlasov SDE, Large deviations, Weak convergence method, Functional iterated logarithm law.}
	
	\noindent{{\bf Mathematics Subject Classification (2020)} 60F10 · 60H10.}
	\vskip0.3cm
	
	
	
	\section{Introduction}

	Consider the following multivalued McKean-Vlasov stochastic differential equation (SDE in short):
	\begin{equation}\label{mve}
		\begin{cases}
			\mathrm{d}X_{t}\in b\left(X_{t},\mathcal{L}_{X_{t}}\right)\,\mathrm{d}t+\sigma \left(X_{t},\mathcal{L}_{X_{t}}\right)\,\mathrm{d}W_{t}-A(X_{t})\,\mathrm{d}t,\\
			X_{0}=x\in \overline{D(A)},
		\end{cases}
	\end{equation}
	where $A:\mathbb{R} ^{d}\rightarrow 2^{\mathbb{R} ^{d}}$ is a multivalued maximal monotone operator with domain $D(A)$ which will be described in the next section, and $ (W_{t})_{t\geq 0}$ is a $d$-dimensional standard Brownian motion on some filtered probability space $(\Omega , P ,\left( \mathcal{F}_{t}\right)_{t\geq 0})$. Here the coefficients $b$ and $\sigma$ depend not only on the current state $X_{t}$ but also on $\mathcal{L}_{X_t}$ which is the law of $X_t$. Notice that when the maximal monotone operator $A$ is the sub-differential of the indicator of a convex set, equation (\ref{mve}) describes the reflected diffusion process.
	
	
	Note that when $A=0$, equation (\ref{mve}) is just the  classical McKean-Vlasov SDE, which was first suggested by Kac \cite{[Kac]} as a stochastic toy model for the Vlasov kinetic equation of plasma, and then introduced by McKean \cite{[MK]} to model plasma dynamics. These equations correspond to nonlinear Fokker-Planck equations in the field of partial differential equation. They also describe the limiting behaviors of individual particles (and empirical measure equivalently) of mean field interacting particle system when the number of particles goes to infinity (so-called propagation of chaos). For this reason McKean-Vlasov SDEs are also referred as mean-field SDEs. The theory and applications of McKean-Vlasov SDEs and associated interacting particle systems have been extensively studied by a large number of researchers under various settings due to their wide range of applications in several fields, including physics, chemistry, biology, economics, financial mathematics etc., see \cite{[CGM],[GH],[GLWZ1],[GLWZ2],[LW],[LWZ],[MS],[TH]} and the references therein. For the existence and uniqueness of solution to McKean-Vlasov SDE, the reader is referred to \cite{[MK],[G],[MS],[SZ],[MV],[HSS],[RZ]} as well as the references therein.
	
	In the case $A\neq 0$, the equation (\ref{mve}) is called a multivalued SDE when the coefficients $b$ and $\sigma$ do not depend on $\mathcal{L}_{X_t}$. The existence and uniqueness of solution for multivalued SDE were first obtained by C\'epa \cite{cepa,cepa1}. Zhang \cite{ZX} extended C\'epa's results to the infinite-dimensional case and relaxed the Lipschitz assumption on $b$ to the monotone case. Ding and Qiao \cite{DQ} relaxed the assumption on both $b$ and $\sigma$ to the non-Lipschitz case. Xu \cite{xu} showed explicit solutions for multivalued SDEs. Ren et al. \cite{zhwu} obtained the exponential ergodicity of non-Lipschitz multivalued SDEs. The regularity of invariant measure was studied by Ren and Wu \cite{ren}. On the other hand, the equation (\ref{mve}) is referred to as a multivalued McKean-Vlasov SDE in the distribution dependent case. The existence and uniqueness of the solution to equation (\ref{mve}) were first studied by Chi \cite{CHM}. Explicitly, under the global Lipschitz conditions on the coefficients $b$ and $\sigma$, Chi proved that there exists a unique pair of processes $(X_{.}, K_{.})$ so that
	$$X_{t}=x+\int_{0}^{t}b\left( X_{s},\mathcal{L}_{X_{s}}\right)\,\mathrm{d}s+\int_{0}^{t}\sigma\left( X_{s},\mathcal{L}_{X_{s}}\right)\,\mathrm{d}W_{s}-K_{t},$$
	where $K_{t}$ is a finite-variation process which will be explained in (\ref{kd}). Qiao and Gong \cite{QG} extended these results to the one-sided Lipschitz case.
	
	In this paper we consider the following small perturbation of equation (\ref{mve}): for any $\epsilon\in (0,1]$,
	\begin{equation}\label{spmve}
		\begin{cases}
			\mathrm{d}X_{t}^{\epsilon}\in b_{\epsilon}\left( X_{t}^{\epsilon},\mathcal{L}_{X_{t}^{\epsilon}}\right)\mathrm{d}t+\sqrt{\epsilon}\,\sigma_{\epsilon}\left( X_{t}^{\epsilon},\mathcal{L}_{X_{t}^{\epsilon}}\right)\mathrm{d}W_{t}-A_{\epsilon}(X_{t}^{\epsilon})\,\mathrm{d}t,\\
			X^{\epsilon}_{0}=x\in \overline{D(A)}.
		\end{cases}
	\end{equation}
	Denote by $(X^{\epsilon}_{t}, K^{\epsilon}_{t})$ the solution of equation (\ref{spmve}). The main purpose of this paper is to establish the large and moderate deviation principles for the solution $X^{\epsilon}_{t}$ of equation (\ref{spmve}) in the space $\mathbb{S}:=C([0,T],\overline{D(A)})$, i.e. the asymptotic estimates of the probabilities $P(X_{t}^{\epsilon}\in B),\ B\in\mathcal{B}(\mathbb{S})$. Large deviation can provide an exponential estimate for tail probability (or the probability of a rare event) in terms of some explicit rate function. In the case of stochastic processes, the heuristics underlying large deviation theory is to identify a deterministic path around which the diffusion is concentrated with overwhelming probability, so that the stochastic motion can be seen as a small random perturbation of this deterministic path.
	
	For the classic McKean-Vlasov SDEs, i.e. $A=0$, the large deviation principle (LDP in short) was studied by Herrmann et al. \cite{[HIP]} and Dos Reis et al. \cite{[RST]}. The approach in \cite{[HIP]} and \cite{[RST]} is to  first replace the distribution $\mathcal{L}_{X_{t}^{\epsilon}}$ of $X_t^{\epsilon}$ in the coefficients with a Dirac measure $\delta_{X^0_t}$, and then to use discretization, approximation and exponential equivalence arguments. Recently the second author et al.
	\cite{liuw} established the large and moderate deviation principles (MDP in short) for McKean-Vlasov SDEs with jumps by means of the weak convergence method.
	
	For the multivalued SDEs, i.e. the coefficients do not depend on the law of $X_t$, C\'epa \cite{cepa} established the
	LDP in the one-dimensional case by contraction principle. Ren et al. \cite{zhang} established the LDP for multivalued SDEs with monotone drifts (in particular containing a class of SDEs with reflection in a convex domain) by using the weak convergence approach. A slightly more general case was considered by Ren et al. \cite{rwz}, also by the weak convergence approach.
	
	Compared with the results introduced previously, most of the difficulties for dealing with the multivalued McKean-Vlasov SDEs (\ref{spmve}) are threefold. First, the coefficients $b$ and $\sigma$ depend on the {\bf distribution $\mathcal{L}_{X_t}$ of the solution}, which makes the equation (\ref{spmve}) a nonlinear diffusion. For this problem, we adopt the {\bf decoupling strategies} for establishing LDP in the second author et al. \cite{liuw}. The second difficulty comes from the multivalued operator $A$ and the {\bf finite-variation process $K_t$}. One only knows that $K_t$ is continuous and could not prove any further regularity such as H\"older continuity. Therefore, the classical method of time-discretized method is almost not applicable. The proposition \ref{multi} below, which is due to C\'epa \cite{cepa1}, plays an important role. The third difficulty is that the multivalued operators $A_\epsilon$ varying as $\epsilon$. Since $A, A_\epsilon$ are set-valued operators, the difference between $A$ and $A_\epsilon$ can not be directly measured, while there is no such problem for $b_\epsilon$ and $\sigma_\epsilon$. Here we use the {\bf Yosida approximations} introduced in \cite{rwz} to overcome this problem.
	
	The weak convergence approach, developed by Dupuis and Ellis \cite{[DE]} (see also \cite{BD2019,BD2000,BDM2008,BDM2011}), is proved to be a powerful tool to establish large deviation principles for various dynamical systems, see e.g. \cite{zhang,renzhang,zhangren,rwz,hu,liuw} and the references therein. In this paper, we will use the weak convergence approach to establish the large and moderate deviation principles for the multivalued McKean-Vlasov SDEs (\ref{spmve}). Recently Fang and the last three authors \cite{FLQZ} obtained the large and moderate deviation principles for small perturbations of Multi-valued McKean-Vlasov SDEs also by the weak convergence approach. However, it is worth to pointing out that  the two works both use weak convergence approach but in different ways, and the non-lipschitzian assumptions on the coefficients for LDP and MDP results in this paper are more weaker than those in \cite{FLQZ}. On the other hand, the multivalued operators $A_\epsilon$ in the system depends on $\epsilon$. Therefore this paper can be seen as the generalization of the LDP and MDP results in \cite{FLQZ}.
	
	Besides the large and moderate deviation principles, in this paper we also study the (large time and small time) functional iterated logarithm laws for the multivalued McKean-Vlasov SDEs, by applying the LDP obtained by weak convergence approach. The functional iterated logarithm laws were investigated in \cite{[RST]} for the McKean-Vlasov SDEs and \cite{rwz} for the multivalued SDEs. We extend these results in our framework.
	
	The rest of this paper is organized as follows. In Section $2$, we recall some well-known facts about multivalued McKean-Vlasov SDEs and a criterion for Laplace principle. The main results about LDP and MDP as well as the existence and uniqueness of strong solution to equation (\ref{mve}) are presented in Section $3$, and the proofs are given in Section $4$. The last section is devoted to the functional iterated logarithm law.


	\section{Preliminaries}\label{pre}
	
	In this section, we first recall some basic notions and definitions.
	
	\subsection{Maximal Monotone Operators}
	
	Let $ 2^{\mathbb{R} ^{d}} $ denote the set containing all the subsets of $ \mathbb{R} ^{d} $. A map $ A:\mathbb{R} ^{d}\rightarrow 2^{\mathbb{R} ^{d}} $ is called a multivalued operator. Denote
	
	\begin{align*}
		D\left( A\right) &:=\{ x\in \mathbb{R} ^{d}:A\left( x\right) \neq \emptyset \} ,\\	
		Im\left( A\right) &:=\cup_{{x\in D(A)}} A\left( x\right) ,\\
		Gr\left( A\right) &:=\{ \left( x,y\right) \in \mathbb{R} ^{d}\times \mathbb{R} ^{d}:x\in \mathbb{R} ^{d},y \in A\left( x\right) \}.
	\end{align*}
	
	
	\begin{definition}\label{mo}
		
		(1) A multivalued operator $A$ is called \textit{monotone} if
		$$
		\langle x-x',y-y'\rangle \geq 0,\quad\forall \left( x,y\right),\left( x',y'\right) \in Gr\left( A\right).
		$$
		
		(2) A multivalued operator $A$ is called \textit{maximal monotone} if
		$$
		\left( x,y\right) \in Gr\left( A\right) \Leftrightarrow \langle x-x',y-y'\rangle \geq 0,\quad\forall \left( x',y'\right) \in Gr\left( A\right).
		$$
		
	\end{definition}
	
	Let $A$ be a \textit{multivalued maximal monotone operator} on $\mathbb{R}^{d}$. It is known from \cite{BH1973} that Int$(D(A))$ and $\overline{D(A)}$ are convex subsets of $\mathbb{R}^{d}$ and
	\begin{center} 	Int$(D(A))$=Int$(\overline{D(A)}).$ \end{center}
	
	
	Given $T>0$, let
	\begin{equation}\label{kd}
		\mathcal{V}=\left\{K\in C([0,T],\mathbb{R}^{d}):\, K \text{ is of finite variation and }K_{0}=0 \right\}.
	\end{equation}
	For any $K\in \mathcal{V}$ and $t\in [0,T]$, denote the variation of $K$ on $[0,t]$ by $|K|_{0}^{t}$ and  $|K|_{TV}:=|K|_{0}^{T}$. We introduce the following set of functions pair:
	\begin{align*}
		\mathcal{A}:=&\big\{(X,K):X\in C\left([0,T],\overline{D(A)}\right),K\in \mathcal{V}\;\text{and}
		\\
		\phantom{=\;\;}&\text{for all}\;(x,y)\in Gr(A),\langle X_{t}-x,dK_{t}-ydt \rangle\geq0 \big\}.
	\end{align*}
	
	It is known from \cite{CHM,cepa1} that for any pairs of $(X_{t},K_{t}), (\tilde{X}_{t},\tilde{K}_{t})\in \mathcal{A}$,
	\begin{equation}\label{mono}
		\langle X_{t}-\tilde{X}_{t},\mathrm{d}K_{t}-\mathrm{d}\tilde{K}_{t}\rangle\geq 0.
	\end{equation}
	
	We recall the following results due to C\'epa \cite{cepa1}, which will be used in the proofs in Section 4.
	\begin{proposition}\label{multi}
		Assume that Int$(D(A))\neq \emptyset$. For any $a\in$ Int$(D(A))$, there exist three positive constants $\lambda_{1}$, $\lambda_{2}\,\text{and}\, \lambda_{3}$ such that for any $(X,K)\in\mathcal{A}$ and $0\leq s<t\leq T$,
		\begin{equation}
			\int_{s}^{t}\langle X_{r}-a,\mathrm{d}K_{r}\rangle_{\mathbb{R}^{d}}\geq \lambda_{1}|K|_{t}^{s}-\lambda_{2}\int_{s}^{t}|X_{r}-a|\mathrm{d}r-\lambda_{3}(t-s),
		\end{equation}
		where $|K|_{s}^{t}$ denotes the total variation of $K$ on $[s,t]$.
	\end{proposition}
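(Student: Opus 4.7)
The plan is to reduce the desired inequality to the monotonicity condition defining $\mathcal{A}$, by testing it against a carefully chosen one-parameter family of graph points of $A$ located on a small sphere around $a$. The key preparatory step is to exploit $a\in\mathrm{Int}(D(A))$: choose $\delta>0$ such that $\overline{B(a,\delta)}\subset D(A)$, and invoke the classical local boundedness of the minimal section of a maximal monotone operator on the interior of its domain to obtain a constant $M>0$ such that, for every $x\in\overline{B(a,\delta)}$, there exists at least one $y_{x}\in A(x)$ with $|y_{x}|\leq M$. In particular, for each unit vector $u\in\mathbb{R}^{d}$ the point $x_{u}:=a+\delta u$ lies in $\overline{B(a,\delta)}$ and carries such a bounded selection $y_{u}\in A(x_{u})$.

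Next, fix $0\leq s<t\leq T$ and a unit vector $u$. Applying the defining inequality of $\mathcal{A}$ (which says that $\langle X_{r}-x,\mathrm{d}K_{r}\rangle\geq\langle X_{r}-x,y\rangle\,\mathrm{d}r$ as signed measures for all $(x,y)\in Gr(A)$) with $(x_{u},y_{u})$ and integrating over $[s,t]$ yields
\[
\int_{s}^{t}\langle X_{r}-a,\mathrm{d}K_{r}\rangle\geq \delta\int_{s}^{t}\langle u,\mathrm{d}K_{r}\rangle+\int_{s}^{t}\langle X_{r}-x_{u},y_{u}\rangle\,\mathrm{d}r.
\]
Bounding the last integral from below via $|y_{u}|\leq M$ and $|X_{r}-x_{u}|\leq |X_{r}-a|+\delta$ gives
\[
\int_{s}^{t}\langle X_{r}-a,\mathrm{d}K_{r}\rangle\geq \delta\int_{s}^{t}\langle u,\mathrm{d}K_{r}\rangle-M\int_{s}^{t}|X_{r}-a|\,\mathrm{d}r-M\delta(t-s),
\]
valid for every unit vector $u$.

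The final step is to convert $\int_{s}^{t}\langle u,\mathrm{d}K_{r}\rangle$ into the total variation $|K|_{s}^{t}$. For any partition $s=s_{0}<s_{1}<\cdots<s_{n}=t$, apply the inequality above on each subinterval $[s_{j},s_{j+1}]$ with $u=u^{(j)}$ chosen to be the unit vector in the direction of the increment $K_{s_{j+1}}-K_{s_{j}}$ (taken arbitrarily when this increment vanishes); note that $u^{(j)}$ still delivers a bounded selection $y_{u^{(j)}}$ by the argument above. Summing over $j$ gives
\[
\int_{s}^{t}\langle X_{r}-a,\mathrm{d}K_{r}\rangle\geq \delta\sum_{j}|K_{s_{j+1}}-K_{s_{j}}|-M\int_{s}^{t}|X_{r}-a|\,\mathrm{d}r-M\delta(t-s),
\]
and passing to the supremum over partitions brings the sum up to $|K|_{s}^{t}$ by definition of the total variation. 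Setting $\lambda_{1}=\delta$, $\lambda_{2}=M$, $\lambda_{3}=M\delta$ finishes the proof.

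The main obstacle is the first step: producing a uniformly bounded selection $u\mapsto y_{u}\in A(a+\delta u)$ as $u$ ranges over all unit vectors. This relies on the nontrivial structural fact that the minimal section of a maximal monotone operator is locally bounded on the interior of its domain, which is exactly what allows the sphere $a+\delta S^{d-1}$ (a compact subset of $\mathrm{Int}(D(A))$ once $\delta$ is small enough) to be covered by graph points with a common bound $M$. Once this selection is in hand, the remainder is essentially mechanical: the one-vector inequality obtained from testing monotonicity against $(x_{u},y_{u})$, combined with a partition/supremum argument that uses nothing about $K$ beyond bounded variation.
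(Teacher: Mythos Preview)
Your argument is correct and is essentially the standard proof due to C\'epa that the paper cites without reproducing: exploit local boundedness of a maximal monotone operator on the interior of its domain to obtain uniformly bounded graph points $(a+\delta u,y_{u})$ on a small sphere, test the monotonicity inequality defining $\mathcal{A}$ against these points, and then recover the total variation of $K$ via a partition argument with direction-adapted unit vectors. Two minor cosmetic points: you should take $\delta$ so that $\overline{B(a,\delta)}\subset\mathrm{Int}(D(A))$ (not merely $D(A)$), and in fact the classical result gives boundedness of the whole image $A(\overline{B(a,\delta)})$, not just of the minimal section, which is slightly stronger than what you invoke but changes nothing in the argument.
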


	\begin{proposition}\label{kcon}
		Let $\left\{K^n,\,n\in\mathbb{N}\right\}\in \mathcal{V}$ converge to some $K$ in $C([0,T],\mathbb{R}^d)$. If $\sup_{n\in\mathbb{N}}|K^n|_{TV}\leq \infty$, then $K\in \mathcal{V}$ and for any $X^n\in C([0,T],\mathbb{R}^d)$ converging to $X$ in $C([0,T],\mathbb{R}^d)$,
		\begin{align}
			\lim\limits_{n\to\infty}\int_{0}^{T}\langle X_s^n,\mathrm{d}K_s^n\rangle=\int_{0}^{T}\langle X_s,\mathrm{d}K_s\rangle.
		\end{align}
	\end{proposition}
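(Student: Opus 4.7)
The plan is to verify the two conclusions in order, treating the membership $K\in\mathcal{V}$ as a soft consequence of lower semicontinuity and then attacking the integral convergence by a ``three-epsilon'' argument via piecewise constant approximation of $X$.

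First I would show $K\in\mathcal{V}$. Since $K^n_0=0$ and $K^n\to K$ in $C([0,T],\mathbb{R}^d)$, one has $K_0=0$. The total variation functional is lower semicontinuous under uniform convergence (on each partition $0=t_0<\cdots<t_N=T$, $\sum_i|K_{t_{i+1}}-K_{t_i}|=\lim_n\sum_i|K^n_{t_{i+1}}-K^n_{t_i}|\le\sup_n|K^n|_{TV}$, then take the supremum over partitions), hence $|K|_{TV}\le\sup_n|K^n|_{TV}<\infty$ and $K\in\mathcal V$.

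For the integral convergence I would write
\begin{align*}
\int_{0}^{T}\langle X_{s}^{n},\mathrm{d}K_{s}^{n}\rangle-\int_{0}^{T}\langle X_{s},\mathrm{d}K_{s}\rangle
&=\int_{0}^{T}\langle X_{s}^{n}-X_{s},\mathrm{d}K_{s}^{n}\rangle+\int_{0}^{T}\langle X_{s},\mathrm{d}K_{s}^{n}-\mathrm{d}K_{s}\rangle.
\end{align*}
The first term is bounded by $\|X^{n}-X\|_{\infty}\cdot\sup_{n}|K^{n}|_{TV}$, which tends to $0$. For the second term I would approximate $X$ by a step function: given $\delta>0$, choose a partition $0=t_{0}<t_{1}<\cdots<t_{N}=T$ fine enough that by uniform continuity of $X$ on $[0,T]$, the step function $X^{\delta}_{s}:=\sum_{i=0}^{N-1}X_{t_{i}}\mathbf{1}_{[t_{i},t_{i+1})}(s)+X_{T}\mathbf{1}_{\{T\}}(s)$ satisfies $\|X-X^{\delta}\|_{\infty}<\delta$. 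Then
\begin{align*}
\int_{0}^{T}\langle X^{\delta}_{s},\mathrm{d}K^{n}_{s}\rangle=\sum_{i=0}^{N-1}\langle X_{t_{i}},K^{n}_{t_{i+1}}-K^{n}_{t_{i}}\rangle\xrightarrow[n\to\infty]{}\sum_{i=0}^{N-1}\langle X_{t_{i}},K_{t_{i+1}}-K_{t_{i}}\rangle=\int_{0}^{T}\langle X^{\delta}_{s},\mathrm{d}K_{s}\rangle,
\end{align*}
since only finitely many pointwise limits are involved. Combining this with the bounds
\[
\Bigl|\int_{0}^{T}\langle X_{s}-X^{\delta}_{s},\mathrm{d}K^{n}_{s}\rangle\Bigr|\le\delta\sup_{n}|K^{n}|_{TV},\qquad\Bigl|\int_{0}^{T}\langle X_{s}-X^{\delta}_{s},\mathrm{d}K_{s}\rangle\Bigr|\le\delta\,|K|_{TV},
\]
and letting first $n\to\infty$ then $\delta\to0$ yields the second term tends to $0$.

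The only mild obstacle is the second term, where the measures $\mathrm{d}K^{n}$ do not converge in total variation but only in a weak sense against continuous functions; the step-function approximation combined with the uniform bound $\sup_{n}|K^{n}|_{TV}<\infty$ (which the hypothesis provides, and which plays the role of a ``tightness'' control for signed measures) is exactly what makes the three-epsilon argument close. Everything else is routine.
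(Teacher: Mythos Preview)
Your argument is correct and is the standard elementary proof of this classical fact. Note, however, that the paper does not supply its own proof of this proposition: it is stated (together with Proposition~\ref{multi}) as a result recalled from C\'epa~\cite{cepa1}, without proof. So there is no ``paper's proof'' to compare against beyond the citation; your write-up simply fills in the details that the paper leaves to the reference, via the expected lower-semicontinuity of total variation plus a three-epsilon/step-function approximation, which is exactly how one proves this in C\'epa's work and elsewhere.
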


	\begin{remark}
		If A is a monotone operator and $\alpha>0$, then $(I+\alpha A)^{-1}$ is a Lipschitz function on $\mathbb{R}^d$ with Lipschitz constant equal to 1. Denote $J^\alpha:=(I+\alpha A)^{-1}$ and define the Yosida approximation of the operator $A$ as
		\begin{equation}\label{yosida}
			A^\alpha(x):=\frac{1}{\alpha}\left(x-J^\alpha(x)\right), \quad x\in\mathbb{R}^d.
		\end{equation}
		Then $A^\alpha(x)$ is monotonic and Lipschitz continuous with Lipschitz constant $\frac{1}{\alpha}$. Moreover, for every $x\in\mathbb{R}^d$ we have
		\begin{align}\label{yosidap}
			A^\alpha(x)\in A(J^\alpha(x))
		\end{align}
		and for every $x\in D(A)$, as $\alpha\to 0$,
		\begin{align}\label{yosi0}
			A^\alpha(x)\to A^0(x) \,\,\,\,\text{and}\,\,\,\,|A^\alpha(x)|\to |A^0(x)|,
		\end{align}
		where $A^0(x)$ is called the minimal section of $A$ and $|A^0(x)|<\infty.$
	\end{remark}


	\subsection{Solutions to multivalued Mckean-Vlasov SDEs}
	
	\begin{definition}\label{strong}
		(\textbf{Strong solution}) A pair of $\mathcal{F}_t$-adapted processes $(X,K)$ is said to be a strong solution of equation (\ref{mve}) with the initial value $x$, if $(X,K)$ is defined on a filtered probability space $(\Omega,\mathcal{F},\{\mathcal{F}_t\}_{t\geq0},P)$ such that
		\begin{enumerate} [(i)]
			\item $P(X_0=x)=1$,
			\item $X_t\in \mathcal{F}^W_t$, where $\left\{\mathcal{F}^W_t\right\}_{t\in[0,T]}$ stands for the $\sigma$-field filtration generated by $W$,
			\item $(X_.(\omega),K_.(\omega)\Big)\in\mathcal{A}$$\,\, P\text{-a.s.}$,
			\item it holds that
			\begin{align*}
				\int_{0}^{t}\left(|b(X_{s},\mathcal{L}_{X_{s}})|+\lVert\sigma(X_{s},\mathcal{L}_{X_{s}})\rVert^{2}\right)\mathrm{d}s <\infty, \quad P\text{-a.s.}
			\end{align*}
			and
			\begin{align*}
				X_{t}=x-K_{t}+\int_{0}^{t}b(X_{s},\mathcal{L}_{X_{s}})\mathrm{d}s+\int_{0}^{t}\sigma(X_{s},\mathcal{L}_{X_{s}})\mathrm{d}W_{s}, \quad \forall t\in[0,T].
			\end{align*}
		\end{enumerate}
	\end{definition}

	\begin{definition}\label{solu}
		(\textbf{Weak solution}) We say that (\ref{mve}) admits a weak solution with intial law $\mathcal{L}_{\tilde{X}_0}\in \mathcal{P}(\mathbb{R}^d)$ if there exists a stochastic basis $\tilde{\mathcal{S}}:=(\tilde{\Omega},\tilde{\mathcal{F}},\{\tilde{\mathcal{F}}_t\}_{t\geq0},\tilde{P})$, a d-dimensional standard $\tilde{\mathcal{F}}_t$-Brownian motion $(\tilde{W}_t)_{t\geq0}$ as well as a pair of continuous and $\tilde{\mathcal{F}}_t$-adapted processes $(\tilde{X},\tilde{K})$ defined on $\tilde{\mathcal{S}}$ such that
		\begin{enumerate} [(i)]
			\item $\tilde{X}_0$ has the initial law $\mathcal{L}_{\tilde{X}_0}$ and $\left(\tilde{X}_{.}(\omega),\tilde{K}_{.}(\omega)\right)\in \mathcal{A}$, $\,\,\, \tilde{P}\text{-a.s.}$,
			\item it holds that
			\begin{align*}
				\int_{0}^{t}\left(|b(\tilde{X}_{s},\mathcal{L}_{\tilde{X}_{s}})|+\lVert\sigma(\tilde{X}_{s},\mathcal{L}_{\tilde{X}_{s}})\rVert^{2}\right)\mathrm{d}s <\infty,\quad \tilde{P}\text{-a.s.},
			\end{align*}
			\item
			$$\tilde{X}_{t}=\tilde{X}_{0}-\tilde{K}_{t}+\int_{0}^{t}b(\tilde{X}_{s},\mathcal{L}_{\tilde{X}_{s}})\mathrm{d}s+\int_{0}^{t}\sigma(\tilde{X}_{s},\mathcal{L}_{\tilde{X}_{s}})\mathrm{d}\tilde{W}_{s}, \quad \forall t\in[0,T],$$
			where $\mathcal{L}_{\tilde{X}_{s}}=\tilde{P}\circ \tilde{X}_{s}^{-1}.$
		\end{enumerate}
	\end{definition}

	\begin{definition}\label{lawuni}
		(Uniqueness in Law ) Let $ (S;W,(X,K)) $ and $ (\widetilde{S};\widetilde{W},(\widetilde{X},\widetilde{K})) $ be two weak solutions with the same initial law $ \mathcal{L}_{X_{0}}=\mathcal{L}_{\widetilde{X}_{0}} $. The uniqueness in law is said to hold for (\ref{mve}) if $ (X,K) $ and $ (\widetilde{X},\widetilde{K}) $ have the same law.
	\end{definition}
	
	\begin{definition}\label{pathuni}
		(Pathwise Uniqueness) Let $ (S;W,(X,K)) $ and $ (S;W,(\widetilde{X},\widetilde{K})) $ be two weak solutions with $ X_{0}=\widetilde{X}_{0} $. The pathwise uniqueness is said to hold for (\ref{mve}) if for all $ t\in[0,T] $, $ (X_{t},K_{t})=(\widetilde{X}_{t},\widetilde{K}_{t}) $.
	\end{definition}

	Let $\mathcal{P}_2(\mathbb{R}^d)$ be the set of all probability measures on $(\mathbb{R}^d,\mathcal{B}(\mathbb{R}^d))$ with finite second moments. $\mathcal{P}_2(\mathbb{R}^d)$ is a Polish space equipped with $L^2$ Wasserstein distance defined by
	$$\mathbb{W}_{2}(\mu,\nu):=\inf\limits_{\xi\in \Pi(\mu,\nu)}\left(\iint_{\mathbb{R}^d\times\mathbb{R}^d}|x-y|^{2} \xi(\mathrm{d}x,\mathrm{d}y)\right)^{\frac{1}{2}},\ \ \forall \mu,\ \nu\in \mathcal{P}_2(\mathbb{R}^d)$$
	where $\Pi(\mu,\nu)$ is the set of all couplings of $\mu,\nu$, i.e. the set of all probability measures on $\mathbb{R}^d\times\mathbb{R}^d$ with marginal distributions $\mu$ and $\nu$. It is well known that $\mathbb{W}_{2}$ is a complete metric on $\mathcal{P}_2(\mathbb{R}^d)$.
	
	\begin{remark}\label{W2}
		From the definition above we know that for any $\mathbb{R}^{d}$-valued random variables $X$ and $Y$,
		$$\mathbb{W}_{2}\left(\mathcal{L}_{X},\mathcal{L}_{Y}\right)\leq \left[\mathbb{E}(X-Y)^{2}\right]^{\frac{1}{2}},$$ which will be used in the proofs later.
	\end{remark}
	
	Next we give the notations of martingale solutions. For $t\in[0,T]$, let
	\begin{align*}
		&\mathcal{W}:=C([0,T],\mathbb{R}^d)\times C([0,T],\mathbb{R}^d),\quad \mathbf{W}=\mathcal{B}(\mathcal{W}), \\
		&\mathcal{W}_t:=C([0,t],\mathbb{R}^d)\times C([0,t],\mathbb{R}^d),\quad \overline{\mathbf{W}}_t=\bigcap_{s>t}\mathcal{B}(\mathcal{W}_s).
	\end{align*}
	
	Denote the partial derivative operator as $\nabla:=\Big(\frac{\partial}{\partial x_1},\cdots,\frac{\partial}{\partial x_d}\Big)$ and the second partial derivate operator as $\nabla^2:=\Big(\frac{\partial^2}{\partial x_i\partial x_j}\Big)_{1\leq i,j\leq d}$. Let $C_b^2(\mathbb{R}^d)$ be the collection of all continuous functions which have bounded, continuous partial derivatives of every order up to 2.

	\begin{definition}\label{msolu}
		(\textbf{Martingale solution}) A probability measure $\mathbb{P}$ on $(\mathcal{W},\mathbf{W})$ is called a martingale solution of equation (\ref{mve}) with initial law $\mathcal{L}_{X_0}\in\mathcal{P}_2(\overline{D(A)})$ if $\mathbb{P}(\mathcal{A})=1$ and for each $f\in C_b^2(\mathbb{R}^d)$,
		\begin{align*}
			M_t^f:=&f(X_t)-f(X_0)+\int_{0}^{t}\langle\nabla f(X_s),\mathrm{d}K_s \rangle -\int_{0}^{t}\sum_{i} b_i(X_s,\mathcal{L}_{X_s})\nabla_i f(X_s)\mathrm{d}s \\
			&-\frac{1}{2}\int_0^t\sum_{i,j}( \sigma^*(X_s,\mathcal{L}_{X_s})\sigma(X_s,\mathcal{L}_{X_s}))_{ij}\nabla_{ij}^2f(X_s)\mathrm{d}s.
		\end{align*}
		is a $\overline{\mathbf{W}}_t$-adapted martingale, where $\mathcal{L}_{X_s}:=\mathbb{P}\circ X_s^{-1}$ denotes the law of $X_s$ under $\mathbb{P}$.
	\end{definition}

	\subsection{Large Deviation Principle}\label{ldp}
	
	We first give the definitions of the rate function and LDP. Let $\mathbb{S}$ be a Polish space and $\mathcal{B}(\mathbb{S})$ be the corresponding Borel $\sigma$-field.
	
	\begin{definition} [Rate function]
		A function $I:\mathbb{S}\rightarrow[0,\infty]$ is called a rate function if for every $a<\infty$, the level set $\left\{g\in\mathbb{S}:I(g)\leq a\right\}$ is a closed subset of $\mathbb{S}$.  $I$ is said to be a good rate function if all the level sets defined above are compact subsets of $\mathbb{S}$.
	\end{definition}
	
	\begin{definition}[Large deviation principle]\label{ldpdefin} Let $I$ be a rate function on $\mathbb{S}$. Given a collection $\{\hbar(\epsilon)\}_{\epsilon>0}$ of
		positive reals, a family $\left\{X^{\epsilon}\right\}_{\epsilon>0}$ of $\mathbb{S}$-valued random variables is said to satisfy the LDP on $\mathbb{S}$ with speed $\hbar(\epsilon)$ and rate function $I$ if the following two claims hold.
		\begin{itemize}
			\item[(a)]
			(Upper bound) For each closed subset $C$ of $\mathbb{S},$
			$$
			\limsup_{\epsilon \rightarrow 0} \hbar(\epsilon) \log {P}\left(X^{\epsilon} \in C\right) \leq - \inf_{x \in C} I(x).
			$$
			\item[(b)] (Lower bound) For each open subset $O$ of $\mathbb{S},$
			$$
			\liminf_{\epsilon \rightarrow 0} \hbar(\epsilon) \log {P}\left(X^{\epsilon} \in O\right) \geq - \inf_{x \in O} I(x).
			$$
		\end{itemize}
	\end{definition}	
	
	Next we recall a general criterion for laplace principle, which is equivalent to the LDP with the same rate function through the weak convergence approach (see \cite{BD2000,BDM2008,zhang}).
	
	
	
	For any $m\in \mathbb{N}$ and $T\geq0$, define
	\begin{equation}
		\mathcal{S}_{m}:=\left\{h\in L^{2}([0,T],\mathbb{R}^d):\int_{0}^{T}|h(s)|^{2}\mathrm{d}s\leq m\right\}.
	\end{equation}
	Equipped with the weak topology, $\mathcal{S}_{m}$ is a compact subset of $L^{2}([0,T],\mathbb{R}^d)$.
	
	Let
	\begin{align*}
		\mathcal{D}_{m}:=\bigg\{\varphi: \varphi\text{ is a predictable process and }
		\varphi(\cdot,\omega)\in\mathcal{S}_{m}\text{ for almost all }\omega\bigg\}.
	\end{align*}

	Let $\left\{\mathcal{G}^{\epsilon}\right\}_{\epsilon>0}$ be a family of measurable maps from $C([0,T],\mathbb{R}^d)$ to $\mathbb{S}$, and $X^\epsilon=\mathcal{G}^{\epsilon}(\sqrt{\epsilon}W)$. Next we introduce the following conditions which will be sufficient to establish the large and moderate deviation principles for $\left\{X^{\epsilon}\right\}_{\epsilon>0}$.
	
	\begin{assumption}\label{assumld}
		There exists a measurable map $\mathcal{G}^{0}:C([0,T],\mathbb{R}^d)\mapsto \mathbb{S}$ such that
		\begin{enumerate}
			\item[(\textbf{LD}$_{\bm{1}}$)] for any $m\in \mathbb{N}$ and $\left\{v_{n},n\in\mathbb{N}\right\}\subset \mathcal{S}_{m}$ satisfying that  $v_{n}$ converges to some $v\in \mathcal{S}_{m}$ as $n\rightarrow\infty$;
			$$\mathcal{G}^{0}\left(\int_{0}^{\cdot}v_{n}(s)\mathrm{d}s\right)\rightarrow\mathcal{G}^{0}\left(\int_{0}^{\cdot}v(s)\mathrm{d}s\right),$$
			\item[(\textbf{LD}$_{\bm{2}}$)] for any $m\in \mathbb{N}$ and $\left\{h_{\epsilon},\epsilon>0\right\}\subset\mathcal{D}_{m}$ satisfying that $h_{\epsilon}$  converges in distribution to some $h\in\mathcal{D}_{m}$ as $ \epsilon\rightarrow 0 $,
			$$\mathcal{G}^{\epsilon}\left(\sqrt{\epsilon}W+\int_{0}^{\cdot}h_{\epsilon}(s)\mathrm{d}s\right)\rightarrow \mathcal{G}^{0}\left(\int_{0}^{\cdot}h(s)\mathrm{d}s\right)\quad \quad \text{in distribution}.$$
		\end{enumerate}
	\end{assumption}
	
	For each $x\in\mathbb{S}$, define $\mathbb{V}_{x}=\left\{h\in L^{2}([0,T],\mathbb{R}^d):x=\mathcal{G}^{0}\left(\int_{0}^{\cdot}h(s)\mathrm{d}s\right)\right\}$ and
	\begin{equation}\label{rate}
		I(x):=\frac{1}{2}\inf\limits_{h\in\mathbb{V}_{x}}\int_{0}^{T}|h(s)|^{2}\mathrm{d}s,
	\end{equation}
	with the convention $\inf\{\emptyset\}=\infty$.

	We recall the following result due to \cite{BD2000} (see also \cite{BDM2008,BDM2011}).
	\begin{theorem}\label{ldlap}
		Under (\textbf{LD}$_{\bm{1}}$) and (\textbf{LD}$_{\bm{2}}$), $\left\{X^{\epsilon}\right\}_{\epsilon>0}$ satisfies the Laplace principle with  the rate function $I$. More precisely, for each real bounded continuous function $ f $ on $ \mathbb{S} $,
		\begin{equation}
			\lim\limits_{\epsilon\rightarrow 0}\epsilon\, \log\,\mathbb{E}^{\mu}\left(\exp[-f(X^{\epsilon})/\epsilon]\right)=-\inf\limits_{x\in\mathbb{S}}\left\{f(x)+I(x)\right\}.
		\end{equation}
		In particular, the family of $\left\{X^{\epsilon}\right\}_{\epsilon>0}$ satisfies the LDP in $(\mathbb{S},\mathcal{B}(\mathbb{S}))$ with speed $\epsilon$ and rate function $I$.
	\end{theorem}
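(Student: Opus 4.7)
The plan is to invoke the Bou\'e--Dupuis variational representation for exponential functionals of the Brownian path, which is the backbone of the weak convergence approach. Concretely, for each bounded continuous $f$ on $\mathbb{S}$ and each $\epsilon>0$, this representation yields
\begin{equation*}
-\epsilon\,\log\mathbb{E}\!\left[\exp\!\left(-f(X^{\epsilon})/\epsilon\right)\right]=\inf_{h\in\mathcal{D}}\mathbb{E}\!\left[\frac{1}{2}\int_{0}^{T}|h(s)|^{2}\mathrm{d}s+f\!\left(\mathcal{G}^{\epsilon}\!\left(\sqrt{\epsilon}W+\int_{0}^{\cdot}h(s)\mathrm{d}s\right)\right)\right].
\end{equation*}
Both the upper and lower Laplace bounds will follow by reading this identity against (\textbf{LD}$_{\bm{1}}$) and (\textbf{LD}$_{\bm{2}}$), and goodness of the rate function will be obtained separately from (\textbf{LD}$_{\bm{1}}$) alone.

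For the direction $\liminf_{\epsilon\to 0}-\epsilon\log\mathbb{E}[\exp(-f(X^{\epsilon})/\epsilon)]\geq-\inf_{x\in\mathbb{S}}\{f(x)+I(x)\}$, I would pick an $\epsilon$-near minimizer $h_{\epsilon}\in\mathcal{D}$ of the right-hand side above. Since $\|f\|_{\infty}<\infty$, the cost penalty forces $\mathbb{E}\int_{0}^{T}|h_{\epsilon}|^{2}\mathrm{d}s\leq M$ for some $M<\infty$ independent of $\epsilon$, and a standard truncation via Markov's inequality lets us assume $h_{\epsilon}\in\mathcal{D}_{m}$ for some fixed $m$. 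Weak compactness of $\mathcal{S}_{m}$ gives tightness of $(h_{\epsilon},W)$, so along a subsequence we have convergence in distribution of $h_{\epsilon}$ to some $h\in\mathcal{D}_{m}$; (\textbf{LD}$_{\bm{2}}$) then yields $\mathcal{G}^{\epsilon}(\sqrt{\epsilon}W+\int_{0}^{\cdot}h_{\epsilon}\mathrm{d}s)\to\mathcal{G}^{0}(\int_{0}^{\cdot}h\mathrm{d}s)$ in distribution. Fatou's lemma together with the lower semicontinuity of the $L^{2}$-norm under weak convergence produces the sought bound, after recognizing that the right-hand side is at least $f(x)+I(x)$ at the limit point $x=\mathcal{G}^{0}(\int_{0}^{\cdot}h\mathrm{d}s)$.

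For the opposite inequality, I would fix $\delta>0$, choose $x^{*}\in\mathbb{S}$ with $f(x^{*})+I(x^{*})\leq\inf_{x}\{f(x)+I(x)\}+\delta$, and pick a deterministic $h^{*}\in\mathbb{V}_{x^{*}}$ with $\frac{1}{2}\int_{0}^{T}|h^{*}|^{2}\mathrm{d}s\leq I(x^{*})+\delta$. Plugging the constant control $h^{*}$ into the variational representation and using (\textbf{LD}$_{\bm{2}}$) on the constant sequence $h_{\epsilon}\equiv h^{*}$, I obtain $\mathcal{G}^{\epsilon}(\sqrt{\epsilon}W+\int_{0}^{\cdot}h^{*}\mathrm{d}s)\to x^{*}$ in distribution, and bounded convergence for the continuous bounded $f$ produces $\limsup_{\epsilon\to 0}-\epsilon\log\mathbb{E}[\exp(-f(X^{\epsilon})/\epsilon)]\leq f(x^{*})+I(x^{*})+\delta$. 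Letting $\delta\downarrow 0$ closes the Laplace identity. Goodness of $I$ is then a short separate argument: the sub-level set $\{I\leq a\}$ equals the image of the weakly compact set $\{h:\frac{1}{2}\int|h|^{2}\mathrm{d}s\leq a\}\subset\mathcal{S}_{2a}$ under the map $h\mapsto\mathcal{G}^{0}(\int_{0}^{\cdot}h\mathrm{d}s)$, which is continuous by (\textbf{LD}$_{\bm{1}}$), so the image is compact. The passage from Laplace principle with a good rate function to the LDP with the same speed and rate function is classical, following Varadhan's lemma and its converse as in Theorem 1.2.3 of Dupuis--Ellis \cite{[DE]}.

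The main obstacle is the tightness/truncation step in the upper bound: one needs to legitimately replace arbitrary $h_{\epsilon}\in\mathcal{D}$ with members of some $\mathcal{D}_{m}$, to apply Skorokhod's representation along a weakly-convergent subsequence in the product topology on $\mathcal{D}_{m}\times C([0,T],\mathbb{R}^{d})$, and then to pass to the liminf under the expectation without losing the quadratic cost. The rest of the argument is essentially a mechanical consequence of (\textbf{LD}$_{\bm{1}}$)--(\textbf{LD}$_{\bm{2}}$) once the correct joint topology on controls and driving noise is set up.
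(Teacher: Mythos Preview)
Your proposal is correct and follows the standard weak convergence argument, but note that the paper does not actually prove this theorem: it is stated as a recalled result due to Budhiraja--Dupuis \cite{BD2000} (see also \cite{BDM2008,BDM2011}), with no proof given in the paper itself. Your sketch is essentially the proof one finds in those references, so there is nothing to compare against here beyond the citation.
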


	%
	%
	%

	
	\section{Main Results}\label{res}

	Consider the following small perturbation of multivalued Mckean-Vlasov SDEs:
	\begin{equation}\label{per}
		\left\{
		\begin{array}{lr}
			X_{t}^{\epsilon} \in x + \int_{0}^{t} b_{\epsilon}\left(X_{s}^{\epsilon},\mathcal{L}_{X_{s}^{\epsilon}} \right)\,\mathrm{d}s + \sqrt{\epsilon}\int_{0}^{t} \sigma_{\epsilon}\left(X_{s}^{\epsilon},\mathcal{L}_{X_{s}^{\epsilon}} \right)\,\mathrm{d}W_{s} - A_\epsilon(X^{\epsilon}_t), \\
			X_{0}^{\epsilon} = x \in\overline{D(A_\epsilon)},\quad \epsilon \in \left(0,1\right)
		\end{array}
		\right.
	\end{equation}
	where $K_{t}^{\epsilon}\in \mathcal{V}$, and $b_{\epsilon}: \overline{D(A_\epsilon)} \times \mathcal{P}_{2} \rightarrow \mathbb{R}^{d}$, $\sigma_{\epsilon}: \overline{D(A_\epsilon)} \times \mathcal{P}_{2} \rightarrow \mathbb{R}^{d}\otimes \mathbb{R}^{d}$ are measurable maps.	
	
	Throughout this paper we denote $\lVert\cdot\rVert$ by the Hilbert-Schmidt norm of matrix in $\mathbb{R}^{d}\otimes \mathbb{R}^{d}$. Let $|\cdot|$  be the Euclidean norm of vector in $\mathbb{R}^d$. For any $\mu\in\mathcal{P}_2(\mathbb{R}^d)$, let $\lVert\mu\rVert_2^2$ be the second moment of $\mu$.
	\begin{assumption}\label{assum}
		Assume that
		
		\textbf{(H0)}\label{h0}
		$ A $ and $A_{\epsilon}$ are multivalued monotone operators with nonempty interior, i.e. Int$ D(A)\neq\emptyset $, Int$ D(A_{\epsilon})\neq\emptyset $, and they have a common domain $D=\overline{D(A)}=\overline{D(A_{\epsilon})}$ with $0\in \text{Int}(D(A))$. Moreover, $A_{\epsilon}$ is locally bounded at 0 uniformly in $\epsilon$, i.e. there exists some $\gamma>0$ such that
		\begin{align*}
			\sup\left\{|y|; y\in A_{\epsilon}(x),\, \epsilon\in[0,1],\, x\in B_{0}(\gamma):=\left\{x\in\mathbb{R}^{d};|x|\leq\gamma\right\}\right\}<\infty.
		\end{align*}
		
		\textbf{(H1)} $b$ and $b_{\epsilon}$ are continuous functions such that for some $L>0$ and all $x,x'\in\mathbb{R}^d$, $\mu,\nu\in \mathcal{P}_2(\mathbb{R}^d)$,
		\begin{align}\label{h1}
			&\langle x-x',b(x,\mu)-b(x',\nu)\rangle\vee\langle x-x',b_\epsilon(x,\mu)-b_\epsilon(x',\nu)\rangle\leq L[\varrho\left(|x-x'|^2\right)+\varrho\left(\mathbb{W}^2_{2}(\mu,\nu)\right)], \\
			&|b(x,\mu)|\vee|b_\epsilon(x,\mu)|\leq L(1+|x|+\lVert \mu\rVert_2),  \label{hblg}
		\end{align}
		where $\varrho:\mathbb{R}^{+}\to \mathbb{R}^{+} $ is a continuous and non-decreasing concave function with $\varrho(0)=0$, $\varrho(u)>0$ for every $u>0$ and $\int_{0^+}1/\varrho(u)du=\infty$.
		And there exist some nonnegative constants $\rho_{b,\epsilon}$ converging to 0 as $\epsilon\rightarrow0$ such that
		$$\sup\limits_{(x,\mu)\in \mathbb{R}^{d}\times\mathcal{P}_{2}}\lvert b_{\epsilon}(x,\mu)-b(x,\mu)\rvert\leq \rho_{b,\epsilon}.$$
		
		\textbf{(H2)}
		$\sigma$ and $\sigma_{\epsilon}$ are continuous functions and satisfy that for some $L>0$ and all $x,x'\in\mathbb{R}^d$, $\mu,\nu\in \mathcal{P}_2(\mathbb{R}^d)$, 	
		\begin{align}\label{h2}
			&\lVert \sigma(x,\mu)-\sigma(x',\nu)\rVert^2\vee\lVert \sigma_\epsilon(x,\mu)-\sigma_\epsilon(x',\nu)\rVert^2
			\leq\, L[\varrho\left(|x-x'|^2\right)+\varrho\left(\mathbb{W}^2_{2}(\mu,\nu)\right)], \\
			&\lVert \sigma(x,\mu)\rVert^2 \vee \lVert \sigma_\epsilon(x,\mu)\rVert^2\leq L(1+|x|^2+\lVert \mu\rVert_2^2), \label{hslg}
		\end{align}
		and there exist some nonnegative constants $\rho_{\sigma,\epsilon}$ converging to 0 as $\epsilon\rightarrow0$ such that
		$$\sup\limits_{(x,\mu)\in \mathbb{R}^{d}\times\mathcal{P}_{2}}\lVert \sigma_{\epsilon}(x,\mu)-\sigma(x,\mu)\rVert\leq \rho_{\sigma,\epsilon}.$$
		
		
		\textbf{(H3)}\label{h3}
		For any given $\epsilon>0$, once if the distributions $\mathcal{L}_{X^{\epsilon}_{\cdot}}$ are fixed with some $\mu_{\cdot}$ in advance, then the pathwise uniqueness holds for (\ref{per}), i.e. for any two solutions $(X^{\epsilon},K^{\epsilon})$ and $(\tilde{X}^{\epsilon},\tilde{K}^{\epsilon})$ of (\ref{per}),
		$$(X^{\epsilon}_t,K^{\epsilon}_t)=(\tilde{X}^{\epsilon}_t,\tilde{K}^{\epsilon}_t),\quad \forall t\in[0,T].$$
		
		\textbf{(H4)}\label{h4}
		For all $\alpha>0$,
		\begin{equation*}
			\lim\limits_{\epsilon\to 0}(1+\alpha A_{\epsilon})^{-1}y=(1+\alpha A)^{-1}y,
		\end{equation*}
		uniformly for $ y $ in compact sets.
		
		
		\begin{remark}
			It is obvious that \textbf{(H4)} is equivalent to
			\begin{equation*}
				\lim\limits_{\epsilon\to 0}A_\epsilon^\alpha(y)=A^\alpha(y),
			\end{equation*}
		\end{remark}
		uniformly for $ y $ in compacts sets, where $A^\alpha$(resp.$\,A_\epsilon^\alpha$) is the Yosida approximation of $A\,$(resp.$\,A_\epsilon$). This condition is proposed to measure the difference between multivalued operators $A$ and $A_\epsilon$.
		
		\begin{remark}
			It is noted that in \textbf{(H1)} and \textbf{(H2)}, the concave functions for the state variable
			$x$ and the distribution variable $\mu$ need not be the same; this does not affect the conclusions in the following estimates. For convenience, we assume that the concave functions for both variables are identical.
		\end{remark}

		\begin{examples}
			The following are some concrete examples of the function $\varrho$:
			\begin{flalign*}
				&\varrho_1(u)=
				\begin{cases}
					u\log(u^{-1}), &0\leq u\leq \eta;  \\
					\eta\log(\eta^{-1})+\dot{\varrho_2}(\eta-)(u-\eta),  &u>\eta;
				\end{cases}&  \\
				&\varrho_2(u)=
				\begin{cases}
					u\log(u^{-1})\log\log(u^{-1}), &0\leq u\leq \eta;  \\
					\eta\log(\eta^{-1})\log\log(\eta^{-1})+\dot{\varrho_3}(\eta-)(u-\eta),  &u>\eta
				\end{cases}&
			\end{flalign*}
			where $L>0$, $\eta$ is some sufficiently small positive constant and $\dot{\varrho}(\eta-)$ denotes the left derivative of $\varrho$ at $\eta$.
		\end{examples}

	\end{assumption}

	Next we introduce the following Bihari’s inequality from \cite{mao}, which will be frequently used to deal with the assumptions (\hyperref[h1]{\textbf{H1}}) and (\hyperref[h2]{\textbf{H2}}).
	

	\begin{lemma}\label{bhr}
		Let $T>0$ and $c>0$. Let $\varrho:\mathbb{R}^+\mapsto\mathbb{R}^+$ be a continuous and nondecreasing function such that $\varrho(t)>0$ for all $t>0$. Let $u(\cdot)$ be a Borel measurable bounded nonnegative function on $[0,T]$ and  $v(\cdot)$ a nonnegative integrable function on $[0,T]$. If
		\begin{align*}
			u(t)\leq c+\int_{0}^{t}v(s)\varrho(u(s))\mathrm{d}s, \quad \text{for all }\,0\leq t\leq T,
		\end{align*}
		then
		\begin{align*}
			u(t)\leq f^{-1}\left(f(c)+\int_{0}^{t}v(s)\mathrm{d}s\right)
		\end{align*}
		holds for all $t\in[0,T]$ with
		\begin{align*}
			f(c)+\int_{0}^{t}v(s)\mathrm{d}s\in Dom(f^{-1}),
		\end{align*}
		where $f(r)=\int_{1}^{r}1/\varrho(s)\mathrm{d}s$ on $r>0$ and $f^{-1}$ is the inverse function of $f$.
	\end{lemma}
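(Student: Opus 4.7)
The plan is to follow the standard Bellman--Bihari scheme: majorise $u$ by the absolutely continuous function defined by the right-hand side, and then reduce the problem to a separable ODE inequality that can be integrated directly.

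First I would set $U(t):=c+\int_{0}^{t}v(s)\varrho(u(s))\,\mathrm{d}s$, so that by hypothesis $u(t)\le U(t)$ for every $t\in[0,T]$ and $U(0)=c$. Since $u$ is bounded and $v$ is integrable, $U$ is absolutely continuous on $[0,T]$ with $U'(t)=v(t)\varrho(u(t))$ for almost every $t$. Using the monotonicity of $\varrho$ together with $u(t)\le U(t)$, I obtain the differential inequality
\begin{equation*}
U'(t)\;\le\;v(t)\,\varrho\bigl(U(t)\bigr),\qquad \text{a.e. } t\in[0,T].
\end{equation*}

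Next I would separate variables. As long as $U(t)>0$, dividing by $\varrho(U(t))>0$ gives $U'(t)/\varrho(U(t))\le v(t)$; integrating from $0$ to $t$ and performing the change of variables $s=U(r)$ on the left,
\begin{equation*}
\int_{c}^{U(t)}\frac{\mathrm{d}s}{\varrho(s)}
\;=\;f(U(t))-f(c)
\;\le\;\int_{0}^{t}v(s)\,\mathrm{d}s,
\end{equation*}
where $f(r)=\int_{1}^{r}\mathrm{d}s/\varrho(s)$ is well defined and strictly increasing on $(0,\infty)$ because $\varrho>0$ there. Because $f$ is strictly increasing, $f^{-1}$ exists on the range of $f$, is itself strictly increasing, and whenever $f(c)+\int_{0}^{t}v(s)\,\mathrm{d}s\in\mathrm{Dom}(f^{-1})$ I may apply $f^{-1}$ to both sides to obtain $U(t)\le f^{-1}\!\left(f(c)+\int_{0}^{t}v(s)\,\mathrm{d}s\right)$. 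Combining with $u(t)\le U(t)$ yields the claimed inequality.

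The only delicate point, and the step I expect to be the main obstacle, is the case $c=0$ (and more generally the behaviour near points where $U$ might vanish or where $f$ escapes to $-\infty$ at $0^{+}$). I would handle it by a standard approximation: replace $c$ by $c_{n}\downarrow 0$, apply the argument above to obtain $u(t)\le f^{-1}(f(c_{n})+\int_{0}^{t}v(s)\,\mathrm{d}s)$, and then pass to the limit using continuity of $f^{-1}$ on its domain. The hypothesis $f(c)+\int_{0}^{t}v(s)\,\mathrm{d}s\in\mathrm{Dom}(f^{-1})$ in the statement is precisely what guarantees that this limiting procedure stays inside the region where $f^{-1}$ is meaningful, so no further regularity on $\varrho$ beyond the assumed continuity and positivity is needed.
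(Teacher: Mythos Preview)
Your argument is correct and is exactly the standard Bellman--Bihari proof. Note that the paper does not supply its own proof of this lemma at all: it simply records the statement and cites Mao's textbook \cite{mao}, so there is no ``paper's proof'' to compare against beyond the classical one you have reproduced.

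One minor remark: your discussion of the case $c=0$ is unnecessary here, because the lemma as stated assumes $c>0$. Since $U'(t)=v(t)\varrho(u(t))\ge 0$, the function $U$ is nondecreasing and hence $U(t)\ge U(0)=c>0$ for every $t\in[0,T]$; thus $\varrho(U(t))>0$ throughout and the division step is justified without any limiting procedure. (The paper does later apply the lemma in situations where the ``constant'' term tends to zero, but there the limit is taken \emph{after} invoking the lemma with a strictly positive constant, exactly as you outline.)
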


	The existence and uniqueness of the strong solution for equation (\ref{mve}) under one-sided Lipschitz form condition were obtained in \cite{QG}. Under non-Lipschitz conditions, the results for McKean-Vlasov SDEs are  investigated in \cite{DQ}. Now we state the strong solution result for equation (\ref{mve}).
	
	\begin{theorem}\label{strongsolu}
		Assume that (\hyperref[h0]{\textbf{H0}})-(\hyperref[h2]{\textbf{H2}}) hold. If $X_0$ be a $\mathcal{F}_0$-measurable random variable with finite second moment, then there exists a unique strong solution $(X,K)\in\mathcal{A}$ as defined in the Definition \ref{strong} to equation (\ref{mve}).
	\end{theorem}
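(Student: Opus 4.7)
The plan is to construct the strong solution by a Picard iteration that freezes the law, reducing each step to a classical (non-distribution-dependent) multivalued SDE whose wellposedness is already available from Ding--Qiao \cite{DQ}, and then to close the loop in the nonlinear measure variable by means of Bihari's inequality (Lemma \ref{bhr}). I will work on the time interval $[0,T]$ and exploit the monotonicity property (\ref{mono}) together with Proposition \ref{multi} to absorb the finite-variation process $K$.

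First I would set $X_t^0\equiv X_0$ and, given $X^n$ with law flow $\mu_t^n:=\mathcal{L}_{X_t^n}$, define the frozen coefficients $\bar b^n(t,x):=b(x,\mu_t^n)$ and $\bar\sigma^n(t,x):=\sigma(x,\mu_t^n)$. By (H1)--(H2), $\bar b^n$ and $\bar \sigma^n$ satisfy the monotone/non-Lipschitz conditions in $x$ uniformly in $t$, so the classical existence/uniqueness theorem for multivalued SDEs gives a unique strong solution $(X^{n+1},K^{n+1})\in\mathcal A$ to
\begin{equation*}
    X_t^{n+1}=X_0+\int_0^t\bar b^n(s,X_s^{n+1})\,\mathrm ds+\int_0^t\bar\sigma^n(s,X_s^{n+1})\,\mathrm dW_s-K_t^{n+1}.
\end{equation*}
The next step is a uniform a priori estimate $\sup_n\mathbb E\sup_{t\le T}|X_t^{n+1}|^2<\infty$. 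Applying It\^o's formula to $|X_t^{n+1}|^2$, using Proposition \ref{multi} with $a=0\in\mathrm{Int}(D(A))$ to control $\int_0^t\langle X_s^{n+1},\mathrm dK_s^{n+1}\rangle$, and invoking (\ref{hblg})--(\ref{hslg}) with $\|\mu_t^n\|_2^2\le\mathbb E|X_t^n|^2$, Gronwall gives the bound by induction.

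The crucial step is Cauchyness. Set $D_t^n:=\mathbb E\sup_{s\le t}|X_s^{n+1}-X_s^n|^2$. Apply It\^o to $|X_t^{n+1}-X_t^n|^2$; the $K$-terms are nonpositive thanks to (\ref{mono}); for the drift use the one-sided bound in (\hyperref[h1]{\textbf{H1}}) and for the diffusion the bound in (\hyperref[h2]{\textbf{H2}}), together with the Burkholder--Davis--Gundy inequality. Using the concavity of $\varrho$, Jensen and Remark \ref{W2} (so that $\mathbb W_2^2(\mu_s^n,\mu_s^{n-1})\le\mathbb E|X_s^n-X_s^{n-1}|^2\le D_{s}^{n-1}$), I expect an inequality of the form
\begin{equation*}
    D_t^n\le C\int_0^t\bigl[\varrho(D_s^n)+\varrho(D_s^{n-1})\bigr]\,\mathrm ds,\qquad t\in[0,T].
\end{equation*}
Setting $U_t^n:=\sup_{k\ge n}D_t^k$ (which is finite thanks to the a priori bound) gives $U_t^n\le 2C\int_0^t\varrho(U_s^n)\,\mathrm ds$; Bihari (Lemma \ref{bhr}) with $c=0$ and the hypothesis $\int_{0^+}1/\varrho(u)\,\mathrm du=\infty$ then forces $U_t^n\equiv 0$, i.e.\ $\{X^n\}$ is Cauchy in $L^2(\Omega;C([0,T],\mathbb R^d))$. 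Denote by $X$ its limit. The finite-variation processes $K^n$ are uniformly bounded in total variation (again by Proposition \ref{multi} applied to the a priori $L^2$ bound), hence a subsequence $K^n$ converges uniformly to some $K$; Proposition \ref{kcon} allows passage to the limit in the integrals against $\mathrm dK^n$, and the continuity of $b,\sigma$ together with $\mathbb W_2(\mu_s^n,\mathcal L_{X_s})\to 0$ yields that $(X,K)\in\mathcal A$ solves (\ref{mve}). Uniqueness is a direct Bihari argument applied to the difference of two solutions, using again (\ref{mono}), (\hyperref[h1]{\textbf{H1}}), (\hyperref[h2]{\textbf{H2}}) and Remark \ref{W2}.

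The main obstacle I expect is the measure-dependence combined with the non-Lipschitz modulus $\varrho$: one cannot contract directly in $\mathbb W_2$, and the usual trick of separating the $x$-variable and $\mu$-variable in a Picard iteration only produces the mixed recursion $D^n\lesssim\int_0^t\varrho(D^n_s)\,\mathrm ds+\int_0^t\varrho(D^{n-1}_s)\,\mathrm ds$. Handling this by the $U_t^n$-device above is the technical crux, and it requires that the a priori $L^2$ bound on $X^n$ be truly uniform in $n$ so that the suprema $U_t^n$ are finite; securing that bound in turn uses Proposition \ref{multi} in an essential way, which is the only available tool to convert the unknown regularity of $K^{n+1}$ into usable estimates.
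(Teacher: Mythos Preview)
Your Picard-iteration strategy is a reasonable alternative to what the paper does, but the Cauchy step has a real gap. From $D_t^n\le C\int_0^t[\varrho(D_s^n)+\varrho(D_s^{n-1})]\,\mathrm ds$ and $U_t^n:=\sup_{k\ge n}D_t^k$, taking the supremum over $k\ge n$ gives $U_t^n\le C\int_0^t[\varrho(U_s^n)+\varrho(U_s^{n-1})]\,\mathrm ds$, \emph{not} the closed inequality you wrote: the term $D_s^{k-1}$ at $k=n$ drags in $U_s^{n-1}\ge U_s^n$. Passing to the limit $n\to\infty$ does yield $U_t:=\limsup_k D_t^k\le 2C\int_0^t\varrho(U_s)\,\mathrm ds$ and hence $U_t=0$ by Bihari, but this only says $\mathbb E\sup_{s\le t}|X_s^{n+1}-X_s^n|^2\to 0$, which does \emph{not} imply that $\{X^n\}$ is Cauchy in $L^2(\Omega;C([0,T],\mathbb R^d))$ (consecutive differences tending to zero is strictly weaker than Cauchy). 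The repair is to run the It\^o computation for $|X_t^{n+1}-X_t^{m+1}|^2$ with two free indices, obtain $E_t^{n+1,m+1}\le C\int_0^t[\varrho(E_s^{n+1,m+1})+\varrho(E_s^{n,m})]\,\mathrm ds$ with $E^{n,m}_t:=\mathbb E\sup_{s\le t}|X_s^n-X_s^m|^2$, and apply Bihari to $V_t:=\limsup_{n,m\to\infty}E_t^{n,m}$; this gives $V_t=0$ and hence genuine Cauchyness. A secondary point: the convergence of $K^n$ should come from the equation itself (once $X^n\to X$ and the stochastic integrals converge, $K^n\to K$ in $C([0,T],\mathbb R^d)$ by subtraction), not from a compactness argument based on bounded variation alone.

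For comparison, the paper avoids this issue altogether by the Yamada--Watanabe route: it first builds a \emph{weak} solution through the martingale-problem formulation (Proposition \ref{vv}, Proposition \ref{magl}), using Euler--Maruyama approximants, the a priori bounds of Lemma \ref{l42}, and tightness in $\mathcal W$; then it proves pathwise uniqueness by a direct Bihari argument on the difference of two solutions. Strong existence follows without ever needing an $L^2$-Cauchy estimate for an iterative scheme. Your approach is more constructive and bypasses the martingale-problem machinery, but the price is precisely the two-index estimate above; the paper trades that for a (standard) tightness/identification step.
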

	
	The proof of this theorem will be presented in the next section. By Theorem \ref{strongsolu} we can easily obtain the following result by taking the diffusion term as zero.
	\begin{proposition}
		Assume that (\hyperref[h0]{\textbf{H0}}) and (\hyperref[h1]{\textbf{H1}}) holds, then there exists a unique process $\left\{(X^{0}_t,K^0_t),t\in[0,T]\right\}$ such that
		\begin{enumerate}
			\item[(1)] $X^{0}\in C\big([0,T],\overline{D(A)}\big)$,
			\item[(2)] $\int_{0}^{T}\lvert b(X_{s}^{0},\mathcal{L}_{X_{s}^{0}})\rvert\mathrm{d}s<\infty$ where $\mathcal{L}_{X_{s}^{0}}$ is just the Dirac measure $\delta_{{X}_{s}^{0}}$ and $K_{t}^{0}\in \mathcal{V},\, \forall t\in[0,T]$,
			\item[(3)] $X^{0}$ satisfies
			\begin{equation}\label{x0}
				X_{t}^{0}=x+\int_{0}^{t}b(X_{s}^{0},\mathcal{L}_{X_{s}^{0}})\mathrm{d}s-K_{t}^{0},\quad\forall t\in[0,T],
			\end{equation}
		\end{enumerate}
		where $(X^{0}_t,K^0_t)$ is called the solution to (\ref{x0}).
	\end{proposition}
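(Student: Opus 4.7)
The plan is to reduce this proposition to Theorem \ref{strongsolu} by switching off the noise. Set $\tilde{\sigma}\equiv 0$ and $\tilde{b}=b$, and consider the multivalued McKean-Vlasov SDE
\begin{equation*}
	\mathrm{d}X_t\in \tilde{b}(X_t,\mathcal{L}_{X_t})\,\mathrm{d}t+\tilde{\sigma}(X_t,\mathcal{L}_{X_t})\,\mathrm{d}W_t-A(X_t)\,\mathrm{d}t,\quad X_0=x.
\end{equation*}
Since the zero map trivially fulfils \textbf{(H2)} (with any concave $\varrho$), and $\tilde{b}=b$ inherits \textbf{(H1)} by assumption, while \textbf{(H0)} is assumed, Theorem \ref{strongsolu} applies with the deterministic initial datum $X_0=x$, which is $\mathcal{F}_0$-measurable and obviously square-integrable. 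This yields a unique strong solution $(X^0,K^0)\in\mathcal{A}$.

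The second step is to observe that this strong solution is in fact deterministic. Because $\tilde{\sigma}\equiv 0$, the stochastic integral disappears, so $X^0$ solves a deterministic (multivalued) integral equation driven only by $b$ and $K^0$. Consequently $\mathcal{L}_{X_s^0}=\delta_{X_s^0}$ for every $s\in[0,T]$, and the equation reduces to (\ref{x0}). Properties (1)--(3) in the statement then follow directly from the defining properties of a strong solution in Definition \ref{strong}: continuity of $X^0$ with values in $\overline{D(A)}$, integrability of $b(X^0_s,\delta_{X_s^0})$ (which here follows even more simply from the linear growth \eqref{hblg} and continuity of $X^0$), and $K^0\in\mathcal{V}$ with $(X^0,K^0)\in\mathcal{A}$.

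For uniqueness, suppose $(\tilde{X}^0,\tilde{K}^0)$ is another process satisfying (1)--(3). Viewing it as a (deterministic) strong solution to the auxiliary SDE above on the same filtered probability space, the uniqueness part of Theorem \ref{strongsolu} forces $(\tilde{X}^0,\tilde{K}^0)=(X^0,K^0)$ pathwise. The only minor point that needs care is that Theorem \ref{strongsolu} delivers pathwise uniqueness among genuine strong solutions; since the law $\mathcal{L}_{\tilde{X}^0_s}$ is automatically $\delta_{\tilde{X}^0_s}$, the distribution-dependent drift evaluated on $\tilde{X}^0$ coincides with the one appearing in (\ref{x0}), so no ambiguity arises when invoking that theorem. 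I do not anticipate a serious obstacle here; the argument is essentially a specialization of the already-established strong well-posedness to the noiseless case.
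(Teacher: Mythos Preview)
Your proposal is correct and follows exactly the approach indicated in the paper: the paper simply states that the result follows from Theorem~\ref{strongsolu} ``by taking the diffusion term as zero,'' and your proof is a careful elaboration of precisely this reduction, including the observation that $\tilde\sigma\equiv 0$ trivially satisfies \textbf{(H2)} and that the resulting noiseless solution is deterministic so that $\mathcal{L}_{X^0_s}=\delta_{X^0_s}$.
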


	
	To ensure the application of Girsanov Theorem, we need the following lemma which is taken from \cite[Lemma 2.21]{liuw}.
	\begin{lemma}\label{girs}
		Let $X_{t}^{\epsilon}$ be a solution of (\ref{per}) with initial value $X_{0}^{\epsilon}=x\in\overline{D(A)}$. Assume (\hyperref[h3]{\textbf{H3}})  and the following assumptions hold:
		\begin{enumerate}
			\label{a0}\item[\textbf{(A0)}] for any $\mathcal{L}_{X^{\epsilon}}$ fixed, the map $b_{\epsilon}(\cdot,\mathcal{L}_{X^{\epsilon}}):\mathbb{R}^{d}\rightarrow\mathbb{R}^{d}$ is $\mathcal{B}(\mathbb{R}^{d})/\mathcal{B}({\mathbb{R}^{d}})$-measurable and  $\sigma_{\epsilon}(\cdot,\mathcal{L}_{X^{\epsilon}}): \mathbb{R}^{d}\rightarrow\mathbb{R}^{d}\otimes\mathbb{R}^{d}$ is $\mathcal{B}(\mathbb{R}^{d})/\mathcal{B}({\mathbb{R}^{d}\otimes \mathbb{R}^{d}})$-measurable,
			\label{a1}\item[\textbf{(A1)}] (\ref{per}) has a unique solution $X^{\epsilon}$ as stated in Definition \ref{strong}.
		\end{enumerate}
		Then there exists a map $\mathcal{G}_{\mathcal{L}_{X^{\epsilon}}}^{\epsilon}$ such that $X^{\epsilon}=\mathcal{G}_{\mathcal{L}_{X^{\epsilon}}}^{\epsilon}(\sqrt{\epsilon}W)$. Furthermore, for any $m\in(0,\infty)$, $h_{\epsilon}\in\mathcal{D}^{T}_m$, let
		\begin{equation}\label{zep}	Z^{h_{\epsilon}}:=\mathcal{G}_{\mathcal{L}_{X^{\epsilon}}}^{\epsilon}\big(\sqrt{\epsilon}W+\int_{0}^{\cdot}h_{\epsilon}(s)\mathrm{d}s\big),
		\end{equation}
		then $Z^{h_{\epsilon}}$ is the unique stochastic process satisfying that
		\begin{enumerate}
			\item[(1)] $Z^{h_{\epsilon}}$ is $\mathcal{F}_{t}$-adapted,
			\item[(2)]
			\begin{align*}
				\int_{0}^{T} \lvert b_\epsilon\left(Z_{s}^{h_{\epsilon}},\mathcal{L}_{X_{s}^{\epsilon}}\right) \rvert \,\mathrm{d}s + \int_{0}^{T} \lVert \sigma_\epsilon\left(Z_{s}^{h_{\epsilon}},\mathcal{L}_{X_{s}^{\epsilon}}\right) \rVert^{2} \,\mathrm{d}s + \int_{0}^{T} \lvert \sigma_\epsilon\left(Z_{s}^{h_{\epsilon}},\mathcal{L}_{X_{s}^{\epsilon}}\right)h_{\epsilon}\left(s\right) \rvert \,\mathrm{d}s +| K^{h_{\epsilon}} |_{0}^{T} <\infty,
			\end{align*}
			\item[(3)]
			\begin{equation}
				\begin{split}
					Z_{t}^{h_{\epsilon}} = x + \int_{0}^{t} b_{\epsilon}\left(Z_{s}^{h_{\epsilon}},\mathcal{L}_{X_{s}^{\epsilon}} \right)\,\mathrm{d}s + \sqrt{\epsilon}\int_{0}^{t} \sigma_{\epsilon}\left(Z_{s}^{h_{\epsilon}},\mathcal{L}_{X_{s}^{\epsilon}} \right)\,\mathrm{d}W_{s} + \int_{0}^{t} \sigma_{\epsilon}\left(Z_{s}^{h_{\epsilon}},\mathcal{L}_{X_{s}^{\epsilon}} \right)h_{\epsilon}(s)\,\mathrm{d}s - K_{t}^{h_{\epsilon}}.
				\end{split}
			\end{equation}
		\end{enumerate}
	\end{lemma}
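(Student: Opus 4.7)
The plan is to reduce (\ref{per}) to a classical multivalued SDE by freezing the law argument, extract a strong-solution map from (A1) together with pathwise uniqueness (H3), and then transport the identity through a Girsanov change of measure.

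First I would freeze the measure flow $\mu_t := \mathcal{L}_{X^\epsilon_t}$ (a deterministic curve in $\mathcal{P}_2$, since $X^\epsilon$ is given) and read (\ref{per}) as the time-inhomogeneous multivalued SDE with coefficients $\tilde b_\epsilon(t,x) := b_\epsilon(x,\mu_t)$ and $\tilde \sigma_\epsilon(t,x) := \sigma_\epsilon(x,\mu_t)$. Under (A0) these are Borel in $(t,x)$; (A1) furnishes a unique strong solution, and (H3) gives pathwise uniqueness. The Yamada--Watanabe principle, adapted to the multivalued setting as in \cite{rwz}, therefore produces a measurable map $\mathcal{G}^\epsilon_{\mathcal{L}_{X^\epsilon}}:C([0,T],\mathbb{R}^d)\to C([0,T],\overline{D(A)})$ with $X^\epsilon=\mathcal{G}^\epsilon_{\mathcal{L}_{X^\epsilon}}(\sqrt{\epsilon}W)$ $P$-a.s.

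For the second part, given $h_\epsilon\in\mathcal{D}_m^T$, I would invoke Girsanov. Because $\int_0^T|h_\epsilon(s)|^2\,ds\le m$ $P$-a.s., the Dol\'eans--Dade exponential
\[
\mathcal{E}^\epsilon_T := \exp\!\Bigl(-\tfrac{1}{\sqrt{\epsilon}}\!\int_0^T\!\langle h_\epsilon(s),dW_s\rangle - \tfrac{1}{2\epsilon}\!\int_0^T\!|h_\epsilon(s)|^2\,ds\Bigr)
\]
is a true martingale (Novikov holds with room to spare). Define $Q$ by $dQ/dP=\mathcal{E}^\epsilon_T$; then $\widetilde W_t := W_t + \tfrac{1}{\sqrt{\epsilon}}\int_0^t h_\epsilon(s)\,ds$ is an $(\mathcal{F}_t,Q)$-Brownian motion, so $Z^{h_\epsilon}:=\mathcal{G}^\epsilon_{\mathcal{L}_{X^\epsilon}}(\sqrt{\epsilon}\widetilde W)$ is, under $Q$, the strong solution of the frozen-law multivalued SDE driven by $\widetilde W$. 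Expanding $\int_0^t \sigma_\epsilon\, d\widetilde W_s = \int_0^t \sigma_\epsilon\, dW_s + \tfrac{1}{\sqrt{\epsilon}}\int_0^t \sigma_\epsilon h_\epsilon(s)\,ds$ and clearing the $\sqrt{\epsilon}$ in front of the diffusion term converts this into equation (3) of the lemma pathwise under $Q$; since $Q\sim P$, both that identity and the pair condition $(Z^{h_\epsilon},K^{h_\epsilon})\in\mathcal{A}$ persist $P$-a.s.

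The remaining items are quick. Adaptedness in (1) follows because $\int_0^\cdot h_\epsilon\,ds$ is $\mathcal{F}_t$-adapted and $\mathcal{G}^\epsilon_{\mathcal{L}_{X^\epsilon}}$ is non-anticipative. The integrability bound (2) combines the linear growth estimates (\ref{hblg}), (\ref{hslg}), Cauchy--Schwarz $\int_0^T|\sigma_\epsilon h_\epsilon|\,ds\le(\int_0^T\|\sigma_\epsilon\|^2\,ds)^{1/2}\sqrt{m}$, $P$-a.s.~finiteness of $\sup_{t\le T}|Z^{h_\epsilon}_t|$, and Proposition \ref{multi} to control $|K^{h_\epsilon}|_0^T$ in terms of the other integrals. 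Uniqueness is immediate: any competing process satisfying (1)--(3) becomes, under $Q$, a strong solution of the frozen-law multivalued SDE driven by $\widetilde W$, so (H3) forces it to agree with $Z^{h_\epsilon}$.

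The main obstacle I foresee is a clean formulation of the Yamada--Watanabe representation in step one: multivalued SDEs require tracking the finite-variation part $K$, and one must verify that the resulting measurable functional extends beyond the canonical Wiener space to the shifted input $\sqrt{\epsilon}\widetilde W$, so that the pathwise identity transfers back to $P$. Once that representation is in hand, the Girsanov manipulation is routine because $h_\epsilon$ carries a deterministic $L^2$-bound.
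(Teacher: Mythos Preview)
The paper does not supply its own proof of this lemma: it is stated as being ``taken from \cite[Lemma 2.21]{liuw}'' and no argument is given. Your proposal reconstructs exactly the mechanism that underlies that cited result (freeze the law to obtain an ordinary multivalued SDE, invoke a Yamada--Watanabe representation to produce the measurable solution map, then push the identity through a Girsanov shift using the uniform $L^2$ bound on $h_\epsilon$), so there is nothing to contrast at the level of strategy.

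One point worth flagging: the reference \cite{liuw} treats McKean--Vlasov SDEs with jumps rather than multivalued SDEs, so the transplantation of Lemma 2.21 to the present setting already presupposes the step you single out as the main obstacle---a Yamada--Watanabe theorem that tracks the pair $(X,K)\in\mathcal{A}$. Your instinct to appeal to \cite{rwz} (or the earlier \cite{zhang}) for this is the right one; the paper leaves that adaptation implicit. Apart from that, your handling of adaptedness, the integrability bound via (\ref{hblg})--(\ref{hslg}) and Proposition~\ref{multi}, and the uniqueness argument under $Q$ are all sound.
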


	\subsection{Large deviation principle}
	
	\begin{proposition}\label{Yu}Assume that (\hyperref[h0]{\textbf{H0}}), (\hyperref[h1]{\textbf{H1}}) and (\hyperref[h2]{\textbf{H2}}) hold. Then for any $ h\in \mathcal{S}_{m} $, there exists a unique process $ (Y^{h},K^h)=\left\{ (Y_{t}^{h},K_t^h), t\in\left[ 0, T \right] \right\} $ satisfying that
		\begin{enumerate}
			\item[(1)] $Y^{h}\in C\big( \left[ 0, T \right], \overline{D(A)} \big)$,
			\item[(2)] $$\int_{0}^{T} \left| b\left(Y_{s}^{h},\mathcal{L}_{X_{s}^{0}}\right) \right| \,\mathrm{d}s + \int_{0}^{T} \lvert \sigma\left(Y_{s}^{h},\mathcal{L}_{X_{s}^{0}}\right)h\left(s\right) \rvert \,\mathrm{d}s + \left| K^{h} \right|_{0}^{T} <\infty,$$
			\item[(3)]
			\begin{equation}\label{yy}
				Y_{t}^{h}=x+\int_{0}^{t}b\left( Y_{s}^{h},\mathcal{L}_{X_{s}^{0}}\right) \,\mathrm{d}s + \int_{0}^{t}\sigma\left( Y_{s}^{h},\mathcal{L}_{X_{s}^{0}}\right)h\left(s\right) \,\mathrm{d}s-K_{t}^{h},\quad t\in \left[0,T\right].
			\end{equation}
		\end{enumerate}
		
		Moreover, for any $m>0$,
		$$ \sup\limits_{h\in \mathcal{S}_{m}}\sup\limits_{t\in \left[0,T\right]}\left| Y_{t}^{h}\right| <\infty.$$
		
	\end{proposition}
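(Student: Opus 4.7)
The plan is to reduce (\ref{yy}) to a deterministic multivalued ODE, since $\mathcal{L}_{X_s^0}=\delta_{X_s^0}$ is a fixed function of $s$, and attack it by Yosida approximation of $A$. Writing $\hat b(s,y):=b(y,\delta_{X_s^0})$ and $\hat\sigma(s,y):=\sigma(y,\delta_{X_s^0})$, the coefficients inherit the non-Lipschitz monotonicity and linear growth from (\hyperref[h1]{\textbf{H1}})--(\hyperref[h2]{\textbf{H2}}), and the forcing $h\in\mathcal{S}_m$ is merely $L^2([0,T])$. Every argument below is purely deterministic.

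For existence, for each $\alpha>0$ I would replace $A$ by its Yosida approximation $A^\alpha$ and solve the single-valued ODE
\begin{equation*}
Y_t^{h,\alpha}=x+\int_0^t\hat b(s,Y_s^{h,\alpha})\,\mathrm{d}s+\int_0^t\hat\sigma(s,Y_s^{h,\alpha})h(s)\,\mathrm{d}s-\int_0^t A^\alpha(Y_s^{h,\alpha})\,\mathrm{d}s.
\end{equation*}
Since $A^\alpha$ is $\tfrac{1}{\alpha}$-Lipschitz monotone, a Picard/truncation argument together with Lemma \ref{bhr} gives a unique continuous solution $Y^{h,\alpha}$, and $K_t^{h,\alpha}:=\int_0^t A^\alpha(Y_s^{h,\alpha})\,\mathrm{d}s$ is its BV companion. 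Because $A^\alpha(y)\in A(J^\alpha(y))$ by (\ref{yosidap}), the pair $(J^\alpha(Y^{h,\alpha}),K^{h,\alpha})$ lies in $\mathcal{A}$, so Proposition \ref{multi} applied around $a=0\in\mathrm{Int}(D(A))$ (available by (\hyperref[h0]{\textbf{H0}})) gives
\begin{equation*}
\int_0^t\langle J^\alpha(Y_s^{h,\alpha}),\mathrm{d}K_s^{h,\alpha}\rangle\geq\lambda_1|K^{h,\alpha}|_0^t-\lambda_2\int_0^t|J^\alpha(Y_s^{h,\alpha})|\,\mathrm{d}s-\lambda_3 t.
\end{equation*}
The non-negativity $\langle Y^{h,\alpha}-J^\alpha(Y^{h,\alpha}),A^\alpha(Y^{h,\alpha})\rangle=\alpha|A^\alpha(Y^{h,\alpha})|^2$ transfers essentially the same lower bound (up to the uniformly bounded error $|J^\alpha(0)|=\alpha|A^\alpha(0)|$, controlled by local boundedness of $A$ near $0$) to $\int_0^t\langle Y^{h,\alpha},\mathrm{d}K^{h,\alpha}\rangle$. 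Combining the chain rule on $|Y_t^{h,\alpha}|^2$, the $\varrho$-estimate (\ref{h1}), the linear growth (\ref{hblg})--(\ref{hslg}), and Young's inequality to dominate $|\langle Y,\hat\sigma(s,Y)h(s)\rangle|$ by $C(1+|h(s)|^2)(1+|Y|^2)+L\varrho(|Y|^2)$, first Grönwall (with $L^1$ weight $1+|h|^2$ of total mass at most $T+m$) and then Bihari (Lemma \ref{bhr}) yield $\sup_\alpha\sup_{t\in[0,T]}|Y_t^{h,\alpha}|\leq C(m,T,L,|x|)$. Reinjecting into the variational bound gives $\sup_\alpha|K^{h,\alpha}|_{TV}<\infty$, hence equicontinuity of $\{Y^{h,\alpha}\}$. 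Arzelà--Ascoli extracts a subsequence with $Y^{h,\alpha}\to Y^h$ and $K^{h,\alpha}\to K^h$ uniformly; (\ref{yosi0}) handles $A^\alpha\to A$ on compact sets and Proposition \ref{kcon} lets me pass to the limit in the characterizing integrals for $\mathcal{A}$ against arbitrary $(z,w)\in Gr(A)$, producing $(Y^h,K^h)\in\mathcal{A}$ solving (\ref{yy}).

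For uniqueness, given two solutions $(Y^h,K^h),(\tilde Y^h,\tilde K^h)$, the chain rule combined with (\ref{mono}) (which kills the $K$-difference term) reduces, after using (\ref{h1}), (\ref{h2}) and Young's inequality, to $u(t):=|Y_t^h-\tilde Y_t^h|^2\leq C\int_0^t\varrho(u(s))\,\mathrm{d}s+\int_0^t u(s)|h(s)|^2\,\mathrm{d}s$. Grönwall absorbs the second integral into a multiplicative $e^m$, and Lemma \ref{bhr} together with $\int_{0^+}1/\varrho(u)\,\mathrm{d}u=\infty$ forces $u\equiv 0$, so $Y^h\equiv\tilde Y^h$ and hence $K^h\equiv\tilde K^h$. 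The uniform bound $\sup_{h\in\mathcal{S}_m}\sup_{t\leq T}|Y_t^h|<\infty$ is just the same a priori estimate applied to the limit pair $(Y^h,K^h)$, whose constant depends on $h$ only through $\int_0^T|h|^2\,\mathrm{d}s\leq m$. The main obstacle is precisely this uniform-in-$\alpha$ and uniform-in-$h\in\mathcal{S}_m$ a priori estimate: the forcing $\hat\sigma(s,Y)h(s)$ is not pointwise bounded and must be split simultaneously into a Grönwall term with $L^1$ weight $|h|^2$ and a Bihari $\varrho$-term, without spoiling the cancellation of the multivalued part supplied by Proposition \ref{multi}.
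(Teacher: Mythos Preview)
The paper does not give a self-contained proof here: it simply says the result follows from Proposition~\ref{multi} together with \cite[Proposition~3.7]{liuw} and omits the details. Your Yosida-approximation programme is a natural way to supply those details, and both your uniqueness argument and the final uniform bound over $h\in\mathcal{S}_m$ are sound. (For uniqueness, the Gr\"onwall-then-Bihari combination does work: since $a(t):=C\int_0^t\varrho(u(s))\,\mathrm{d}s$ is non-decreasing, the integral Gr\"onwall inequality with $L^1$ kernel $|h|^2$ gives $u(t)\le a(t)\exp(C\!\int_0^t|h|^2)\le e^{Cm}a(t)$, after which Lemma~\ref{bhr} forces $u\equiv0$.)

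There is, however, a genuine gap in the existence step. From $\sup_\alpha|K^{h,\alpha}|_{TV}<\infty$ you conclude equicontinuity of $\{Y^{h,\alpha}\}_\alpha$ and invoke Arzel\`a--Ascoli, but a uniform total-variation bound on the companions does \emph{not} produce a uniform modulus of continuity: a family of BV paths with uniformly bounded variation can concentrate all of its variation on arbitrarily short intervals. Trying to close this directly via Proposition~\ref{multi} on $[s,t]$ is circular, because the resulting upper bound on $|K^{h,\alpha}|_s^t$ contains the term $|Y^{h,\alpha}_s|^2-|Y^{h,\alpha}_t|^2$, which is of order $\sup_r|Y^{h,\alpha}_r|\cdot|Y^{h,\alpha}_t-Y^{h,\alpha}_s|$ and feeds back with a coefficient that need not be below~$1$. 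The standard repair---and exactly the device the paper itself uses in the closely related Lemma~\ref{afin}---is to insert an additional mollification layer: replace the driving path $N_t=\int_0^t\hat b\,\mathrm{d}s+\int_0^t\hat\sigma\,h\,\mathrm{d}s$ by a $C^\infty$ approximation $N^n$, for which C\'epa's estimate \cite[Proposition~4.7]{cepa1} bounds $\sup_t|A^\alpha(Y^{h,\alpha,n}_t)|$ in terms of $\sup_t|\dot N^n_t|+\sup_t|\ddot N^n_t|$; this makes the Yosida family Cauchy in $C([0,T])$ for each fixed $n$, and the mollification is then removed using the stability estimate \cite[Proposition~4.3]{cepa1} together with the uniform TV bound you already have. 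Without this (or an equivalent device such as a Picard iteration on the Skorohod map), the compactness step does not go through.
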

	\begin{proof} This result can be proved by using Proposition \ref{multi} presented in Section 2 and Proposition 3.7 in \cite{liuw}. We omit the tedious proofs here.\end{proof}

	Next we present our main result about the large deviation principle.
	\begin{theorem}\label{thmldp}
		Assume that (\hyperref[h0]{\textbf{H0}})-(\hyperref[h4]{\textbf{H4}}) hold, then the family of processes $ \left\{X_{t}^{\epsilon}, \epsilon >0, t\in[0,T]\right\} $ satisfies the LDP on $C\big(\left[0,T\right],\overline{D(A)}\big)$ with speed $\epsilon$ and the good rate function $I$ given by
		\begin{equation}
			I\left( g\right): = \frac{1}{2}\inf\limits_{\left\{h\in L^{2}([0,T],\mathbb{R}^d): g=Y^{h}\right\}}\int_{0}^{T}\lvert h(s)\rvert^2\,\mathrm{d}s,\quad g\in C\left(\left[0,T\right],\overline{D(A)}\right),
		\end{equation}
		where $Y^{h}$ is the solution to equation (\ref{yy}).
	\end{theorem}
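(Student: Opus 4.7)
The strategy is to apply the weak convergence criterion Theorem~\ref{ldlap}. I identify the maps as follows: Lemma~\ref{girs} supplies $\mathcal{G}^\epsilon$ with $X^\epsilon=\mathcal{G}^\epsilon(\sqrt{\epsilon}W)$ and, for any $h_\epsilon\in\mathcal{D}_m$, the controlled version $Z^{h_\epsilon}=\mathcal{G}^\epsilon(\sqrt{\epsilon}W+\int_0^\cdot h_\epsilon(s)\,ds)$ satisfying an explicit controlled SDE; Proposition~\ref{Yu} lets me define $\mathcal{G}^0(\int_0^\cdot h(s)\,ds):=Y^h$. With these identifications the rate function from \eqref{rate} coincides with the one in Theorem~\ref{thmldp}, so it suffices to verify (\textbf{LD}$_{\bm 1}$) and (\textbf{LD}$_{\bm 2}$).

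For (\textbf{LD}$_{\bm 1}$), given $v_n\to v$ weakly in $\mathcal{S}_m$, I would first derive uniform bounds $\sup_n(\|Y^{v_n}\|_\infty+|K^{v_n}|_{TV})<\infty$ by applying the chain rule to $|Y^{v_n}_t|^2$ in \eqref{yy} and invoking Proposition~\ref{multi} with $a=0\in\mathrm{Int}(D(A))$: the lower bound $\int_0^t\langle Y^{v_n}_s,dK^{v_n}_s\rangle\geq \lambda_1|K^{v_n}|_0^t-\lambda_2\int_0^t|Y^{v_n}_s|\,ds-\lambda_3 t$ absorbs the singular boundary term, while the growth conditions \eqref{hblg}, \eqref{hslg} and Cauchy--Schwarz on the control term handle the rest. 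The same bounds yield equicontinuity, so a subsequence converges uniformly to $(Y^\ast,K^\ast)$; Proposition~\ref{kcon} gives $K^\ast\in\mathcal{V}$ and the convergence of the $dK$ integrals. Weak convergence of $v_n$, continuity of $\sigma(\cdot,\mathcal{L}_{X^0_s})$, and the non-Lipschitz estimate \textbf{(H2)} let me pass the limit through $\int_0^t\sigma(Y^{v_n}_s,\mathcal{L}_{X^0_s})v_n(s)\,ds$, and the uniqueness in Proposition~\ref{Yu} identifies $Y^\ast=Y^v$.

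For (\textbf{LD}$_{\bm 2}$), with $h_\epsilon\Rightarrow h$ in $\mathcal{D}_m$, the plan splits into three stages. \emph{Stage 1 (decoupling).} I would prove $\mathbb{E}\sup_{t\le T}|X^\epsilon_t-X^0_t|^2\to 0$ by applying Itô to the difference, using the monotonicity inequality \eqref{mono} extended across $A_\epsilon$ and $A$ via the Yosida test pairs $(J^\alpha(X^\epsilon_t),A^\alpha(X^\epsilon_t))\in\mathrm{Gr}(A)$ and $(J_\epsilon^\alpha(X^0_t),A_\epsilon^\alpha(X^0_t))\in\mathrm{Gr}(A_\epsilon)$, exploiting $\rho_{b,\epsilon},\rho_{\sigma,\epsilon}\to 0$ and \textbf{(H4)}, and closing by Bihari's inequality (Lemma~\ref{bhr}); Remark~\ref{W2} then yields $\sup_{s\in[0,T]}\mathbb{W}_2(\mathcal{L}_{X^\epsilon_s},\delta_{X^0_s})\to 0$. \emph{Stage 2 (tightness).} Uniform $L^2$ moment bounds on $(Z^{h_\epsilon},K^{h_\epsilon})$ come from the same Proposition~\ref{multi} argument combined with Burkholder--Davis--Gundy on the $\sqrt\epsilon$-noise, giving tightness in $C([0,T],\mathbb{R}^d)^2$. \emph{Stage 3 (identification).} By Skorokhod representation I pass to a.s.\ convergence of a subsequence $(Z^{h_\epsilon},K^{h_\epsilon},h_\epsilon)\to (Z^\ast,K^\ast,h)$; Proposition~\ref{kcon} preserves the $dK$ integrals in the limit; the decoupling estimate together with \textbf{(H1)}, \textbf{(H2)} allows substitution of $b(\cdot,\mathcal{L}_{X^0_s})$ and $\sigma(\cdot,\mathcal{L}_{X^0_s})$ for the $\epsilon$-versions; the $\sqrt\epsilon$-noise vanishes in $L^2$; and the admissibility of $(Z^{h_\epsilon},K^{h_\epsilon})$ relative to $A_\epsilon$ passes to the limit as admissibility of $(Z^\ast,K^\ast)$ relative to $A$ by testing against Yosida pairs and invoking \textbf{(H4)} together with \eqref{yosi0}. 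Uniqueness in Proposition~\ref{Yu} then gives $Z^\ast=Y^h$.

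The main obstacle is the joint treatment of the $\epsilon$-dependent multivalued operator $A_\epsilon$ (for which no direct pointwise comparison with $A$ is available) and the non-Lipschitz coefficients controlled only through the concave $\varrho$. The Yosida regularization resolves the former by inserting a scale $\alpha>0$: for fixed $\alpha$ the map $A_\epsilon^\alpha$ is $\alpha^{-1}$-Lipschitz and converges uniformly on compacts to $A^\alpha$ by \textbf{(H4)}, after which $\alpha\to 0$ recovers the multivalued objects via \eqref{yosidap}--\eqref{yosi0}. The order of limits is delicate, since the Lipschitz constant $\alpha^{-1}$ blows up as $\alpha\to 0$; I would send $\epsilon\to 0$ first so that the Yosida discrepancy $|A_\epsilon^\alpha-A^\alpha|$ vanishes at a fixed $\alpha$, and then let $\alpha\to 0$. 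For the non-Lipschitz issue, the hypothesis $\int_{0^+}\varrho(u)^{-1}\,du=\infty$ forces $f^{-1}(0)=0$ in Lemma~\ref{bhr}, so Bihari's inequality drives the residual error to zero as the forcing vanishes, which is precisely what closes each of the three stages above.
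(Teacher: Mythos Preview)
Your overall architecture --- Theorem~\ref{ldlap} plus Lemma~\ref{girs} and Proposition~\ref{Yu} for the maps $\mathcal{G}^\epsilon,\mathcal{G}^0$, decoupling $\mathcal{L}_{X^\epsilon}\to\delta_{X^0}$, Yosida regularisation for the $A_\epsilon\to A$ passage, and Bihari for the non-Lipschitz closure --- is the same as the paper's. The execution, however, differs in both conditions.

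For (\textbf{LD}$_{\bm 1}$) you argue by compactness (Arzel\`a--Ascoli on $(Y^{v_n},K^{v_n})$, then identification via Proposition~\ref{kcon} and uniqueness). The paper instead estimates $|Y^{h_n}_t-Y^h_t|^2$ directly: the cross-$dK$ term is $\le 0$ by \eqref{mono}, the drift and diffusion differences feed into Bihari, and the residual $\int_0^t\langle Y^{h_n}_r-Y^h_r,\sigma(Y^h_r,\cdot)(h_n(r)-h(r))\rangle\,dr$ is integrated by parts against $g_n(t):=\int_0^t\sigma(Y^h_r,\cdot)[h_n(r)-h(r)]\,dr$ and then killed by Arzel\`a--Ascoli applied only to $g_n$. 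Both routes are sound; the paper's is shorter and never extracts a subsequence.

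For (\textbf{LD}$_{\bm 2}$) the paper does \emph{not} go through tightness and Skorokhod. It proves the stronger statement
\[
\lim_{\epsilon\to0}\mathbb{E}\Bigl(\sup_{t\in[0,T]}|Z^{\epsilon,h_\epsilon}_t-Y^{h_\epsilon}_t|^2\Bigr)=0
\]
(comparing $Z^{\epsilon,h_\epsilon}$ to $Y^{h_\epsilon}$, i.e.\ \emph{same} control), which together with (\textbf{LD}$_{\bm 1}$) implies (\textbf{LD}$_{\bm 2}$). Matching the controls isolates the $A_\epsilon$-versus-$A$ discrepancy, which is then handled by the Yosida sandwich
$Z^{\epsilon,h_\epsilon}\approx Z^{\epsilon,h_\epsilon,\alpha}\approx Y^{h_\epsilon,\alpha}\approx Y^{h_\epsilon}$
(Lemmas~\ref{afin} and~\ref{ayfin}): the two outer approximations are uniform in $\epsilon$, and the middle one is an $L^2$ estimate between solutions of ODE/SDEs with $\alpha^{-1}$-Lipschitz drifts $A^\alpha_\epsilon,A^\alpha$, using only \textbf{(H4)} and the decoupling Lemma~\ref{ep0}. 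Your tightness/Skorokhod route should also go through --- in particular your proposed passage of $\mathcal{A}_\epsilon$-admissibility to $\mathcal{A}$-admissibility via Yosida test pairs $(J^\alpha_\epsilon(x+\alpha y),A^\alpha_\epsilon(x+\alpha y))\to(x,y)$ for each $(x,y)\in\mathrm{Gr}(A)$ is correct --- but it is heavier, since you must control the moduli of continuity of both $Z^{h_\epsilon}$ and $K^{h_\epsilon}$ and then run a separate weak-limit identification, whereas the paper's Yosida triangle never leaves $L^2$ and needs no subsequence or representation argument.
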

	
	\begin{proof}
		By Proposition \ref{Yu} we can define a map $$ \mathcal{G}^{0}: \mathcal{S}_m\ni h\mapsto Y^{h} \in C\left(\left[0,T\right],\overline{D(A)}\right),$$ where $Y^{h}$ is the unique solution of eqution (\ref{yy}).
		
		For any $\epsilon >0$, $m\in \left(0,\infty\right)$ and $h_{\epsilon}\in \mathcal{D}_{m}$, by Lemma \ref{girs} there exists a unique solution $ \left\{Z_{t}^{\epsilon,h_{\epsilon}} ,t\in \left[0,T\right]\right\} $ to the following equation
		\begin{equation}\label{LDP}
			\mathrm{d}Z_{t}^{\epsilon,h_{\epsilon}}\in b_{\epsilon}\left(Z_{t}^{\epsilon,h_{\epsilon}},\mathcal{L}_{X_{t}^{\epsilon}}\right)\mathrm{d}t+ \sqrt{\epsilon}\sigma_{\epsilon}\left(Z_{t}^{\epsilon,h_\epsilon},\mathcal{L}_{X_{t}^{\epsilon}}\right)\mathrm{d}W_{t} + \sigma_{\epsilon}\left(Z_{t}^{\epsilon,h_\epsilon},\mathcal{L}_{X_{t}^{\epsilon}}\right)h_{\epsilon}\left(t\right)\mathrm{d}t-A_\epsilon\left(Z_{t}^{\epsilon,h_\epsilon}\right)\mathrm{d}t,
		\end{equation}
		where $Z_{0}^{\epsilon,h_{\epsilon}}=x$ is the initial value and $X^{\epsilon}$ is the solution to (\ref{per}).
		
		To establish the LDP by using Theorem \ref{ldlap}, it is sufficient to verify the following two conditions.

		\textbf{(LDP)$\bm{_{1}}$} For any given $m\in\left(0,\infty\right)$, if $\left\{h_{n}, n\in \mathbb{N}\right\}\subset \mathcal{S}_{m}$ weakly converges to $h\in \mathcal{S}_{m}$ as $n\to \infty$, then
		$$
		\lim\limits_{n\to \infty}\sup\limits_{t\in \left[0,T\right]}\lvert \mathcal{G}^{0}\left(h_{n}\right)\left(t\right)-\mathcal{G}^{0}\left(h\right)\left(t\right) \rvert=0;
		$$
		
		\textbf{(LDP)$\bm{_{2}}$} For any given $m\in\left(0,\infty\right)$, let $\left\{h_{\epsilon}, \epsilon>0\right\}\subset \mathcal{D}_{m}$, then
		$$
		\lim\limits_{\epsilon\to 0}\mathbb{E}\left(\sup\limits_{t\in \left[0,T\right]}\lvert Z^{\epsilon,h_\epsilon}_t-\mathcal{G}^{0}\left(h_{\epsilon}\right)\left(t\right)\rvert^{2}\right)=0.
		$$
		
		The verification of \textbf{(LDP)$\bm{_{1}}$} and \textbf{(LDP)$\bm{_{2}}$} will be given later in Section 4.2. Note that \textbf{(LDP)$\bm{_{2}}$} is stronger than (\textbf{LD}$_{\bm{2}}$) stated in Assumption \ref{assumld}.
	\end{proof}
	
	\subsection{Moderate deviation principle}
	
	Lemma \ref{girs} can also be used to establish the moderate deviation principle of $X^\epsilon$ as $\epsilon$ decreases to 0. Assume that $ \lambda(\epsilon)>0$ satisfies
	\begin{align}\label{lame}
		\lambda(\epsilon)\to 0, \quad \frac{\epsilon}{\lambda^2(\epsilon)}\to 0, \quad \text{as}\,\,\epsilon\to 0.
	\end{align}
	
	Define
	\begin{align*}
		M^{\epsilon}_t:=\frac{1}{\lambda(\epsilon)}(\bar{X}_t^\epsilon-X_t^0), \quad t\in[0,T],
	\end{align*}
	where $X^0_t$ solves equation (\ref{x0}), i.e.
	\begin{align*}
		\begin{cases}
			\,\,\mathrm{d}X_t^0\in\,\,b(X^0_t,\mathcal{L}_{X^0_t})\mathrm{d}t-A(X^0_t)\mathrm{d}t, \\
			\,\, X^0_0=x.
		\end{cases}
	\end{align*}
	Consider the following multivalued SDE
	\begin{align}\label{Mep}
		\begin{cases}
			\,\mathrm{d}M^{\epsilon}_t\in\,\frac{1}{\lambda(\epsilon)}\left(b_{\epsilon}(\bar{X}^\epsilon_t,\mathcal{L}_{\bar{X}_t^{\epsilon}})-b(X_t^0,\mathcal{L}_{X_t^0})\right)\mathrm{d}t+\frac{\sqrt{\epsilon}}{\lambda(\epsilon)}\sigma_{\epsilon}(\bar{X}_t^\epsilon,\mathcal{L}_{\bar{X}_t^{\epsilon}})\mathrm{d}W_t-A_\epsilon(M^{\epsilon}_t)\mathrm{d}t, \\
			\,
			M^{\epsilon}_0=0.
		\end{cases}
	\end{align}
	By Theorem \ref{strongsolu},  $(M^{\epsilon}_t,\bar{K}^\epsilon_t)$ is the unique solution to the following equation
	\begin{align}\label{mep}
		\mathrm{d}M^{\epsilon}_t=&\frac{1}{\lambda(\epsilon)}\left(b_{\epsilon}(\lambda(\epsilon)M^{\epsilon}_t+X^0_t,\mathcal{L}_{\bar{X}_t^{\epsilon}})-b(X_t^0,\mathcal{L}_{X_t^0})\right)\mathrm{d}t   \nonumber \\
		&+\frac{\sqrt{\epsilon}}{\lambda(\epsilon)}\sigma_{\epsilon}(\lambda(\epsilon)M^{\epsilon}_t+X_t^0,\mathcal{L}_{\bar{X}_t^{\epsilon}})\mathrm{d}W_t-\mathrm{d}\bar{K}^\epsilon_t.
	\end{align}
	
	
	To show the moderate deviation principle for $\bar{X}^\epsilon$, it is equivalent to prove that $M^\epsilon$ satisfies the LDP. Let $\nabla b(x,\mu)$ denote the gradient of $b(x,\mu)$ with respect to the space variable $x$.
	
	We make the following assumptions.
	
	\textbf{(B0)} There exist $L',q'\geq 0$ such that for all $x_1,x_2\in\mathbb{R}^{d}$, and $\mu_1,\mu_2\in\mathcal{P}_2(\mathbb{R}^d)$,
	\begin{align*}\label{b0}
		&\lVert\nabla b(x_1,\mu_1)-\nabla b(x_2,\mu_1)\rVert\leq L'(1+|x_1|^{q'}+|x_2|^{q'})|x_1-x_2|, \\
		&\langle x_1-x_2, b(x_1,\mu_1)-b(x_2,\mu_1) \rangle \vee \langle x_1-x_2, b_\epsilon(x_1,\mu_1)-b_\epsilon(x_2,\mu_1) \rangle\leq L'|x_1-x_2|^2, \\
		&|b(x_1,\mu_1)-b(x_1,\mu_2)|\leq L'\mathbb{W}_2(\mu_1,\mu_2), \\
		&|b(x_1,\mu_1)|\vee|b_\epsilon(x_1,\mu_1)|\leq L'(1+|x_1|+\lVert \mu_1\rVert_2).
	\end{align*}
	
	\textbf{(B1)} There exists $L_{\nabla b}>0$ such that
	\begin{align*}\label{b1}
		\int_{0}^{T}\lVert\nabla b(X^0_t,\mathcal{L}_{X^0_t})\rVert\mathrm{d}t<L_{\nabla b}.
	\end{align*}
	
	\textbf{(B2)} For $\rho_{b,\epsilon}$ given in (\hyperref[h2]{\textbf{H2}}) and $\lambda(\epsilon)$ satisfying (\ref{lame}), it holds that
	\begin{equation*}\label{b2}
		\begin{aligned}
			\lim\limits_{\epsilon\to 0}\frac{\rho_{b,\epsilon}}{\lambda(\epsilon)}=0.
		\end{aligned}
	\end{equation*}
	
	\textbf{(B3)}  There exists $L'>0$, such that for all $x_1,x_2\in\mathbb{R}^{d}$, and $\mu_1,\mu_2\in\mathcal{P}_2(\mathbb{R}^d)$,
	\begin{align*}\label{b3}
		& \lVert \sigma(x_1,\mu_1)-\sigma(x_2,\mu_2)\rVert^2\vee\lVert \sigma_\epsilon(x_1,\mu_1)-\sigma_\epsilon(x_2,\mu_2)\rVert^2\leq L'(|x_1-x_2|^2+\mathbb{W}^2_2(\mu_1,\mu_2)). \\
		&\lVert \sigma(x_1,\mu_1)\rVert^2 \vee \lVert \sigma_\epsilon(x_1,\mu_1)\rVert^2\leq L'(1+|x_1|^2+\lVert \mu_1\rVert_2^2).
	\end{align*}



	\begin{proposition}\label{nu}
		Assume that (\hyperref[h1]{\textbf{H1}}), (\hyperref[h2]{\textbf{H2}}), (\hyperref[b0]{\textbf{B0}}) and (\hyperref[b1]{\textbf{B1}}) hold. Then for any fixed $m\in(0,\infty)$ and $\psi\in \mathcal{S}_m$, there is a unique solution $(\nu^\psi,\hat{K}^\psi)=\big\{\big(\nu^{\psi}_t,\hat{K}^\psi_t\big), t\in[0,T]\big\}$ to the following equation:
		\begin{equation}\label{mdp1}
			\left\{ \begin{aligned}
				&\mathrm{d}\nu^{\psi}_t=\nabla b(X^0_t,\mathcal{L}_{X^0_t})\nu^{\psi}_t\mathrm{d}t+\sigma(X_t^0,\mathcal{L}_{X_t^0})\psi(t)\mathrm{d}t-\mathrm{d}\hat{K}^\psi_t,    \\
				&\nu^{\psi}_0=0.
			\end{aligned}  \right.
		\end{equation}
		Moreover,
		\begin{align}\label{nulim}
			\sup_{\psi\in \mathcal{S}^{m}}\sup_{t\in[0,T]}|\nu^{\psi}_t|<\infty.
		\end{align}
		
		\begin{proof}
			Since $\big\{\hat{K}^\psi_t, t\in[0,T]\big\}$ is of finite variation with $\psi\in\mathcal{S}_m$, we have
			$|\hat{K}^\psi|_0^T<\infty.$
			
			By (\hyperref[h1]{\textbf{H1}}), (\hyperref[h2]{\textbf{H2}}), Remark \ref{W2} and the fact that $X^0\in C([0,T],\mathbb{R}^d)$ and $\psi\in\mathcal{S}_m$, we have
			\begin{align}\label{slim}
				\int_{0}^{T}\lVert \sigma(X_s^0,\mathcal{L}_{X_s^0})\rVert^2\mathrm{d}s<\infty.
			\end{align}
			Thus
			\begin{align}\label{si}
				\int_{0}^{T}\lvert\sigma(X_t^0,\mathcal{L}_{X_t^0})\psi(t)\rvert\mathrm{d}t  \nonumber
				\leq &\left(\int_{0}^{T}\lVert\sigma(X_t^0,\mathcal{L}_{X_t^0})\rVert^2\mathrm{d}t\right)^{\frac{1}{2}}\left(\int_{0}^{T}|\psi(t)|^2\mathrm{d}t\right)^\frac{1}{2}   \nonumber\\
				\leq &\left(\int_{0}^{T}\lVert\sigma(X_t^0,\mathcal{L}_{X_t^0})\rVert^2\mathrm{d}t\right)^{\frac{1}{2}}(2m)^{\frac{1}{2}}  \nonumber
				<\infty.
			\end{align}
			
			Due to (\hyperref[b1]{\textbf{B1}}) and the estimates above we can easily prove that the equation (\ref{mdp1}) has a unique solution $\big\{\big(\nu^{\psi}_t,\hat{K}^\psi_t\big), t\in[0,T]\big\}$.
			By Gronwall's inequality we have
			\begin{align}
				\nu^{\psi}_t\leq  e^{\int_{0}^{t}\nabla b(X_s^0,\mathcal{L}_{X_s^0})\mathrm{d}s}\int_{0}^{t}\sigma(X_s^0,\mathcal{L}_{X_s^0})\psi(s)\mathrm{d}s,
			\end{align}
			which implies (\ref{nulim}).
			
		\end{proof}
		
	\end{proposition}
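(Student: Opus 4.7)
The plan is to treat (\ref{mdp1}) as a deterministic multivalued integral equation whose driving data are integrable in time, prove existence and uniqueness via Yosida regularization together with the monotonicity inequality (\ref{mono}), and finally derive the uniform bound (\ref{nulim}) by an energy estimate on $|\nu^{\psi}_t|^2$ combined with Gr\"onwall.

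First I would check integrability of the forcing terms. Since $X^0\in C([0,T],\overline{D(A)})$ is bounded and (\hyperref[h2]{\textbf{H2}}) gives a linear growth bound on $\sigma$, the map $t\mapsto\sigma(X^0_t,\mathcal{L}_{X^0_t})$ is bounded on $[0,T]$. Combined with $\psi\in\mathcal{S}_m$ and Cauchy--Schwarz this gives $\int_0^T|\sigma(X^0_t,\mathcal{L}_{X^0_t})\psi(t)|\,dt\le C(T)\sqrt{2m}<\infty$, and together with (\hyperref[b1]{\textbf{B1}}) we are in the classical setting for multivalued monotone ODEs with $L^1$-in-time, linear (in the unknown) drift plus $L^1$ forcing.

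For existence I would regularize via the Yosida approximation (\ref{yosida}): solve the single-valued Lipschitz ODE
$$\nu^{\psi,\alpha}_t=\int_0^t\Bigl[\nabla b(X^0_s,\mathcal{L}_{X^0_s})\nu^{\psi,\alpha}_s+\sigma(X^0_s,\mathcal{L}_{X^0_s})\psi(s)-A^\alpha(\nu^{\psi,\alpha}_s)\Bigr]\,ds$$
by Picard iteration (existence being routine since $A^\alpha$ is $1/\alpha$-Lipschitz). A standard $\frac{d}{dt}|\nu^{\psi,\alpha}_t|^2$ computation, together with monotonicity of $A^\alpha$, (\hyperref[b1]{\textbf{B1}}) and Gr\"onwall, yields a uniform-in-$\alpha$ bound on $\sup_{t\le T}|\nu^{\psi,\alpha}_t|$; Proposition \ref{multi} applied to the pair $(\nu^{\psi,\alpha},\hat K^{\psi,\alpha})$ with $\hat K^{\psi,\alpha}_t:=\int_0^t A^\alpha(\nu^{\psi,\alpha}_s)\,ds$ then controls the total variation $|\hat K^{\psi,\alpha}|_0^T$ uniformly in $\alpha$. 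Arzel\`a--Ascoli along a subsequence $\alpha_n\downarrow0$ produces a limit pair $(\nu^\psi,\hat K^\psi)$, and Proposition \ref{kcon} together with (\ref{yosidap}) lets me identify the limit as a solution of (\ref{mdp1}) in $\mathcal{A}$.

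Uniqueness is straightforward from (\ref{mono}): for two solutions the $d|\nu^{\psi,1}_t-\nu^{\psi,2}_t|^2$ computation kills the $\sigma\psi$-term, the cross-term in $d\hat K^{\psi,1}-d\hat K^{\psi,2}$ is nonpositive by (\ref{mono}), and the drift difference is dominated by $\|\nabla b(X^0_t,\mathcal{L}_{X^0_t})\|\,|\nu^{\psi,1}_t-\nu^{\psi,2}_t|^2$, so (\hyperref[b1]{\textbf{B1}}) and Gr\"onwall give equality. Finally, for (\ref{nulim}) I apply the chain rule to $|\nu^\psi_t|^2$, use Proposition \ref{multi} at $a=0\in\text{Int}(D(A))$ (available via (\hyperref[h0]{\textbf{H0}})) to bound $\int_0^t\langle\nu^\psi_s,d\hat K^\psi_s\rangle$ from below, handle the $\sigma\psi$ cross-term by Cauchy--Schwarz and the $\mathcal{S}_m$-bound on $\psi$, and close by Gr\"onwall; the resulting constant depends only on $T$, $m$, $L_{\nabla b}$, $\sup_t\|\sigma(X^0_t,\mathcal{L}_{X^0_t})\|$ and the constants from Proposition \ref{multi}, hence is uniform over $\psi\in\mathcal{S}_m$. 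The main technical obstacle is the Yosida passage to the limit, which relies crucially on the uniform total-variation bound for $\hat K^{\psi,\alpha}$ combined with Proposition \ref{kcon}.
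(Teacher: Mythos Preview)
Your proposal is correct and follows essentially the same route as the paper: both verify integrability of the forcing via (\hyperref[h2]{\textbf{H2}}) and Cauchy--Schwarz, invoke (\hyperref[b1]{\textbf{B1}}) for the drift, and close the uniform bound by a Gr\"onwall argument. The paper's proof is considerably more terse---it simply asserts that existence and uniqueness ``can easily'' be obtained and writes a one-line Gr\"onwall bound on $\nu^{\psi}_t$ without explicitly handling the $\hat K^\psi$ contribution---so your Yosida regularization for existence and your $|\nu^\psi_t|^2$ energy estimate using Proposition~\ref{multi} to control $\int_0^t\langle\nu^\psi_s,d\hat K^\psi_s\rangle$ are a legitimate (and more careful) expansion of what the paper leaves implicit.
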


	\begin{theorem}
		Assume that (\hyperref[h0]{\textbf{H0}}), (\hyperref[h4]{\textbf{H4}}), (\hyperref[b0]{\textbf{B0}}), (\hyperref[b1]{\textbf{B1}}) and (\hyperref[b2]{\textbf{B2}}) hold, then $\left\{M^{\epsilon},\epsilon>0\right\}$ satisfies the LDP on $C([0,T],\overline{D(A)})$ with speed ${\epsilon}/{\lambda^2(\epsilon)}$ and rate function $I$ given by
		\begin{align}
			I(g):=\frac{1}{2}\inf_{\left\{\psi\in L^2([0,T],\mathbb{R}^d), \nu^{\psi}=g\right\}}\int_{0}^{T}|\psi(s)|^2\mathrm{d}s, \quad g\in C\left([0,T],\overline{D(A)}\right)
		\end{align}
		where  $\big(\nu^{\psi},\hat{K}^\psi\big)$ is the unique solution of (\ref{mdp1}) with $\psi\in L^2([0,T],\mathbb{R}^d)$.
	\end{theorem}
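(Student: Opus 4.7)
The plan is to apply the weak convergence criterion (Theorem \ref{ldlap}) with speed $\tilde\epsilon:=\epsilon/\lambda^2(\epsilon)$, observing that $\sqrt{\tilde\epsilon}=\sqrt{\epsilon}/\lambda(\epsilon)$ is exactly the noise amplitude in equation (\ref{mep}). Viewing (\ref{mep}) as a multivalued McKean--Vlasov SDE driven by the small noise $\sqrt{\tilde\epsilon}W$ (with $\mathcal{L}_{\bar X^\epsilon_\cdot}$ treated as a decoupling input), Lemma \ref{girs} supplies a measurable map $\mathcal{G}^\epsilon_{\mathrm{M}}$ with $M^\epsilon=\mathcal{G}^\epsilon_{\mathrm{M}}(\sqrt{\tilde\epsilon}W)$ and, for any control $\psi_\epsilon\in\mathcal{D}_m$, a unique controlled process $\tilde M^{\epsilon,\psi_\epsilon}:=\mathcal{G}^\epsilon_{\mathrm{M}}\bigl(\sqrt{\tilde\epsilon}W+\int_0^\cdot\psi_\epsilon(s)\mathrm{d}s\bigr)$ solving
\begin{align*}
\mathrm{d}\tilde M^{\epsilon,\psi_\epsilon}_t \in\,&\tfrac{1}{\lambda(\epsilon)}\bigl(b_\epsilon(\lambda(\epsilon)\tilde M^{\epsilon,\psi_\epsilon}_t+X^0_t,\mathcal{L}_{\bar X^\epsilon_t})-b(X^0_t,\mathcal{L}_{X^0_t})\bigr)\mathrm{d}t \\
&+\tfrac{\sqrt{\epsilon}}{\lambda(\epsilon)}\sigma_\epsilon(\lambda(\epsilon)\tilde M^{\epsilon,\psi_\epsilon}_t+X^0_t,\mathcal{L}_{\bar X^\epsilon_t})\mathrm{d}W_t+\sigma_\epsilon(\lambda(\epsilon)\tilde M^{\epsilon,\psi_\epsilon}_t+X^0_t,\mathcal{L}_{\bar X^\epsilon_t})\psi_\epsilon(t)\mathrm{d}t-A_\epsilon(\tilde M^{\epsilon,\psi_\epsilon}_t)\mathrm{d}t.
\end{align*}
Setting $\mathcal{G}^0(\psi):=\nu^\psi$ from Proposition \ref{nu}, it remains to verify the two conditions analogous to \textbf{(LDP)}$_{\bm 1}$ and \textbf{(LDP)}$_{\bm 2}$ from the proof of Theorem \ref{thmldp}.

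For the skeleton continuity, given $\psi_n\rightharpoonup\psi$ in $\mathcal{S}_m$, the operator $h\mapsto\int_0^\cdot \sigma(X^0_s,\mathcal{L}_{X^0_s})h(s)\,\mathrm{d}s$ is compact from $(\mathcal{S}_m,\mathrm{weak})$ into $C([0,T],\mathbb{R}^d)$ since $\sigma(X^0_\cdot,\mathcal{L}_{X^0_\cdot})$ is square-integrable by (\hyperref[h2]{\textbf{H2}}) and the boundedness of $X^0$; hence the forcing in (\ref{mdp1}) converges uniformly. Testing $\nu^{\psi_n}-\nu^\psi$ against itself, the monotonicity inequality (\ref{mono}) kills the reflection contribution, (\hyperref[b1]{\textbf{B1}}) provides the integrable linear coefficient $\|\nabla b(X^0,\mathcal{L}_{X^0})\|$, and Gronwall's lemma gives $\sup_t|\nu^{\psi_n}_t-\nu^\psi_t|\to 0$.

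For the controlled convergence, the goal is $\sup_t|\tilde M^{\epsilon,\psi_\epsilon}_t-\nu^{\psi_\epsilon}_t|\to 0$ in probability, after which Skorokhod's representation combined with the skeleton continuity settles the weak limit. I would proceed in four steps. (i) Prove the auxiliary bound $\mathbb{E}\sup_t|\bar X^\epsilon_t-X^0_t|^2=O(\epsilon+\rho_{b,\epsilon}^2+\rho_{\sigma,\epsilon}^2)$ via Itô, the one-sided Lipschitz estimate in (\hyperref[b0]{\textbf{B0}}), the monotonicity of $A_\epsilon$, and the local boundedness of $A_\epsilon$ near $0$ from (\hyperref[h0]{\textbf{H0}}); by (\ref{lame}) and (\hyperref[b2]{\textbf{B2}}) this shows $\lambda(\epsilon)M^\epsilon\to 0$ uniformly in probability. (ii) Decompose
\begin{align*}
\tfrac{1}{\lambda(\epsilon)}\bigl(b_\epsilon(\lambda(\epsilon)\tilde M^{\epsilon,\psi_\epsilon}_t+X^0_t,\mathcal{L}_{\bar X^\epsilon_t})-b(X^0_t,\mathcal{L}_{X^0_t})\bigr)=\nabla b(X^0_t,\mathcal{L}_{X^0_t})\tilde M^{\epsilon,\psi_\epsilon}_t+R^\epsilon_t,
\end{align*}
where $R^\epsilon_t$ gathers the Taylor remainder (controlled by the gradient-Lipschitz bound in (\hyperref[b0]{\textbf{B0}}) and step (i)), the Wasserstein discrepancy (controlled by Remark \ref{W2} and step (i)), and the approximation error $(b_\epsilon-b)/\lambda(\epsilon)$ vanishing by (\hyperref[b2]{\textbf{B2}}). (iii) Compare $\sigma_\epsilon(\cdots)\psi_\epsilon$ with $\sigma(X^0,\mathcal{L}_{X^0})\psi_\epsilon$ by (\hyperref[h2]{\textbf{H2}}) and Cauchy--Schwarz (using $\|\psi_\epsilon\|_{L^2}\leq\sqrt{m}$), while the martingale term has variance $O(\tilde\epsilon)\to 0$ by BDG. (iv) Pair the difference $\tilde M^{\epsilon,\psi_\epsilon}-\nu^{\psi_\epsilon}$ with itself and handle the mismatched reflection terms via the Yosida regularizations $A^\alpha_\epsilon,A^\alpha$: Proposition \ref{multi} furnishes uniform bounds on $|\bar K^\epsilon|_{TV}$ and $|\hat K^{\psi_\epsilon}|_{TV}$, and letting $\alpha\to 0$ after $\epsilon\to 0$ using (\hyperref[h4]{\textbf{H4}}) and (\ref{yosi0}) --- in the spirit of \cite{rwz} --- a Bihari--Gronwall step (Lemma \ref{bhr}) closes the estimate.

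The main obstacle is the drift linearization: the Taylor remainder is of order $|\bar X^\epsilon-X^0|^2/\lambda(\epsilon)=O(\epsilon/\lambda(\epsilon))$, which tends to zero \emph{only} because the MDP scaling (\ref{lame}) forces $\sqrt{\epsilon}/\lambda(\epsilon)\to 0$ (whence $\epsilon/\lambda(\epsilon)=\sqrt\epsilon\cdot(\sqrt\epsilon/\lambda(\epsilon))\to 0$); this is precisely why the proof cannot be carried out under the non-Lipschitz hypothesis (\hyperref[h1]{\textbf{H1}}) alone and requires the smoothness in (\hyperref[b0]{\textbf{B0}}). A secondary but delicate difficulty is reconciling the two distinct multivalued operators in the limit: $\hat K^\psi$ is driven by $A$ while $\bar K^\epsilon$ lies in the graph of $A_\epsilon$, so the reflection must be synchronized through Yosida approximations and relies essentially on (\hyperref[h4]{\textbf{H4}}).
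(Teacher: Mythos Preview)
Your plan is essentially the paper's strategy: set up $\Gamma^0(\psi)=\nu^\psi$, invoke Lemma~\ref{girs} to produce the controlled process solving (\ref{mdp2m}), and verify the two weak-convergence conditions via monotonicity/Gronwall for the skeleton and via a Taylor linearization of the drift plus Yosida regularization for the controlled convergence. Two technical points deserve sharpening.

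First, your Taylor-remainder bookkeeping conflates the uncontrolled and controlled processes. The state variable entering $b$ in (\ref{mdp2m}) is $\lambda(\epsilon)\tilde M^{\epsilon,\psi_\epsilon}+X^0$, not $\bar X^\epsilon$; hence the second-order remainder after dividing by $\lambda(\epsilon)$ is of order $\lambda(\epsilon)\,|\tilde M^{\epsilon,\psi_\epsilon}|^2\bigl(1+|X^0|^{q'}+|\lambda(\epsilon)\tilde M^{\epsilon,\psi_\epsilon}+X^0|^{q'}\bigr)$, not $|\bar X^\epsilon-X^0|^2/\lambda(\epsilon)$. To control this one needs (a) a uniform a~priori bound $\sup_{\epsilon}\mathbb{E}\sup_t|\tilde M^{\epsilon,\psi_\epsilon}_t|^2<\infty$ --- the paper isolates this as a separate lemma, proved by It\^o's formula, Proposition~\ref{multi}, and careful use of (\hyperref[b0]{\textbf{B0}}) together with the rate $\mathbb{E}\sup_t|\bar X^\epsilon_t-X^0_t|^2=O(\epsilon+\rho_{b,\epsilon}^2+\epsilon\rho_{\sigma,\epsilon}^2)$ --- and (b) a stopping-time localization $\tau_\epsilon^a=\inf\{t:|\tilde M^{\epsilon,\psi_\epsilon}_t|\ge a\}\wedge T$ so that the polynomial factor becomes a harmless constant $C_a$; one then lets $a\to\infty$ after $\epsilon\to0$ using Markov's inequality and the a~priori bound. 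Without this localization the Gronwall/Bihari step does not close.

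Second, the order of limits in your step~(iv) is inverted. The Yosida estimates $\mathbb{E}\sup_t|\tilde M^{\epsilon,\psi_\epsilon,\alpha}_t-\tilde M^{\epsilon,\psi_\epsilon}_t|^2\to0$ as $\alpha\to0$ must be shown \emph{uniformly in $\epsilon$} (this is where the paper mirrors the argument of Lemma~\ref{afin}); one then fixes $\alpha$ small, sends $\epsilon\to0$ in the regularized comparison (where the mismatch $A^\alpha-A_\epsilon^\alpha$ is handled pointwise by (\hyperref[h4]{\textbf{H4}}) plus the uniform moment bound), and finally lets $\alpha\to0$. Sending $\epsilon\to0$ first would leave the $\alpha$-dependence uncontrolled.
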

	
	\begin{proof}
		By proposition \ref{nu}, we can define a map
		\begin{align}
			\Gamma^0:L^2([0,T],\mathbb{R}^d)\ni \psi \mapsto \nu^\psi\in C([0,T],\overline{D(A)})
		\end{align}
		such that $\nu^{\psi}_.=\Gamma^{0}(\psi)(\cdot)$, where $\nu^\psi$ is the unique solution of (\ref{nu}). Let
		\begin{align*}	\Gamma^\epsilon_{\mathcal{L}_{X^0}}(\cdot):=\frac{1}{\lambda(\epsilon)}\left(\mathcal{G}^\epsilon_{\mathcal{L}_{X^0}}(\cdot)-X^0\right),
		\end{align*}
		then we have
		\begin{enumerate}
			\item[(a)] $\Gamma^\epsilon_{\mathcal{L}_{X^0}}$ is a measurable map from $C([0,T],\mathbb{R}^d)\mapsto C([0,T],\overline{D(A)}) $ such that
			\begin{align*}
				M^\epsilon=\Gamma^\epsilon_{\mathcal{L}_{X^0}}\left(\sqrt{\epsilon}W_\cdot\right),
			\end{align*}
			\item[(b)] for any $m\in(0,\infty)$, $\psi_\epsilon\in\mathcal{D}_m^T$, let
			\begin{align}\label{mpsi}
				M^{\epsilon,\psi_\epsilon}:=\Gamma^\epsilon_{\mathcal{L}_{X^0}}\left(\sqrt{\epsilon}W_\cdot+\lambda(\epsilon)\int_{0}^{\cdot}\psi_\epsilon(s)\mathrm{d}s\right).
			\end{align}
		\end{enumerate}
		
		For any $\epsilon>0$, by Proposition \ref{strongsolu} and Girsanov Theorem, it holds that   $\left\{\left(M^{\epsilon,\psi_\epsilon}_t,\hat{K}^{\epsilon,\psi_\epsilon}_t\right),t\in[0,T]\right\}$ is the unique solution to the following controlled multivalued McKean-Vlasov  SDE:
		\begin{equation}\label{mdp2m}
			\left\{\begin{aligned}
				\mathrm{d}M^{\epsilon,\psi_\epsilon}_t\in&\frac{1}{\lambda(\epsilon)}\left(b_\epsilon(\lambda(\epsilon)M^{\epsilon,\psi_{\epsilon}}_t+X_t^0,\mathcal{L}_{X_t^\epsilon})-b(X_t^0,\mathcal{L}_{X_t^0})\right)\mathrm{d}t  \\
				&+\frac{\sqrt{\epsilon}}{\lambda(\epsilon)}\sigma_{\epsilon}(\lambda(\epsilon)M^{\epsilon,\psi_{\epsilon}}+X^0_t,\mathcal{L}_{X_t^\epsilon})\mathrm{d}W_t  \\
				&+\sigma_{\epsilon}(\lambda(\epsilon)M^{\epsilon,\psi_{\epsilon}}+X^0_t,\mathcal{L}_{X_t^\epsilon})\psi_{\epsilon}(t)\mathrm{d}t-A_\epsilon(M^{\epsilon,\psi_\epsilon}_t)\mathrm{d}t,  \\
				\,\,\,\, M^{\epsilon,\psi_{\epsilon}}_0=&\,\,0.
			\end{aligned}\right.
		\end{equation}

		By Theorem \ref{ldlap}, it is sufficient to verify the following two claims.
		
		\noindent\textbf{(MDP)}$\bm{_{1}}$ For any given $m\in(0,\infty)$, let $\left\{\psi_n, n\in\mathbb{N}\right\}, \psi\in\mathcal{S}_m$ be such that $\psi_n\to\psi$ in $\mathcal{S}_m$ as $n\to\infty$, then
		$$\lim\limits_{n\to\infty}\sup_{t\in[0,T]}|\Gamma^0(\psi_n)(t)-\Gamma^0(\psi)(t)|=0.$$
		
		\noindent\textbf{(MDP)}$\bm{_{2}}$ For any given $m\in(0,\infty)$, let $\left\{\psi_{\epsilon},\epsilon>0\right\}\in\mathcal{D}_m$, then for any $\xi>0$,
		$$\lim\limits_{\epsilon\to0}P\big(\sup_{t\in[0,T]}|M^{\epsilon,\psi_{\epsilon}}_t-\Gamma^0(\psi_{\epsilon})(t)|>\xi\big)=0.$$
		
		
		The verifications of \textbf{(MDP)}$\bm{_{1}}$ and \textbf{(MDP)}$\bm{_{2}}$ will be given in Section 4.4.

	\end{proof}

	\section{Proof of main results}
	
	\subsection{Proof of Theorem 3.4}
	
	First, we introduce the relationship between martingale solution and weak solution.
	
	\begin{proposition}\label{vv}
		The existence of martingale solutions implies the existence of weak solutions and vice versa.
	\end{proposition}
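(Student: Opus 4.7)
The plan is to prove the two implications separately, via the standard correspondence between the SDE formulation and the martingale problem formulation, adapted to the presence of the multivalued drift $A$ through the finite-variation process $K$.

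For the direction weak solution $\Rightarrow$ martingale solution, I would start from a weak solution $(\tilde{\mathcal{S}}; \tilde{W}, (\tilde{X}, \tilde{K}))$ as in Definition~\ref{solu}, and define the candidate martingale solution as the pushforward
\[
\mathbb{P} := \tilde{P} \circ (\tilde{X}, \tilde{K})^{-1}
\]
on $(\mathcal{W}, \mathbf{W})$. The condition $\mathbb{P}(\mathcal{A})=1$ is immediate from item (i) of Definition~\ref{solu}. For any $f \in C_b^2(\mathbb{R}^d)$, I apply It\^o's formula to $f(\tilde{X}_t)$ using the decomposition $\mathrm{d}\tilde{X}_t = -\mathrm{d}\tilde{K}_t + b(\tilde{X}_t,\mathcal{L}_{\tilde{X}_t})\mathrm{d}t + \sigma(\tilde{X}_t,\mathcal{L}_{\tilde{X}_t})\mathrm{d}\tilde{W}_t$, which shows that $M_t^f$ under $\tilde{P}$ coincides with the stochastic integral $\int_0^t \langle \nabla f(\tilde{X}_s), \sigma(\tilde{X}_s,\mathcal{L}_{\tilde{X}_s})\mathrm{d}\tilde{W}_s\rangle$. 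Since $f \in C_b^2$ and $\sigma$ has linear growth by \textbf{(H2)}, this integral is a true martingale under $\tilde{P}$, hence $M_t^f$ is a martingale under $\mathbb{P}$.

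For the converse, start with a martingale solution $\mathbb{P}$ on $(\mathcal{W},\mathbf{W})$ with coordinate processes $(X,K)$ and filtration $\overline{\mathbf{W}}_t$. To recover the SDE, I approximate the coordinate functions $x \mapsto x_i$ and $x \mapsto x_i x_j$ by a sequence of $C_b^2$ truncations that agree on balls of radius $n$; applying the martingale property to these test functions and letting $n\to\infty$ (using the second moment bound on $\mathcal{L}_{X_s}$ and the linear growth assumptions) yields that
\[
N_t^{(i)} := X_t^i - X_0^i + K_t^i - \int_0^t b_i(X_s,\mathcal{L}_{X_s})\,\mathrm{d}s
\]
is a continuous local martingale with quadratic covariation
\[
\langle N^{(i)}, N^{(j)} \rangle_t = \int_0^t (\sigma\sigma^*)_{ij}(X_s,\mathcal{L}_{X_s})\,\mathrm{d}s.
\]
Then I invoke the representation theorem for continuous martingales as stochastic integrals with respect to Brownian motion (on a possibly enlarged probability space carrying an independent auxiliary Brownian motion to handle potential degeneracy of $\sigma$) to produce a $d$-dimensional Brownian motion $\tilde{W}$ such that $N_t = \int_0^t \sigma(X_s,\mathcal{L}_{X_s})\mathrm{d}\tilde{W}_s$. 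Combined with $\mathbb{P}(\mathcal{A})=1$, this delivers a weak solution in the sense of Definition~\ref{solu}.

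The main technical obstacle is the second direction: in particular, the passage from the martingale identities for $C_b^2$ test functions to identification of $N$ as a stochastic integral against Brownian motion. This requires a careful localization argument (since $x_i$ and $x_i x_j$ are not themselves bounded), uniform integrability provided by the linear growth conditions \eqref{hblg} and \eqref{hslg} together with standard a priori second-moment estimates for $X_t$, and the extension-of-probability-space construction to handle the case when $\sigma(X_s,\mathcal{L}_{X_s})$ may fail to be invertible. The presence of the bounded-variation term $K$ does not cause additional difficulty here because $\nabla f(X_s)$ is continuous and $K$ is of finite variation, so the Stieltjes integral $\int_0^t \langle \nabla f(X_s),\mathrm{d}K_s\rangle$ appearing in $M_t^f$ is well-defined and contributes no martingale part.
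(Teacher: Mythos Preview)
Your proposal is correct and follows the standard Stroock--Varadhan style equivalence argument, adapted to the multivalued setting via the finite-variation process $K$. The paper itself does not give a proof but simply cites \cite[Proposition 3.5]{Zalinescu}, whose argument is precisely the one you outline; so your approach coincides with what the paper defers to.
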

	
	Since the Proposition \ref{vv} above can be proved by the same way in \cite[Proposition 3.5]{Zalinescu}, we omit the detailed proofs here for brevity. To study the existence of martingale solution to equation (\ref{mve}), we first present the following lemmas. The first one is taken from \citep[Lemma 4.1]{CHM}.
	\begin{lemma}\label{kfcon}
		Let $\{\mathbb{P}_n\in\mathcal{P}(\mathcal{W})\}_{n\geq 1}$ be a sequence of probability measures converging weakly to $\mathbb{P}_0\in\mathcal{P}(\mathcal{W})$. Assume that $\mathbb{P}_n(\mathcal{A})=1$ and $\sup_{n\geq 1}\mathbb{E}^{\mathbb{P}_n}|K|^2_{TV}<\infty$, then $\mathbb{P}_0(\mathcal{A})=1$ and $\mathbb{E}^{\mathbb{P}_0}|K|^2_{TV}<\infty$. Moreover, for any bounded continuous functional $G$ on $\mathcal{W}$ and $f\in C^1_b(\mathbb{R}^d)$,
		\begin{align*}
			\lim\limits_{n\to\infty}\mathbb{E}^{\mathbb{P}_n}\left(G\int_{0}^{t}\langle\nabla f(X_s),\mathrm{d}K_s\rangle\right)=\mathbb{E}^{\mathbb{P}_0}\left(G\int_{0}^{t}\langle\nabla f(X_s),\mathrm{d}K_s\rangle\right).
		\end{align*}
	\end{lemma}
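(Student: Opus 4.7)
\smallskip
The plan is to combine Skorohod's representation theorem with the deterministic continuity result stated in Proposition \ref{kcon} and with a uniform integrability argument based on the $L^{2}$ moment bound $\sup_{n}\mathbb{E}^{\mathbb{P}_n}|K|_{TV}^{2}<\infty$. By Skorohod's theorem applied to the Polish space $\mathcal{W}=C([0,T],\mathbb{R}^d)\times C([0,T],\mathbb{R}^d)$, I can find an auxiliary probability space $(\tilde{\Omega},\tilde{\mathcal{F}},\tilde{\mathbb{P}})$ and random variables $(\tilde{X}^{n},\tilde{K}^{n})$ of law $\mathbb{P}_n$ together with $(\tilde{X},\tilde{K})$ of law $\mathbb{P}_0$ such that $(\tilde{X}^{n},\tilde{K}^{n})\to(\tilde{X},\tilde{K})$ uniformly on $[0,T]$ almost surely. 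This reduces everything to a pointwise analysis.

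Next I would verify that $\mathbb{P}_0(\mathcal{A})=1$. The condition $\tilde{X}_{t}\in\overline{D(A)}$ for all $t$, and the condition $\tilde{K}_{0}=0$, are closed in the uniform topology, so they pass to the almost sure limit. For the finite variation property, the functional $K\mapsto|K|_{TV}$ is lower semicontinuous on $C([0,T],\mathbb{R}^d)$, hence by Fatou's lemma
\begin{equation*}
\tilde{\mathbb{E}}\,|\tilde{K}|_{TV}^{2}\;\leq\;\liminf_{n\to\infty}\tilde{\mathbb{E}}\,|\tilde{K}^{n}|_{TV}^{2}\;=\;\liminf_{n\to\infty}\mathbb{E}^{\mathbb{P}_n}|K|_{TV}^{2}\;<\;\infty,
\end{equation*}
giving $\mathbb{E}^{\mathbb{P}_0}|K|_{TV}^{2}<\infty$ and in particular $\tilde{K}\in\mathcal{V}$ almost surely. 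For the monotonicity condition in the definition of $\mathcal{A}$, i.e. $\int_{0}^{t}\langle\tilde{X}_{r}-x,d\tilde{K}_{r}-y\,dr\rangle\geq 0$ for every $(x,y)\in Gr(A)$, I would fix $(x,y)$, use that the analogous inequality holds $\tilde{\mathbb{P}}$-a.s.\ for each $n$, and pass to the limit by invoking Proposition \ref{kcon}; the uniform bound $\sup_{n}|\tilde{K}^{n}(\tilde\omega)|_{TV}<\infty$ required by that proposition will be secured by a subsequence argument, since on a set of full probability the almost sure convergence combined with Markov's inequality on $|\tilde K^n|_{TV}^{2}$ lets me pick, for each $\tilde\omega$ in a set of full measure, a bounding subsequence whose limit is forced to coincide with $\tilde{K}$.

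For the second assertion concerning the limit of the expectations, write $\Phi_{n}:=G\int_{0}^{t}\langle\nabla f(\tilde{X}^{n}_{s}),d\tilde{K}^{n}_{s}\rangle$ and $\Phi:=G\int_{0}^{t}\langle\nabla f(\tilde{X}_{s}),d\tilde{K}_{s}\rangle$. By the same subsequence version of Proposition \ref{kcon} used above, $\Phi_{n}\to\Phi$ almost surely along any subsequence along which the total variations stay bounded. Since
\begin{equation*}
|\Phi_{n}|\;\leq\;\|G\|_{\infty}\,\|\nabla f\|_{\infty}\,|\tilde{K}^{n}|_{TV},
\end{equation*}
and $|\tilde{K}^{n}|_{TV}$ is bounded in $L^{2}(\tilde{\mathbb{P}})$ by hypothesis, the family $\{\Phi_{n}\}$ is uniformly integrable. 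Vitali's convergence theorem, applied along an arbitrary subsequence and then upgraded to the full sequence by uniqueness of the weak limit, yields $\tilde{\mathbb{E}}\Phi_{n}\to\tilde{\mathbb{E}}\Phi$, which is precisely the claimed identity after transporting back to $\mathbb{P}_n$ and $\mathbb{P}_0$.

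The main obstacle is the convergence of the Stieltjes integrals, because $(X,K)\mapsto\int_{0}^{t}\langle\nabla f(X_{s}),dK_{s}\rangle$ is neither continuous nor bounded on $\mathcal{W}$; uniform convergence of $\tilde{K}^{n}$ alone does \emph{not} force the integrals to converge. Handling this cleanly requires the pointwise application of Proposition \ref{kcon} via the $\tilde\omega$-by-$\tilde\omega$ subsequence argument sketched above, coupled with the $L^{2}$ bound on the total variations to make the uniform integrability step go through. Once this obstacle is navigated, all other steps reduce to closedness/lower-semicontinuity arguments that transfer cleanly under weak convergence.
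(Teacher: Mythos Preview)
The paper does not give its own proof; the lemma is quoted from \cite[Lemma~4.1]{CHM}. Your overall strategy via Skorohod representation, lower semicontinuity of $|\cdot|_{TV}$, and uniform integrability from the $L^{2}$ moment bound is the standard route and is presumably what the cited reference does.

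There is, however, a genuine gap in your argument for the second assertion. Your $\tilde\omega$-dependent subsequence device---Fatou gives $\liminf_{n}|\tilde K^{n}(\tilde\omega)|_{TV}<\infty$ a.s., so each $\tilde\omega$ admits a bounded-TV subsequence along which Proposition~\ref{kcon} applies---is adequate for $\mathbb{P}_{0}(\mathcal{A})=1$, because there you only need to verify a property of the \emph{limit} pair $(\tilde X,\tilde K)$. It does \emph{not} deliver $\tilde{\mathbb E}\Phi_{n}\to\tilde{\mathbb E}\Phi$: an $\tilde\omega$-dependent subsequence tells you only that $\Phi(\tilde\omega)$ is a cluster point of $\{\Phi_{n}(\tilde\omega)\}$, which yields neither a.s.\ nor in-probability convergence along any \emph{deterministic} subsequence, so Vitali has nothing to act on and ``upgraded to the full sequence by uniqueness of the weak limit'' is not a valid step. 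The clean repair bypasses pointwise use of Proposition~\ref{kcon} and bounds $\tilde{\mathbb E}|\Phi_{n}-\Phi|$ directly. Write the difference of integrals as
\[
\int_{0}^{t}\langle\nabla f(\tilde X^{n}_{s})-\nabla f(\tilde X_{s}),d\tilde K^{n}_{s}\rangle
+\int_{0}^{t}\langle\nabla f(\tilde X_{s}),d(\tilde K^{n}-\tilde K)_{s}\rangle.
\]
The first term has $L^{1}$ norm at most $\bigl(\tilde{\mathbb E}\|\nabla f(\tilde X^{n})-\nabla f(\tilde X)\|_{\infty}^{2}\bigr)^{1/2}\bigl(\sup_{n}\tilde{\mathbb E}|\tilde K^{n}|_{TV}^{2}\bigr)^{1/2}\to0$ by dominated convergence. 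For the second term, approximate $s\mapsto\nabla f(\tilde X_{s})$ by piecewise-constant $g^{(m)}$: then $\int_{0}^{t}\langle g^{(m)}_{s},d(\tilde K^{n}-\tilde K)_{s}\rangle$ is a finite sum of increments, hence $\to0$ a.s.\ with a dominator bounded in $L^{2}$, while the approximation error is at most $\|\nabla f(\tilde X)-g^{(m)}\|_{\infty}\bigl(|\tilde K^{n}|_{TV}+|\tilde K|_{TV}\bigr)$, whose $L^{1}$ norm is controlled by Cauchy--Schwarz and vanishes as $m\to\infty$ uniformly in $n$.
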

	
	 Denote $\tilde{\mathbb{E}}(\cdot):=\mathbb{E}^{\tilde{P}}(\cdot)$ the expectation under the probability measure $\tilde{P}$ introduced in the Definition \ref{solu}. We now state the second lemma.
	
	\begin{lemma}\label{l42}
		Assume that $b(x,\mu)$ and $\sigma(x,\mu)$ satisfy (\hyperref[h0]{\textbf{H0}}), (\ref{hblg}) and (\ref{hslg}). If $(\tilde{\mathcal{ S }};\tilde{W},(\tilde{X},\tilde{K}))$ is a weak solution to the equation (\ref{mve}), $\mathcal{L}_{\tilde{X}_0}\in\mathcal{P}({\overline{D(A)}})$ and $\tilde{\mathbb{E}}|\tilde{X}_0|^{2p}<\infty$ for any $p>1$, then it follows that
		\begin{align}\label{l421}
			\tilde{\mathbb{E}}(\sup_{t\in[0,T]}|\tilde{X}_t|^{2p}+(|\tilde{K}|_0^T)^{p})<\infty
		\end{align}
and
		\begin{align}\label{l422}
			\tilde{\mathbb{E}}(|\tilde{X}_t-\tilde{X}_s|^{2p}+(|\tilde{K}|_s^t)^{p})\leq C(t-s)^p,\quad 0\leq s<t\leq T,
		\end{align}
		where $C>0$ is some constant depending on $T,p,L$.
	\end{lemma}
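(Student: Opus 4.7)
The plan is to derive both estimates by applying It\^o's formula to $|\tilde{X}_t|^{2p}$ and combining it with Proposition~\ref{multi} to handle the $d\tilde{K}$ contribution. The crucial preliminary observation is that Proposition~\ref{multi}, being valid on every subinterval $[u,v]\subset[0,T]$, is equivalent to the inequality of signed measures
$$\langle \tilde{X}_r-a,\mathrm{d}\tilde{K}_r\rangle\geq \lambda_1\,\mathrm{d}|\tilde{K}|_r-\lambda_2|\tilde{X}_r-a|\,\mathrm{d}r-\lambda_3\,\mathrm{d}r\qquad\text{on }[0,T],$$
so it may be multiplied by any non-negative continuous weight before integration. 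Taking $a=0\in\mathrm{Int}(D(A))$, which is available by (\hyperref[h0]{\textbf{H0}}), together with the weight $w_r=|\tilde{X}_r|^{2p-2}$, will supply the pointwise control of the weighted $d\tilde{K}$-term that appears in the It\^o expansion of $|\tilde{X}_t|^{2p}$.

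To prove (\ref{l421}), I would apply It\^o's formula to $|\tilde{X}_t|^{2p}$ and invoke the weighted form of Proposition~\ref{multi} to bound $-2p\int_0^t|\tilde{X}_s|^{2p-2}\langle\tilde{X}_s,\mathrm{d}\tilde{K}_s\rangle$ from above by $-2p\lambda_1\int_0^t|\tilde{X}_s|^{2p-2}\,\mathrm{d}|\tilde{K}|_s+C\int_0^t(1+|\tilde{X}_s|^{2p})\,\mathrm{d}s$. The drift and diffusion contributions are then controlled by the linear-growth bounds (\ref{hblg}) and (\ref{hslg}), combined with the identity $\lVert\mathcal{L}_{\tilde{X}_s}\rVert_2^2=\tilde{\mathbb{E}}|\tilde{X}_s|^2$ and Jensen's inequality; taking $\sup_{t\leq T}$ and expectation, applying Burkholder--Davis--Gundy to the stochastic integral, and then Gronwall's lemma yields $\tilde{\mathbb{E}}\sup_{t\leq T}|\tilde{X}_t|^{2p}<\infty$. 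The moment $\tilde{\mathbb{E}}(|\tilde{K}|_0^T)^p$ is then extracted from the $p=1$ instance: Proposition~\ref{multi} gives a pointwise control of $2\lambda_1|\tilde{K}|_0^T$ by a sum involving $|\tilde{X}_T|^2$, an integral of $|\tilde{X}|^2$, and a martingale increment, after which raising to the $p$-th power and combining with the already established moment bound closes the estimate.

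For (\ref{l422}) I would run the same argument on the interval $[s,t]$. Starting from $\tilde{X}_t-\tilde{X}_s=\int_s^t b_r\,\mathrm{d}r+\int_s^t\sigma_r\,\mathrm{d}\tilde{W}_r-(\tilde{K}_t-\tilde{K}_s)$, Minkowski combined with H\"older (for the drift term) and Burkholder--Davis--Gundy (for the diffusion term) produces the desired $(t-s)^p$ scaling, provided one also has $\tilde{\mathbb{E}}(|\tilde{K}|_s^t)^p\leq C(t-s)^p$. The latter is obtained by applying Proposition~\ref{multi} on $[s,t]$ with $a=0$ and re-expressing $\int_s^t\langle\tilde{X}_r,\mathrm{d}\tilde{K}_r\rangle$ through It\^o on $|\tilde{X}_t|^2-|\tilde{X}_s|^2$. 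The hard part is precisely this step: the resulting expression contains a boundary term $\tfrac12(|\tilde{X}_s|^2-|\tilde{X}_t|^2)$ that is not a priori of order $(t-s)$, so the quantities $\tilde{\mathbb{E}}|\tilde{X}_t-\tilde{X}_s|^{2p}$ and $\tilde{\mathbb{E}}(|\tilde{K}|_s^t)^p$ must be estimated simultaneously. I expect to close the loop by setting up a single Gronwall-type inequality for the sum $\tilde{\mathbb{E}}\bigl(|\tilde{X}_t-\tilde{X}_s|^{2p}+(|\tilde{K}|_s^t)^p\bigr)$ and iterating over a fine partition of $[s,t]$, using the uniform moment bounds on $\sup_{r\leq T}|\tilde{X}_r|$ established in the first part to absorb the boundary contribution.
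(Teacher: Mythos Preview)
Your plan is correct and will prove the lemma, but it diverges from the paper's route in one instructive way. For (\ref{l421}) the paper does \emph{not} apply It\^o's formula to $|\tilde X_t|^{2p}$; instead it applies It\^o only to $|\tilde X_t|^{2}$, uses Proposition~\ref{multi} in its plain unweighted form (with $a=0$) to obtain the pathwise inequality
\[
|\tilde X_t|^{2}+2\lambda_1|\tilde K|_0^t\;\le\;|\tilde X_0|^{2}+C\!\int_0^t\!\bigl(1+|\tilde X_s|^{2}+\tilde{\mathbb E}|\tilde X_s|^{2}\bigr)\,\mathrm{d}s
+2\!\int_0^t\!\langle \tilde X_s,\sigma(\tilde X_s,\mathcal L_{\tilde X_s})\,\mathrm{d}\tilde W_s\rangle,
\]
and only \emph{then} raises this scalar inequality to the $p$-th power, takes $\sup_{t\le T}$, applies BDG and Gronwall. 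This one-step device delivers $\tilde{\mathbb E}\sup_t|\tilde X_t|^{2p}$ and $\tilde{\mathbb E}(|\tilde K|_0^T)^p$ simultaneously, so your weighted-measure reading of Proposition~\ref{multi} (which is perfectly valid) and the separate extraction of the $K$-moment become unnecessary. Your route trades a little extra bookkeeping for the conceptual gain of seeing exactly how the $d\tilde K$-term interacts with higher-order It\^o expansions; the paper's route is shorter because all the nonlinear-in-$p$ work happens \emph{after} the geometric input from Proposition~\ref{multi} has already linearised the problem. For (\ref{l422}) the paper simply repeats the same quadratic-then-power argument on $[s,t]$ in one line; your more explicit discussion of the boundary term $|\tilde X_s|^2-|\tilde X_t|^2$ and the coupled estimation of $\tilde{\mathbb E}|\tilde X_t-\tilde X_s|^{2p}$ with $\tilde{\mathbb E}(|\tilde K|_s^t)^p$ is a faithful unpacking of what that line hides.
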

	
	\begin{proof}
		By using the truncation approach to $\tilde{X}_t$ and applying Fatou's Lemma, we may assume that $\tilde{X}_t$ is bounded without loss of generality.
		
		For the equation (\ref{mve}), since $0\in \text{Int}D(A)$, by Proposition \ref{multi} and It\^o's fomula we have
		\begin{align*}
			|\tilde{X}_t|^{2}=&\,|\tilde{X}_0|^{2}-2\int_{0}^{t}\langle \tilde{X}_s, \mathrm{d}\tilde{K}_s\rangle + 2\int_{0}^{t}\langle \tilde{X}_s, b(\tilde{X}_s,\mathcal{L}_{\tilde{X}_s})\rangle\mathrm{d}s \\
			&\,+ 2\int_{0}^{t}\langle \tilde{X}_s, \sigma(\tilde{X}_s,\mathcal{L}_{\tilde{X}_s})\mathrm{d}\tilde{W}_s\rangle + \int_{0}^{t} \lVert \sigma(\tilde{X}_s,\mathcal{L}_{\tilde{X}_s}) \rVert^2\mathrm{d}s \\
			\leq &\, |\tilde{X}_0|^{2}-2\lambda_{1}|\tilde{K}|_0^t + C\int_{0}^{t}\left(1+|\tilde{X}_s|^2+\mathbb{E}|\tilde{X}_s|^2\right)\mathrm{d}s + 2\int_{0}^{t}\langle \tilde{X}_s, \sigma(\tilde{X}_s,\mathcal{L}_{\tilde{X}_s})\mathrm{d}\tilde{W}_s\rangle.
		\end{align*}
		By taking expectations on the both sides of the inequality above, we get
		\begin{align*}
			\tilde{\mathbb{E}}|\tilde{X}_t|^{2}\leq\, \tilde{\mathbb{E}}|\tilde{X}_0|^{2}+C\int_{0}^{t}\left(1+\tilde{\mathbb{E}}|\tilde{X}_s|^2\right)\mathrm{d}s,
		\end{align*}
		which implies $\sup_{t\in[0,T]}\tilde{\mathbb{E}}|\tilde{X}_t|^{2}<\infty$ by Gronwall's inequality.
		
		By BDG inequality and Young's inequality, for any $t\in[0,T]$, it holds that
		\begin{align*}
			&\tilde{\mathbb{E}}\left(\sup_{s\in[0,t]}|\tilde{X}_s|^{2p}+(|\tilde{K}|_0^t)^p\right)\\
\leq &\, C\tilde{\mathbb{E}}|\tilde{X}_0|^{2p}+C\tilde{\mathbb{E}}\left(\int_{0}^{t}\left(1+|\tilde{X}_s|^2+\tilde{\mathbb{E}}|\tilde{X}_s|^2\right)\mathrm{d}s\right)^p  +C\tilde{\mathbb{E}}\left(\int_{0}^{t}\langle \tilde{X}_s, \sigma(\tilde{X}_s,\mathcal{L}_{\tilde{X}_s})\mathrm{d}\tilde{W}_s\rangle\right)^p \\
			\leq&\, C\tilde{\mathbb{E}}|\tilde{X}_0|^{2p}+C+Ct^{p-1}\int_{0}^{t}\tilde{\mathbb{E}}|\tilde{X}_s|^{2p}\mathrm{d}s+C\tilde{\mathbb{E}}\left(\int_{0}^{t}|\tilde{X}_s|^2\lVert\sigma(\tilde{X}_s,\mathcal{L}_{\tilde{X}_s}) \rVert^2\mathrm{d}s\right)^{\frac{p}{2}} \\
			\leq&\, C+Ct^p+\frac{1}{2}\tilde{\mathbb{E}}\sup_{s\in[0,t]}|\tilde{X}_s|^{2p}+Ct^{p-1}\int_{0}^{t}\tilde{\mathbb{E}}|\tilde{X}_s|^{2p}\mathrm{d}s,
		\end{align*}
		where $C$ depends on $T,p,L$. This implies (\ref{l421}) by directly using Gronwall's inequality.
		
		
		By the same deduction, for any $t\in[0,T]$ we get
		\begin{align*}
			&\tilde{\mathbb{E}}|\tilde{X}_t-\tilde{X}_s|^{2p}+\tilde{\mathbb{E}}(|\tilde{K}|_s^t)^{p}\\
\leq&\, C\tilde{\mathbb{E}}\left(\int_{s}^{t}\left(1+|\tilde{X}_r|^2+\mathbb{E}|\tilde{X}_r|^2\right)\mathrm{d}r\right)^p+C\tilde{\mathbb{E}}\left(\int_{s}^{t}\langle \tilde{X}_r, \sigma(\tilde{X}_r,\mathcal{L}_{\tilde{X}_r})\mathrm{d}\tilde{W}_r\rangle\right)^p \\
			\leq&\, C(t-s)^p,
		\end{align*}
		which completes the proof.
		
	\end{proof}
	
	Next we will show the existence of the martingale solution.
	
	\begin{proposition}\label{magl}
		Assume that the assumptions (\hyperref[h0]{\textbf{H0}}), (\ref{hblg}) and (\ref{hslg}) hold. If $\mathcal{L}_{X_0}\in\mathcal{P}({\overline{D(A)}})$ and $\mathbb{E}|X_0|^{2p}<\infty$ with $p>1$, then  there exists a martingale solution to the equation (\ref{mve}).
	\end{proposition}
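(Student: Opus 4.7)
The plan is to regularize the multivalued operator $A$ via its Yosida approximation $A^n := A^{1/n}$, construct solutions to the resulting single-valued McKean-Vlasov SDE, and then pass to the limit $n\to\infty$ to obtain a martingale solution of (\ref{mve}). For each $n$, the equation
$$dX_t^n = b(X_t^n,\mathcal{L}_{X_t^n})\,dt + \sigma(X_t^n,\mathcal{L}_{X_t^n})\,dW_t - A^n(X_t^n)\,dt,\qquad X_0^n=X_0,$$
has Lipschitz drift $A^n$ and continuous coefficients $b,\sigma$ of linear growth; existence of a weak solution $(X^n,W)$ follows by standard McKean-Vlasov theory, e.g.\ by freezing $\mu_\cdot\in C([0,T];\mathcal{P}_2(\mathbb{R}^d))$, solving the resulting standard SDE, and closing with a Schauder fixed-point argument based on tightness. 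Setting $K_t^n := \int_0^t A^n(X_s^n)\,ds$, the law $\mathbb{P}^n := \mathcal{L}(X^n,K^n)$ belongs to $\mathcal{P}(\mathcal{W})$.

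The next step is to derive $n$-uniform a~priori estimates paralleling Lemma \ref{l42}. Using $0\in\mathrm{Int}\,D(A)$ and the local boundedness of $A_\epsilon$ near $0$ from \textbf{(H0)}, together with the identity $X_t^n = J^n(X_t^n) + n^{-1}A^n(X_t^n)$ and the monotonicity of $A$ applied to $(J^n(X_t^n),A^n(X_t^n))\in Gr(A)$ tested against $(a,y_a)\in Gr(A)$ with $|a|\le\gamma$, one derives a C\'epa-type bound
$$\int_s^t\langle X_r^n-a,\, A^n(X_r^n)\rangle\,dr \ge \lambda_1\,|K^n|_s^t - \lambda_2\int_s^t|X_r^n-a|\,dr - \lambda_3(t-s)$$
with constants independent of $n$. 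Combining this with It\^o's formula for $|X_t^n|^{2p}$, the growth conditions (\ref{hblg})--(\ref{hslg}), the BDG inequality and Gronwall, one obtains
$$\sup_n\mathbb{E}\Big(\sup_{t\le T}|X_t^n|^{2p} + (|K^n|_{TV})^{p}\Big) < \infty,\qquad \mathbb{E}\bigl(|X_t^n-X_s^n|^{2p} + (|K^n|_s^t)^{p}\bigr)\le C(t-s)^p.$$
Kolmogorov's criterion then yields tightness of $\{\mathbb{P}^n\}$ on $\mathcal{W}$; passing to a subsequence and invoking Skorohod representation one obtains $(\tilde X^{n_k},\tilde K^{n_k},\tilde W^{n_k}) \to (\tilde X,\tilde K,\tilde W)$ almost surely in $\mathcal{W}\times C([0,T];\mathbb{R}^d)$ on an extended space, with $\tilde W$ a Brownian motion.

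The limit is then identified as a martingale solution. Continuity of $b,\sigma$, the uniform moment bound, and Remark \ref{W2} give $\mathbb{W}_2(\mathcal{L}_{\tilde X_s^{n_k}},\mathcal{L}_{\tilde X_s})\to 0$ uniformly in $s$, which lets us pass to the limit in the drift and stochastic integral terms, while Lemma \ref{kfcon} and Proposition \ref{kcon} handle the $d\tilde K$-integral terms appearing when It\^o's formula is applied to any $f\in C_b^2(\mathbb{R}^d)$ along $(X^{n_k},K^{n_k})$. This verifies the martingale characterization of Definition \ref{msolu} under $\mathbb{P}_0 := \mathcal{L}(\tilde X,\tilde K)$, provided we know $\mathbb{P}_0(\mathcal{A})=1$.

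The principal obstacle is precisely this last point: verifying $(\tilde X,\tilde K)\in\mathcal{A}$ almost surely, i.e.\ that the multivalued inclusion survives in the limit. For each $(x,y)\in Gr(A)$ and $0\le s<t\le T$, the Yosida property (\ref{yosidap}) gives $A^{n_k}(X_r^{n_k})\in A(J^{n_k}(X_r^{n_k}))$, hence by monotonicity
$$\int_s^t\langle J^{n_k}(X_r^{n_k})-x,\, A^{n_k}(X_r^{n_k})-y\rangle\,dr \ge 0.$$
Since $|X_r^{n_k}-J^{n_k}(X_r^{n_k})| = n_k^{-1}|A^{n_k}(X_r^{n_k})|$ and $\int_0^T|A^{n_k}(X_r^{n_k})|\,dr = |K^{n_k}|_{TV}$ is bounded in $L^p$, we have $J^{n_k}(X^{n_k}) \to \tilde X$ a.s.\ in $C([0,T];\mathbb{R}^d)$; Lemma \ref{kfcon} then delivers $\int_s^t\langle \tilde X_r-x,\, d\tilde K_r - y\,dr\rangle\ge 0$ for every $(x,y)\in Gr(A)$, which is the defining property of $\mathcal{A}$. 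Combined with Proposition \ref{vv}, this yields the required martingale solution and, simultaneously, a weak solution to (\ref{mve}).
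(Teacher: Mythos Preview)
Your approach via Yosida regularisation of $A$ is a genuinely different route from the paper's. The paper instead uses an Euler--Maruyama time discretisation
\[
dX_t^n \in -A(X_t^n)\,dt + b(X_{t_n}^n,\mathcal{L}_{X_{t_n}^n})\,dt + \sigma(X_{t_n}^n,\mathcal{L}_{X_{t_n}^n})\,dW_t,\qquad t_n=2^{-n}[2^nt],
\]
keeping the multivalued operator $A$ intact and solving a deterministic Skorohod problem on each dyadic interval. The payoff is that every approximant $(X^n,K^n)$ genuinely lies in $\mathcal{A}$, so Lemma~\ref{kfcon} applies verbatim to give $\mathbb{P}_0(\mathcal{A})=1$ in one stroke. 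Your scheme trades this for a single-valued SDE to which standard McKean--Vlasov existence applies, but pays the price that $(X^n,K^n)\notin\mathcal{A}$ in general: $X^n$ need not stay in $\overline{D(A)}$, and $\langle X^n_t-x,A^n(X^n_t)-y\rangle\ge 0$ can fail for $(x,y)\in Gr(A)$.

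This is exactly where your argument has a gap. You invoke Lemma~\ref{kfcon} for the constraint, but its hypothesis $\mathbb{P}_n(\mathcal{A})=1$ is not satisfied by your approximants. Your direct argument is on the right track, but the claimed convergence $J^{n_k}(X^{n_k})\to\tilde X$ in $C([0,T];\mathbb{R}^d)$ is unjustified: the bound on $|K^{n_k}|_{TV}=\int_0^T|A^{n_k}(X_r^{n_k})|\,dr$ controls only the $L^1(dt)$ distance between $J^{n_k}(X^{n_k})$ and $X^{n_k}$, not the sup-norm. The repair is to avoid uniform convergence altogether: write
\[
\int_s^t\langle J^{n_k}(\tilde X^{n_k}_r)-x,\,d\tilde K^{n_k}_r\rangle
=\int_s^t\langle \tilde X^{n_k}_r-x,\,d\tilde K^{n_k}_r\rangle
-\frac{1}{n_k}\int_s^t|A^{n_k}(\tilde X^{n_k}_r)|^2\,dr,
\]
use Proposition~\ref{kcon} (not Lemma~\ref{kfcon}) for the first term, and observe that the second is $\le 0$, which gives the inequality in the limit with the correct sign. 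The $\langle\cdot,y\rangle$ term converges since the correction is $O(n_k^{-1}|\tilde K^{n_k}|_{TV})\to 0$. That $\tilde X_r\in\overline{D(A)}$ for every $r$ then follows from $J^{n_k}(\tilde X^{n_k}_r)\in D(A)$, the $L^1(dt)$ convergence, and continuity of $\tilde X$. With these adjustments your Yosida scheme goes through; the paper's discretisation simply sidesteps the issue by preserving membership in $\mathcal{A}$ throughout.
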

	
	\begin{proof}
		For any fixed $n\in\mathbb{N}$, consider the following  Euler-Maruyama approximation equation:
		\begin{align*}
			\mathrm{d}X_t^n\in -A(X_t^n)+b(X^n_{t_n},\mathcal{L}_{X^n_{t_n}})\mathrm{d}t+\sigma(X^n_{t_n},\mathcal{L}_{X^n_{t_n}})\mathrm{d}W_t,\quad X^n_0=X_0,
		\end{align*}
		where $t_n=2^{-n}[2^nt]$ and $[a]$ denotes the integer part of a real number $a$. By solving a deterministic Skorohod problem, this equation can be solved step by step. That implies that there exists $(X^n,K^n)\in \mathcal{A}$ such that
		\begin{align}\label{emsolu}
			X_t^n=X_0-K_t^n+\int_{0}^{t}b(X_{s_n}^n,\mathcal{L}_{X_{s_n}^n})\mathrm{d}s+\int_{0}^{t}\sigma(X_{s_n}^n,\mathcal{L}_{X_{s_n}^n})\mathrm{d}W_s.
		\end{align}
		By Lemma \ref{l42} and the assumption $\mathbb{E}|X_0|^{2p}<\infty$, we have
		\begin{align}\label{l4b}
			&\sup_{n\in\mathbb{N}}\mathbb{E}(\sup_{t\in[0,T]}|X_t^n|^{2p})+\sup_{n\in\mathbb{N}}\mathbb{E}(|K^n|_0^T)^{p}< \infty, \\
			&\sup_{n\in\mathbb{N}}\mathbb{E}|X_t^n-X_s^n|^{2p}\leq C(t-s)^p,\quad 0\leq s<t\leq T.
		\end{align}
		
		By \citep[theorem 3.2]{Zalinescu} we deduce that $(X^n,K^n)_{n\in\mathbb{N}}$ is tight in $\mathcal{W}$. Let $\mathbb{P}_n:=\tilde{P}\circ(X^n,K^n)^{-1}$, then there exists a subsequence (still denoted by $\mathbb{P}_n$) and $\mathbb{P}_0\in \mathcal{P}(\mathcal{W})$ such that $\mathbb{P}_n$ weakly converges to $\mathbb{P}_0$. By Lemma \ref{kfcon}, we have
		\begin{align*}
			\mathbb{P}_0(\mathcal{A})=1.
		\end{align*}
		To complete the proof, it suffices to show that for each $f\in C_b^2(\mathbb{R}^d)$, $M_t^f$ is a continuous $\overline{\mathbf{W}}_t$-martingale under $\mathbb{P}_0$. Specifically, we only need to verify that for any $0\leq s<t\leq T$ and bounded $\overline{\mathbf{W}}_s$-measurable functional $G$ on $\mathcal{W}$,
		\begin{align*}
			\mathbb{E}^{\mathbb{P}_0}\left( (M_t^f(X,K)-M_s^f(X,K))G\right)=0.
		\end{align*}
		
		Let
		\begin{align*}
			M_t^{n,f}:=&f(X_t)-f(X_0)+\int_{0}^{t}\langle \nabla f(X_s),\mathrm{d}K_s\rangle-\sum_{i}\int_{0}^{t}b_i(X_{s_n},\mathcal{L}^n_{X_{s_n}})\nabla_i f(X_s)\mathrm{d}s \\
			&-\frac{1}{2}\sum_{i,j}\int_{0}^{t}(\sigma^*(X_{s_n},\mathcal{L}^n_{X_{s_n}})\sigma(X_{s_n},\mathcal{L}^n_{X_{s_n}}))_{ij}\nabla_{ij}^2(f(X_s))\mathrm{d}s,
		\end{align*}
		where $\mathcal{L}^n_{X_{s_n}}=\mathbb{P}_n\circ X_{s_n}^{-1}$.
		
		Since the equation (\ref{emsolu}) admits a weak solution $(X^n,K^n)$, there exists a martingale solution $\mathbb{P}_n$ on $(\mathcal{W},\mathbf{W})$ for (\ref{emsolu}) due to Proposition \ref{vv}. This implies that $M_t^{n,f}$ is a continuous $\overline{\mathbf{W}}_t$-adapted martingale under $\mathbb{P}_n$. Thus we have
		\begin{align}
			\mathbb{E}^{\mathbb{P}_n}\left( (M_t^{n,f}(X,K)-M_s^{n,f}(X,K))G\right)=0.
		\end{align}
		
		Since $\mathbb{P}_n$ weakly converges to $\mathbb{P}_0$ as $n\to\infty$, we obtain
		\begin{align*}
			\lim\limits_{n\to\infty}\mathbb{E}^{\mathbb{P}_n}\left((f(X_t)-f(X_s))G\right)=\mathbb{E}^{\mathbb{P}_0}\left((f(X_t)-f(X_s))G\right).
		\end{align*}
		Furthermore, by applying Lemma \ref{kfcon} we get
		\begin{align*}
			\lim\limits_{n\to\infty}\mathbb{E}^{\mathbb{P}_n}\left(G\int_{s}^{t}\left\langle \nabla f(X_r),\mathrm{d}K_r\right\rangle\right)=\mathbb{E}^{\mathbb{P}_0}\left(G\int_{s}^{t}\langle \nabla f(X_r),\mathrm{d}K_r\rangle\right).
		\end{align*}
		
		We now prove the following two equations: 
		\begin{align}\label{48b}
			\lim\limits_{n\to\infty}\mathbb{E}^{\mathbb{P}_n}\left(G\int_{0}^{t} b_i(X_{s_n},\mathcal{L}^n_{X_{s_n}})\nabla_if(X_s)\mathrm{d}s\right)=\mathbb{E}^{\mathbb{P}_0}\left(G\int_{0}^{t}b_i(X_{s},\mathcal{L}_{X_{s}})\nabla_if(X_s)\mathrm{d}s\right)
		\end{align}
		and
		\begin{align}\label{49s}
			&\lim\limits_{n\to\infty}\mathbb{E}^{\mathbb{P}_n}\left(G\int_{0}^{t} (\sigma^*(X_{s_n},\mathcal{L}^n_{X_{s_n}})\sigma(X_{s_n},\mathcal{L}^n_{X_{s_n}}))_{ij}\nabla_{ij}^2(f(X_s))\mathrm{d}s\right)  \nonumber\\
			=&\mathbb{E}^{\mathbb{P}_0}\left(G\int_{0}^{t}(\sigma^*(X_{s},\mathcal{L}_{X_{s}})\sigma(X_{s},\mathcal{L}_{X_{s}}))_{ij}\nabla_{ij}^2(f(X_s))\mathrm{d}s\right).
		\end{align}
		
		By Skorohod's representation theorem, there exists a probability space  $(\tilde{\Omega},\tilde{\mathcal{F}},\tilde{\mathbb{P}})$ and a sequence of $\mathcal{W}$-valued random variables $\left\{(\tilde{X}^n,\tilde{K}^n)\right\}_{n\in\mathbb{N}}$ and $(\tilde{X},\tilde{K})$ defined on this probability space such that
		\begin{enumerate}[(i)]
			\item the law of $(\tilde{X}^n,\tilde{K}^n)$ is $\mathbb{P}_n$ for each $n\in\mathbb{N}$ and the law of $(\tilde{X},\tilde{K})$ is $\mathbb{P}_0$,
			\item $(\tilde{X}^n,\tilde{K}^n)\to(\tilde{X},\tilde{K}),\,\, \tilde{\mathbb{P}}$-a.s. as  $n\to\infty$.
		\end{enumerate}
		
		Thus, it suffices to verify
		\begin{align}\label{410b}
			\lim\limits_{n\to\infty}\mathbb{E}^{\mathbb{P}_n}\left(G\int_{0}^{t} b_i(\tilde{X}^n_{s_n},\mathcal{L}^n_{\tilde{X}^n_{s_n}})\nabla_if(\tilde{X}^n_s)\mathrm{d}s\right)=\mathbb{E}^{\mathbb{P}_0}\left(G\int_{0}^{t}b_i(\tilde{X}_{s},\mathcal{L}_{\tilde{X}_{s}})\nabla_if(\tilde{X}_s)\mathrm{d}s\right)
		\end{align}
		and
		\begin{align}\label{411s}
			\lim\limits_{n\to\infty}&\mathbb{E}^{\mathbb{P}_n}\left(G\int_{0}^{t} (\sigma^*(\tilde{X}^n_{s_n},\mathcal{L}^n_{\tilde{X}^n_{s_n}})\sigma(\tilde{X}^n_{s_n},\mathcal{L}^n_{\tilde{X}^n_{s_n}}))_{ij}\nabla_{ij}^2(f(\tilde{X}^n_s))\mathrm{d}s\right)  \nonumber\\
			=&\mathbb{E}^{\mathbb{P}_0}\left(G\int_{0}^{t}(\sigma^*(\tilde{X}_{s},\mathcal{L}_{\tilde{X}_{s}})\sigma(\tilde{X}_{s},\mathcal{L}_{\tilde{X}_{s}}))_{ij}\nabla_{ij}^2(f(\tilde{X}_s))\mathrm{d}s\right).
		\end{align}
		
		By (\ref{l4b}), we have
		\begin{align*}
			\sup_{n\in\mathbb{N}}\mathbb{E}^{\tilde{\mathbb{P}}}\left(\sup_{t\in[0,T]}|\tilde{X}^n_t|^{2p}\right)< \infty.
		\end{align*}
		
Combining the estimate above with (ii), we deduce that
		\begin{align}
			\lim\limits_{n\to\infty}\mathbb{W}_2\left(\mathcal{L}_{\tilde{X}^n}^n,\mathcal{L}_{\tilde{X}}\right)= 0.
		\end{align}
		
		Moreover, by (\hyperref[h2]{\textbf{H2}}), (\ref{l4b}) and (i), we obtain
		\begin{align*}
			\mathbb{E}^{\tilde{\mathbb{P}}}|b(\tilde{X}^n_{s_n},\mathcal{L}^n_{\tilde{X}^n_{s_n}})|=\mathbb{E}|b(X^n_{s_n},\mathcal{L}^n_{X^n_{s_n}})|\leq \left(\mathbb{E}|b(X^n_{s_n},\mathcal{L}^n_{X^n_{s_n}})|^{2p}\right)^{\frac{1}{2p}}
			\leq C(1+\mathbb{E}|X^n_{s_n}|^{2p})^{\frac{1}{2p}}
			< \infty.
		\end{align*}
		
		 Thus (\ref{410b}) is verified by using dominated convergence theorem and the continuity of $b$.  The equation (\ref{411s}) follows similarly.

	\end{proof}

	By Proposition \ref{vv} and Proposition \ref{magl}, the equation (\ref{mve}) has a weak solution. Then we only need to prove that pathwise uniqueness holds for the equation (\ref{mve}).
	
	\begin{proposition}
		Assume that (\hyperref[h0]{\textbf{H0}}) and (\hyperref[h2]{\textbf{H2}}) hold. Then the pathwise uniqueness holds for equation (\ref{mve}).
	\end{proposition}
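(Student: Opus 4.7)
The plan is to compare two strong solutions $(X, K)$ and $(\tilde X, \tilde K)$ of (\ref{mve}) on a common stochastic basis with $X_0 = \tilde X_0$, apply It\^o's formula to $|X_t - \tilde X_t|^2$, exploit the monotonicity of $A$ to cancel the finite-variation cross-terms, bound the diffusion quadratic variation via (H2), and close the resulting scalar integral inequality with Bihari's lemma.

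First I would introduce the localizing sequence $\tau_N := \inf\{t \in [0, T] : |X_t| \vee |\tilde X_t| \geq N\}$, which tends to $T$ almost surely by path continuity. Applying It\^o's formula on $[0, t \wedge \tau_N]$ decomposes $|X_{t \wedge \tau_N} - \tilde X_{t \wedge \tau_N}|^2$ into four contributions: (i) the $\mathrm{d}K$-cross-term $-2\int_0^{t\wedge \tau_N}\langle X_s - \tilde X_s, \mathrm{d}K_s - \mathrm{d}\tilde K_s\rangle$, which is non-positive by (\ref{mono}) (an immediate consequence of (H0) together with $\mathcal{A}$-membership of both pairs); (ii) a drift cross-term, controlled by the ambient concave monotone modulus for $b$ built into the paper's framework; (iii) the stochastic-integral term, which is a genuine martingale on $[0, \tau_N]$ and so vanishes in expectation; and (iv) the Hilbert--Schmidt trace $\int_0^{t \wedge \tau_N}\lVert\sigma(X_s, \mathcal{L}_{X_s}) - \sigma(\tilde X_s, \mathcal{L}_{\tilde X_s})\rVert^2\,\mathrm{d}s$, which by (H2)/(\ref{h2}) is dominated by $L[\varrho(|X_s - \tilde X_s|^2) + \varrho(\mathbb{W}_2^2(\mathcal{L}_{X_s}, \mathcal{L}_{\tilde X_s}))]$.

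Next I would use Remark \ref{W2} to replace $\mathbb{W}_2^2(\mathcal{L}_{X_s}, \mathcal{L}_{\tilde X_s})$ by $\mathbb{E}|X_s - \tilde X_s|^2$, and invoke the concavity of $\varrho$ with Jensen's inequality (after taking expectation) to reach
\[ u(t) \leq C \int_0^t \varrho(u(s))\,\mathrm{d}s, \qquad t \in [0, T], \]
where $u(t) := \mathbb{E}|X_{t \wedge \tau_N} - \tilde X_{t \wedge \tau_N}|^2$. Since $\varrho(0) = 0$ and $\int_{0^+} 1/\varrho(u)\,\mathrm{d}u = \infty$, Bihari's lemma (Lemma \ref{bhr}) with $c = 0$ forces $u \equiv 0$ on $[0, T]$. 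Sending $N \to \infty$ via Fatou's lemma yields $\mathbb{E}|X_t - \tilde X_t|^2 = 0$ for every $t$, so $X \equiv \tilde X$ almost surely by path continuity, and $K \equiv \tilde K$ follows by substituting back into (\ref{mve}).

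The main obstacle is the rigorous passage from the instantaneous inequality (\ref{mono}) to its Stieltjes-integral form along continuous paths on a common basis: this requires viewing both $(X, K)$ and $(\tilde X, \tilde K)$ as elements of $\mathcal{A}$ simultaneously and a careful Riemann-sum approximation, an ingredient classical in C\'epa's framework but not automatic. A secondary subtlety is that (\ref{mono}) yields only non-positivity of the $\mathrm{d}K$-cross-term---no coercive gain---so closure must come entirely from the $\sigma$-modulus in (H2) and the drift control just described.
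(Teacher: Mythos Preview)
Your proposal is correct and follows essentially the same route as the paper: apply It\^o's formula to $|X_t-\tilde X_t|^2$, discard the $\mathrm{d}K$-cross-term via (\ref{mono}), bound the drift and diffusion increments by the modulus $\varrho$ (the paper tacitly uses (H1) for the drift here as well, just as you anticipate), take expectations with Jensen, and close with Bihari's lemma. Your localization step is slightly more careful than the paper's direct passage to expectation, but otherwise the two arguments coincide.
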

	
	\begin{proof}
		Let $(\tilde{\mathcal{ S }};\tilde{W},(\tilde{X}^1,\tilde{K}^1))$ and $(\tilde{\mathcal{ S }};\tilde{W},(\tilde{X}^2,\tilde{K}^2))$ be two weak solutions to (\ref{mve}) with $\tilde{X}^1_0=\tilde{X}^2_0$. Let $Y_t:=\tilde{X}^1_t-\tilde{X}^2_t$. By It\^o's formula, we have
		\begin{align}\label{pro43}
			|Y_t|^2=&2\int_{0}^{t}\langle Y_s, b(\tilde{X}^1_s,\mathcal{L}_{\tilde{X}^1_s})-b(\tilde{X}^2_s,\mathcal{L}_{\tilde{X}^2_s})\rangle\mathrm{d}s+\int_{0}^{t}\lVert\sigma(\tilde{X}^1_s,\mathcal{L}_{\tilde{X}^1_s})-\sigma(\tilde{X}^2_s,\mathcal{L}_{\tilde{X}^2_s})\rVert^2\mathrm{d}s \nonumber\\
			&+2\int_{0}^{t}\langle Y_s, \sigma(\tilde{X}^1_s,\mathcal{L}_{\tilde{X}^1_s})-\sigma(\tilde{X}^2_s,\mathcal{L}_{\tilde{X}^2_s})\mathrm{d}\tilde{W}_s\rangle-2\int_{0}^{t}\langle Y_s,\mathrm{d}\tilde{K}_s^1-\mathrm{d}\tilde{K}_s^2\rangle.
		\end{align}
		
		Taking the expectation on both two sides of the equation (\ref{pro43}), by H\"older's inequality, (\hyperref[h3]{\textbf{H3}}), Remark \ref{W2} and (\ref{mono}), we get
		\begin{align*}
			\tilde{\mathbb{E}}|Y_t|^2\leq&2\tilde{\mathbb{E}}\int_{0}^{T}\langle Y_s,b(\tilde{X}^1_s,\mathcal{L}_{\tilde{X}^1_s})-b(\tilde{X}^2_s,\mathcal{L}_{\tilde{X}^2_s})\rangle\mathrm{d}s +\tilde{\mathbb{E}}\int_{0}^{T}\lVert\sigma(\tilde{X}^1_s,\mathcal{L}_{\tilde{X}^1_s})-\sigma(\tilde{X}^2_s,\mathcal{L}_{\tilde{X}^2_s})\rVert^2\mathrm{d}s \\
			&+2\tilde{\mathbb{E}}\int_{0}^{T}\langle Y_s, \sigma(\tilde{X}^1_s,\mathcal{L}_{\tilde{X}^1_s})-\sigma(\tilde{X}^2_s,\mathcal{L}_{\tilde{X}^2_s})\mathrm{d}\tilde{W}_s\rangle  \\
			\leq& C\tilde{\mathbb{E}}\int_{0}^{t}(\varrho(|Y_s|^2)+\varrho(\mathbb{W}^2_2(\mathcal{L}_{\tilde{X}_s^1},\mathcal{L}_{\tilde{X}_s^2})))\mathrm{d}s \\
			\leq& C\tilde{\mathbb{E}}\int_{0}^{t}[\varrho(|Y_s|^2)+\varrho(\tilde{\mathbb{E}}|Y_s|^2)]\mathrm{d}s.
		\end{align*}
		
		Applying Jensen's inequality, we obtain
		\begin{align*}
			\tilde{\mathbb{E}}|Y_t|^2\leq C\int_{0}^{t}\varrho(\tilde{\mathbb{E}}|Y_s|^2)\mathrm{d}s,
		\end{align*}
		where $C$ depends on $L_2$ and $T$. By Lemma \ref{bhr}, we get $\tilde{\mathbb{E}}|Y_t|^2=0$, which implies that for any $t\in[0,T]$, $Y_t=0\,\,\,$a.s.. Therefore we get the desired result.
	\end{proof}

	\subsection{Yosida Approximations}
	
	Consider the following controlled  multivalued McKean-Vlasov SDE:
	\begin{equation}\label{zeue}
		\begin{cases}
			\mathrm{d}Z^{\epsilon,h_\epsilon}_t\in b_{\epsilon}(Z^{\epsilon,h_\epsilon}_t,\mathcal{L}_{X^{\epsilon}_t})\mathrm{d}t+\sigma_\epsilon(Z^{\epsilon,h_\epsilon}_t,\mathcal{L}_{X^{\epsilon}_t})h_\epsilon(t)\mathrm{d}t+\sqrt{\epsilon}\sigma_\epsilon(Z^{\epsilon,h_\epsilon}_t,\mathcal{L}_{X^{\epsilon}_t})\mathrm{d}W_t-A_\epsilon(Z^{\epsilon,h_\epsilon}_t)\mathrm{d}t , \\
			Z^{\epsilon,h_\epsilon}_0=x\in\overline{D(A)}.
		\end{cases}
	\end{equation}
	
	By Lemma \ref{girs} we know that the above equation has a unique solution $(Z^{\epsilon,h_\epsilon}_t,K^{\epsilon,h_\epsilon}_t)$. Next we consider the following Yosida Approximation:
	\begin{equation}
		\begin{cases}
			\mathrm{d}Z^{\epsilon,h_\epsilon,\alpha}_t= b_{\epsilon}(Z^{\epsilon,h_\epsilon,\alpha}_t,\mathcal{L}_{X^{\epsilon}_t})\mathrm{d}t+\sigma_\epsilon(Z^{\epsilon,h_\epsilon,\alpha}_t,\mathcal{L}_{X^{\epsilon}_t})h_\epsilon(t)\mathrm{d}t+\sqrt{\epsilon}\sigma_\epsilon(Z^{\epsilon,h_\epsilon,\alpha}_t,\mathcal{L}_{X^{\epsilon}_t})\mathrm{d}W_t-A_\epsilon^\alpha(Z^{\epsilon,h_\epsilon,\alpha}_t)\mathrm{d}t,  \\
			Z^{\epsilon,h_\epsilon,\alpha}_0=x\in\overline{D(A)}.
		\end{cases}
	\end{equation}
	
	
	\begin{lemma}\label{yozfin}
		Assume that (\hyperref[h0]{\textbf{H0}})-(\hyperref[h4]{\textbf{H4}}) hold. For any $p\geq 1$, $h_\epsilon\in\mathcal{D}_m$ and $x\in\overline{D(A)}$, there exists some constant $C>0$ such that for any $\epsilon\in(0,1)$ and $\alpha>0$,
		\begin{equation*}
			\mathbb{E}\sup_{t\in[0,T]}|Z^{\epsilon,h_\epsilon,\alpha}_t|^{2p}\leq C,
		\end{equation*}
		where C may be dependent of $p,\,T,\,m$ and $x$, but independent of $\epsilon$ and $\alpha$.
		
		\begin{proof}
			Since $A^\alpha_\epsilon$ is monotone, it follows that
			\begin{align*}
				-&\langle Z^{\epsilon,h_\epsilon,\alpha}_s,A^\alpha_\epsilon\left(Z^{\epsilon,h_\epsilon,\alpha}_s\right)\rangle  =-\langle Z^{\epsilon,h_\epsilon,\alpha}_s,A^\alpha_\epsilon\left(Z^{\epsilon,h_\epsilon,\alpha}_s\right)-A^\alpha_\epsilon(0)+A^\alpha_\epsilon(0)\rangle  \leq -\langle Z^{\epsilon,h_\epsilon,\alpha}_s,A^\alpha_\epsilon(0)\rangle.
			\end{align*}
			Noting that $\sup_\epsilon|A_\epsilon^\alpha(0)|\leq\sup_\epsilon|A_\epsilon^0(0)|$, we have
			\begin{align*}
				-\langle Z^{\epsilon,h_\epsilon,\alpha}_s,A^\alpha_\epsilon\left(Z^{\epsilon,h_\epsilon,\alpha}_s\right)\rangle\leq \frac{1}{2}|Z^{\epsilon,h_\epsilon,\alpha}_s|^2+C.
			\end{align*}
			By It$\hat{\text{o}}$'s formula we get
			\begin{align*}
				&|Z^{\epsilon,h_\epsilon,\alpha}_t|^{2p} \\
				=&|x|^{2p}+2p\int_{0}^{t}|Z^{\epsilon,h_\epsilon,\alpha}_s|^{2p-2}\langle Z^{\epsilon,h_\epsilon,\alpha}_s,b_\epsilon(Z^{\epsilon,h_\epsilon,\alpha}_s,\mathcal{L}_{X^\epsilon_s})\rangle\mathrm{d}s \\
				&+2p\int_{0}^{t}|Z^{\epsilon,h_\epsilon,\alpha}_s|^{2p-2}\langle Z^{\epsilon,h_\epsilon,\alpha}_s,\sigma_\epsilon(Z^{\epsilon,h_\epsilon,\alpha}_s,\mathcal{L}_{X^\epsilon_s})h_\epsilon(s)\rangle\mathrm{d}s \\
				&+2p\sqrt{\epsilon}\int_{0}^{t}|Z^{\epsilon,h_\epsilon,\alpha}_s|^{2p-2}\langle Z^{\epsilon,h_\epsilon,\alpha}_s,\sigma_\epsilon(Z^{\epsilon,h_\epsilon,\alpha}_s,\mathcal{L}_{X^\epsilon_s})\mathrm{d}W_s\rangle \\
				&+p\epsilon\int_{0}^{t}|Z^{\epsilon,h_\epsilon,\alpha}_s|^{2p-2}\lVert \sigma_\epsilon(Z^{\epsilon,h_\epsilon,\alpha}_s,\mathcal{L}_{X_s^\epsilon})\rVert^2\mathrm{d}s \\
				&+2p(p-1)\epsilon\int_{0}^{t}|Z^{\epsilon,h_\epsilon,\alpha}_s|^{2p-4}\langle Z^{\epsilon,h_\epsilon,\alpha}_s,\sigma_\epsilon(Z^{\epsilon,h_\epsilon,\alpha}_s,\mathcal{L}_{X^\epsilon_s})\sigma^\ast_\epsilon(Z^{\epsilon,h_\epsilon,\alpha}_s,\mathcal{L}_{X^\epsilon_s})Z^{\epsilon,h_\epsilon,\alpha}_s\rangle\mathrm{d}s \\
				&-2p\int_{0}^{t}|Z^{\epsilon,h_\epsilon,\alpha}_s|^{2p-2}\langle Z^{\epsilon,h_\epsilon,\alpha}_s,A_\epsilon^\alpha(Z^{\epsilon,h_\epsilon,\alpha}_s)\rangle\mathrm{d}s.
			\end{align*}
			Then by (\hyperref[h0]{\textbf{H0}})-(\hyperref[h2]{\textbf{H2}}) we have
			\begin{align*}
				|Z^{\epsilon,h_\epsilon,\alpha}_t|^{2p} \leq &C+C\int_{0}^{t}|Z^{\epsilon,h_\epsilon,\alpha}_s|^{2p}\mathrm{d}s+2p\sqrt{\epsilon}\int_{0}^{t}|Z^{\epsilon,h_\epsilon,\alpha}_s|^{2p-2}\langle Z^{\epsilon,h_\epsilon,\alpha}_s,\sigma_\epsilon(Z^{\epsilon,h_\epsilon,\alpha}_s,\mathcal{L}_{X^\epsilon_s})\mathrm{d}W_s\rangle \\
				&+2p\int_{0}^{t}|Z^{\epsilon,h_\epsilon,\alpha}_s|^{2p}\lvert h_\epsilon(s)\rvert\mathrm{d}s+2p\int_{0}^{t}|Z^{\epsilon,h_\epsilon,\alpha}_s|^{2p-1}|\sigma_\epsilon(Z^{\epsilon,h_\epsilon,\alpha}_s,\mathcal{L}_{X^\epsilon_s})h_\epsilon(s)|\mathrm{d}s.
			\end{align*}
			Let
			\begin{equation*}
				f(t):=\mathbb{E}\sup_{s\in[0,t]}|Z^{\epsilon,h_\epsilon,\alpha}_s|^{2p}.
			\end{equation*}
			
			By the same deduction in Lemma \ref{l42} we get $\sup_{t\in[0,T]}\mathbb{E}|X^\epsilon_t|^{2p}<C$, where $C$ depends on $x$, $p$ and $T$. Then by BDG's inequality, H\"{o}lder's inequality, Young's inequality, \hyperref[h1]{\textbf{(H1)}} and \hyperref[h2]{\textbf{(H2)}}, we have
			\begin{align*}
				&\mathbb{E}\sup_{t_1\in[0,t]}2p\sqrt{\epsilon}\int_{0}^{t_1}|Z^{\epsilon,h_\epsilon,\alpha}_s|^{2p-2}\langle Z^{\epsilon,h_\epsilon,\alpha}_s,\sigma_\epsilon(Z^{\epsilon,h_\epsilon,\alpha}_s,\mathcal{L}_{X^\epsilon_s})\mathrm{d}W_s\rangle  \\
				\leq& C\mathbb{E}\left[\left(\int_{0}^{t}|Z^{\epsilon,h_\epsilon,\alpha}_s|^{4p-2}\lVert\sigma_\epsilon(Z^{\epsilon,h_\epsilon,\alpha}_s,\mathcal{L}_{X^\epsilon_s})\rVert^2\mathrm{d}s\right)^{\frac{1}{2}}\right] \\
				\leq& \frac{1}{8}\mathbb{E}\sup_{s\in[0,t]}|Z^{\epsilon,h_\epsilon,\alpha}_s|^{2p}+C\int_{0}^{t}|Z^{\epsilon,h_\epsilon,\alpha}_s|^{2p}\mathrm{d}s+Ct^{\frac{1}{p}}\left(\int_{0}^{t}|Z^{\epsilon,h_\epsilon,\alpha}_s|^{2p}\mathrm{d}s\right)^{\frac{2p-2}{2p}} \\
				\leq& \frac{1}{8}f(t)+C\int_{0}^{t}\mathbb{E}|Z^{\epsilon,h_\epsilon,\alpha}_s|^{2p}\mathrm{d}s+Ct^{\frac{1}{p}}.
			\end{align*}
			
			By Young's inequality again we have
			\begin{align*}
				&\mathbb{E}\left[\int_{0}^{t}|Z^{\epsilon,h_\epsilon,\alpha}_s|^{2p}\lvert h_\epsilon(s)\rvert\mathrm{d}s\right] \\
				\leq& C\mathbb{E}\left[\left(\int_{0}^{t}|Z^{\epsilon,h_\epsilon,\alpha}_s|^{4p}\mathrm{d}s\right)^{\frac{1}{2}}\right]\left(\int_{0}^{t}|h_\epsilon(s)|^2\mathrm{d}s\right) \\
				\leq& C\mathbb{E}\left[\left(\int_{0}^{t}|Z^{\epsilon,h_\epsilon,\alpha}_s|^{4p}\mathrm{d}s\right)^{\frac{1}{2}}\right]\leq \frac{1}{4}f(t)+C\int_{0}^{t}\mathbb{E}|Z^{\epsilon,h_\epsilon,\alpha}_s|^{2p}\mathrm{d}s
			\end{align*}
			and
			\begin{align*}
				&\mathbb{E}\left[\int_{0}^{t}|Z^{\epsilon,h_\epsilon,\alpha}_s|^{2p-1}|\sigma_\epsilon(Z^{\epsilon,h_\epsilon,\alpha}_s,\mathcal{L}_{X^\epsilon_s})h_\epsilon(s)|\mathrm{d}s\right]  \\
				\leq& C\mathbb{E}\left[\left(\int_{0}^{t}|Z^{\epsilon,h_\epsilon,\alpha}_s|^{4p-2}\lVert \sigma_\epsilon(Z^{\epsilon,h_\epsilon,\alpha}_s,\mathcal{L}_{X^\epsilon_s})\rVert^2\mathrm{d}s\right)^{\frac{1}{2}}\right]+C \\
				\leq& \frac{1}{8}f(t)+C\int_{0}^{t}\mathbb{E}|Z^{\epsilon,h_\epsilon,\alpha}_s|^{2p}\mathrm{d}s+Ct^{\frac{1}{p}}.
			\end{align*}
			
			By combing the estimates above together  we get
			\begin{align*}
				f(t)\leq C\int_{0}^{t}\mathbb{E}|Z^{\epsilon,h_\epsilon,\alpha}_s|^{2p}\mathrm{d}s+Ct^{\frac{1}{p}},
			\end{align*}
			which completes the proof by Gronwall's inequality.
		\end{proof}
	\end{lemma}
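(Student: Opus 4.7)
The plan is to apply Itô's formula to $|Z^{\epsilon,h_\epsilon,\alpha}_t|^{2p}$ and then estimate each resulting term so that the $\epsilon$- and $\alpha$-dependence disappears from the final bound. The structural fact that makes this possible is that the Yosida-regularised drift $-A_\epsilon^\alpha$ is dissipative: by monotonicity of $A_\epsilon^\alpha$,
$$-\langle Z,\, A_\epsilon^\alpha(Z)\rangle = -\langle Z,\, A_\epsilon^\alpha(Z)-A_\epsilon^\alpha(0)\rangle -\langle Z,\, A_\epsilon^\alpha(0)\rangle \leq \tfrac{1}{2}|Z|^2 + \tfrac{1}{2}|A_\epsilon^\alpha(0)|^2.$$
Combining (H0) with the elementary bound $|A_\epsilon^\alpha(0)| \leq |A_\epsilon^0(0)|$ shows that $\sup_{\epsilon,\alpha}|A_\epsilon^\alpha(0)|$ is finite, so the $A_\epsilon^\alpha$-contribution yields a quadratic upper bound uniform in both parameters.

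Next I would dispose of the drift, diffusion, and control terms using the linear growth bounds in (H1) and (H2). Each gives rise to an expression of the form $|Z|^{2p-2}\langle Z, \cdot\rangle$; after Cauchy--Schwarz and Young's inequality these are dominated by $C|Z|^{2p}(1+\|\mathcal{L}_{X^\epsilon_s}\|_2^2) + C|h_\epsilon(s)|^2$. A preliminary input I would want is $\sup_{\epsilon\in(0,1)}\sup_{s\in[0,T]}\mathbb{E}|X^\epsilon_s|^{2p} < \infty$, which is established by repeating the argument directly on the equation (\ref{per}) using the same monotone structure (this is the same estimate as in Lemma \ref{l42}). The control term obeys $\int_0^T|h_\epsilon(s)|^2\,\mathrm{d}s \leq m$, which bounds the time-integrated cost of the $h_\epsilon$-terms uniformly in $\epsilon$.

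For the stochastic integral, after taking $\sup_{t\leq T}$ I would apply the Burkholder--Davis--Gundy inequality to obtain
\begin{align*}
\mathbb{E}\sup_{t\leq T} \biggl|\int_0^t |Z|^{2p-2}\langle Z, \sigma_\epsilon\,\mathrm{d}W\rangle\biggr|
\leq C\,\mathbb{E}\biggl(\int_0^T |Z|^{4p-2}\|\sigma_\epsilon\|^2\,\mathrm{d}s\biggr)^{1/2}.
\end{align*}
Applying the linear growth bound on $\sigma_\epsilon$ and Young's inequality splits this into a piece of the form $\tfrac{1}{8}\mathbb{E}\sup_{s\leq T}|Z|^{2p}$, which is absorbed into the left-hand side of the master inequality, plus a controllable term $C\int_0^T\mathbb{E}|Z|^{2p}\,\mathrm{d}s + C$. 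Setting $f(t):=\mathbb{E}\sup_{s\leq t}|Z^{\epsilon,h_\epsilon,\alpha}_s|^{2p}$ and applying Gronwall's inequality will then yield $f(T)\leq C$ with $C$ depending only on $p,T,m,x$.

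The main obstacle is uniformity in $\alpha$: the Lipschitz constant of $A_\epsilon^\alpha$ is $1/\alpha$, which blows up as $\alpha\to 0$, so any estimate that invokes Lipschitzness directly is useless. One must therefore rely exclusively on the $\alpha$-free monotonicity inequality $\langle Z,\, A_\epsilon^\alpha(Z)-A_\epsilon^\alpha(0)\rangle\geq 0$ together with the uniform bound on $A_\epsilon^\alpha(0)$ supplied by (H0). A secondary subtlety is that $\mathcal{L}_{X^\epsilon_s}$ (rather than $\mathcal{L}_{Z^{\epsilon,h_\epsilon,\alpha}_s}$) appears in the coefficients, which decouples the moment estimate from any circular dependence on the law of $Z^{\epsilon,h_\epsilon,\alpha}$ provided the a priori bound on $\mathbb{E}|X^\epsilon_s|^{2p}$ is in hand as a preliminary step.
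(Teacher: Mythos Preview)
Your proposal is correct and follows essentially the same approach as the paper: apply It\^o's formula to $|Z^{\epsilon,h_\epsilon,\alpha}_t|^{2p}$, kill the Yosida term via monotonicity and the uniform bound $|A_\epsilon^\alpha(0)|\le|A_\epsilon^0(0)|$ from (H0), control drift, diffusion and control terms by the linear-growth conditions (H1)--(H2) together with $\int_0^T|h_\epsilon|^2\le m$ and the a~priori estimate $\sup_{\epsilon,s}\mathbb{E}|X_s^\epsilon|^{2p}<\infty$, handle the stochastic integral by BDG plus Young with a small constant, and close with Gronwall. You have also correctly identified the two delicate points---that the $1/\alpha$-Lipschitz constant must never be invoked, and that the appearance of $\mathcal{L}_{X^\epsilon_s}$ rather than $\mathcal{L}_{Z^{\epsilon,h_\epsilon,\alpha}_s}$ decouples the estimate---so nothing is missing.
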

	
	The following result can be proved in a similar way as above by Proposition \ref{multi}.
	
	\begin{lemma}\label{yosifin}
		Assume that (\hyperref[h0]{\textbf{H0}})-(\hyperref[h4]{\textbf{H4}}) hold. For any $p\geq 1$ and $x\in\overline{D(A)}$, there exists a constant $C>0$ such that for any $\epsilon\in(0,1)$,
		\begin{equation*}
			\mathbb{E}\sup_{t\in[0,T]}|Z^{\epsilon,h_\epsilon}_t|^{2p}+\mathbb{E}|K^{\epsilon,h_{\epsilon}}|^T_0\leq C,
		\end{equation*}
		where $C$ may be dependent of $p,T,m$ and $x$, but independent of $\epsilon$.
	\end{lemma}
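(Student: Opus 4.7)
The plan is to mimic the proof of Lemma \ref{yozfin} verbatim, but applied directly to $(Z^{\epsilon,h_\epsilon},K^{\epsilon,h_\epsilon})$ instead of the Yosida-regularised pair, replacing the use of monotonicity of $A^\alpha_\epsilon$ (which produced the term $-\langle Z,A^\alpha_\epsilon(0)\rangle$) with the estimate from Proposition \ref{multi} applied to the pair $(Z^{\epsilon,h_\epsilon},K^{\epsilon,h_\epsilon})\in\mathcal{A}$, choosing $a=0\in\mathrm{Int}(D(A))$ guaranteed by \textbf{(H0)}.

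First I would apply It\^o's formula to $|Z^{\epsilon,h_\epsilon}_t|^{2p}$; the resulting expansion is identical to the one displayed in the proof of Lemma \ref{yozfin}, except that its last line becomes $-2p\int_0^t|Z^{\epsilon,h_\epsilon}_s|^{2p-2}\langle Z^{\epsilon,h_\epsilon}_s,\mathrm{d}K^{\epsilon,h_\epsilon}_s\rangle$. To control this reflection term I would use a weighted variant of Proposition \ref{multi}: for any non-negative adapted weight $\phi$,
\begin{equation*}
\int_0^t\phi(s)\langle Z^{\epsilon,h_\epsilon}_s,\mathrm{d}K^{\epsilon,h_\epsilon}_s\rangle\geq \lambda_1\int_0^t\phi(s)\,\mathrm{d}|K^{\epsilon,h_\epsilon}|_s-\lambda_2\int_0^t\phi(s)|Z^{\epsilon,h_\epsilon}_s|\,\mathrm{d}s-\lambda_3\int_0^t\phi(s)\,\mathrm{d}s,
\end{equation*}
which follows from the same geometric argument as C\'epa's original proof (or by integrating the pointwise form of Proposition \ref{multi} against the non-negative weight $\phi=|Z^{\epsilon,h_\epsilon}|^{2p-2}$). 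Importantly, by \textbf{(H0)} the constants $\lambda_1,\lambda_2,\lambda_3$ depend only on the common domain $\overline{D(A)}$, on the choice $a=0\in\mathrm{Int}(D(A))$, and on the uniform-in-$\epsilon$ local bound of $A_\epsilon$ at $0$, so they can be chosen independently of $\epsilon$.

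Substituting this into the It\^o expansion with $\phi(s)=|Z^{\epsilon,h_\epsilon}_s|^{2p-2}$ produces a non-positive term $-2p\lambda_1\int_0^t|Z|^{2p-2}\mathrm{d}|K|_s$ (which I drop when estimating moments of $Z$) together with two error terms dominated by $\int_0^t(1+|Z^{\epsilon,h_\epsilon}_s|^{2p})\mathrm{d}s$. From here the argument of Lemma \ref{yozfin} applies line by line: BDG on the stochastic integral, Young and H\"older inequalities, the growth assumption \textbf{(H2)}, the control $h_\epsilon\in\mathcal{D}_m$, and the a priori bound $\sup_{t\leq T}\mathbb{E}|X^\epsilon_t|^{2p}\leq C$ (obtained as in Lemma \ref{l42}) reduce the problem to
\begin{equation*}
f(t):=\mathbb{E}\sup_{s\in[0,t]}|Z^{\epsilon,h_\epsilon}_s|^{2p}\leq C+C\int_0^t f(s)\,\mathrm{d}s,
\end{equation*}
and Gronwall's inequality yields $\mathbb{E}\sup_{t\leq T}|Z^{\epsilon,h_\epsilon}_t|^{2p}\leq C$.

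For the variation bound I would specialise to $p=1$, where the weight disappears and Proposition \ref{multi} is used exactly as stated. The It\^o expansion of $|Z^{\epsilon,h_\epsilon}_t|^{2}$ then gives
\begin{equation*}
|Z^{\epsilon,h_\epsilon}_t|^{2}+2\lambda_1|K^{\epsilon,h_\epsilon}|_0^t\leq|x|^2+\text{(drift, diffusion, control terms)},
\end{equation*}
and taking expectations, combined with the moment bound just established and BDG to handle the martingale part, delivers $\mathbb{E}|K^{\epsilon,h_\epsilon}|_0^T\leq C$. The main delicate point I expect to encounter is verifying that the weighted version of Proposition \ref{multi} holds with constants independent of $\epsilon$; this is really the content of \textbf{(H0)}, which was tailored precisely so that C\'epa's geometric construction (using a ball around $0$ contained in $D(A_\epsilon)$ on which $A_\epsilon$ is bounded uniformly in $\epsilon$) produces uniform $\lambda_i$.
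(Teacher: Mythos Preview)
Your proposal is correct and matches the paper's own approach: the paper simply states that the result ``can be proved in a similar way as above by Proposition \ref{multi}'', meaning exactly what you wrote --- repeat the It\^o argument of Lemma \ref{yozfin} but replace the monotonicity estimate for $A^\alpha_\epsilon$ by C\'epa's inequality (Proposition \ref{multi}) applied to the pair $(Z^{\epsilon,h_\epsilon},K^{\epsilon,h_\epsilon})\in\mathcal{A}$ with $a=0$. Your observation that the weighted form of Proposition \ref{multi} is needed for $p>1$, and that \textbf{(H0)} is precisely what guarantees the constants $\lambda_1,\lambda_2,\lambda_3$ can be taken uniformly in $\epsilon$, is in fact more explicit than the paper's one-line justification.
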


	\begin{lemma}\label{afin}
		Assume that (\hyperref[h0]{\textbf{H0}})-(\hyperref[h4]{\textbf{H4}}) hold. For any $x\in\overline{D(A)}$, we have
		\begin{equation*}
			\lim\limits_{\alpha\to 0}\mathbb{E}\sup_{t\in[0,T]}|Z^{\epsilon,h_\epsilon,\alpha}_t-Z^{\epsilon,h_\epsilon}_t|^2=0
		\end{equation*}
		uniformly in $\epsilon>0$.
		
		\begin{proof}
			Firstly, let
			\begin{align*}
				N_\epsilon(t):=\int_{0}^{t}b_\epsilon(Z^{\epsilon,h_\epsilon}_s,\mathcal{L}_{X^\epsilon_s})\mathrm{d}s+\int_{0}^{t}\sigma_\epsilon(Z^{\epsilon,h_\epsilon}_s,\mathcal{L}_{X^\epsilon_s})h_\epsilon(s)\mathrm{d}s+\sqrt{\epsilon}\int_{0}^{t}\sigma_\epsilon(Z^{\epsilon,h_\epsilon}_s,\mathcal{L}_{X^\epsilon_s})\mathrm{d}W_s.
			\end{align*}
			
			Using BDG's inequality, (\hyperref[h1]{\textbf{H1}}), (\hyperref[h2]{\textbf{H2}}) and Lemma \ref{afin}, we have
			\begin{equation}\label{Nfin}
				\mathbb{E}\sup_{t\in[0,T]}|N_\epsilon(t)|^2<\infty.
			\end{equation}
			For any $ p\geq 2$ and $\Delta>0$, we also have
			\begin{equation}\label{Npfin}
				\mathbb{E}\sup_{0\leq|t-s|\leq\Delta}|N_\epsilon(t)-N_\epsilon(s)|^p\leq C_{p,m}\Delta^{\frac{p}{2}-1},
			\end{equation}
			where $C_{p,m}$ is independent of $\epsilon$ and $\Delta$.
			
			Now consider the following Yosida approximation:
			\begin{equation}\label{brez}
				\begin{cases}
					\mathrm{d}\breve{Z}^{\epsilon,h_\epsilon,\alpha}_t=\mathrm{d}N_\epsilon(t)-A^\alpha_\epsilon(\breve{Z}^{\epsilon,h_\epsilon,\alpha}_t)\mathrm{d}t, \\
					\breve{Z}^{\epsilon,h_\epsilon,\alpha}_0=x\in\overline{D(A)}.
				\end{cases}
			\end{equation}
			Since $A^\alpha_\epsilon$ is Lipschitz, the above equation admits a unique solution.
			
			First we will use the argument in \cite{rwz} to show
			\begin{equation}\label{approx}
				\lim\limits_{\alpha\to 0}\mathbb{E}\sup_{t\in[0,T]}|\breve{Z}^{\epsilon,h_\epsilon,\alpha}_t-Z^{\epsilon,h_\epsilon}_t|^2=0
			\end{equation}
			uniformly in $\epsilon> 0$.
			
			Let $N^n_\epsilon(t)$ be the $C^\infty$-approximation of $N_\epsilon(t)$ defined by
			\begin{align*}
				N^n_\epsilon(t):=n\int_{t-\frac{1}{n}}^{t+\frac{1}{n}}N_\epsilon(s-\frac{1}{n})\varphi(n(t-s))\mathrm{d}s,
			\end{align*}
			where $\varphi$ is a mollifier with $supp\,\varphi\subset(-1,1)$, $\varphi\in C^\infty$ and $\int_{-1}^{1}\varphi(s)\mathrm{d}s=1$.
			
			By (\ref{Nfin}) and (\ref{Npfin}) we can deduce that
			\begin{align*}
				&N_\epsilon^n(0)=0, \\
				&\lim\limits_{n\to\infty}\mathbb{E}\sup_{t\in[0,T]}|N^n_\epsilon(t)-N_\epsilon(t)|^2=0, \\
				&\sup_{t\in[0,T]}|N^n_\epsilon(t)|\leq\sup_{t\in[0,T]}|N_\epsilon(t)| \quad \text{a.s.}, \\
				&\sup_{|t-s|\leq\Delta}|N^n_\epsilon(t)-N^n_\epsilon(s)|\leq \sup_{|t-s|\leq\Delta}|N_\epsilon(t)-N_\epsilon(s)| \quad\text{a.s.,}\quad\forall \Delta>0, \\
				&\mathbb{E}\sup_{t\in[0,T]}|\dot{N}^n_\epsilon(t)|^2+\mathbb{E}\sup_{t\in[0,T]}|\ddot{N}^n_\epsilon(t)|^2\leq L_n,
			\end{align*}
			where $L_n$ is a positive constant which is independent of $\epsilon$ and $\alpha$.
			
			Let $(Z^{\epsilon,h_\epsilon,n}_t,K^{\epsilon,h_\epsilon,n}_t)$ be the solution of the following Multivalued McKean-Vlasov SDE
			\begin{equation}\label{zeuen}
				\begin{cases}
					\mathrm{d}Z^{\epsilon,h_\epsilon,n}_t\in \mathrm{d}N^n_\epsilon(t)-A_\epsilon(Z^{\epsilon,h_\epsilon,n}_t)\mathrm{d}t, \\
					Z^{\epsilon,h_\epsilon,n}_0=x,
				\end{cases}
			\end{equation}
			and $(\breve{Z}^{\epsilon,h_\epsilon,\alpha,n}_t,K^{\epsilon,h_\epsilon,\alpha,n}_t)$ be the solution of  the following differential equation
			\begin{equation*}
				\begin{cases}
					\mathrm{d}\breve{Z}^{\epsilon,h_\epsilon,\alpha,n}_t= \mathrm{d}N^n_\epsilon(t)-A^\alpha_\epsilon(\breve{Z}^{\epsilon,h_\epsilon,\alpha,n}_t)\mathrm{d}t, \\
					\breve{Z}^{\epsilon,h_\epsilon,\alpha,n}_0=x.
				\end{cases}
			\end{equation*}
			
			We divide the proof of (\ref{approx}) into three steps.
			
			\textbf{Step 1}: First we prove
			\begin{equation}\label{zns1}
				\lim\limits_{n\to\infty}\mathbb{E}\sup_{t\in[0,T]}|Z^{\epsilon,h_\epsilon,n}_t-Z^{\epsilon,h_\epsilon}_t|=0
			\end{equation}
			uniformly in $\epsilon>0$. By \cite[Proposition 4.3]{cepa1}, we have
			\begin{align*}
				&\sup_{t\in[0,T]}|Z^{\epsilon,h_\epsilon,n}_t-Z^{\epsilon,h_\epsilon}_t|^2  \\
				\leq&\sup_{t\in[0,T]}|N^n_\epsilon(t)-N_\epsilon(t)|^{\frac{1}{2}}\left(\sup_{t\in[0,T]}|N^n_\epsilon(t)-N_\epsilon(t)|+4|K^{\epsilon,h_\epsilon,n}|^0_T+4|K^{\epsilon,h_\epsilon}|^0_T\right)^\frac{1}{2}.
			\end{align*}
			
			By Lemma \ref{yosifin}, we have
			\begin{equation*}
				\mathbb{E}|K^{\epsilon,h_\epsilon}|^0_T\leq C,
			\end{equation*}
			where $C$ is independent of $\epsilon$. Then it is left to verify
			\begin{align*}
				\mathbb{E}|K^{\epsilon,h_\epsilon,n}|^0_T\leq C,
			\end{align*}
			where $C$ is independent of $\epsilon$ and $n$. By Proposition \ref{multi} we have for any $a\in$ Int$(D(A))$
			\begin{align*}
				&|Z^{\epsilon,h_\epsilon,n}_t-a|^2-|Z^{\epsilon,h_\epsilon,n}_s-a|^2 \\
				\leq&|N^n_\epsilon(t)+h-a|^2+|N^n_\epsilon(s)+h-a|^2-2\lambda_1|K^{\epsilon,h_\epsilon,n}|^s_t+\lambda_{2}\int_{s}^{t}|Z^{\epsilon,h_\epsilon,n}_r-a|\mathrm{d}r \\
				&+\lambda_{3}(t-s)+2\int_{s}^{t}\langle N^n_\epsilon(r)-N_\epsilon(r), \mathrm{d}K^{\epsilon,h_\epsilon,n}_r\rangle+2\langle Z^{\epsilon,h_\epsilon,n}_t-h-N^n_\epsilon(t), N^n_\epsilon(t)-N^n_\epsilon(s)\rangle \\
				\leq&C\left(1+\sup_{t\in [0,T]}|N_\epsilon(t)|^2+\sup_{t\in [0,T]}|Z^{\epsilon,h_\epsilon,n}_t-a|\right)+2\left(\sup_{|t'-s'|\leq|t-s|}|N^n_\epsilon(t')-N^n_\epsilon(s')|-\lambda_{1}\right)|K^{\epsilon,h_\epsilon,n}|^s_t,
			\end{align*}
			then we get
			\begin{align*}
				|Z^{\epsilon,h_\epsilon,n}_t-a|^2-|Z^{\epsilon,h_\epsilon,n}_s-a|^2+\lambda_{1}|K^{\epsilon,h_\epsilon,n}|^s_t
				\leq C\left(1+\sup_{t\in [0,T]}|N_\epsilon(t)|^2+\sup_{t\in [0,T]}|Z^{\epsilon,h_\epsilon,n}_t-a|\right),
			\end{align*}
			on the set
			\begin{equation*}
				\Theta_{\epsilon,n,\Delta}=\left\{\sup_{|t'-s'|\leq\Delta}|N^n_\epsilon(t')-N^n_\epsilon(s')|\leq\frac{\lambda_1}{2}\right\}.
			\end{equation*}
			By the same way in \citep[Proposition 4.9]{cepa1}, we have
			\begin{align*}
				|K^{\epsilon,h_\epsilon,n}|^0_TI_{\Theta_{\epsilon,n,\Delta}}\leq \frac{C\left(1+\sup_{t\in[0,T]}|N_\epsilon(t)|^2\right)}{\Delta}.
			\end{align*}
			Notice that for $p>6$, we can deduce that
			\begin{align*}
				\mathbb{E}|K^{\epsilon,h_\epsilon,n}|^0_T=&\sum_{k=1}^{\infty}\mathbb{E}\left[|K^{\epsilon,h_\epsilon,n}|^0_TI_{\Theta_{\epsilon,n,\frac{1}{k+1}}\backslash\Theta_{\epsilon,n,\frac{1}{k}}}\right] \\
				\leq&C\sum_{k=1}^{\infty}(k+1)P\left(\Theta_{\epsilon,n,\frac{1}{k}}^c\right)
				\leq C\sum_{k=1}^{\infty}(k+1)\mathbb{E}\sup_{|t-s|\leq\frac{1}{k}}|N_\epsilon(t)-N_\epsilon(s)|^p \\ \leq &C\sum_{k=1}^{\infty}(k+1)\left(\frac{1}{k}\right)^{\frac{p}{2}-1}<\infty.
			\end{align*}
			Hence, we obtain (\ref{zns1}).
			
			\textbf{Step 2}: By using the same procedure in Step $1$, we also have
			\begin{align}
				\lim\limits_{n\to\infty}\mathbb{E}\sup_{t\in[0,T]}|\breve{Z}^{\epsilon,h_\epsilon,\alpha,n}_t-\breve{Z}^{\epsilon,h_\epsilon,\alpha}_t|=0
			\end{align}
			uniformly in $\epsilon$ and $\alpha$.
			
			\textbf{Step 3}: Let $\alpha,\beta>0$. Since by Remark \ref{yosidap} we have $A^\alpha_\epsilon(x)\in A_\epsilon(J^\alpha(x))$, then for $\forall x,y\in\mathbb{R}^d$
			\begin{equation*}
				\langle A^\alpha_\epsilon(x)-A^\beta_\epsilon(y),x-y\rangle\leq-(\alpha+\beta)\langle A^\alpha_\epsilon(x),A^\beta_\epsilon(y)\rangle.
			\end{equation*}
			Hence
			\begin{align*}
				&\sup_{t\in[0,T]}|\breve{Z}^{\epsilon,h_\epsilon,\alpha,n}_t-\breve{Z}^{\epsilon,h_\epsilon,\beta,n}_t|^2 \\
				=&\sup_{t\in[0,T]}|\int_{0}^{t}\langle A^\alpha_\epsilon(\breve{Z}^{\epsilon,h_\epsilon,\alpha,n}_s)-A^\beta_\epsilon(\breve{Z}^{\epsilon,h_\epsilon,\beta,n}_s),\breve{Z}^{\epsilon,h_\epsilon,\alpha,n}_s-\breve{Z}^{\epsilon,h_\epsilon,\beta,n}_s\rangle\mathrm{d}s| \\
				\leq&(\alpha+\beta)\int_{0}^{T}|A^\alpha_\epsilon(\breve{Z}^{\epsilon,h_\epsilon,\alpha,n}_s)||A^\beta_\epsilon(\breve{Z}^{\epsilon,h_\epsilon,\beta,n}_s)|\mathrm{d}s \\
				\leq&(\alpha+\beta)T\sup_{s\in[0,T]}|A^\alpha_\epsilon(\breve{Z}^{\epsilon,h_\epsilon,\alpha,n}_s)|\sup_{s\in[0,T]}|A^\beta_\epsilon(\breve{Z}^{\epsilon,h_\epsilon,\beta,n}_s)|.
			\end{align*}
			By the estimation in \citep[Proposition 4.7]{cepa1} and property (\ref{yosi0}), we can deduce that
			\begin{align*}
				|\frac{d}{dt}\breve{Z}^{\epsilon,h_\epsilon,\alpha,n}|\leq |A^0_\epsilon(x)|+|\dot{N}_\epsilon^n(t)|+\int_{0}^{t}|\ddot{N}^n_\epsilon(s)|\mathrm{d}s
			\end{align*}
			and thus we have
			\begin{align*}
				\sup_{t\in[0,T]}|A^\alpha_\epsilon(\breve{Z}^{\epsilon,h_\epsilon,\alpha,n}_t)|\leq|A^0_\epsilon(x)|+2\sup_{t\in[0,T]}|\dot{N}^n_\epsilon(t)|+T\sup_{t\in[0,T]}|\ddot{N}^n_\epsilon(t)|.
			\end{align*}
			Combining the above estimations we have
			\begin{equation*}
				\mathbb{E}\sup_{t\in[0,T]}|\breve{Z}^{\epsilon,h_\epsilon,\alpha,n}_t-\breve{Z}^{\epsilon,h_\epsilon,\beta,n}_t|^2\leq\,L_n(\alpha+\beta),
			\end{equation*}
			where $L_n$ is a constant independent of $\epsilon$, $\alpha$ and $\beta$. Then by \citep[Proposition 4.7]{cepa1} we know that
			\begin{equation*}
				\sup_{t\in[0,T]}|\breve{Z}^{\epsilon,h_\epsilon,\alpha,n}_t-Z^{\epsilon,h_\epsilon,n}_t|\to 0\,\,\,a.s.\quad \text{as }\,\alpha\to 0.
			\end{equation*}
			
			Thus we obtain the result of the first part
			\begin{equation}\label{yosida1}
				\lim\limits_{\alpha\to 0}\mathbb{E}\sup_{t\in[0,T]}|\breve{Z}^{\epsilon,h_\epsilon,\alpha,n}_t-Z^{\epsilon,h_\epsilon,n}_t|^2= 0
			\end{equation}
			uniformly in $\epsilon>0$.

			Secondly, by It{\^o}'s formula and monotonicity of $A_\epsilon$, we can deduce that
			\begin{align*}
				&|\breve{Z}^{\epsilon,h_\epsilon,\alpha}_t-Z^{\epsilon,h_\epsilon,\alpha}_t|^2  \\
				\leq &\,\,2\int_{0}^{t}\langle \breve{Z}^{\epsilon,h_\epsilon,\alpha}_s-Z^{\epsilon,h_\epsilon}_s,\,b_\epsilon(Z^{\epsilon,h_\epsilon}_s,\mathcal{L}_{X^\epsilon_s})-b_\epsilon(Z^{\epsilon,h_\epsilon,\alpha}_s,\mathcal{L}_{X^\epsilon_s})\rangle\mathrm{d}s \\
				&+2\int_{0}^{t}\langle Z^{\epsilon,h_\epsilon}_s-Z^{\epsilon,h_\epsilon,\alpha}_s,\,b_\epsilon(Z^{\epsilon,h_\epsilon}_s,\mathcal{L}_{X^\epsilon_s})-b_\epsilon(Z^{\epsilon,h_\epsilon,\alpha}_s,\mathcal{L}_{X^\epsilon_s})\rangle\mathrm{d}s \\
				&+2\int_{0}^{t}\langle \breve{Z}^{\epsilon,h_\epsilon,\alpha}_s-Z^{\epsilon,h_\epsilon}_s,\,\sigma_\epsilon(Z^{\epsilon,h_\epsilon}_s,\mathcal{L}_{X^\epsilon_s})h_\epsilon(s)-\sigma_\epsilon(Z^{\epsilon,h_\epsilon,\alpha}_s,\mathcal{L}_{X^\epsilon_s})h_\epsilon(s)\rangle\mathrm{d}s \\
				&+2\int_{0}^{t}\langle Z^{\epsilon,h_\epsilon}_s-Z^{\epsilon,h_\epsilon,\alpha}_s,\,\sigma_\epsilon(Z^{\epsilon,h_\epsilon}_s,\mathcal{L}_{X^\epsilon_s})h_\epsilon(s)-\sigma_\epsilon(Z^{\epsilon,h_\epsilon,\alpha}_s,\mathcal{L}_{X^\epsilon_s})h_\epsilon(s)\rangle\mathrm{d}s \\
				&+2\sqrt{\epsilon}\int_{0}^{t}\langle \breve{Z}^{\epsilon,h_\epsilon,\alpha}_s-Z^{\epsilon,h_\epsilon}_s,\,\left(\sigma_\epsilon(Z^{\epsilon,h_\epsilon}_s,\mathcal{L}_{X^\epsilon_s})-\sigma_\epsilon(Z^{\epsilon,h_\epsilon,\alpha}_s,\mathcal{L}_{X^\epsilon_s})\right)\mathrm{d}W_s\rangle  \\
				&+2\sqrt{\epsilon}\int_{0}^{t}\langle Z^{\epsilon,h_\epsilon}_s-Z^{\epsilon,h_\epsilon,\alpha}_s,\,\left(\sigma_\epsilon(Z^{\epsilon,h_\epsilon}_s,\mathcal{L}_{X^\epsilon_s})-\sigma_\epsilon(Z^{\epsilon,h_\epsilon,\alpha}_s,\mathcal{L}_{X^\epsilon_s})\right)\mathrm{d}W_s\rangle  \\
				&+\frac{\epsilon}{2}\int_{0}^{t}\lVert \sigma_\epsilon(Z^{\epsilon,h_\epsilon}_s,\mathcal{L}_{X^\epsilon_s})-\sigma_\epsilon(Z^{\epsilon,h_\epsilon,\alpha}_s,\mathcal{L}_{X^\epsilon_s})\rVert^2\mathrm{d}s \\
				:=&\,\,I^{\epsilon,\alpha}_1(t)+I^{\epsilon,\alpha}_2(t)+I^{\epsilon,\alpha}_3(t)+I^{\epsilon,\alpha}_4(t)+I^{\epsilon,\alpha}_5(t)+I^{\epsilon,\alpha}_2(t)+I^{\epsilon,\alpha}_6(t)+I^{\epsilon,\alpha}_7(t).
			\end{align*}
			
			For $I^{\epsilon,\alpha}_1(t)$, by H{\"o}lder's inequality, Young's inequality, (\hyperref[h1]{\textbf{H1}})-(\hyperref[h4]{\textbf{H4}}), Lemma \ref{yozfin} and Lemma \ref{yosifin}, we have
			\begin{align}\label{431}
				&\mathbb{E}\sup_{t\in[0,T]}|I^{\epsilon,\alpha}_1(t)| \nonumber\\
				\leq& C\left(1+\mathbb{E}\sup_{t\in[0,T]}|Z^{\epsilon,h_\epsilon}_t|+\mathbb{E}\sup_{t\in[0,T]}|Z^{\epsilon,h_\epsilon,\alpha}_t|+(\sup_{t\in[0,T]}\mathbb{E}|X^\epsilon_t|^2)^{\frac{1}{2}}\right)\left(\mathbb{E}\sup_{t\in[0,T]}|\breve{Z}^{\epsilon,h_\epsilon,\alpha}_t-Z^{\epsilon,h_\epsilon}_t|^2\right)^\frac{1}{2} \nonumber\\
				\leq& C\left(\mathbb{E}\sup_{t\in[0,T]}|\breve{Z}^{\epsilon,h_\epsilon,\alpha}_t-Z^{\epsilon,h_\epsilon}_t|^2\right)^\frac{1}{2}.
			\end{align}
			
			By the similar way, for $I^{\epsilon,\alpha}_2(t)$, $I^{\epsilon,\alpha}_3(t)$ and $I^{\epsilon,\alpha}_4(t)$, we have
			\begin{align}\label{432}
				\mathbb{E}\sup_{t\in[0,T]}|I^{\epsilon,\alpha}_2(t)|
				\leq
				C\mathbb{E}\int_{0}^{T}\varrho\left[\sup_{s\in[0,t]}|Z^{\epsilon,h_\epsilon}_s-Z^{\epsilon,h_\epsilon,\alpha}_s|^2\right]\mathrm{d}t,
			\end{align}
			\begin{align}\label{433}
				&\mathbb{E}\sup_{t\in[0,T]}|I^{\epsilon,\alpha}_3(t)| \nonumber\\
				\leq& C\mathbb{E}\left[\left(\int_{0}^{T}|\breve{Z}^{\epsilon,h_\epsilon,\alpha}_t-Z^{\epsilon,h_\epsilon}_t|^2\lVert\sigma_\epsilon(Z^{\epsilon,h_\epsilon}_t,\mathcal{L}_{X^\epsilon_t})-\sigma_\epsilon(Z^{\epsilon,h_\epsilon,\alpha}_t,\mathcal{L}_{X^\epsilon_t})\rVert^2\mathrm{d}t\right)^\frac{1}{2}\left(\int_{0}^{T}|h_\epsilon(t)|^2\mathrm{d}t\right)^{\frac{1}{2}}\right] \nonumber\\
				\leq& C\left(1+\mathbb{E}\sup_{t\in[0,T]}|Z^{\epsilon,h_\epsilon}_t|+\mathbb{E}\sup_{t\in[0,T]}|Z^{\epsilon,h_\epsilon,\alpha}_t|+(\sup_{t\in[0,T]}\mathbb{E}|X^\epsilon_t|^2)^{\frac{1}{2}}\right)\left(\mathbb{E}\sup_{t\in[0,T]}|\breve{Z}^{\epsilon,h_\epsilon,\alpha}_t-Z^{\epsilon,h_\epsilon}_t|^2\right)^{\frac{1}{2}} \nonumber\\
				\leq&
				C\left(\mathbb{E}\sup_{t\in[0,T]}|\breve{Z}^{\epsilon,h_\epsilon,\alpha}_t-Z^{\epsilon,h_\epsilon}_t|^2\right)^{\frac{1}{2}}
			\end{align}
			and
			\begin{align}\label{434}
				&\mathbb{E}\sup_{t\in[0,T]}|I^{\epsilon,\alpha}_4(t)| \nonumber\\
				\leq& C\mathbb{E}\left[\left(\int_{0}^{T}|Z^{\epsilon,h_\epsilon}_t-Z^{\epsilon,h_\epsilon,\alpha}_t|^2\lVert\sigma_\epsilon(Z^{\epsilon,h_\epsilon}_t,\mathcal{L}_{X^\epsilon_t})-\sigma_\epsilon(Z^{\epsilon,h_\epsilon,\alpha}_t,\mathcal{L}_{X^\epsilon_t})\rVert^2\mathrm{d}t\right)^\frac{1}{2}\left(\int_{0}^{T}|h_\epsilon(t)|^2\mathrm{d}t\right)^{\frac{1}{2}}\right] \nonumber\\
				\leq& \frac{1}{4}\mathbb{E}\sup_{t\in[0,T]}|Z^{\epsilon,h_\epsilon}_t-Z^{\epsilon,h_\epsilon,\alpha}_t|^2+C\mathbb{E}\int_{0}^{T}\varrho\left[\sup_{s\in[0,t]}|Z^{\epsilon,h_\epsilon}_s-Z^{\epsilon,h_\epsilon,\alpha}_s|^2\right]\mathrm{d}t.
			\end{align}
			
			By BDG's inequality, H{\"o}lder's inequality, Young's inequality, (\hyperref[h1]{\textbf{H1}}), Lemma \ref{yozfin} and Lemma \ref{yosifin}, we obtain
			\begin{align}\label{435}
				&\mathbb{E}\sup_{t\in[0,T]}|I^{\epsilon,\alpha}_5(t)| \nonumber\\
				\leq& C\mathbb{E}\left[\left(\int_{0}^{T}|\breve{Z}^{\epsilon,h_\epsilon,\alpha}_t-Z^{\epsilon,h_\epsilon}_t|^2\lVert\sigma_\epsilon(Z^{\epsilon,h_\epsilon}_t,\mathcal{L}_{X^\epsilon_t})-\sigma_\epsilon(Z^{\epsilon,h_\epsilon,\alpha}_t,\mathcal{L}_{X^\epsilon_t})\rVert^2\mathrm{d}t\right)^\frac{1}{2}\right] \nonumber\\
				\leq& C\mathbb{E}\left[\sup_{t\in[0,T]}|\breve{Z}^{\epsilon,h_\epsilon,\alpha}_t-Z^{\epsilon,h_\epsilon}_t|\sup_{t\in[0,T]}\lVert\sigma_\epsilon(Z^{\epsilon,h_\epsilon}_t,\mathcal{L}_{X^\epsilon_t})-\sigma_\epsilon(Z^{\epsilon,h_\epsilon,\alpha}_t,\mathcal{L}_{X^\epsilon_t})\rVert\right] \nonumber\\
				\leq& C\left(\mathbb{E}\sup_{t\in[0,T]}|\breve{Z}^{\epsilon,h_\epsilon,\alpha}_t-Z^{\epsilon,h_\epsilon}_t|^2\right)^\frac{1}{2}\left(\mathbb{E}\sup_{t\in[0,T]}\lVert\sigma_\epsilon(Z^{\epsilon,h_\epsilon}_t,\mathcal{L}_{X^\epsilon_t})-\sigma_\epsilon(Z^{\epsilon,h_\epsilon,\alpha}_t,\mathcal{L}_{X^\epsilon_t})\rVert^2\right)^\frac{1}{2} \nonumber\\
				\leq& C\left(1+\mathbb{E}\sup_{t\in[0,T]}|Z^{\epsilon,h_\epsilon}_t|+\mathbb{E}\sup_{t\in[0,T]}|Z^{\epsilon,h_\epsilon,\alpha}_t|+(\sup_{t\in[0,T]}\mathbb{E}|X^\epsilon_t|^2)^{\frac{1}{2}}\right)\left(\mathbb{E}\sup_{t\in[0,T]}|\breve{Z}^{\epsilon,h_\epsilon,\alpha}_t-Z^{\epsilon,h_\epsilon}_t|^2\right)^\frac{1}{2} \nonumber\\
				\leq& C\left(\mathbb{E}\sup_{t\in[0,T]}|\breve{Z}^{\epsilon,h_\epsilon,\alpha}_t-Z^{\epsilon,h_\epsilon}_t|^2\right)^\frac{1}{2}
			\end{align}
			and
			\begin{align}\label{436}
				&\mathbb{E}\sup_{t\in[0,T]}|I^{\epsilon,\alpha}_6(t)| \nonumber\\
				\leq& C\mathbb{E}\left[\left(\int_{0}^{T}|Z^{\epsilon,h_\epsilon,\alpha}_t-Z^{\epsilon,h_\epsilon}_t|^2\lVert\sigma_\epsilon(Z^{\epsilon,h_\epsilon}_t,\mathcal{L}_{X^\epsilon_t})-\sigma_\epsilon(Z^{\epsilon,h_\epsilon,\alpha}_t,\mathcal{L}_{X^\epsilon_t})\rVert^2\mathrm{d}t\right)^\frac{1}{2}\right] \nonumber\\
				\leq& C\left(\mathbb{E}\sup_{t\in[0,T]}|Z^{\epsilon,h_\epsilon,\alpha}_t-Z^{\epsilon,h_\epsilon}_t|\right)\left(\mathbb{E}\int_{0}^{T}\lVert\sigma_\epsilon(Z^{\epsilon,h_\epsilon}_t,\mathcal{L}_{X^\epsilon_t})-\sigma_\epsilon(Z^{\epsilon,h_\epsilon,\alpha}_t,\mathcal{L}_{X^\epsilon_t})\rVert^2\mathrm{d}t\right)^\frac{1}{2} \nonumber\\
				\leq& \frac{1}{4}\mathbb{E}\sup_{t\in[0,T]}|Z^{\epsilon,h_\epsilon,\alpha}_t-Z^{\epsilon,h_\epsilon}_t|^2+C\mathbb{E}\int_{0}^{T}\varrho\left[\sup_{s\in[0,t]}|Z^{\epsilon,h_\epsilon}_s-Z^{\epsilon,h_\epsilon,\alpha}_s|^2\right]\mathrm{d}t.
			\end{align}
			
			For the last term, we have
			\begin{align}\label{437}
				\mathbb{E}\sup_{t\in[0,T]}|I^{\epsilon,\alpha}_7(t)|\leq C\mathbb{E}\int_{0}^{T}\varrho\left[\sup_{s\in[0,t]}|Z^{\epsilon,h_\epsilon}_s-Z^{\epsilon,h_\epsilon,\alpha}_s|^2\right]\mathrm{d}t.
			\end{align}
			
			Denote
			\begin{equation*}
				R(t):=|Z^{\epsilon,h_\epsilon,\alpha}_t-Z^{\epsilon,h_\epsilon}_t|^2,
			\end{equation*}
			
			then by combining (\ref{431})-(\ref{437}) together and using Jensen's inequality we have
			\begin{align}\label{rt}
				\mathbb{E}\sup_{t\in[0,T]}R(t)\leq&\,\, 2\mathbb{E}\sup_{t\in[0,T]}|\breve{Z}^{\epsilon,h_\epsilon,\alpha}_t-Z^{\epsilon,h_\epsilon,\alpha}_t|^2+2\mathbb{E}\sup_{t\in[0,T]}|\breve{Z}^{\epsilon,h_\epsilon,\alpha}_t-Z^{\epsilon,h_\epsilon}_t|^2\nonumber \\
				\leq&\,\, \frac{1}{2}\mathbb{E}\sup_{t\in[0,T]}R(t)+C\mathbb{E}\int_{0}^{T}\varrho(\sup_{s\in[0,t]}R(s))\mathrm{d}t+C\left(\mathbb{E}\sup_{t\in[0,T]}|\breve{Z}^{\epsilon,h_\epsilon,\alpha}_t-Z^{\epsilon,h_\epsilon}_t|^2\right)^{\frac{1}{2}} \nonumber\\
				&+2\mathbb{E}\sup_{t\in[0,T]}|\breve{Z}^{\epsilon,h_\epsilon,\alpha}_t-Z^{\epsilon,h_\epsilon}_t|^2 \nonumber\\
				=:&\frac{1}{2}\mathbb{E}\sup_{t\in[0,T]}R(t)+C\mathbb{E}\int_{0}^{T}\varrho(\sup_{s\in[0,t]}R(s))\mathrm{d}t+O_1(\alpha).
			\end{align}
			Since the $L$ in (\ref{rt}) is independent of $\epsilon$ and $\alpha$, by (\ref{yosida1}) we have
			\begin{align*}
				\lim\limits_{\alpha\to 0}O_1(\alpha)=0.
			\end{align*}
			Setting $f(t)=\int_{1}^{t}1/\varrho(s)\mathrm{d}s$, by Lemma \ref{bhr}, we have
			\begin{align*}\label{ad1}
				\mathbb{E}\sup_{t\in[0,T]}R(t)\leq f^{-1}\left(f(O_1(\alpha))+2LT\right).
			\end{align*}
			Since $f$ and $f^{-1}$ are strictly increasing functions and $\int_{0^+}1/\varrho(s)\mathrm{d}s=\infty$, we obtain
			\begin{align*}\label{ad2}
				\lim\limits_{\alpha\to 0}f^{-1}\left(f(O_1(\alpha))+2LT\right)= 0.
			\end{align*}
			Combining (\ref{ad1}) and (\ref{ad2}) together, we get
			\begin{equation*}
				\lim\limits_{\alpha\to 0}\mathbb{E}\sup_{t\in[0,T]}|Z^{\epsilon,h_\epsilon,\alpha}_t-Z^{\epsilon,h_\epsilon}_t|^2=0
			\end{equation*}
			uniformly in $\epsilon>0$, which completes the proof.
			
		\end{proof}
	\end{lemma}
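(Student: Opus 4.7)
The plan is to introduce an auxiliary process that decouples the multivalued part from the diffusion coefficients, and to split the error $Z^{\epsilon,h_\epsilon,\alpha}-Z^{\epsilon,h_\epsilon}$ into two pieces that can be controlled by very different mechanisms. Concretely, I would first define the driving semimartingale
\begin{equation*}
N_\epsilon(t):=\int_{0}^{t} b_\epsilon(Z^{\epsilon,h_\epsilon}_s,\mathcal{L}_{X^\epsilon_s})\,\mathrm{d}s+\int_{0}^{t} \sigma_\epsilon(Z^{\epsilon,h_\epsilon}_s,\mathcal{L}_{X^\epsilon_s})h_\epsilon(s)\,\mathrm{d}s+\sqrt{\epsilon}\int_{0}^{t} \sigma_\epsilon(Z^{\epsilon,h_\epsilon}_s,\mathcal{L}_{X^\epsilon_s})\,\mathrm{d}W_s,
\end{equation*}
and the intermediate process $\breve{Z}^{\epsilon,h_\epsilon,\alpha}$ solving $\mathrm{d}\breve{Z}^{\epsilon,h_\epsilon,\alpha}_t=\mathrm{d}N_\epsilon(t)-A_\epsilon^\alpha(\breve{Z}^{\epsilon,h_\epsilon,\alpha}_t)\,\mathrm{d}t$ with initial value $x$. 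Since $Z^{\epsilon,h_\epsilon}$ solves the Skorokhod-type equation driven by the same $N_\epsilon$ but with the multivalued operator $A_\epsilon$, the desired error is dominated by $|Z^{\epsilon,h_\epsilon,\alpha}_t-\breve{Z}^{\epsilon,h_\epsilon,\alpha}_t|^2+|\breve{Z}^{\epsilon,h_\epsilon,\alpha}_t-Z^{\epsilon,h_\epsilon}_t|^2$.

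The first preparatory step is to establish $\epsilon$-uniform bounds for $N_\epsilon$: by the linear-growth parts of (\textbf{H1}) and (\textbf{H2}), the BDG inequality, and Lemma \ref{yosifin}, one gets uniform bounds on $\mathbb{E}\sup_{t\leq T}|N_\epsilon(t)|^p$ as well as on $\mathbb{E}\sup_{|t-s|\leq\Delta}|N_\epsilon(t)-N_\epsilon(s)|^p\leq C_p\Delta^{p/2-1}$. For the second piece $\breve{Z}^{\epsilon,h_\epsilon,\alpha}-Z^{\epsilon,h_\epsilon}$, I would mollify $N_\epsilon$ into $C^\infty$ paths $N^n_\epsilon$ and compare the corresponding deterministic Skorokhod solutions with their smooth Yosida approximations, following the Cépa-type argument in \cite{rwz,cepa1}. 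The main technical point here is to control the total variation $|K^{\epsilon,h_\epsilon,n}|_0^T$ uniformly in $n$ and $\epsilon$ via Proposition \ref{multi} applied at a point in $\mathrm{Int}(D(A))$, combined with the uniform modulus estimate on $N_\epsilon$; letting first $n\to\infty$ and then $\alpha\to 0$ yields the desired uniform convergence of $\breve{Z}^{\epsilon,h_\epsilon,\alpha}\to Z^{\epsilon,h_\epsilon}$.

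For the first piece, I would apply Itô's formula to $|Z^{\epsilon,h_\epsilon,\alpha}-\breve{Z}^{\epsilon,h_\epsilon,\alpha}|^2$. The crucial observation is that both processes have the same initial value and both absorb a dissipative $-A_\epsilon^\alpha$ term on the $\breve{}$ side while $Z^{\epsilon,h_\epsilon,\alpha}$ carries $-A_\epsilon^\alpha$ of itself; monotonicity of $A_\epsilon^\alpha$ then kills the pure Yosida cross term when written appropriately. What remains are differences of $b_\epsilon$ and $\sigma_\epsilon$ evaluated at $Z^{\epsilon,h_\epsilon,\alpha}$ versus $Z^{\epsilon,h_\epsilon}$, to which the one-sided non-Lipschitz bounds of (\textbf{H1}) and (\textbf{H2}) apply with the concave modulus $\varrho$. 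Using BDG, Hölder, Young, Lemmas \ref{yozfin} and \ref{yosifin}, one obtains an integral inequality of the form
\begin{equation*}
\mathbb{E}\sup_{s\leq t}|Z^{\epsilon,h_\epsilon,\alpha}_s-Z^{\epsilon,h_\epsilon}_s|^2\leq O(\alpha)+C\int_{0}^{t}\varrho\!\left(\mathbb{E}\sup_{r\leq s}|Z^{\epsilon,h_\epsilon,\alpha}_r-Z^{\epsilon,h_\epsilon}_r|^2\right)\mathrm{d}s,
\end{equation*}
where $O(\alpha)$ comes from the second piece and vanishes as $\alpha\to 0$ uniformly in $\epsilon$, and $C$ is independent of $\epsilon$ and $\alpha$. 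Applying Bihari's inequality (Lemma \ref{bhr}) with $\int_{0^+}\varrho(u)^{-1}\,\mathrm{d}u=\infty$ then forces the left-hand side to $0$ uniformly in $\epsilon$.

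The main obstacle is ensuring the uniformity in $\epsilon$ throughout. The delicate point is that $A_\epsilon$ itself varies with $\epsilon$, so all bounds involving $A_\epsilon^0$ or $A_\epsilon^\alpha(0)$ must be controlled via the uniform local boundedness at $0$ in (\textbf{H0}); and the mollification argument for the second piece must be carried out with a modulus estimate on $N_\epsilon$ that is genuinely $\epsilon$-independent, which is why the linear-growth conditions rather than pointwise coefficient bounds are the only viable input. Once this uniformity is preserved at every stage, the concavity of $\varrho$ and Bihari's inequality close the argument cleanly.
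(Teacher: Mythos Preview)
Your proposal is correct and follows essentially the same approach as the paper: you introduce the same auxiliary semimartingale $N_\epsilon$ and intermediate process $\breve{Z}^{\epsilon,h_\epsilon,\alpha}$, split via the triangle inequality, handle the piece $\breve{Z}^{\epsilon,h_\epsilon,\alpha}-Z^{\epsilon,h_\epsilon}$ through the mollification/C\'epa argument of \cite{rwz,cepa1} with uniform total-variation control from Proposition~\ref{multi}, and handle the piece $Z^{\epsilon,h_\epsilon,\alpha}-\breve{Z}^{\epsilon,h_\epsilon,\alpha}$ by It\^o's formula, monotonicity of $A_\epsilon^\alpha$, the non-Lipschitz bounds on $b_\epsilon,\sigma_\epsilon$, and Bihari's inequality. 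Your identification of the uniformity in $\epsilon$ (via the local boundedness in (\textbf{H0}) and the $\epsilon$-independent modulus estimates on $N_\epsilon$) as the delicate point also matches the paper's treatment.
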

	
	Next we consider the Yosida approximation of the  equation (\ref{yy}):
	\begin{align*}
		\begin{cases}
			\mathrm{d}Y^{h,\alpha}_t=\,b(Y^{h,\alpha}_t,\mathcal{L}_{X^0_t})\mathrm{d}t+\sigma(Y^{h,\alpha}_t,\mathcal{L}_{X^0_t})h(t)\mathrm{d}t-A^\alpha(Y^{h,\alpha}_t)\mathrm{d}t, \\
			Y^{h,\alpha}_0=x\in\overline{D(A)}.
		\end{cases}
	\end{align*}
	
	Similar to Lemma \ref{afin}, we have the following result. For the sake of brevity, we omit the detailed proofs here.
	\begin{lemma}\label{ayfin}
		Assume that \hyperref[h0]{(\textbf{H0})}-\hyperref[h4]{\textbf{(H4)}} hold. For any $p\geq 1$ and $x\in\overline{D(A)}$, there exists $C>0$ such that for any $\alpha>0$,
		\begin{equation*}
			\sup_{t\in[0,T]}|Y^{h,\alpha}_t|^{2p}\leq C,
		\end{equation*}
		where $C$ may be dependent of $p,T,m$ and $x$, but independent of $\epsilon$ and $\alpha$.
		
		Moreover, for any $x\in\overline{D(A)}$, we have
		\begin{equation*}
			\lim\limits_{\alpha\to 0}\sup_{t\in[0,T]}|Y^{h,\alpha}_t-Y^h_t|^2=0.
		\end{equation*}
	\end{lemma}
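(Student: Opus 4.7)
The plan is to mirror the proofs of Lemma \ref{yozfin} and Lemma \ref{afin}, but in the deterministic setting. Since the equation for $Y^{h,\alpha}$ has no Brownian term, there are no stochastic integrals, no BDG inequality, and no expectation to take, so every step is a pointwise/pathwise analogue of the corresponding stochastic estimate and the argument simplifies substantially.

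For the uniform bound, I would apply the chain rule to $|Y^{h,\alpha}_t|^{2p}$. The monotonicity of $A^\alpha$ combined with $\sup_\alpha |A^\alpha(0)| \leq |A^0(0)| < \infty$ (guaranteed by \hyperref[h0]{\textbf{(H0)}}) yields the pointwise estimate $-\langle Y^{h,\alpha}_s, A^\alpha(Y^{h,\alpha}_s)\rangle \leq \tfrac{1}{2}|Y^{h,\alpha}_s|^2 + C$, exactly as in Lemma \ref{yozfin}. The linear-growth bounds (\ref{hblg}) and (\ref{hslg}) control $b$ and $\sigma$, the Cauchy--Schwarz estimate $\int_0^T |\sigma(Y^{h,\alpha}_s,\mathcal{L}_{X^0_s})h(s)|\,\mathrm{d}s \leq \big(\int_0^T\|\sigma\|^2\,\mathrm{d}s\big)^{1/2}(2m)^{1/2}$ uses $h \in \mathcal{S}_m$, and $\sup_{t\in[0,T]}|X^0_t|<\infty$ comes from Proposition \ref{Yu} applied with $h \equiv 0$. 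Plugging everything in produces an integral inequality of Gronwall type from which the uniform-in-$\alpha$ bound on $\sup_{t\in[0,T]}|Y^{h,\alpha}_t|^{2p}$ follows.

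For the convergence $Y^{h,\alpha} \to Y^h$ I would follow the three-step scheme of Lemma \ref{afin}. Set
\begin{equation*}
N(t) := \int_0^t b(Y^h_s,\mathcal{L}_{X^0_s})\,\mathrm{d}s + \int_0^t \sigma(Y^h_s,\mathcal{L}_{X^0_s})h(s)\,\mathrm{d}s,
\end{equation*}
which is continuous on $[0,T]$ and, by the same Cauchy--Schwarz trick and the uniform moment bound, has $\sup_{t}|N(t)|<\infty$ and a quantitative modulus of continuity. Introduce the auxiliary ``frozen'' Yosida problem $\mathrm{d}\breve Y^{h,\alpha}_t = \mathrm{d}N(t) - A^\alpha(\breve Y^{h,\alpha}_t)\,\mathrm{d}t$ and its mollified versions with $N^n$ replacing $N$. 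The deterministic results from C\'epa \cite{cepa1} invoked in Lemma \ref{afin} (Propositions~4.3, 4.7, 4.9 there) apply pathwise and give $\sup_{t\in[0,T]}|\breve Y^{h,\alpha}_t - Y^h_t| \to 0$ as $\alpha \to 0$; here the bound on $|K^{h,n}|_0^T$ is immediate without probabilistic tail estimates because the modulus of continuity of $N$ is deterministic. Finally, comparing $Y^{h,\alpha}$ with $\breve Y^{h,\alpha}$ by applying the chain rule to $|Y^{h,\alpha}_t - \breve Y^{h,\alpha}_t|^2$, the monotonicity of $A^\alpha$ kills the cross term $\langle Y^{h,\alpha}-\breve Y^{h,\alpha}, A^\alpha(Y^{h,\alpha})-A^\alpha(\breve Y^{h,\alpha})\rangle$, and the non-Lipschitz estimates (\ref{h1}) and (\ref{h2}) together with Jensen's inequality (using concavity of $\varrho$) produce an inequality of the form
\begin{equation*}
\sup_{t\in[0,T]}|Y^{h,\alpha}_t - \breve Y^{h,\alpha}_t|^2 \leq C\int_0^T \varrho\Big(\sup_{s\in[0,r]}|Y^{h,\alpha}_s - \breve Y^{h,\alpha}_s|^2\Big)\mathrm{d}r + O(\alpha),
\end{equation*}
from which Bihari's inequality (Lemma \ref{bhr}) and $\int_{0^+}1/\varrho(u)\,\mathrm{d}u = \infty$ yield the conclusion.

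The main obstacle, as in Lemma \ref{afin}, is closing the comparison step under the non-Lipschitz assumption: one cannot use a straightforward Gronwall bound on $|Y^{h,\alpha}-Y^h|^2$, and must route the argument through $\breve Y^{h,\alpha}$ plus Bihari. Everything else — the moment bound, the $A^\alpha(0)$ control, and the mollification/Skorohod-type convergence from C\'epa — is either routine or a direct importation from the proofs already given.
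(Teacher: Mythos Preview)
Your proposal is correct and is exactly the approach the paper takes: the paper itself gives no detailed proof of this lemma, stating only that it follows ``similar to Lemma~\ref{afin}'' in the deterministic setting, which is precisely the program you outline. One small slip: in your final displayed inequality the argument of $\varrho$ should be $\sup_{s\in[0,r]}|Y^{h,\alpha}_s-Y^{h}_s|^{2}$ rather than $|Y^{h,\alpha}_s-\breve Y^{h,\alpha}_s|^{2}$, since the drift and diffusion differences produced by the comparison are $b(Y^{h,\alpha})-b(Y^{h})$ and $\sigma(Y^{h,\alpha})-\sigma(Y^{h})$; this is exactly how the paper organizes the analogous estimate in Lemma~\ref{afin} (see the definition of $R(t)$ there), and with that correction your Bihari step closes as intended.
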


	\subsection{Proof of Theorem 3.6}
	Next we prove the main result on the LDP. We will verify the conditions \textbf{(LDP)$\bm{_{1}}$} and \textbf{(LDP)$\bm{_{2}}$} separately.
	\begin{proposition}\label{ld1}[\textbf{(LDP)$\bm{_{1}}$}]
		For any given $m\in\left(0,\infty\right)$, if $\left\{h_{n}, n\in \mathbb{N}\right\}\subset \mathcal{S}_{m}$ weakly converges to $h\in \mathcal{S}_{m}$ as $n\to \infty$, then
		$$
		\lim\limits_{n\to \infty}\sup\limits_{t\in \left[0,T\right]}\lvert \mathcal{G}^{0}\left(h_{n}\right)\left(t\right)-\mathcal{G}^{0}\left(h\right)\left(t\right) \rvert=0.
		$$
	\end{proposition}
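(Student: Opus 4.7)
The plan is to combine compactness with the characterization of $Y^h$ as the unique solution to (\ref{yy}), exploiting the weak compactness of $\mathcal{S}_m$ (which is metrizable compact in the weak topology of $L^2$). Write $Y^n:=Y^{h_n}$ and $K^n:=K^{h_n}$. First, by Proposition \ref{Yu} we have $\sup_n\sup_{t\in[0,T]}|Y^n_t|<\infty$, and applying Proposition \ref{multi} with $a=0\in\mathrm{Int}(D(A))$ to the pair $(Y^n,K^n)\in\mathcal{A}$ together with the identity
\begin{equation*}
Y^n_t=x+\int_0^t b(Y^n_s,\mathcal{L}_{X^0_s})\,\mathrm{d}s+\int_0^t\sigma(Y^n_s,\mathcal{L}_{X^0_s})h_n(s)\,\mathrm{d}s-K^n_t,
\end{equation*}
one easily derives $\sup_n|K^n|_0^T<\infty$, using the linear growth bounds (\ref{hblg}), (\ref{hslg}) and the fact $\int_0^T|h_n(s)|\,\mathrm{d}s\le\sqrt{mT}$. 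Equicontinuity of $\{Y^n\}$ in $C([0,T],\overline{D(A)})$ then follows from the same bounds, since for $s<t$ the drift and Skorohod terms are bounded by $C(t-s)^{1/2}$ times constants depending only on $m$ and the uniform $L^\infty$-bound.

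By the Arzelà--Ascoli theorem, every subsequence of $\{Y^n\}$ has a further subsequence (still indexed by $n$) converging uniformly to some $\tilde Y\in C([0,T],\overline{D(A)})$, and correspondingly $K^n$ converges uniformly to $\tilde K:=x+\int_0^\cdot b(\tilde Y_s,\mathcal{L}_{X^0_s})\,\mathrm{d}s+\int_0^\cdot\sigma(\tilde Y_s,\mathcal{L}_{X^0_s})h(s)\,\mathrm{d}s-\tilde Y$. I will identify $(\tilde Y,\tilde K)$ as $(Y^h,K^h)$, whence uniqueness in Proposition \ref{Yu} forces the whole sequence to converge and yields the claim. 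Passing to the limit in the drift term is immediate from the continuity of $b$ and uniform convergence together with the linear growth bound, by dominated convergence. For the controlled term, split
\begin{equation*}
\int_0^t\sigma(Y^n_s,\mathcal{L}_{X^0_s})h_n(s)\,\mathrm{d}s=\int_0^t\bigl[\sigma(Y^n_s,\mathcal{L}_{X^0_s})-\sigma(\tilde Y_s,\mathcal{L}_{X^0_s})\bigr]h_n(s)\,\mathrm{d}s+\int_0^t\sigma(\tilde Y_s,\mathcal{L}_{X^0_s})h_n(s)\,\mathrm{d}s.
\end{equation*}
The first summand is controlled by Cauchy--Schwarz together with (\ref{h2}) and the concavity of $\varrho$, since $\sup_s|Y^n_s-\tilde Y_s|\to 0$ forces $\int_0^T\|\sigma(Y^n_s,\cdot)-\sigma(\tilde Y_s,\cdot)\|^2\,\mathrm{d}s\to 0$; the second converges because $s\mapsto\sigma(\tilde Y_s,\mathcal{L}_{X^0_s})$ lies in $L^2([0,T])$ (by (\ref{hslg}) and boundedness of $\tilde Y$) and $h_n\rightharpoonup h$ weakly in $L^2$. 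Finally, the convergence of $K^n\to\tilde K$ in $C([0,T],\mathbb{R}^d)$ combined with $\sup_n|K^n|_{TV}<\infty$ places $\tilde K\in\mathcal{V}$ via Proposition \ref{kcon}, and the same proposition guarantees $\int_0^t\langle Y^n_s-x',\mathrm{d}K^n_s\rangle\to\int_0^t\langle\tilde Y_s-x',\mathrm{d}\tilde K_s\rangle$ for any fixed $(x',y')\in Gr(A)$, so that $(\tilde Y,\tilde K)\in\mathcal{A}$ by passing to the limit in the monotonicity inequality $\int_s^t\langle Y^n_r-x',\mathrm{d}K^n_r-y'\,\mathrm{d}r\rangle\ge 0$.

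The main obstacle is the coupling between the weakly converging controls $h_n$ and the nonlinear coefficient $\sigma(Y^n,\cdot)$: weak convergence alone would not suffice, so the argument really depends on first upgrading $Y^n\to\tilde Y$ to uniform (hence strong) convergence via the compactness step. A secondary subtle point is the passage to the limit in the monotonicity condition defining $\mathcal{A}$, which relies crucially on the uniform total-variation bound for $K^n$ so that Proposition \ref{kcon} applies; this is why the careful application of Proposition \ref{multi} at the outset is essential. Once $(\tilde Y,\tilde K)$ is shown to satisfy (\ref{yy}) with the same control $h$, uniqueness in Proposition \ref{Yu} closes the argument.
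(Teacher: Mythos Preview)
Your compactness-plus-identification strategy is a natural idea, but there is a genuine gap at the equicontinuity step. You assert that ``for $s<t$ the drift and Skorohod terms are bounded by $C(t-s)^{1/2}$ times constants depending only on $m$ and the uniform $L^\infty$-bound''. This is correct for the drift and the controlled integral, but \emph{not} for the Skorohod term $K^n_t-K^n_s$: a uniform bound $\sup_n|K^n|_0^T<\infty$ on total variation does not yield any uniform modulus of continuity for the family $\{K^n\}$ (think of $K^n_t$ approximating a jump). Since $Y^n_t-Y^n_s=(N^n_t-N^n_s)-(K^n_t-K^n_s)$, the equicontinuity of $\{Y^n\}$ is equivalent to that of $\{K^n\}$, and neither follows from what you have. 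This is exactly the obstruction the paper flags in the introduction: ``One only knows that $K_t$ is continuous and could not prove any further regularity such as H\"older continuity.'' Attempts to extract a modulus from Proposition~\ref{multi} run into the term $|Y^n_s|^2-|Y^n_t|^2$, which is circular, and the constant $\lambda_1$ there cannot in general dominate the uniform sup-bound on $Y^n$.

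The paper avoids this difficulty altogether by a direct energy estimate rather than compactness. Writing $\varsigma_n=Y^{h_n}-Y^h$ and applying the chain rule to $|\varsigma_n|^2$, the finite-variation contribution $-2\int_0^t\langle \varsigma_n,\mathrm{d}K^{h_n}-\mathrm{d}K^{h}\rangle$ has a favorable sign by monotonicity (equation~(\ref{mono})) and is simply discarded; no regularity of $K^n$ is needed. The drift term and the $(\sigma(Y^{h_n},\cdot)-\sigma(Y^h,\cdot))h_n$ term are controlled via (\textbf{H1})--(\textbf{H2}) and absorbed. The genuinely delicate piece is the cross term $2\int_0^t\langle \sigma(Y^h,\cdot)(h_n-h),\varsigma_n\rangle$, which mixes weak convergence of $h_n$ with the unknown $\varsigma_n$: the paper handles it by first showing $g_n(t):=\int_0^t\sigma(Y^h,\cdot)(h_n-h)\,\mathrm{d}r\to 0$ uniformly (via Arzel\`a--Ascoli on $g_n$, not on $Y^n$), then integrating by parts against $\varsigma_n$. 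Bihari's inequality (Lemma~\ref{bhr}) then closes the loop under the non-Lipschitz modulus $\varrho$. If you want to rescue your route, you would need an independent argument for equicontinuity of $K^n$---for instance via Yosida approximation as in Lemma~\ref{afin}---but that is substantially more work than the energy method.
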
	
	
	\begin{proof}
		Let $Y^h$ be the solution of (\ref{yy}) and $Y^{h}_n$ be the solution of (\ref{yy}) with $h$ replaced by $h_n$. By the definition of $\mathcal{G}^0$, we have $Y^h=\mathcal{G}^0(h)$ and $Y^{h_n}=\mathcal{G}^0(h_n)$, then it is left to verify that
		\begin{align*}
			\lim\limits_{n\to\infty}\sup_{t\in[0,T]}|Y^{h_n}_t-Y^h_t|=0.
		\end{align*}
		
		Note that $Y^h,Y^{h_n}\in C([0,T],\overline{D(A)})$, $\forall n\in\mathbb{N}$. By Proposition \ref{Yu} we know that both $Y^h$ and $\left\{Y_{h_n}\right\}_{n\geq 1}$ are uniformly bounded, i.e. there exists some constant $C>0$ such that
		\begin{align}\label{yf}
			\max\left\{\sup_{n\geq 1}\sup_{t\in[0,T]}|Y^{h_n}_t|,\sup_{t\in[0,T]}|Y^h_t|\right\}\leq C.
		\end{align}
		
		Set $\varsigma_{n}(t):=Y_{t}^{h_{n}}-Y_{t}^{h}$, then
		\begin{align*}
			\varsigma_{n}(t)=&-(K_{t}^{h_{n}}-K_{t}^{h}) +\int_{0}^{t}\left[b\left(Y_{r}^{h_{n}},\mathcal{L}_{X_{r}^{0}}\right)-b\left(Y_{r}^{h},\mathcal{L}_{X_{r}^{0}}\right)\right]\,\mathrm{d}r \\
			&+\int_{0}^{t}\left[\sigma\left(Y_{r}^{h_{n}},\mathcal{L}_{X_{r}^{0}}\right)h_{n}(r)-\sigma\left(Y_{r}^{h},\mathcal{L}_{X_{r}^{0}}\right)h(r)\right]\,\mathrm{d}r.
		\end{align*}
		
		It follows from It$\hat{\text{o}}$'s formula that
		\begin{align*}
			|\varsigma_{n}(t)|^{2}= &2\int_{0}^{t}\left\langle\mathrm{d}K_{r}^{h}-\mathrm{d}K_{r}^{h_{n}},\varsigma_{n}(r)\right\rangle+2\int_{0}^{t}\left\langle b\left(Y_{r}^{h_{n}},\mathcal{L}_{X_{r}^{0}}\right)-b\left(Y_{r}^{h},\mathcal{L}_{X_{r}^{0}}\right),\varsigma_{n}(r)\right\rangle\mathrm{d}r \\
			&+2\int_{0}^{t}\left\langle \sigma\left(Y_{r}^{h},\mathcal{L}_{X_{r}^{0}}\right)\left[h_{n}(r)-h(r)\right],\varsigma_{n}(r)\right\rangle\mathrm{d}r \\
			&+2\int_{0}^{t}\left\langle \left[\sigma\left(Y_{r}^{h_{n}},\mathcal{L}_{X_{r}^{0}}\right)-\sigma\left(Y_{r}^{h},\mathcal{L}_{X_{r}^{0}}\right)\right]h_{n}(r),\varsigma_{n}(r)\right\rangle\mathrm{d}r \\
			=:&I_{1}^{n}(t)+I_{2}^{n}(t)+I_{3}^{n}(t)+I_{4}^{n}(t).
		\end{align*}
		
		By the Assumption (\hyperref[h1]{\textbf{H1}}) and Proposition \ref{multi}, there exists some constant $L>0$ such that
		\begin{align}\label{361}
			I_{1}^{n}(t)\leq0
		\end{align}
		and
		\begin{align}\label{362}
			|I_{2}^{n}(t)|\leq 2L\int_{0}^{t}\varrho(\sup_{s\in[0,r]}\lvert\varsigma_{n}(s)\rvert^{2})\mathrm{d}r.
		\end{align}			
		By Young's inequalities, the definition of $ \mathcal{S}_{m} $ and Lemma 2.1, we get
		\begin{align}\label{363}
			|I_{4}^{n}(t)|=
			&2\int_{0}^{t}\left\langle \left[\sigma\left(Y_{r}^{h_{n}},\mathcal{L}_{X_{r}^{0}}\right)-\sigma\left(Y_{r}^{h},\mathcal{L}_{X_{r}^{0}}\right)\right]h_{n}(r),\varsigma_{n}(r)\right\rangle\mathrm{d}r \nonumber\\
			&\leq 2\sqrt{L}\int_{0}^{t}\lvert h_{n}(r)\rvert \lvert\varsigma_{n}(r)\rvert\left(\varrho(|\varsigma_n(r)|^2)\right)^{\frac{1}{2}}\mathrm{d}r \nonumber\\
			&\leq 2\sqrt{L}\left(\int_{0}^{T}\lvert h_{n}(r)\rvert^{2}\mathrm{d}r\right)^{\frac{1}{2}}\left(\int_{0}^{t}\lvert \varsigma_{n}(r)\rvert^{2}\varrho(|\varsigma_n(r)|^2)\mathrm{d}r\right)^{\frac{1}{2}} \nonumber\\
			&\leq 2\sqrt{Lm}\left[\left(\sup_{r\in[0,t]}\lvert\varsigma_{n}(r)\rvert^{2}\right)\left(\int_{0}^{t}\varrho(|\varsigma_n(r)|^2)\mathrm{d}r\right)\right]^{\frac{1}{2}} \nonumber\\
			&\leq \frac{1}{2}\sup_{r\in[0,t]}\lvert\varsigma_{n}(r)\rvert^{2}+C\int_{0}^{t}\varrho(\sup_{s\in[0,r]}\lvert\varsigma_{n}(s)\rvert^{2})\mathrm{d}r.
		\end{align}
		
		

		Combining (\ref{361}), (\ref{362}) and (\ref{363}) together, we obtain
		\begin{equation*}
			\sup\limits_{r\in[0,t]}|\varsigma_{n}(r)|^{2}\leq \frac{1}{2}\sup\limits_{r\in[0,t]}|\varsigma_{n}(r)|^{2}+C\int_{0}^{t}\varrho(\sup\limits_{s\in[0,r]}|\varsigma_{n}(s)|^{2})\mathrm{d}r+\sup\limits_{r\in[0,t]}|I^n_3(r)|,
		\end{equation*}
		which implies that
		\begin{equation}\label{y11}
			\sup\limits_{r\in[0,t]}|\varsigma_{r}|^{2}\leq C\int_{0}^{t}\varrho(\sup\limits_{s\in[0,r]}|\varsigma_{n}(s)|^{2})\mathrm{d}r+\sup\limits_{r\in[0,t]}|I^n_3(r)|.
		\end{equation}

		For $I^n_3(t)$, since $h,\,h_n\in \mathcal{S}_m$, we set $$g_n(t):=\int_{0}^{t}\sigma(Y_{r}^{h},\mathcal{L}_{X_{r}^{0}})\left[h_n(r)-h(r)\right]\mathrm{d}r.$$
		
		Then we need to show that $g_n(t)$ tends to 0 in $C([0,T],\mathbb{R}^d)$. To begin with, we show that
		\begin{enumerate}
			\item[(i)] $\sup_{n\geq 1}\sup_{r\in[0,T]}|g_n(r)|<\infty$;
			\item[(ii)] $t\mapsto g_n(t), n\geq 1$ is equi-continuous.
		\end{enumerate}
		
		Firstly, for $0\leq s<t\leq T$, by \hyperref[h2]{(\textbf{H2})} we have
		\begin{align*}
			|g_n(t)-g_n(s)|\leq &\,\left\lvert\int_{s}^{t}\sigma(Y_{r}^{h},\mathcal{L}_{X_{r}^{0}})\left[h_n(r)-h(r)\right]\mathrm{d}r\right\rvert \\
			\leq &\,\int_{s}^{t}\lVert\sigma(Y_{r}^{h},\mathcal{L}_{X_{r}^{0}})\rVert\lvert h_n(r)-h(r)\rvert\mathrm{d}r \\
			\leq &\,\left(\int_{s}^{t}\lVert\sigma(Y_{r}^{h},\mathcal{L}_{X_{r}^{0}})\rVert^2\mathrm{d}r\right)^{\frac{1}{2}}\left(\int_{s}^{t}|h_n(r)-h(r)|^2\mathrm{d}r\right)^{\frac{1}{2}} \\
			\leq&\, 2\sqrt{m}\left(\int_{s}^{t}\lVert\sigma(Y_{r}^{h},\mathcal{L}_{X_{r}^{0}})\rVert^2\mathrm{d}r\right)^{\frac{1}{2}} \\
			\leq&\, 2\sqrt{Lm}\left(\int_{s}^{t}(1+|Y_{r}^{h}|^2+\lVert \mathcal{L}_{X_{r}^{0}}\rVert_2^2)\mathrm{d}r\right)^{\frac{1}{2}}.
		\end{align*}
		
		Let $s=0$. By (\ref{yf}) and Remark \ref{W2}, we have
		\begin{align*}
			|g_n(t)|\leq 2\sqrt{Lm}(C+\mathbb{E}\sup_{r\in[0,t]}|X_r^0|^2)<\infty
		\end{align*}
		where $C$ is independent of $n$, and then we obtain (i).
		By similar deduction we have
		\begin{align*}
			|g_n(t)-g_n(s)|\leq 2\sqrt{Lm}(C+\mathbb{E}\sup_{r\in[0,t]}|X_r^0|^2)\sqrt{t-s},
		\end{align*}
		which implies (ii).
		
		By (i), (ii), and Ascoli-Arzela lemma we obtain that $\left\{g_n(t),n\geq1\right\}$ is pre-compact in $C([0,T],\mathbb{R}^d)$.
		
		Note that
		\begin{equation*}
			\int_{0}^{t}\lVert \sigma(Y_{r}^{h},\mathcal{L}_{X_{r}^{0}})\rVert^2\mathrm{d}r<\infty
		\end{equation*}
		and $h_n(t)$ converges to $h(t)$ weakly in $L^2([0,T],\mathbb{R}^d)$ as $n\to\infty$, which implies that
		\begin{align}\label{gf}
			\lim\limits_{n\to\infty}\sup_{t\in[0,T]}|g_n(t)|=0.
		\end{align}
		
		By using Taylor formula for $\langle \varsigma_n(s ),g_n(s)\rangle$, we can obtain that
		\begin{align*}
			\frac{1}{2}I^n_3(t)=&\langle g_n(t),\varsigma_{n}(t)\rangle-\int_{0}^{t}\langle \mathrm{d}K^h_s-\mathrm{d}K^{h_n}_s, g_n(s)\rangle -\int_{0}^{t}\langle b(Y_{s}^{h_n},\mathcal{L}_{X_{s}^{0}})-b(Y_{s}^{h},\mathcal{L}_{X_{s}^{0}}), g_n(s)\rangle\mathrm{d}s \\
			&-\int_{0}^{t}\langle \sigma(Y_{s}^{h_n},\mathcal{L}_{X_{s}^{0}})h_n(s)-\sigma(Y_{s}^{h},\mathcal{L}_{X_{s}^{0}})h(s), g_n(s)\rangle\mathrm{d}s \\
			=:& I^n_{31}(t)+I^n_{32}(t)+I^n_{33}(t)+I^n_{34}(t).
		\end{align*}
		
		For $I^n_{31}(t)$, since
		\begin{align*}
			\sup_{t\in[0,T]}|I^n_{31}(t)|\leq \sup_{t\in[0,T]}|g_n(t)|\,\sup_{t\in[0,T]}|\varsigma_n(t)|,
		\end{align*}
		by (\ref{yf}) and (\ref{gf}) we have
		\begin{align}\label{3631}
			\lim\limits_{n\to\infty}\sup_{t\in[0,T]}|I^n_{31}(t)|=0.
		\end{align}
		Similarly, by (\ref{mono}) and (\ref{yf}) we obtain
		\begin{align}\label{3632}
			\lim\limits_{n\to\infty}\sup_{t\in[0,T]}|I^n_{32}(t)|=0.
		\end{align}
		
		For $I^n_{33}(t)$, note that
		\begin{align*}
			\sup_{t\in[0,T]}|I^n_{33}(t)|\leq T\sup_{t\in[0,T]}\left(|b(Y_{t}^{h_n},\mathcal{L}_{X_{t}^{0}})|+|b(Y_{t}^{h},\mathcal{L}_{X_{t}^{0}})| \right)\,\sup_{t\in[0,T]}|g_n(t)|,
		\end{align*}
		then by \hyperref[h1]{\textbf{(H1)}} and (\ref{gf}) we get
		\begin{align}\label{3633}
			\lim\limits_{n\to\infty}\sup_{t\in[0,T]}|I^n_{33}(t)|=0.
		\end{align}
		By \hyperref[h2]{\textbf{(H2)}} and $h_n,\,h\in \mathcal{ S }_m$, we have
		\begin{align}\label{3634}
			\lim\limits_{n\to\infty}\sup_{t\in[0,T]}|I^n_{34}(t)|=0.
		\end{align}
		
		Let $f(t)=\int_{1}^{t}1/\varrho(s)\mathrm{d}s$. By (\ref{3631})-(\ref{3634}) and applying Lemma \ref{bhr} to the inequality (\ref{y11}), we get
		\begin{align*}
			\sup\limits_{r\in[0,t]}|\varsigma_{r}|^{2}\leq f^{-1}\left(f(\sup\limits_{r\in[0,t]}|I^n_3(r)|)+CLT\right),
		\end{align*}
		similar to the proof of Lemma \ref{afin}, we obtain
		\begin{align*}
			\lim\limits_{n\to\infty}\sup_{t\in[0,T]}|Y^{h_n}_t-Y^h_t|=0,
		\end{align*}
		which is the desired result.
	\end{proof}



	To verify \textbf{(LDP)$\bm{_{2}}$}, we need the following lemma.
	
	\begin{lemma}\label{ep0}
		Under (\hyperref[h0]{\textbf{H0}}), (\hyperref[h1]{\textbf{H1}}) and (\hyperref[h2]{\textbf{H2}}), it holds that
		$$\lim\limits_{\epsilon\to 0}\mathbb{E}\left(\sup_{t\in[0,T]}\lvert X_{t}^{\epsilon}-X_{t}^{0}\rvert^{2}\right)=0.$$
	\end{lemma}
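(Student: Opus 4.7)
The plan is to use a Yosida-approximation splitting, which turns the inaccessible direct comparison between the finite-variation processes $K^\epsilon$ and $K^0$ (governed by different set-valued operators $A_\epsilon$ and $A$) into a comparison between the Lipschitz regularisations $A^\alpha_\epsilon$ and $A^\alpha$. For each $\alpha>0$ let $X^{\epsilon,\alpha}:=Z^{\epsilon,0,\alpha}$ (the process from Section~4.2 with control $h_\epsilon=0$) and $X^{0,\alpha}:=Y^{0,\alpha}$ (the deterministic Yosida regularisation of $X^0$). The triangle inequality gives
\begin{equation*}
\mathbb{E}\sup_{t\in[0,T]}|X^\epsilon_t-X^0_t|^2 \leq 3\bigl(\Lambda_1(\epsilon,\alpha)+F(\epsilon,\alpha)+\Lambda_3(\alpha)\bigr),
\end{equation*}
where $\Lambda_1(\epsilon,\alpha):=\mathbb{E}\sup_t|X^\epsilon_t-X^{\epsilon,\alpha}_t|^2$, $\Lambda_3(\alpha):=\sup_t|X^{0,\alpha}_t-X^0_t|^2$ and $F(\epsilon,\alpha):=\mathbb{E}\sup_t|X^{\epsilon,\alpha}_t-X^{0,\alpha}_t|^2$. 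Lemma~\ref{afin} yields $\sup_{\epsilon}\Lambda_1(\epsilon,\alpha)\to 0$ and Lemma~\ref{ayfin} gives $\Lambda_3(\alpha)\to 0$ as $\alpha\to 0$, so the task reduces to showing that, for every fixed $\alpha>0$, $F(\epsilon,\alpha)\to 0$ as $\epsilon\to 0$.

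For the middle term I would set $U_t:=X^{\epsilon,\alpha}_t-X^{0,\alpha}_t$ and apply It\^o's formula to $|U_t|^2$. In the drift contribution, after inserting $b_\epsilon(X^{0,\alpha}_s,\mathcal{L}_{X^0_s})$, (\textbf{H1}) produces a monotonicity-type bound $L[\varrho(|U_s|^2)+\varrho(\mathbb{W}_2^2(\mathcal{L}_{X^\epsilon_s},\mathcal{L}_{X^0_s}))]$ plus a vanishing $|U_s|\rho_{b,\epsilon}$ remainder. The multivalued contribution $-\langle U_s,A^\alpha_\epsilon(X^{\epsilon,\alpha}_s)-A^\alpha(X^{0,\alpha}_s)\rangle$ is split at $A^\alpha_\epsilon(X^{0,\alpha}_s)$: monotonicity of $A^\alpha_\epsilon$ kills the first half, and (\textbf{H4}), combined with the uniform boundedness of $X^{0,\alpha}$ from Lemma~\ref{ayfin}, forces the second half to vanish uniformly in $s$ as $\epsilon\to 0$. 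The diffusion term is handled symmetrically via (\textbf{H2}), $\rho_{\sigma,\epsilon}$, the BDG inequality for the martingale part, and the moment bound of Lemma~\ref{yozfin} to absorb the $\epsilon\int\|\sigma_\epsilon\|^2$ term.

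Writing $\phi_{\epsilon,\alpha}(t):=\mathbb{E}\sup_{s\leq t}|U_s|^2$, the Wasserstein term is controlled by
\begin{equation*}
\mathbb{W}_2^2(\mathcal{L}_{X^\epsilon_s},\mathcal{L}_{X^0_s})\leq \mathbb{E}|X^\epsilon_s-X^0_s|^2 \leq 3\bigl(\Lambda_1(\epsilon,\alpha)+\phi_{\epsilon,\alpha}(s)+\Lambda_3(\alpha)\bigr),
\end{equation*}
and using Jensen together with the subadditivity of $\varrho$ (which follows from concavity and $\varrho(0)=0$), all the above estimates consolidate into
\begin{equation*}
\phi_{\epsilon,\alpha}(t)\leq C\int_0^t \varrho(\phi_{\epsilon,\alpha}(s))\,\mathrm{d}s + R(\alpha)+R(\epsilon,\alpha),
\end{equation*}
where $R(\alpha)$ collects the Yosida remainders $\varrho(\Lambda_1),\varrho(\Lambda_3)$ (independent of $\epsilon$, vanishing as $\alpha\to 0$) and $R(\epsilon,\alpha)$ gathers $\rho_{b,\epsilon}$, $\rho_{\sigma,\epsilon}$, $\epsilon$ and the $|A^\alpha_\epsilon-A^\alpha|$ errors on the compact range of $X^{0,\alpha}$ (vanishing as $\epsilon\to 0$ with $\alpha$ fixed). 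Bihari's Lemma~\ref{bhr} then gives $\phi_{\epsilon,\alpha}(T)\leq f^{-1}\bigl(f(R(\alpha)+R(\epsilon,\alpha))+CT\bigr)$ with $f(r)=\int_1^{r}1/\varrho(s)\,\mathrm{d}s$. Taking $\epsilon\to 0$ first and then $\alpha\to 0$, and invoking $\int_{0^+}1/\varrho=\infty$ (so that $f(0^+)=-\infty$ and $f^{-1}(-\infty+CT)=0$), delivers $\lim_{\epsilon\to 0}F(\epsilon,\alpha)=0$ and hence the lemma.

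The hard part is the circular dependence: law-dependence of the coefficients produces a $\varrho(\mathbb{W}_2^2(\mathcal{L}_{X^\epsilon},\mathcal{L}_{X^0}))$ term whose natural upper bound is exactly the quantity we are trying to prove goes to zero. The Yosida splitting circumvents this by replacing the unmeasurable comparison of $K^\epsilon$ with $K^0$ by the quantitative (\textbf{H4})-based comparison of $A^\alpha_\epsilon$ with $A^\alpha$, while simultaneously re-expressing the Wasserstein term through $\phi_{\epsilon,\alpha}$ plus Yosida errors whose decay is driven by the external parameter $\alpha$ rather than by $\epsilon$; the order of limits $\epsilon\to 0$ followed by $\alpha\to 0$ is therefore essential.
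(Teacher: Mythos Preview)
Your approach is correct but takes a genuinely different route from the paper's own proof. The paper argues directly: apply It\^o's formula to $|X^\epsilon_t-X^0_t|^2$, split into drift ($J_1$), martingale ($J_2$), quadratic-variation ($J_3$) and finite-variation ($J_4$) pieces, and dispose of $J_4=\int_0^t\langle X^\epsilon_r-X^0_r,\mathrm{d}K^0_r-\mathrm{d}K^\epsilon_r\rangle$ by invoking the monotonicity property (\ref{mono}) to get $J_4\leq 0$; the remaining terms are handled exactly as you do (H1/H2, $\rho_{b,\epsilon},\rho_{\sigma,\epsilon}$, BDG, Young, then Bihari). No Yosida layer appears.

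The trade-off is instructive. The paper's direct argument is much shorter, but its use of (\ref{mono}) is delicate: that inequality is stated for two pairs in the \emph{same} class $\mathcal{A}$ (built from a single operator $A$), whereas here $K^\epsilon$ is a selection from $A_\epsilon$ and $K^0$ from $A$, so cross-monotonicity is not automatic. Your Yosida splitting sidesteps this issue entirely by replacing the multivalued comparison with a Lipschitz one and then using (\textbf{H4}) to control $|A^\alpha_\epsilon-A^\alpha|$ on compacts --- exactly the mechanism the paper itself employs later in Proposition~\ref{ld2}. The price you pay is that your proof needs (\textbf{H4}), which is \emph{not} among the hypotheses listed for this lemma (only (\textbf{H0})--(\textbf{H2}) are stated). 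In the paper's overall context (\textbf{H4}) is in force wherever the lemma is invoked, so this is harmless in practice, but you should flag the extra assumption explicitly.
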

	
	\begin{proof}
		By applying It$\hat{\text{o}}$'s formula we have
		\begin{align*}
			&\lvert X_{t}^{\epsilon}-X_{t}^{0}\rvert^{2} \\
			=&2\int_{0}^{t}\left\langle b_{\epsilon}\left(X_{r}^{\epsilon},\mathcal{L}_{X_{r}^{\epsilon}}\right)-b\left(X_{r}^{0},\mathcal{L}_{X_{r}^{0}}\right), X_{r}^{\epsilon}-X_{r}^{0}\right\rangle\mathrm{d}r
			+2\sqrt{\epsilon}\int_{0}^{t}\left\langle\sigma_{\epsilon}\left(X_{r}^{\epsilon},\mathcal{L}_{X_{r}^{\epsilon}}\right), X_{r}^{\epsilon}-X_{r}^{0}\right\rangle\mathrm{d}W(s) \\
			&+\epsilon\int_{0}^{t}\lVert\sigma_{\epsilon}\left(X_{r}^{\epsilon},\mathcal{L}_{X_{r}^{\epsilon}}\right)\rVert^{2}\mathrm{d}r+\int_{0}^{t}\left\langle X_{r}^{\epsilon}-X_{r}^{0},\mathrm{d}K_{r}^{0}-\mathrm{d}K_{r}^{\epsilon}\right\rangle \\
			=:&J_{1}^{\epsilon}(t)+J_{2}^{\epsilon}(t)+J_{3}^{\epsilon}(t)+J_{4}^{\epsilon}(t).
		\end{align*}
		Next we estimate the four items above.
		
		For $J_{1}^{\epsilon}(t)$,
		by Jensen's inequality,  (\hyperref[h1]{\textbf{H1}}),(\hyperref[h2]{\textbf{H2}}) and Remark \ref{W2},
		\begin{align*}
			\sup_{t\in [0,T]}\lvert J_{1}^{\epsilon}(t)\rvert
			\leq& \,2\int_{0}^{T}\lvert\left\langle X_{r}^{\epsilon}-X_{r}^{0},  b_{\epsilon}\left(X_{r}^{\epsilon},\mathcal{L}_{X_{r}^{\epsilon}}\right)-b\left(X_{r}^{\epsilon},\mathcal{L}_{X_{r}^{\epsilon}}\right)\right\rangle\rvert\mathrm{d}r \\
			&+2\int_{0}^{T}\lvert\left\langle X_{r}^{\epsilon}-X_{r}^{0},  b\left(X_{r}^{\epsilon},\mathcal{L}_{X_{r}^{\epsilon}}\right)-b\left(X_{r}^{0},\mathcal{L}_{X_{r}^{0}}\right)\right\rangle\rvert\mathrm{d}r \\
			\leq& \,2\rho_{b,\epsilon}\int_{0}^{T}\lvert X_{r}^{\epsilon}-X_{r}^{0}\rvert\mathrm{d}r+2L\int_{0}^{T}\varrho\left[\lvert X_{r}^{\epsilon}-X_{r}^{0}\rvert^{2}+\mathbb{W}_2^{2}\left(\mathcal{L}_{X_{r}^{\epsilon}},\mathcal{L}_{X_{r}^{0}}\right)\right]\mathrm{d}r  \\
			\leq& \,\rho_{b,\epsilon}\int_{0}^{T}\lvert X_{r}^{\epsilon}-X_{r}^{0}\rvert^{2}\,\mathrm{d}r+2L\int_{0}^{T}\varrho(|X_r^\epsilon-X_r^0|^2)\mathrm{d}r+2L\int_{0}^{T}
			\mathbb{E}\left[\varrho(\lvert X_{r}^{\epsilon}-X_{r}^{0}\rvert^{2})\right]\,\mathrm{d}r+T\rho_{b,\epsilon}.
		\end{align*}
		
		Thus we have
		\begin{align}\label{451}
			\mathbb{E}\left(\sup_{t\in [0,T]}\lvert J_{1}^{\epsilon}(t)\rvert\right)\leq T\rho_{b,\epsilon}\mathbb{E}\left(\sup_{r\in[0,T]}\lvert X_{r}^{\epsilon}-X_{r}^{0}\rvert^{2}\right)+4L\mathbb{E}\int_{0}^{T}\varrho(\sup_{s\in [0,r]}\lvert X_{s}^{\epsilon}-X_{s}^{0}\rvert^{2})\,\mathrm{d}r+T\rho_{b,\epsilon}.
		\end{align}
		
		For $J_{3}^{\epsilon}(t)$,
		since $\varrho(\cdot)$ is concave and increasing, there exists a positive number $a$ such that
		\begin{align}\label{lin}
			\varrho(u)\leq a(1+u).
		\end{align}
		By (\hyperref[h1]{\textbf{H1}}), (\hyperref[h2]{\textbf{H2}}) and Remark \ref{W2} again, we get
		\begin{align}\label{453}
			\mathbb{E}\left(\sup_{t\in [0,T]}\lvert J_{3}^{\epsilon}(t)\rvert\right)
			=&\, \epsilon\,\mathbb{E}\int_{0}^{T}\lVert\sigma_{\epsilon}\left(X_{r}^{\epsilon},\mathcal{L}_{X_{r}^{\epsilon}}\right)\rVert^{2}\,\mathrm{d}r \nonumber\\
			\leq&\, C\epsilon\,\mathbb{E}\int_{0}^{T}\lVert\sigma_{\epsilon}\left(X_{r}^{\epsilon},\mathcal{L}_{X_{r}^{\epsilon}}\right)-\sigma\left(X_{r}^{\epsilon},\mathcal{L}_{X_{r}^{\epsilon}}\right)\rVert^{2}\,\mathrm{d}r +C\epsilon\int_{0}^{T}\lVert\sigma\left(X_{r}^{0},\mathcal{L}_{X_{r}^{0}}\right)\rVert^{2}\,\mathrm{d}r \nonumber\\
			&+\, C\epsilon\,\mathbb{E}\int_{0}^{T}\lVert\sigma\left(X_{r}^{\epsilon},\mathcal{L}_{X_{r}^{\epsilon}}\right)-\sigma\left(X_{r}^{0},\mathcal{L}_{X_{r}^{0}}\right)\rVert^{2}\,\mathrm{d}r \nonumber\\
			\leq&\, CT\epsilon\rho_{\sigma,\epsilon}^{2}+CL\epsilon\,\mathbb{E}\int_{0}^{T}\varrho\left[\lvert X_{r}^{\epsilon}-X_{r}^{0}\rvert^{2}+\mathbb{W}_2^{2}\left(\mathcal{L}_{X_{r}^{\epsilon}},\mathcal{L}_{X_{r}^{0}}\right)\right]\mathrm{d}r \nonumber\\
			&+\,C\epsilon\int_{0}^{T}\lVert\sigma\left(X_{r}^{0},\mathcal{L}_{X_{r}^{0}}\right)\rVert^{2}\,\mathrm{d}r \nonumber\\
			\leq&\, CT\epsilon\rho_{\sigma,\epsilon}^{2}+CL\epsilon \int_{0}^{T}\,\mathbb{E}\left[\varrho(\sup_{s\in[0,r]}\lvert X_{s}^{\epsilon}-X_{s}^{0}\rvert^{2})\right]\mathrm{~d}r +\,CLT\epsilon\nonumber\\
			\leq&\, CT\epsilon\rho_{\sigma,\epsilon}^2+\,CLT\epsilon \,\mathbb{E}\left(\sup_{r\in[0,T]}\lvert X_{r}^{\epsilon}-X_{r}^{0}\rvert^{2}\right)+\,CLT(1+a)\epsilon.
		\end{align}
		
		For $J_{2}^{\epsilon}(t)$,
		by BDG's inequality and Young's inequality, we have
		\begin{align}\label{452}
			\mathbb{E}\left(\sup_{t\in [0,T]}\lvert J_{2}^{\epsilon}(t)\rvert\right)
			\leq& C\sqrt{\epsilon}\,\mathbb{E}\left[\int_{0}^{T}\lVert\sigma_{\epsilon}\left(X_{r}^{\epsilon},\mathcal{L}_{X_{r}^{\epsilon}}\right)\rVert^{2}\cdot\lvert X_{r}^{\epsilon}-X_{r}^{0}\rvert^{2}\,\mathrm{d}r\right]^{\frac{1}{2}} \nonumber\\
			\leq& \frac{1}{4}\,\mathbb{E}\left(\sup_{r\in[0,T]}\lvert X_{r}^{\epsilon}-X_{r}^{0}\rvert^{2}\right)+C\epsilon\,\mathbb{E}\int_{0}^{T}\lVert\sigma_{\epsilon}\left(X_{r}^{\epsilon},\mathcal{L}_{X_{r}^{\epsilon}}\right)\rVert^{2}\,\mathrm{d}r \nonumber\\
			\leq& \frac{1}{4}\,\mathbb{E}\left(\sup_{r\in[0,T]}\lvert X_{r}^{\epsilon}-X_{r}^{0}\rvert^{2}\right)+CL\epsilon\int_{0}^{T}\mathbb{E}\left[\varrho(\sup_{s\in[0,r]}\lvert X_{s}^{\epsilon}-X_{s}^{0}\rvert^{2})\right]\mathrm{d}r+CT\epsilon\rho_{\sigma,\epsilon}^{2} \nonumber\\
			\leq& \left(\frac{1}{4}+CLT\epsilon\right)\,\mathbb{E}\left(\sup_{r\in[0,T]}\lvert X_{r}^{\epsilon}-X_{r}^{0}\rvert^{2}\right)+CT\epsilon\rho_{\sigma,\epsilon}^{2}+\,CLT\epsilon.
		\end{align}
		
		For $J_{4}^{\epsilon}(t)$, by (\ref{mono}) we have
		\begin{align}\label{454}
			\sup_{t\in [0,T]} J_{4}^{\epsilon}(t)\leq 0.
		\end{align}

		Combining (\ref{451})-(\ref{454}) together, we obtain
		\begin{align*}
			&\left(\frac{3}{4}-CLT\epsilon-T\rho_{b,\epsilon}\right)\,\,\mathbb{E}\left(\sup_{r\in[0,T]}\lvert X_{r}^{\epsilon}-X_{r}^{0}\rvert^{2}\right) \\
			\leq& CL\mathbb{E}\int_{0}^{T}\varrho(\sup_{s\in[0,r]}\lvert X_{s}^{\epsilon}-X_{s}^{0}\rvert^{2})\,\mathrm{d}r+T\rho_{b,\epsilon}+CT\epsilon\rho_{\sigma,\epsilon}^{2} \\
			=:&CL\mathbb{E}\int_{0}^{T}\varrho(\sup_{s\in[0,r]}\lvert X_{s}^{\epsilon}-X_{s}^{0}\rvert^{2})\,\mathrm{d}r+O_2(\epsilon).
		\end{align*}
		By (\hyperref[h2]{\textbf{H2}}) we can deduce that
		\begin{align*}
			\lim\limits_{\epsilon\to0}O_2(\epsilon)=0.
		\end{align*}
		
		Since $X^{0}\in C\left([0,T],\mathbb{R}^{d}\right)$ and  by (\hyperref[h1]{\textbf{H1}}),  there exists $\epsilon_{0}>0$ small enough such that for any $\epsilon\in \left(0,\epsilon_{0}\right]$,
		$$\frac{3}{4}-CLT\epsilon-T\rho_{b,\epsilon}\geq\frac{1}{4}.$$
		By Lemma \ref{bhr}, we have
		\begin{align*}
			\mathbb{E}\left(\sup_{r\in[0,T]}\lvert X_{r}^{\epsilon}-X_{r}^{0}\rvert^{2}\right)\leq f^{-1}\left(f(O_2(\epsilon))+CLT\right),
		\end{align*}
		where $f(t)=\int_{1}^{t}1/\varrho(s)\mathrm{d}s$. Similar to the proof of Proposition \ref{ld1}, we obtain
		$$\lim\limits_{\epsilon\to 0}\mathbb{E}\left(\sup_{t\in[0,T]}\lvert X_{t}^{\epsilon}-X_{t}^{0}\rvert^{2}\right)=0,$$
		which is the desired result.
	\end{proof}

	Now we are in the position to verify  \textbf{(LDP)$\bm{_{2}}$}.
	
	\begin{proposition}\label{ld2}[\textbf{(LDP)$\bm{_{2}}$}]
		For any given $m\in\left(0,\infty\right)$, let $\left\{h_{\epsilon}, \epsilon>0\right\}\subset \mathcal{D}_{m}$, then
		$$
		\lim\limits_{\epsilon\to 0}\mathbb{E}\left(\sup\limits_{t\in \left[0,T\right]}\lvert Z_{t}^{\epsilon,h_{\epsilon}}-\mathcal{G}^{0}\left(h_{\epsilon}\right)\left(t\right)\rvert^{2}\right)=0.
		$$
		\begin{proof}
			Let $Y_{t}^{h_{\epsilon}}$ be the solution of (\ref{yy}) with $h$ replaced by $h_{\epsilon}$, then $\mathcal{G}^{0}(h_{\epsilon})=Y^{h_{\epsilon}}$.
			Since
			\begin{align}
				\mathbb{E}\sup_{t\in[0,T]}|Z^{\epsilon,h_\epsilon}_t-Y^{h_\epsilon}_t|^2\leq &\,3\,\mathbb{E}\sup_{t\in[0,T]}|Z^{\epsilon,h_\epsilon}_t-Z^{\epsilon,h_\epsilon,\alpha}_t|^2 \\
				&\,+3\,\mathbb{E}\sup_{t\in[0,T]}|Z^{\epsilon,h_\epsilon,\alpha}_t-Y^{h_\epsilon,\alpha}_t|^2 \\
				&\,+3\,\mathbb{E}\sup_{t\in[0,T]}|Y^{h_\epsilon,\alpha}_t-Y^{h_\epsilon}_t|^2.
			\end{align}
			
			By Lemma \ref{afin} and Lemma \ref{ayfin}, for any $\delta>0$, there exists some constant $\alpha$ independent of $\epsilon$ such that
			\begin{equation}\label{fin21}
				\mathbb{E}\sup_{t\in[0,T]}|Z^{\epsilon,h_\epsilon}_t-Z^{\epsilon,h_\epsilon,\alpha}|^2\leq\frac{\delta}{9}
			\end{equation}
			and
			\begin{equation}\label{fin23}
				\mathbb{E}\sup_{t\in[0,T]}|Y^{h_\epsilon,\alpha}_t-Y^{h_\epsilon}_t|^2\leq\frac{\delta}{9}.
			\end{equation}
			
			For this $\alpha$, we have
			\begin{align*}
				\phi_{t}^{\epsilon,h_{\epsilon},\alpha}\coloneqq&\,Z_{t}^{\epsilon,h_{\epsilon},\alpha}-Y_{t}^{h_{\epsilon},\alpha}\\
				=&\int_{0}^{t}\left[A^\alpha(Y^{h_\epsilon,\alpha}_r)-A_\epsilon^\alpha(Z_{r}^{\epsilon,h_{\epsilon},\alpha})\right]\mathrm{d}r +\int_{0}^{t}\left[b_{\epsilon}\left(Z_{r}^{\epsilon,h_{\epsilon},\alpha},\mathcal{L}_{X_{r}^{\epsilon}}\right)-b\left(Y_{r}^{h_{\epsilon},\alpha},\mathcal{L}_{X_{r}^{0}}\right)\right]\,\mathrm{d}r \\
				&+\int_{0}^{t}\left[\sigma_{\epsilon}\left(Z_{r}^{\epsilon,h_{\epsilon},\alpha},\mathcal{L}_{X_{r}^{\epsilon}}\right)h_{\epsilon}(r)-\sigma\left(Y_{r}^{h_{\epsilon},\alpha},\mathcal{L}_{X_{r}^{0}}\right)h_{\epsilon}(r)\right]\,\mathrm{d}r +\sqrt{\epsilon}\int_{0}^{t}\sigma_{\epsilon}\left(Z_{r}^{\epsilon,h_{\epsilon},\alpha},\mathcal{L}_{X_{r}^{\epsilon}}\right)\,\mathrm{d}W_r.
			\end{align*}
			By It$\hat{\text{o}}$'s formula we get
			\begin{align*}
				\lvert\phi_{t}^{\epsilon,h_{\epsilon},\alpha}\rvert^{2}=&\,2\int_{0}^{t}\left\langle \phi_{r}^{\epsilon,h_{\epsilon},\alpha},A^\alpha(Y_r^{h_\epsilon,\alpha})-A^\alpha(Z_{r}^{\epsilon,h_{\epsilon},\alpha})\right\rangle\mathrm{d}r  \\
				&+2\int_{0}^{t}\left\langle \phi_{r}^{\epsilon,h_{\epsilon},\alpha},A^\alpha(Z_{r}^{\epsilon,h_{\epsilon},\alpha})-A_\epsilon^\alpha(Z_{r}^{\epsilon,h_{\epsilon},\alpha})\right\rangle\mathrm{d}r  \\ &+2\int_{0}^{t}\left\langle\phi_{r}^{\epsilon,h_{\epsilon},\alpha},b_{\epsilon}\left(Z_{r}^{\epsilon,h_{\epsilon},\alpha},\mathcal{L}_{X_{r}^{\epsilon}}\right)-b\left(Y_{r}^{h_{\epsilon},\alpha},\mathcal{L}_{X_{r}^{0}}\right)\right\rangle\,\mathrm{d}r \\
				&+2\int_{0}^{t}\left\langle\phi_{r}^{\epsilon,h_{\epsilon},\alpha},\sigma_{\epsilon}\left(Z_{r}^{\epsilon,h_{\epsilon},\alpha},\mathcal{L}_{X_{r}^{\epsilon}}\right)h_{\epsilon}(r)-\sigma\left(Y_{r}^{h_{\epsilon},\alpha},\mathcal{L}_{X_{r}^{0}}\right)h_{\epsilon}(r)\right\rangle\,\mathrm{d}r \\
				&+2\sqrt{\epsilon}\int_{0}^{t}\left\langle\phi_{r}^{\epsilon,h_{\epsilon},\alpha},\sigma_{\epsilon}\left(Z_{r}^{\epsilon,h_{\epsilon},\alpha},\mathcal{L}_{X_{r}^{\epsilon}}\right)\,\mathrm{d}W_r\right\rangle +\epsilon\int_{0}^{t}\lVert \sigma_{\epsilon}\left(Z_{r}^{\epsilon,h_{\epsilon},\alpha},\mathcal{L}_{X_{r}^{\epsilon}}\right)\rVert^{2}\mathrm{d}r \\
				=:&I_{1}^{\epsilon,\alpha}(t)+I_{2}^{\epsilon,\alpha}(t)+I_{3}^{\epsilon,\alpha}(t)+I_{4}^{\epsilon,\alpha}(t)+I_{5}^{\epsilon,\alpha}(t)+I_{6}^{\epsilon,\alpha}(t).
			\end{align*}
			
			Next we estimate the six items in the equality above.
			
			(1) $I_{1}^{\epsilon,\alpha}(t)$.
			By (\ref{mono}) and the monotonicity of $A^\alpha$ we immediately get
			\begin{align}\label{421}
				I_{1}^{\epsilon,\alpha}(t)\leq 0.
			\end{align}
			
			(2) $I_{2}^{\epsilon,\alpha}(t)$. For any $\tau>0$ we have
			\begin{align}\label{422}
				\sup_{t\in [0,T]}\lvert I_{2}^{\epsilon,\alpha}(t)\rvert\leq \tau\mathbb{E}\left(\sup_{t\in[0,T]}|\phi_{t}^{\epsilon,h_\epsilon,\alpha}|^2\right)+C_\tau\int_{0}^{T}\mathbb{E}|A^\alpha(Z^{\epsilon,h_\epsilon,\alpha}_t)-A^\alpha_\epsilon(Z^{\epsilon,h_\epsilon,\alpha}_t)|^2\mathrm{d}t.
			\end{align}
			Next we show that as $\epsilon\to 0$
			\begin{equation}\label{Psii}
				\Psi_\epsilon:=\int_{0}^{T}\mathbb{E}|A^\alpha(Z^{\epsilon,h_\epsilon,\alpha}_t)-A^\alpha_\epsilon(Z^{\epsilon,h_\epsilon,\alpha}_t)|^2\mathrm{d}t\to 0.
			\end{equation}
			For any $j>0$, we have
			\begin{align*}
				\Psi_\epsilon=&\int_{0}^{T}\mathbb{E}\left[|A^\alpha(Z^{\epsilon,h_\epsilon,\alpha}_t)-A^\alpha_\epsilon(Z^{\epsilon,h_\epsilon,\alpha}_t)|^2\mathbb{I}_{\left\{\sup_{t\in [0,T]}|Z^{\epsilon,h_\epsilon,\alpha}_t|\leq j\right\}}\right]\mathrm{d}t \\
				&+\int_{0}^{T}\mathbb{E}\left[|A^\alpha(Z^{\epsilon,h_\epsilon,\alpha}_t)-A^\alpha_\epsilon(Z^{\epsilon,h_\epsilon,\alpha}_t)|^2\mathbb{I}_{\left\{\sup_{t\in [0,T]}|Z^{\epsilon,h_\epsilon,\alpha}_t|> j\right\}}\right]\mathrm{d}t \\
				=:&J_1^{\epsilon,j}+J_2^{\epsilon,j}.
			\end{align*}
			
			By \hyperref[h4]{\textbf{(H4)}} we have $\lim\limits_{\epsilon\to 0}J_1^{\epsilon,j}=0$. For $J_2^{\epsilon,j}$,			
			by Lipschitz continuity of Yosida approximation and \hyperref[h0]{(\textbf{H0})} we have
			\begin{align*}
				|A^\alpha(Z^{\epsilon,h_\epsilon,\alpha}_t)|+|A^\alpha_\epsilon(Z^{\epsilon,h_\epsilon,\alpha}_t)|\leq C_\alpha\left(1+|Z^{\epsilon,h_\epsilon,\alpha}_t|\right),
			\end{align*}
			
			and hence
			\begin{align*}
				J_2^{\epsilon,j}\leq &C_\alpha\mathbb{E}\left[\left(1+\sup_{t\in[0,T]}|Z^{\epsilon,h_\epsilon,\alpha}_t|^2\right)\mathbb{I}_{\left\{\sup_{t\in [0,T]}|Z^{\epsilon,h_\epsilon,\alpha}_t|> j\right\}}\right] \\
				\leq&C_\alpha\mathbb{E}\left[\left(1+\sup_{t\in[0,T]}|Z^{\epsilon,h_\epsilon,\alpha}_t|^2\right)\frac{\sup_{t\in [0,T]}|Z^{\epsilon,h_\epsilon,\alpha}_t|^2}{j^2}\right] \\
				\leq&C_\alpha\sup_{\epsilon}\frac{\mathbb{E}\left[\sup_{t\in [0,T]}|Z^{\epsilon,h_\epsilon,\alpha}_t|^2+\sup_{t\in [0,T]}|Z^{\epsilon,h_\epsilon,\alpha}_t|^4\right]}{j^2}\leq C_\alpha \cdot C/j^2,
			\end{align*}
			where the last inequality follows from Lemma \ref{yozfin} and $C$ is independent of $\epsilon$ and $\alpha$. This implies $\lim\limits_{j\to\infty}J_2^{\epsilon,j}=0$ for any $\epsilon$, and therefore (\ref{Psii}).
			
			(3) $I_{3}^{\epsilon,\alpha}(t)$.
			By Assumptions (\hyperref[h1]{\textbf{H1}}) and Remark \ref{W2}, we have
			\begin{align*}
				\sup_{t\in [0,T]}\lvert I_{3}^{\epsilon,\alpha}(t)\rvert
				\leq& \,2\int_{0}^{T}\lvert\left\langle \phi_{r}^{\epsilon,h_{\epsilon},\alpha},  b_{\epsilon}\left(Z_{r}^{\epsilon,h_{\epsilon},\alpha},\mathcal{L}_{X_{r}^{\epsilon}}\right)-b\left(Y_{r}^{h_{\epsilon},\alpha},\mathcal{L}_{X_{r}^{0}}\right)\right\rangle\rvert\mathrm{d}r \\
				\leq& \,2\int_{0}^{T}\lvert\left\langle \phi_{r}^{\epsilon,h_{\epsilon},\alpha},  b_{\epsilon}\left(Z_{r}^{\epsilon,h_{\epsilon},\alpha},\mathcal{L}_{X_{r}^{\epsilon}}\right)-b\left(Z_{r}^{\epsilon,h_{\epsilon},\alpha},\mathcal{L}_{X_{r}^{\epsilon}}\right)\right\rangle\rvert\mathrm{d}r \\
				&+2\int_{0}^{T}\lvert\left\langle \phi_{r}^{\epsilon,h_{\epsilon},\alpha},  b\left(Z_{r}^{\epsilon,h_{\epsilon},\alpha},\mathcal{L}_{X_{r}^{\epsilon}}\right)-b\left(Y_{r}^{h_{\epsilon},\alpha},\mathcal{L}_{X_{r}^{0}}\right)\right\rangle\rvert\mathrm{d}r \\
				\leq&\,2\rho_{b,\epsilon}\int_{0}^{T}\lvert\phi_{r}^{\epsilon,h_{\epsilon},\alpha}\rvert\mathrm{d}r+2L\int_{0}^{T}\varrho(\lvert\phi_{r}^{\epsilon,h_{\epsilon},\alpha}\rvert^{2})\mathrm{d}r+2L\int_{0}^{T}\varrho\left(\mathbb{W}^2_{2}(\mathcal{L}_{X_{r}^{\epsilon}},\mathcal{L}_{X_{r}^{0}})\right)\mathrm{d}r \\
				\leq&\,T\rho_{b,\epsilon}+\rho_{b,\epsilon}
				\int_{0}^{T}\lvert\phi_{r}^{\epsilon,h_{\epsilon},\alpha}\rvert^{2}\mathrm{d}r+2L\int_{0}^{T}\varrho(\lvert\phi_{r}^{\epsilon,h_{\epsilon},\alpha}\rvert^{2})\mathrm{d}r+2L\int_{0}^{T}\mathbb{E}\left[\varrho\left(\lvert X_{r}^{\epsilon}-X_{r}^{0}\rvert^{2}\right)\right]\mathrm{d}r \\
				\leq&T\rho_{b,\epsilon}+\,T\rho_{b,\epsilon}\sup_{r\in [0,T]}\lvert\phi_{r}^{\epsilon,h_{\epsilon},\alpha}\rvert^{2}+2L\int_{0}^{T}\varrho(\lvert\phi_{r}^{\epsilon,h_{\epsilon},\alpha}\rvert^{2})\mathrm{d}r+LT\mathbb{E}\left[\varrho(\sup_{r\in[0,T]}\lvert X_{r}^{\epsilon}-X_{r}^{0}\rvert^{2})\right].	
			\end{align*}
			Hence
			\begin{align}\label{423}
				\mathbb{E}\left(\sup_{r \in[0, T]}\lvert I_{3}^{\epsilon,\alpha}(t)\rvert\right)\leq& T\rho_{b,\epsilon}+\,T\rho_{b,\epsilon}\mathbb{E}\left(\sup_{r\in [0,T]}\lvert\phi_{r}^{\epsilon,h_{\epsilon},\alpha}\rvert^{2}\right)+2L\int_{0}^{T}\mathbb{E}\left[\varrho(\sup_{s\in[0,r]}\lvert\phi_{s}^{\epsilon,h_{\epsilon},\alpha}\rvert^{2})\right]\mathrm{d}r \nonumber\\
				&+LT\mathbb{E}\left[\varrho(\sup_{r\in[0,T]}\lvert X_{r}^{\epsilon}-X_{r}^{0}\rvert^{2})\right].
			\end{align}
			
			(4) $I_{4}^{\epsilon,\alpha}(t)$. By (\hyperref[h2]{\textbf{H2}}),  we have
			\begin{align*}
				&\mathbb{E}\left(\sup_{t\in [0,T]}\lvert I_{4}^{\epsilon,\alpha}(t)\rvert\right) \nonumber\\
				\leq& \, 2\mathbb{E}\int_{0}^{T}\lvert\left\langle \phi_{r}^{\epsilon,h_{\epsilon},\alpha}, \left[\sigma_{\epsilon}\left(Z_{r}^{\epsilon,h_{\epsilon},\alpha},\mathcal{L}_{X_{r}^{\epsilon}}\right)-\sigma\left(Z_{r}^{\epsilon,h_{\epsilon},\alpha},\mathcal{L}_{X_{r}^{\epsilon}}\right)\right]h_{\epsilon}(r)\right\rangle\rvert\,\mathrm{d}r \nonumber\\
				&+ 2\mathbb{E}\int_{0}^{T}\lvert\left\langle \phi_{r}^{\epsilon,h_{\epsilon},\alpha}, \left[\sigma\left(Z_{r}^{\epsilon,h_{\epsilon},\alpha},\mathcal{L}_{X_{r}^{\epsilon}}\right)-\sigma\left(Y_{r}^{h_{\epsilon},\alpha},\mathcal{L}_{X_{r}^{0}}\right)\right]h_{\epsilon}(r)\right\rangle\rvert\,\mathrm{d}r  \nonumber\\
				\leq& \,2\rho_{\sigma,\epsilon}\mathbb{E}\int_{0}^{T}\lvert\phi_{r}^{\epsilon,h_{\epsilon},\alpha}\rvert\lvert h_{\epsilon}(r)\rvert\,\mathrm{d}r+2\mathbb{E}\int_{0}^{T}\lvert\phi_{r}^{\epsilon,h_{\epsilon},\alpha}\rvert\cdot\lVert\sigma\left(Z_{r}^{h_{\epsilon}},\mathcal{L}_{X_{r}^{\epsilon}}\right)-\sigma\left(Y_{r}^{h_{\epsilon}},\mathcal{L}_{X_{r}^{0}}\right)\rVert\cdot\lvert h_{\epsilon}(r)\rvert\,\mathrm{d}r \nonumber\\
				\leq& \, T\rho_{\sigma,\epsilon}\mathbb{E}\left(\sup_{r\in [0,T]}\lvert\phi_{r}^{\epsilon,h_{\epsilon},\alpha}\rvert^{2}\right)+\rho_{\sigma,\epsilon}\int_{0}^{T}\lvert h_{\epsilon}(r)\rvert^{2}\,\mathrm{d}r \nonumber\\
				&+2\mathbb{E}\left(\sup_{r\in[0,T]}\lvert\phi_{r}^{\epsilon,h_{\epsilon},\alpha}\rvert\int_{0}^{T}\lVert\sigma\left(Z_{r}^{\epsilon,h_{\epsilon},\alpha},\mathcal{L}_{X_{r}^{\epsilon}}\right)-\sigma\left(Y_{r}^{h_{\epsilon},\alpha},\mathcal{L}_{X_{r}^{0}}\right)\rVert\cdot|h_\epsilon|\,\mathrm{d}r\right).
			\end{align*}
			Furthermore, for any $\tau>0$, by Young's inequality we have
			\begin{align}\label{424}
				\mathbb{E}\left(\sup_{t\in [0,T]}\lvert I_{4}^{\epsilon,\alpha}(t)\rvert\right)\leq& \, T\rho_{\sigma,\epsilon}\mathbb{E}\left(\sup_{r\in [0,T]}\lvert\phi_{r}^{\epsilon,h_{\epsilon},\alpha}\rvert^{2}\right)+m\rho_{\sigma,\epsilon}+\tau\mathbb{E}\left(\sup_{r\in[0,T]}\lvert\phi_{r}^{\epsilon,h_{\epsilon},\alpha}\rvert^{2}\right) \nonumber\\
				&+C\int_{0}^{T}\left(\mathbb{E}(\varrho(\sup_{s\in[0,r]}\lvert\phi_{s}^{\epsilon,h_{\epsilon},\alpha}\rvert^{2}))+\mathbb{E}\left[\varrho(\lvert X_{r}^{\epsilon}-X_{r}^{0}\rvert^{2})\right]\right)\,\mathrm{d}r \nonumber\\
				=&(\tau+T\rho_{\sigma,\epsilon})\mathbb{E}\left(\sup_{r\in[0,T]}\lvert\phi_{r}^{\epsilon,h_{\epsilon},\alpha}\rvert^{2}\right)+C\int_{0}^{T}\mathbb{E}\left[\varrho(\sup_{s\in[0,r]}\lvert\phi_{s}^{\epsilon,h_{\epsilon},\alpha}\rvert^{2})\right]\mathrm{d}r \nonumber\\
				&+C\mathbb{E}\left[\varrho(\sup_{r\in[0,T]}\lvert X_{r}^{\epsilon}-X_{r}^{0}\rvert^{2})\right]+m\rho_{\sigma,\epsilon},
			\end{align}
			where $C$ depends on $m,T,\tau$ and $L$.
			
			(5) $I_{5}^{\epsilon,\alpha}(t)$ and $I_{6}^{\epsilon,\alpha}(t)$. By (\hyperref[h2]{\textbf{H2}}), Young's inequlity and (\ref{lin}) we have
			\begin{align}\label{425}
				&\mathbb{E}\left(\sup_{t\in [0,T]}\lvert I_{5}^{\epsilon,\alpha}(t)\rvert\right)+\mathbb{E}\left(\sup_{t\in [0,T]}\lvert I_{6}^{\epsilon,\alpha}(t)\rvert\right) \nonumber\\
				\leq&\, C\sqrt{\epsilon}\,\mathbb{E}\left[\int_{0}^{T}\lVert\sigma_{\epsilon}\left(Z_{r}^{\epsilon,h_{\epsilon},\alpha},\mathcal{L}_{X_{r}^{\epsilon}}\right)\rVert^{2}\cdot\lvert\phi_{r}^{\epsilon,h_{\epsilon},\alpha}\rvert^{2}\,\mathrm{d}r\right]^{\frac{1}{2}}+\epsilon\,\mathbb{E}\int_{0}^{T}\lVert\sigma_{\epsilon}\left(Z_{r}^{\epsilon,h_{\epsilon},\alpha},\mathcal{L}_{X_{r}^{\epsilon}}\right)\rVert^{2}\,\mathrm{d}r \nonumber\\
				\leq& \,\tau\mathbb{E}\left(\sup_{r\in [0,T]}\lvert\phi_{r}^{\epsilon,h_{\epsilon},\alpha}\rvert^{2}\right)+C\epsilon\,\mathbb{E}\int_{0}^{T}\lVert\sigma_{\epsilon}\left(Z_{r}^{\epsilon,h_{\epsilon},\alpha},\mathcal{L}_{X_{r}^{\epsilon}}\right)\rVert^{2}\,\mathrm{d}r \nonumber\\
				\leq& \, \tau\mathbb{E}\left(\sup_{r\in [0,T]}\lvert\phi_{r}^{\epsilon,h_{\epsilon},\alpha}\rvert^{2}\right)+C\epsilon\,\mathbb{E}\int_{0}^{T}\lVert\sigma_{\epsilon}\left(Z_{r}^{\epsilon,h_{\epsilon},\alpha},\mathcal{L}_{X_{r}^{\epsilon}}\right)-\sigma\left(Z_{r}^{\epsilon,h_{\epsilon},\alpha},\mathcal{L}_{X_{r}^{\epsilon}}\right)\rVert^{2}\,\mathrm{d}r \nonumber\\
				&+C\epsilon\,\mathbb{E}\int_{0}^{T}\lVert\sigma\left(Z_{r}^{\epsilon,h_{\epsilon},\alpha},\mathcal{L}_{X_{r}^{\epsilon}}\right)-\sigma\left(Y_{r}^{h_{\epsilon},\alpha},\mathcal{L}_{X_{r}^{0}}\right)\rVert^{2}\,\mathrm{d}r+C\epsilon\,\mathbb{E}\int_{0}^{T}\lVert\sigma\left(Y_{r}^{h_{\epsilon},\alpha},\mathcal{L}_{X_{r}^{0}}\right)\rVert^{2}\,\mathrm{d}r \nonumber\\
				\leq& \, \tau\mathbb{E}\left(\sup_{r\in [0,T]}\lvert\phi_{r}^{\epsilon,h_{\epsilon},\alpha}\rvert^{2}\right)+C\rho_{\sigma,\epsilon}^{2}\epsilon+C\epsilon\,\int_{0}^{T}\mathbb{E}\left[\varrho(\sup_{r\in [0,s]}\lvert\phi_{s}^{\epsilon,h_{\epsilon},\alpha}\rvert^{2})\right]\,\mathrm{d}r \nonumber\\
				&+ C\epsilon\,\int_{0}^{T}\mathbb{E}\left[\varrho(\lvert X_{r}^{\epsilon}-X_{r}^{0}\rvert^{2})\right]\,\mathrm{d}r+C\epsilon\sup_{h_{\epsilon}\in \mathcal{S}_{m}}\left(\sup_{r\in [0,T]}\lvert Y_{r}^{h_{\epsilon}}\rvert^{2}\right)+C\epsilon\sup_{r\in [0,T]}|X_{r}^{0}|^{2}+C\epsilon \nonumber\\
				\leq&\, \left(\tau+\,Ca\epsilon\right)\mathbb{E}\left(\sup_{r\in [0,T]}\lvert\phi_{r}^{\epsilon,h_{\epsilon},\alpha}\rvert^{2}\right)+Ca\epsilon\,\mathbb{E}\left(\sup_{r\in[0,T]}\lvert X_{r}^{\epsilon}-X_{r}^{0}\rvert^{2}\right)+C(1+a)\epsilon+C\rho_{\sigma,\epsilon}^{2}\epsilon,
			\end{align}
			where $C$  depends on $\tau,L,T,Y^{h_\epsilon},X^{0},m$.
			
			Taking $\tau=\frac{1}{8}$ and combining (\ref{421})-(\ref{425}) together, we get for any $\epsilon\in \left(0,\epsilon_{0}\right]$,
			\begin{align*}
				&\left(\frac{5}{8}-Ca\epsilon-T\rho_{\sigma,\epsilon}-T\rho_{b,\epsilon}\right)\mathbb{E}\left(\sup_{r\in [0,T]}\lvert\phi_{r}^{\epsilon,h_{\epsilon},\alpha}\rvert^{2}\right)    \nonumber\\
				\leq&\, CL\int_{0}^{T}\mathbb{E}\left[\varrho(\sup_{s\in[0,r]}|\phi_{s}^{\epsilon,h_\epsilon}|^2)\right]\mathrm{d}r+CTa\epsilon\,\mathbb{E}\left(\sup_{r\in[0,T]}\lvert X_{r}^{\epsilon}-X_{r}^{0}\rvert^{2}\right)+C\mathbb{E}\left[\varrho(\sup_{r\in [0,T]}\lvert X^\epsilon_r-X^0_r\rvert^2)\right]  \nonumber\\ &+C\left(\epsilon+\rho_{b,\epsilon}+\rho_{\sigma,\epsilon}+\epsilon\rho_{\sigma,\epsilon}^{2}+\Psi_\epsilon\right) \\
				=:&CL\int_{0}^{T}\mathbb{E}\left[\varrho(\sup_{s\in[0,r]}|\phi_{s}^{\epsilon,h_\epsilon}|^2)\right]\mathrm{d}r+O_3(\epsilon).
			\end{align*}
			By Lemma \ref{ep0} and (\ref{Psii}) we get
			\begin{align*}
				\lim\limits_{\epsilon\to 0}O_3(\epsilon)=0.
			\end{align*}
			Since there exists a constant $\epsilon_1\leq \epsilon_{0}$ small enough such that for any $\epsilon\in(0,\epsilon_{1}],$
			$$ \frac{5}{8}-Ca\epsilon-T\rho_{\sigma,\epsilon}-T\rho_{b,\epsilon}\geq \frac{1}{4}. $$
			Setting $f(t)=\int_{1}^{t}1/\varrho(s)\mathrm{d}s$, then by Lemma \ref{bhr}, we obtain
			\begin{align*}
				\mathbb{E}\left(\sup_{t\in[0,T]}\lvert Z_{t}^{\epsilon,h_{\epsilon},\alpha}-Y_{t}^{h_{\epsilon},\alpha}\rvert^{2}\right)\leq f^{-1}\left(f(O_3(\epsilon)+CLT)\right).
			\end{align*}
			Since $f$ and $f^{-1}$ are strictly increasing functions and $\int_{0^+}1/\varrho(s)\mathrm{d}s=\infty$, we have
			$$\lim\limits_{\epsilon\to 0}\mathbb{E}\left(\sup_{t\in[0,T]}\lvert Z_{t}^{\epsilon,h_{\epsilon},\alpha}-Y_{t}^{h_{\epsilon},\alpha}\rvert^{2}\right)=0.$$
			
			Hence for fixed $\alpha$ we can choose $\epsilon$ such that
			\begin{align}\label{fin22}
				\mathbb{E}\left(\sup_{t\in[0,T]}\lvert Z_{t}^{\epsilon,h_{\epsilon},\alpha}-Y_{t}^{h_{\epsilon},\alpha}\rvert^{2}\right)\leq\frac{\delta}{9}.
			\end{align}
			By taking (\ref{fin21}) and (\ref{fin23}) into account, we get
			$$\lim\limits_{\epsilon\to 0}\mathbb{E}\left(\sup_{t\in[0,T]}\lvert Z_{t}^{\epsilon,h_{\epsilon}}-Y_{t}^{h_{\epsilon}}\rvert^{2}\right)=0,$$
			which is the desired result.
			
		\end{proof}
	\end{proposition}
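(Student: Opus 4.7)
The plan is to combine a Yosida-regularisation/triangle decomposition with Bihari's inequality to absorb the non-Lipschitz moduli. Writing $Y^{h_\epsilon}=\mathcal{G}^0(h_\epsilon)$ for the unique solution of (\ref{yy}) driven by the control $h_\epsilon$, and denoting by $Z^{\epsilon,h_\epsilon,\alpha}$ and $Y^{h_\epsilon,\alpha}$ the Yosida approximations with parameter $\alpha>0$ introduced before, the triangle inequality yields
$$\mathbb{E}\sup_{t\in[0,T]}|Z^{\epsilon,h_\epsilon}_t-Y^{h_\epsilon}_t|^2\leq 3\bigl(\Delta_1(\epsilon,\alpha)+\Delta_2(\epsilon,\alpha)+\Delta_3(\epsilon,\alpha)\bigr),$$
where $\Delta_1,\Delta_3$ are the Yosida errors on the $Z$- and $Y$-sides and $\Delta_2:=\mathbb{E}\sup_t|Z^{\epsilon,h_\epsilon,\alpha}_t-Y^{h_\epsilon,\alpha}_t|^2$. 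Lemma \ref{afin} and Lemma \ref{ayfin} respectively allow one to make $\Delta_1$ and $\Delta_3$ arbitrarily small, uniformly in $\epsilon\in(0,1)$, by choosing $\alpha$ small enough, so the problem reduces to showing $\Delta_2\to 0$ as $\epsilon\to 0$ for each fixed $\alpha>0$.

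For $\Delta_2$ I would set $\phi^{\epsilon,\alpha}_t:=Z^{\epsilon,h_\epsilon,\alpha}_t-Y^{h_\epsilon,\alpha}_t$ and apply It\^o's formula to $|\phi^{\epsilon,\alpha}_t|^2$. The multivalued drift contributes $-2\int_0^t\langle\phi^{\epsilon,\alpha}_s,A_\epsilon^\alpha(Z^{\epsilon,h_\epsilon,\alpha}_s)-A^\alpha(Y^{h_\epsilon,\alpha}_s)\rangle\,ds$, which I split by inserting $\pm A^\alpha(Z^{\epsilon,h_\epsilon,\alpha}_s)$: the piece with $A^\alpha(Z)-A^\alpha(Y)$ is nonnegative by monotonicity of the Yosida approximation (so its It\^o contribution is $\leq 0$); the piece with $A_\epsilon^\alpha(Z)-A^\alpha(Z)$ is controlled by Young's inequality, after which one needs $\mathbb{E}\int_0^T|A_\epsilon^\alpha(Z^{\epsilon,h_\epsilon,\alpha}_s)-A^\alpha(Z^{\epsilon,h_\epsilon,\alpha}_s)|^2\,ds\to 0$. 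This is the main obstacle: since $A,A_\epsilon$ are set-valued and thus cannot be compared directly, I would split according to $\{\sup_t|Z^{\epsilon,h_\epsilon,\alpha}_t|\leq j\}$. On this set, assumption (\hyperref[h4]{\textbf{H4}}) gives uniform convergence $A_\epsilon^\alpha\to A^\alpha$ on compacts; on the complement, the uniform $L^{2p}$ bound from Lemma \ref{yozfin} together with Markov's inequality handles the tail after letting $j\to\infty$.

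The remaining drift and diffusion cross terms are handled by a decoupling decomposition of the form $b_\epsilon(Z^{\epsilon,h_\epsilon,\alpha},\mathcal{L}_{X^\epsilon})-b(Y^{h_\epsilon,\alpha},\mathcal{L}_{X^0})=[b_\epsilon-b](Z^{\epsilon,h_\epsilon,\alpha},\mathcal{L}_{X^\epsilon})+[b(Z,\mathcal{L}_{X^\epsilon})-b(Y,\mathcal{L}_{X^0})]$, and analogously for $\sigma_\epsilon$. The first piece is controlled by $\rho_{b,\epsilon}$ (resp.\ $\rho_{\sigma,\epsilon}$) from (\hyperref[h1]{\textbf{H1}})--(\hyperref[h2]{\textbf{H2}}) and vanishes as $\epsilon\to 0$; the second piece is bounded by $L\bigl[\varrho(|\phi^{\epsilon,\alpha}|^2)+\varrho(\mathbb{W}_2^2(\mathcal{L}_{X^\epsilon},\mathcal{L}_{X^0}))\bigr]$, where Remark \ref{W2} and Lemma \ref{ep0} guarantee that the distribution-dependent term tends to zero. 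The control cross term with $h_\epsilon\in\mathcal{S}_m$ is absorbed via Cauchy--Schwarz using $\int_0^T|h_\epsilon|^2\,ds\leq m$, while the stochastic integral and the It\^o correction inherit $\sqrt\epsilon$ and $\epsilon$ prefactors and are treated by BDG.

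Collecting all estimates, using Young's inequality to absorb a term $\tfrac12\mathbb{E}\sup|\phi^{\epsilon,\alpha}|^2$ into the left-hand side and Jensen's inequality to move the expectation inside the concave $\varrho$, one arrives at an integral inequality of the form $\mathbb{E}\sup_{s\in[0,t]}|\phi^{\epsilon,\alpha}_s|^2\leq O_\alpha(\epsilon)+C\int_0^t\varrho\bigl(\mathbb{E}\sup_{r\in[0,s]}|\phi^{\epsilon,\alpha}_r|^2\bigr)\,ds$, with $O_\alpha(\epsilon)\to 0$ as $\epsilon\to 0$ for fixed $\alpha$. Bihari's inequality (Lemma \ref{bhr}), combined with $\int_{0^+}1/\varrho=\infty$ and the strict monotonicity of $f$ and $f^{-1}$, then forces $\Delta_2\to 0$, concluding the proof after first fixing $\alpha$ small and then sending $\epsilon\to 0$.
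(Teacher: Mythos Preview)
Your proposal is correct and follows essentially the same route as the paper's proof: the Yosida triangle decomposition via Lemmas \ref{afin} and \ref{ayfin}, the It\^o expansion of $|\phi^{\epsilon,\alpha}|^2$ with the $\pm A^\alpha(Z)$ splitting of the multivalued term, the truncation argument based on \textbf{(H4)} and Lemma \ref{yozfin} to show $\int_0^T\mathbb{E}|A_\epsilon^\alpha(Z)-A^\alpha(Z)|^2\,ds\to 0$, the decoupling of $b_\epsilon,\sigma_\epsilon$ into the $\rho_{b,\epsilon},\rho_{\sigma,\epsilon}$ pieces and the $\varrho$-controlled pieces (with Lemma \ref{ep0} handling the law term), and the final Bihari argument are all exactly what the paper does.
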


	\subsection{An Example}
	Consider the following small perturbation of a Multivalued McKean-Vlasov SDE:
	\begin{equation}
		\begin{cases}
			\mathrm{d}X^\epsilon_t\in b_\epsilon(X^\epsilon_t,\mathcal{L}_{X^\epsilon_t})\mathrm{d}t+\sigma_\epsilon(X^\epsilon_t,\mathcal{L}_{X^\epsilon_t})\mathrm{d}W_t-\partial\Phi_\epsilon(X^\epsilon_t)\mathrm{d}t, \\
			X^\epsilon_0=h\in\overline{D(\Phi)},\quad \epsilon\in(0,1],
		\end{cases}
	\end{equation}
	where $\Phi_\epsilon$ and $\Phi$ are lower semicontinuous convex functions with common domain $D$ whose interior is non-empty. Suppose $\lim_{\epsilon\downarrow0}\Phi_\epsilon(x)=\Phi(x)$ for every $x\in D$. Define the Moreau-Yosida approximation of $\Phi$ by
	\begin{equation*}
		\Phi^\alpha(x)=\inf_{y\in{\mathbb{R}^d}}\left\{\Phi(y)+|y-x|^2/(2\alpha)\right\},
	\end{equation*}
	and that of $\Phi_\epsilon$ in the same way. Then, by \cite[Ch.\uppercase\expandafter{\romannumeral2}]{AGM}, $\Phi_\epsilon^\alpha$ and $\Phi^\alpha$ are differentiable convex functions defined on the whole space and it is not difficult to prove that
	\begin{equation*}
		\lim\limits_{\epsilon\downarrow0}\Phi_\epsilon^\alpha(x)=\Phi^\alpha(x),
	\end{equation*}
	for every $\alpha$. Let
	\begin{equation*}
		A:=\partial\Phi,\quad A_\epsilon:=\partial\Phi_\epsilon,
		A^\alpha:=\partial\Phi^\alpha,\quad A^\alpha_\epsilon:=\partial\Phi^\alpha_\epsilon,
	\end{equation*}
	then by \cite[Ch.D, Corollary 6.2.7 and Corollary 6.2.8]{JBHU}, it is easy to verify that the assumptions \hyperref[h0]{\textbf{(H0)}} and \hyperref[h4]{\textbf{(H4)}} hold in this case. Furthermore, if the assumptions \hyperref[h1]{\textbf{(H1)}}, \hyperref[h2]{\textbf{(H2)}} and \hyperref[h2]{\textbf{(H3)}} hold, then we can get the LDP by applying Theorem \ref{thmldp}.

	\subsection{Proof of Theorem 3.9}
	
	
	
	\begin{proposition}\label{} [\textbf{(MDP)}$\bm{_{1}}$]
		For any given $m\in(0,\infty)$, let $\left\{\psi_n, n\in\mathbb{N}\right\}, \psi\in\mathcal{S}_m$ satisfy that $\psi_n\to\psi$ in $\mathcal{S}_m$ as $n\to\infty$, then
		$$\lim\limits_{n\to\infty}\sup_{t\in[0,T]}|\Gamma^0(\psi_n)(t)-\Gamma^0(\psi)(t)|=0.$$
		
		\begin{proof}
			Notice that $\nu^{\psi}=\Gamma^0(\psi)$ is the solution to (\ref{mdp1}), and $\nu^{\psi_n}=\Gamma^0(\psi_n)$ is the solution to (\ref{mdp1}) with $\psi$ replaced by $\psi_n$. It is equivalent to prove the following result
			$$\lim\limits_{n\to\infty}\sup_{t\in[0,T]}|\nu^{\psi_n}(t)-\nu^\psi(t)|=0.$$
			
			The proof is similar to that of Proposition \ref{ld1}.
			By Taylor formula, we have
			\begin{align*}
				|\nu^{\psi_n}(t)-\nu^{\psi}(t)|^2=&\,2\int_{0}^{t}\langle \nu^{\psi_n}_s-\nu^{\psi}_s, \nabla b(X^0_s,\mathcal{L}_{X^0_s})(\nu^{\psi_n}_s-\nu^{\psi}_s)\rangle\mathrm{d}s \\
				&+2\,\int_{0}^{t}\langle \nu^{\psi_n}_s-\nu^{\psi}_s,\sigma(X^0_s,\mathcal{L}_{X^0_s})(\psi_n(s)-\psi(s))\rangle\mathrm{d}s \\
				&-2\int_{0}^{t}\langle \nu^{\psi_n}_s-\nu^{\psi}_s, \mathrm{d}\hat{K}^{\psi_n}_s-\mathrm{d}\hat{K}^{\psi}_s\rangle \\
				:=&\,I_1(t)+I_2(t)+I_3(t).
			\end{align*}
			
			By property (\ref{mono}), we have $\sup_{s\in[0,t]}I_3(s)\leq 0$. For $I_1(t)$, it follows from (B1) that
			\begin{align*}
				\sup_{s\in[0,t]}I_1(s)\leq\,2L_{\nabla b}\int_{0}^{t}|\nu^{\psi_n}_s-\nu^{\psi}_s|^2\mathrm{d}s.
			\end{align*}

			Combining the estimates above, we get
			\begin{align*}
				\sup_{s\in[0,t]}|\nu^{\psi_n}_s-\nu^{\psi}_s|^2\leq\,C\int_{0}^{t}|\nu^{\psi_n}_r-\nu^{\psi}_r|^2\mathrm{d}r+\sup_{r\in[0,t]}I_2(r).
			\end{align*}
			Then by Gronwall's inequality we have
			\begin{align*}
				\sup_{s\in[0,t]}|\nu^{\psi_n}_s-\nu^{\psi}_s|^2\leq \,e^{CT}\sup_{r\in[0,t]}I_2(r).
			\end{align*}
			
			To estimate $I_2(t)$, we set
			\begin{align*}
				f_n(t):=\int_{0}^{t}\sigma(X_s^0,\mathcal{L}_{X_s^0})[\psi_n(s)-\psi(s)]\mathrm{d}s.
			\end{align*}
			
			Since $\psi_n(t)$ converges to $\psi(t)$ weakly in $L^2([0,T],\mathbb{R}^d)$ as $n\to\infty$, by the similar way in Proposition \ref{ld1},  we obtain
			\begin{align*}
				\lim_{n\to\infty}\sup_{t\in[0,T]}|f_n(t)|=0.
			\end{align*}
			By applying Taylor formula to $\langle |\nu^{\psi_n}_t-\nu^{\psi}_t|,f_n(t)\rangle$  we get
			\begin{align*}
				\lim_{n\to\infty}\sup_{t\in[0,T]}|\nu^{\psi_n}_s-\nu^{\psi}_s|=0,
			\end{align*}
			which completes the proof.

		\end{proof}
	\end{proposition}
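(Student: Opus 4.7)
The proof plan is to mimic the verification of (LDP)$_1$ carried out in Proposition \ref{ld1}, with two simplifications: the map $\Gamma^0$ is defined via a linear equation in $\nu^{\psi}$ (the drift is linear in $\nu^{\psi}$), and there is no non-Lipschitz function $\varrho$ to worry about because the coefficient $\nabla b(X^0_t,\mathcal{L}_{X^0_t})$ is bounded in $L^1$ by (B1) and $\sigma(X^0_t,\mathcal{L}_{X^0_t})$ is square integrable by (\ref{slim}). Set $\nu^{\psi_n}=\Gamma^0(\psi_n)$ and $\nu^\psi=\Gamma^0(\psi)$ with $\hat K^{\psi_n}$, $\hat K^\psi$ the associated finite-variation processes, and let $\xi_n(t):=\nu^{\psi_n}_t-\nu^\psi_t$. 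The goal is to show $\sup_{t\in[0,T]}|\xi_n(t)|\to 0$.

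First I would apply the chain rule to $|\xi_n(t)|^2$, decomposing it into three contributions:
\begin{align*}
|\xi_n(t)|^2 =&\ 2\int_0^t\langle\xi_n(s),\nabla b(X^0_s,\mathcal{L}_{X^0_s})\xi_n(s)\rangle\mathrm{d}s \\
&+2\int_0^t\langle\xi_n(s),\sigma(X^0_s,\mathcal{L}_{X^0_s})(\psi_n(s)-\psi(s))\rangle\mathrm{d}s \\
&-2\int_0^t\langle\xi_n(s),\mathrm{d}\hat K^{\psi_n}_s-\mathrm{d}\hat K^\psi_s\rangle.
\end{align*}
The last term is nonpositive by the monotonicity inequality (\ref{mono}) (applied to $(\nu^{\psi_n},\hat K^{\psi_n})$ and $(\nu^{\psi},\hat K^{\psi})$ which lie in $\mathcal{A}$). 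The first term is bounded by $2\|\nabla b(X^0_s,\mathcal{L}_{X^0_s})\|\cdot|\xi_n(s)|^2$, and this will feed into a Gronwall step thanks to (B1).

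The only genuinely non-trivial part is the middle term, which involves the weak limit $\psi_n\to\psi$ in $\mathcal{S}_m$. For this I would introduce
\[
f_n(t):=\int_0^t\sigma(X^0_s,\mathcal{L}_{X^0_s})(\psi_n(s)-\psi(s))\mathrm{d}s,
\]
and argue, exactly as in Proposition \ref{ld1}, that $\{f_n\}$ is uniformly bounded and equi-continuous on $[0,T]$ (via Cauchy--Schwarz and (\ref{slim})), hence pre-compact in $C([0,T],\mathbb{R}^d)$; weak convergence of $\psi_n$ combined with the square-integrability of $\sigma(X^0_\cdot,\mathcal{L}_{X^0_\cdot})$ then gives $\sup_{t\in[0,T]}|f_n(t)|\to 0$. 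Integration by parts rewrites the middle term as
\[
2\langle\xi_n(t),f_n(t)\rangle - 2\int_0^t\langle f_n(s),\mathrm{d}\xi_n(s)\rangle,
\]
and the remaining integral splits, via (\ref{mdp1}), into pieces that are controlled by $\sup_{s\leq t}|f_n(s)|$ times the $L^1$-bound $L_{\nabla b}$, the $L^2$-bound on $\sigma(X^0,\mathcal{L}_{X^0})$, the $\mathcal{S}_m$-bound on $\psi_n$ and $\psi$, and $|\hat K^{\psi_n}|_0^T+|\hat K^\psi|_0^T$ (finite by Proposition \ref{nu}). Collecting everything yields
\[
\sup_{s\in[0,t]}|\xi_n(s)|^2\leq 2L_{\nabla b}\int_0^t\sup_{r\in[0,s]}|\xi_n(r)|^2\mathrm{d}s+R_n(T),
\]
with $R_n(T)\to 0$, and Gronwall's lemma closes the argument.

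The only real obstacle is the uniform convergence of the middle term; once the pre-compactness of $\{f_n\}$ and the integration-by-parts decomposition are in place, everything else is a linear Gronwall argument, which is strictly easier than the non-Lipschitz Bihari step used in Proposition \ref{ld1}.
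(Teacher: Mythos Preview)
Your proposal is correct and follows essentially the same route as the paper: the same three-term decomposition of $|\xi_n(t)|^2$, monotonicity for the $\hat K$-term, (B1) for the $\nabla b$-term, the auxiliary function $f_n$ together with Arzel\`a--Ascoli and weak convergence to kill the cross term, and a Gronwall step to finish. If anything, your account is slightly more explicit than the paper's (you spell out the integration-by-parts decomposition of the middle term and the uniform-in-$n$ bounds on $|\hat K^{\psi_n}|_0^T$ needed for it), but there is no substantive difference in strategy.
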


	To verify \textbf{(MDP)}$\bm{_{2}}$, we will need the following two lemmas.
	
	\begin{lemma}\label{ep2}
		There exist two positive constant $\epsilon_{0}>0$ and $C_T>0$ such that
		$$\mathbb{E}\left(\sup_{t\in[0,T]}\lvert X_{t}^{\epsilon}-X_{t}^{0}\rvert^{2}\right)\leq C_T(\epsilon+\epsilon\rho_{\sigma,\epsilon}^2+\rho_{b,\epsilon}^2),\quad \forall\epsilon\in \left(0,\epsilon_{0}\right]$$
		where $\rho_{b,\epsilon}$ and $\rho_{\sigma,\epsilon}$ are the constants given in \hyperref[h2]{\textbf{(H2)}}.
	\end{lemma}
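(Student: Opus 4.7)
The strategy is to reprove Lemma \ref{ep0} but now exploiting the genuine Lipschitz hypotheses (\hyperref[b0]{\textbf{B0}}) and (\hyperref[b3]{\textbf{B3}}) in place of the concave modulus $\varrho$, so that the inequality that came out of It\^o's formula can be closed by the classical Gronwall lemma rather than by Bihari. I would start exactly as in the proof of Lemma \ref{ep0}: apply It\^o's formula to $|X_t^\epsilon - X_t^0|^2$, use the monotonicity inequality (\ref{mono}) to discard the $K$-terms (i.e.\ $\int_0^t \langle X_r^\epsilon-X_r^0,\mathrm{d}K_r^0-\mathrm{d}K_r^\epsilon\rangle\le 0$), and split the four resulting pieces $J_1^\epsilon,\dots,J_4^\epsilon$ as before.

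For the drift piece $J_1^\epsilon$ I would insert $\pm b(X_r^\epsilon,\mathcal L_{X_r^\epsilon})\pm b(X_r^\epsilon,\mathcal L_{X_r^0})$ into the bracket, so that the three resulting inner products are controlled respectively by $\rho_{b,\epsilon}|X_r^\epsilon-X_r^0|$ (using the $\sup$-bound on $b_\epsilon-b$ in (\hyperref[h1]{\textbf{H1}})), by $L'\mathbb{W}_2(\mathcal L_{X_r^\epsilon},\mathcal L_{X_r^0})|X_r^\epsilon-X_r^0|$ (using the $\mathbb W_2$-Lipschitz part of (\hyperref[b0]{\textbf{B0}})), and by $L'|X_r^\epsilon-X_r^0|^2$ (using the one-sided Lipschitz part of (\hyperref[b0]{\textbf{B0}})). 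Young's inequality converts the first term into $\rho_{b,\epsilon}^2+|X_r^\epsilon-X_r^0|^2$ and Remark \ref{W2} lets me replace $\mathbb W_2^2$ by $\mathbb E|X_r^\epsilon-X_r^0|^2$; after taking $\mathbb E\sup_{t\in[0,T]}$ this contributes a term of order $\rho_{b,\epsilon}^2+\int_0^T\mathbb E\sup_{s\in[0,r]}|X_s^\epsilon-X_s^0|^2\,\mathrm dr$.

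For the quadratic-variation piece $J_3^\epsilon$ I would bound the integrand by splitting $\sigma_\epsilon=(\sigma_\epsilon-\sigma)+\sigma$, giving $\epsilon\rho_{\sigma,\epsilon}^2 T + C\epsilon\int_0^T(1+|X_r^\epsilon|^2+|X_r^0|^2+\|\mathcal L_{X_r^\epsilon}\|_2^2)\,\mathrm dr$; the second moment bound used in Lemma \ref{l42} kills the last integral at price $C\epsilon$. For the martingale piece $J_2^\epsilon$ I would apply BDG followed by Young's inequality to absorb $\tfrac14\mathbb E\sup_{r\in[0,T]}|X_r^\epsilon-X_r^0|^2$ into the left-hand side, leaving a remainder of the same form as $J_3^\epsilon$.

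Collecting everything yields
\begin{equation*}
\mathbb E\sup_{r\in[0,t]}|X_r^\epsilon-X_r^0|^2 \le C_T\bigl(\epsilon+\epsilon\rho_{\sigma,\epsilon}^2+\rho_{b,\epsilon}^2\bigr) + C_T\int_0^t \mathbb E\sup_{s\in[0,r]}|X_s^\epsilon-X_s^0|^2\,\mathrm dr
\end{equation*}
for all $\epsilon\le\epsilon_0$ small enough to absorb the $\rho_{b,\epsilon}$-coefficient on the left. A direct application of Gronwall's inequality then produces the claimed quantitative estimate with a constant $C_T$ depending on $T,L,L'$ but independent of $\epsilon$. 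The only nontrivial point is bookkeeping: since (\hyperref[b0]{\textbf{B0}}) only gives the one-sided Lipschitz bound in the $x$-variable while the $\mu$-variable dependence is linear in $\mathbb W_2$ (not in $\mathbb W_2^2$), one has to be careful to pair the $\mathbb W_2$-term with $|X_r^\epsilon-X_r^0|$ through Young's inequality before taking expectations, otherwise one would lose the sharp $\rho_{b,\epsilon}^2$ rate and get only $\rho_{b,\epsilon}$. Apart from that, the argument is a quantitative mirror of Lemma \ref{ep0}.
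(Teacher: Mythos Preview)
Your proposal is correct and follows essentially the same route as the paper's proof: apply It\^o's formula to $|X_t^\epsilon-X_t^0|^2$, drop the $K$-term via the monotonicity inequality (\ref{mono}), split the drift using the $\rho_{b,\epsilon}$-bound together with the one-sided Lipschitz and $\mathbb W_2$-Lipschitz parts of (\hyperref[b0]{\textbf{B0}}), handle the diffusion pieces with (\hyperref[b3]{\textbf{B3}}), BDG and Young, and close with the classical Gronwall inequality. Your remark about pairing the $\mathbb W_2$-term with $|X_r^\epsilon-X_r^0|$ through Young's inequality to retain the sharp $\rho_{b,\epsilon}^2$ rate is exactly the point that distinguishes this lemma from Lemma~\ref{ep0}.
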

	
	\begin{proof}
		We follow the similar way in Lemma \ref{ep0}. By applying It$\hat{\text{o}}$'s formula we have
		\begin{align*}
			&\lvert X_{t}^{\epsilon}-X_{t}^{0}\rvert^{2} \\
			=&2\int_{0}^{t}\left\langle b_{\epsilon}\left(X_{r}^{\epsilon},\mathcal{L}_{X_{r}^{\epsilon}}\right)-b\left(X_{r}^{0},\mathcal{L}_{X_{r}^{0}}\right), X_{r}^{\epsilon}-X_{r}^{0}\right\rangle\mathrm{d}r
			+2\sqrt{\epsilon}\int_{0}^{t}\left\langle\sigma_{\epsilon}\left(X_{r}^{\epsilon},\mathcal{L}_{X_{r}^{\epsilon}}\right), X_{r}^{\epsilon}-X_{r}^{0}\right\rangle\mathrm{d}W_s \\
			&+\epsilon\int_{0}^{t}\lVert\sigma_{\epsilon}\left(X_{r}^{\epsilon},\mathcal{L}_{X_{r}^{\epsilon}}\right)\rVert^{2}\mathrm{d}r+\int_{0}^{t}\left\langle X_{r}^{\epsilon}-X_{r}^{0},\mathrm{d}K_{r}^{0}-\mathrm{d}K_{r}^{\epsilon}\right\rangle \\
			=:&J_{1}^{\epsilon}(t)+J_{2}^{\epsilon}(t)+J_{3}^{\epsilon}(t)+J_{4}^{\epsilon}(t).
		\end{align*}
		
		For $J_{1}^{\epsilon}(t)$,
		by (\hyperref[b0]{\textbf{B0}}),(\hyperref[b1]{\textbf{B1}}) and Remark \ref{W2},
		\begin{align}\label{461}
			\mathbb{E}\left(\sup_{t\in [0,T]}\lvert J_{1}^{\epsilon}(t)\rvert\right)
			\leq (CL'+1)\mathbb{E}\int_{0}^{T}\lvert X_{s}^{\epsilon}-X_{s}^{0}\rvert^{2}\mathrm{d}s+\rho_{b,\epsilon}^2T.
		\end{align}
		
		For $J_{3}^{\epsilon}(t)$,
		by (\hyperref[b0]{\textbf{B0}}),(\hyperref[b3]{\textbf{B3}}) and Remark \ref{W2} again,
		\begin{align}\label{462}
			\mathbb{E}\left(\sup_{t\in [0,T]}\lvert J_{3}^{\epsilon}(t)\rvert\right)
			=&\, \epsilon\,\mathbb{E}\int_{0}^{T}\lVert\sigma_{\epsilon}\left(X_{r}^{\epsilon},\mathcal{L}_{X_{r}^{\epsilon}}\right)\rVert^{2}\,\mathrm{d}r \nonumber\\
			\leq&\, C\epsilon\rho_{\sigma,\epsilon}^2T+\,CL'T\epsilon \,\mathbb{E}\left(\sup_{r\in[0,T]}\lvert X_{r}^{\epsilon}-X_{r}^{0}\rvert^{2}\right)+\,C\epsilon.
		\end{align}
		
		For $J_{2}^{\epsilon}(t)$,
		by BDG's inequality and Young's inequality, we have
		\begin{align}\label{463}
			\mathbb{E}\left(\sup_{t\in [0,T]}\lvert J_{2}^{\epsilon}(t)\rvert\right)
			\leq& C\sqrt{\epsilon}\,\mathbb{E}\left[\int_{0}^{T}\lVert\sigma_{\epsilon}\left(X_{r}^{\epsilon},\mathcal{L}_{X_{r}^{\epsilon}}\right)\rVert^{2}\cdot\lvert X_{r}^{\epsilon}-X_{r}^{0}\rvert^{2}\,\mathrm{d}r\right]^{\frac{1}{2}} \nonumber\\
			\leq& \left(\frac{1}{4}+CL'T\epsilon\right)\,\mathbb{E}\left(\sup_{r\in[0,T]}\lvert X_{r}^{\epsilon}-X_{r}^{0}\rvert^{2}\right)+C\epsilon\rho_{\sigma,\epsilon}^{2}T+\,C\epsilon.
		\end{align}
		
		With (\ref{mono}) we get
		\begin{align}\label{464}
			\sup_{t\in [0,T]} J_{4}^{\epsilon}(t)\leq 0.
		\end{align}
		
		Combining (\ref{461})-(\ref{464}) together, we have
		\begin{align*}
			&\left(\frac{3}{4}-CL'T\epsilon\right)\,\,\mathbb{E}\left(\sup_{r\in[0,T]}\lvert X_{r}^{\epsilon}-X_{r}^{0}\rvert^{2}\right) \\
			\leq& CL'\mathbb{E}\int_{0}^{T}\lvert X_{s}^{\epsilon}-X_{s}^{0}\rvert^{2}\,\mathrm{d}s+\,C\epsilon+\rho_{b,\epsilon}^{2}T+C\epsilon\rho_{\sigma,\epsilon}^{2}T.
		\end{align*}
		
		Since there exists $\epsilon_0>0$ small enough such that for any $\epsilon\in(0,\epsilon_0],$
		$$\frac{3}{4}-C{L'}^{2}T\epsilon\geq\frac{1}{4},$$
		Finally by Gronwall's inequility, for any $\epsilon\in \left(0,\epsilon_{0}\right]$, we obtain the desired result
		$$\mathbb{E}\left(\sup_{t\in[0,T]}\lvert X_{t}^{\epsilon}-X_{t}^{0}\rvert^{2}\right)\leq C_T(\epsilon+\epsilon\rho_{\sigma,\epsilon}^2+\rho_{b,\epsilon}^2),\quad \forall\epsilon\in \left(0,\epsilon_{0}\right].$$
		
	\end{proof}

	\begin{lemma}\label{mdp2lemma}
		Let $M^{\epsilon,\psi_\epsilon}$ be the solution to (\ref{mdp2m}), then there exists some $\kappa_0>0$ such that for any $\psi_{\epsilon}\in\mathcal{D}_m$, then
		\begin{align}\label{minfty}
			\sup_{\epsilon\in(0,\kappa_0]}\mathbb{E}\left(\sup_{t\in[0,T]}|M^{\epsilon,\psi_{\epsilon}}_t|^2\right)<\infty.
		\end{align}
		\begin{proof}
			By applying It$\hat{\text{o}}$'s formula we have for any $t\in[0,T]$,
			\begin{align}
				&|M^{\epsilon,\psi_{\epsilon}}_t|^2  \nonumber\\
				=&\frac{2}{\lambda(\epsilon)}\int_{0}^{t}\left\langle M^{\epsilon,\psi_{\epsilon}}_s,b_{\epsilon}(\lambda(\epsilon)M^{\epsilon,\psi_{\epsilon}}_s+X^0_s,\mathcal{L}_{\bar{X}^\epsilon_s})-b(X^0_s,\mathcal{L}_{X^0_s})\right\rangle\mathrm{d}s  \nonumber\\
				&+\frac{2\sqrt{\epsilon}}{\lambda(\epsilon)}\int_{0}^{t}\left\langle M^{\epsilon,\psi_{\epsilon}}_s,\sigma_{\epsilon}(\lambda(\epsilon)M^{\epsilon,\psi_{\epsilon}}_s+X^0_s,\mathcal{L}_{\bar{X}^\epsilon_s})\mathrm{~d}W_s\right\rangle \nonumber+\frac{\epsilon}{\lambda^{2}(\epsilon)}\int_{0}^{t}\lVert \sigma_{\epsilon}(\lambda(\epsilon)M^{\epsilon,\psi_{\epsilon}}_s+X^0_s,\mathcal{L}_{\bar{X}^\epsilon_s})\rVert^2\mathrm{~d}s  \nonumber\\
				&+2\int_{0}^{t}\left\langle \sigma_{\epsilon}(\lambda(\epsilon)M^{\epsilon,\psi_{\epsilon}}_s+X^0_s,\mathcal{L}_{\bar{X}^\epsilon_s})\psi_\epsilon(s),M^{\epsilon,\psi_{\epsilon}}_s\right\rangle\mathrm{d}s  \nonumber-2\int_{0}^{t}\left\langle M^{\epsilon,\psi_{\epsilon}}_s,\mathrm{d}\hat{K}^{\epsilon,\psi_\epsilon}_s\right\rangle \nonumber\\
				=:&I_{1}(t)+I_{2}(t)+I_{3}(t)+I_{4}(t)+I_{5}(t).
			\end{align}
			
			
			By (\hyperref[b0]{\textbf{B0}})-(\hyperref[b3]{\textbf{B3}}), there exists $\epsilon_{1}>0$ such that
			\begin{align}\label{ep1}
				\frac{\epsilon}{\lambda^2(\epsilon)}\vee \lambda(\epsilon) \vee \rho_{b,\epsilon} \vee \rho_{\sigma,\epsilon} \vee \frac{\rho_{b,\epsilon}}{\lambda(\epsilon)}\in (0,\frac{1}{2}], \quad \forall\epsilon\in(0,\epsilon_{1}].
			\end{align}
			Let $\epsilon_{2}=\epsilon_{0}\wedge\epsilon_{1}\wedge\frac{1}{2}$, where $\epsilon_{0}$ is the constant given in Lemma \ref{ep0}. Denote by $C$ some generic constant which may change from line to line and is independent of $\epsilon$.
			
			
			Since $A_\epsilon$ is monotone, by Proposition (\ref{multi}) and H\"older's inequality we have
			\begin{align}\label{mi5}
				\sup_{t\in[0,T]}I_5(t)&
				\leq \sup_{t\in[0,T]}\left(-|\hat{K}^{\epsilon,\psi_{\epsilon}}|_0^t\right)+C\int_{0}^{T}|M^{\epsilon,\psi_{\epsilon}}_s|\mathrm{d}s+C
				\leq C\int_{0}^{t}\sup_{r\in[0,s]}|M^{\epsilon,\psi_{\epsilon}}_r|^2\mathrm{d}s+C.
			\end{align}
			
			Due to (\hyperref[b0]{\textbf{B0}}), (\hyperref[b1]{\textbf{B1}}), Lemma \ref{ep2} and (\ref{slim}), for any $\epsilon\in(0,\epsilon_{2}]$,
			\begin{align}\label{mi1}
				I_1(t)=&\,\,\frac{2}{\lambda(\epsilon)}\int_{0}^{t}\left\langle b_{\epsilon}(\lambda(\epsilon)M^{\epsilon,\psi_{\epsilon}}_s+X^0_s,\mathcal{L}_{\bar{X}^\epsilon_s})-b(X^0_s,\mathcal{L}_{X^0_s}),M^{\epsilon,\psi_{\epsilon}}_s\right\rangle\mathrm{d}s  \nonumber\\
				=&\,\,\frac{2}{\lambda(\epsilon)}\int_{0}^{t}\left\langle b_{\epsilon}(\lambda(\epsilon)M^{\epsilon,\psi_{\epsilon}}_s+X^0_s,\mathcal{L}_{\bar{X}^\epsilon_s})-b(\lambda(\epsilon)M^{\epsilon,\psi_{\epsilon}}_s+X^0_s,\mathcal{L}_{\bar{X}^\epsilon_s}),M^{\epsilon,\psi_{\epsilon}}_s\right\rangle\mathrm{d}s  \nonumber\\
				&+\frac{2}{\lambda(\epsilon)}\int_{0}^{t}\left\langle b(\lambda(\epsilon)M^{\epsilon,\psi_{\epsilon}}_s+X^0_s,\mathcal{L}_{\bar{X}^\epsilon_s})-b(X^0_s,\mathcal{L}_{\bar{X}^\epsilon_s}),M^{\epsilon,\psi_{\epsilon}}_s\right\rangle\mathrm{d}s  \nonumber\\
				&+\frac{2}{\lambda(\epsilon)}\int_{0}^{t}\left\langle b(X^0_s,\mathcal{L}_{\bar{X}^{\epsilon}_s})-b(X^0_s,\mathcal{L}_{X^0_s}),M^{\epsilon,\psi_{\epsilon}}_s\right\rangle\mathrm{d}s \nonumber\\
				\leq&\,\,\frac{2\rho_{b,\epsilon}}{\lambda(\epsilon)}\int_{0}^{t}|M^{\epsilon,\psi_{\epsilon}}_s|\mathrm{d}s+2L'\int_{0}^{t}|M^{\epsilon,\psi_{\epsilon}}_s|^2\mathrm{d}s+\frac{2L'}{\lambda(\epsilon)}\int_{0}^{t}|M^{\epsilon,\psi_{\epsilon}}_s|\mathbb{W}_2(\mathcal{L}_{\bar{X}^{\epsilon}_s},\mathcal{L}_{X^0_s})\mathrm{d}s  \nonumber\\
				\leq&\,\,(2L'+\frac{2L'C\sqrt{\epsilon}}{\lambda(\epsilon)})\int_{0}^{t}|M^{\epsilon,\psi_{\epsilon}}_s|^2\mathrm{d}s+\frac{2\rho_{b,\epsilon}}{\lambda(\epsilon)}\int_{0}^{t}|M^{\epsilon,\psi_{\epsilon}}_s|\mathrm{d}s
				\leq C\int_{0}^{t}|M^{\epsilon,\psi_{\epsilon}}_s|^2\mathrm{d}s+C.
			\end{align}
			
			For $I_3(t)$, we have
			\begin{align}\label{mi3}
				I_3(t)=&\,\,\frac{\epsilon}{\lambda^{2}(\epsilon)}\int_{0}^{t}\lVert \sigma_{\epsilon}(\lambda(\epsilon)M^{\epsilon,\psi_{\epsilon}}_s+X^0_s,\mathcal{L}_{\bar{X}_s^\epsilon})\rVert^2\mathrm{~d}s  \nonumber\\
				\leq&\,\, \frac{\epsilon}{\lambda^{2}(\epsilon)}\int_{0}^{t}\lVert \sigma_{\epsilon}(\lambda(\epsilon)M^{\epsilon,\psi_{\epsilon}}_s+X^0_s,\mathcal{L}_{\bar{X}_s^\epsilon})-\sigma(\lambda(\epsilon)M^{\epsilon,\psi_{\epsilon}}_s+X^0_s,\mathcal{L}_{\bar{X}_s^\epsilon})\rVert^2\mathrm{~d}s  \nonumber\\
				&+\frac{\epsilon}{\lambda^{2}(\epsilon)}\int_{0}^{t}\lVert \sigma(\lambda(\epsilon)M^{\epsilon,\psi_{\epsilon}}_s+X^0_s,\mathcal{L}_{\bar{X}_s^\epsilon})-\sigma(X^0_s,\mathcal{L}_{\bar{X}_s^\epsilon})\rVert^2\mathrm{~d}s  \nonumber\\
				&+\frac{\epsilon}{\lambda^{2}(\epsilon)}\int_{0}^{t}\lVert \sigma(X^0_s,\mathcal{L}_{\bar{X}_s^\epsilon})-\sigma(X^0_s,\mathcal{L}_{X^0_s})\rVert^2\mathrm{~d}s+\frac{\epsilon}{\lambda^{2}(\epsilon)}\int_{0}^{t}\lVert \sigma(X^0_s,\mathcal{L}_{X^0_s})\rVert^2\mathrm{~d}s \nonumber\\
				\leq&\,\, \frac{C\epsilon\rho^2_{\sigma,\epsilon}}{\lambda^{2}(\epsilon)}+
				C\epsilon\int_{0}^{t}|M^{\epsilon,\psi_{\epsilon}}_s|^2\mathrm{d}s+\frac{\epsilon}{\lambda^{2}(\epsilon)}\int_{0}^{t}\mathbb{W}_2^2(\mathcal{L}_{\bar{X}^{\epsilon}_s},\mathcal{L}_{X^0_s})\mathrm{d}s+\frac{\epsilon}{\lambda^{2}(\epsilon)}\int_{0}^{t}\lVert \sigma(X^0_s,\mathcal{L}_{X^0_s})\rVert^2\mathrm{~d}s  \nonumber \\
				\leq&\,\, C\int_{0}^{t}|M^{\epsilon,\psi_{\epsilon}}_s|^2\mathrm{d}s+C.
			\end{align}
			
			For $I_4(t)$,  we have
			\begin{align}\label{mi44}
				I_4(t)=&\,2\int_{0}^{t}\left\langle \sigma_{\epsilon}(\lambda(\epsilon)M^{\epsilon,\psi_{\epsilon}}_s+X^0_s,\mathcal{L}_{X_s^0})\psi_\epsilon(s),M^{\epsilon,\psi_{\epsilon}}_s\right\rangle\mathrm{d}s  \nonumber\\
				=&\,2\int_{0}^{t}\left\langle \left[\sigma_{\epsilon}(\lambda(\epsilon)M^{\epsilon,\psi_{\epsilon}}_s+X^0_s,\mathcal{L}_{\bar{X}_s^\epsilon})-\sigma(\lambda(\epsilon)M^{\epsilon,\psi_{\epsilon}}_s+X^0_s,\mathcal{L}_{\bar{X}_s^\epsilon})\right]\psi_\epsilon(s),M^{\epsilon,\psi_{\epsilon}}_s\right\rangle\mathrm{d}s  \nonumber\\
				&+2\int_{0}^{t}\left\langle \left[\sigma(\lambda(\epsilon)M^{\epsilon,\psi_{\epsilon}}_s+X^0_s,\mathcal{L}_{\bar{X}_s^\epsilon})-\sigma(X^0_s,\mathcal{L}_{X^0_s})\right]\psi_\epsilon(s),M^{\epsilon,\psi_{\epsilon}}_s\right\rangle\mathrm{d}s  \nonumber \\
				&+2\int_{0}^{t}\left\langle \sigma(X^0_s,\mathcal{L}_{X^0_s})\psi_\epsilon(s),M^{\epsilon,\psi_{\epsilon}}_s\right\rangle\mathrm{d}s  \nonumber\\
				\leq&\,\,2\rho_{\sigma,\epsilon}\int_{0}^{t}|M^{\epsilon,\psi_{\epsilon}}_s||\psi_\epsilon(s)|\mathrm{d}s+C\int_{0}^{t}(\lambda(\epsilon)|M^{\epsilon,\psi_{\epsilon}}_s|+\mathbb{W}_2(\mathcal{L}_{\bar{X}_s^\epsilon},\mathcal{L}_{X^0_s}))|M^{\epsilon,\psi_{\epsilon}}_s||\psi_\epsilon(s)|\mathrm{d}s  \nonumber\\
				&+\,C\int_{0}^{t}\lVert \sigma(X^0_s,\mathcal{L}_{X^0_s})\rVert|M^{\epsilon,\psi_{\epsilon}}_s||\psi_\epsilon(s)|\mathrm{d}s.
			\end{align}
			By H{\"o}lder's inequality and Young's inequality, we get
			\begin{align}\label{mi4}	
				I_4(t)
				\leq&\,\, \left(1+C\left(\mathbb{E}(\sup_{s\in[0,T]}|\bar{X}^\epsilon_s-X^0_s|^2
				)\right)^{\frac{1}{2}}\right)\int_{0}^{t}|M^{\epsilon,\psi_{\epsilon}}_s||\psi_\epsilon(s)|\mathrm{d}s+C\lambda(\epsilon)\int_{0}^{t}|M^{\epsilon,\psi_{\epsilon}}_s|^2|\psi_\epsilon(s)|\mathrm{d}s   \nonumber\\
				&+\, C\int_{0}^{t}\lVert \sigma(X^0_s,\mathcal{L}_{X^0_s})\rVert|M^{\epsilon,\psi_{\epsilon}}_s||\psi_\epsilon(s)|\mathrm{d}s  \nonumber\\
				\leq&\,\, C\int_{0}^{t}|M^{\epsilon,\psi_{\epsilon}}_s|^2\mathrm{d}s+C\int_{0}^{t}|\psi_\epsilon(s)|^2\mathrm{d}s+C\int_{0}^{t}|M^{\epsilon,\psi_{\epsilon}}_s|^2|\psi_\epsilon(s)|^2\mathrm{d}s+C\int_{0}^{t}\lVert \sigma(X^0_s,\mathcal{L}_{X^0_s})\rVert^2\mathrm{d}s \nonumber\\
				\leq&\,\, C\int_{0}^{t}\left(1+|\psi_\epsilon(s)|^2\right)|M^{\epsilon,\psi_{\epsilon}}_s|^2\mathrm{d}s+C.
			\end{align}
			
			By combining the above estimates (\ref{mi5})-(\ref{mi4}) together and  applying Gronwall's inequality, we obtain  for any $\epsilon\in(0,\epsilon_{2}]$ and $t\in[0,T]$,
			\begin{align}
				\sup_{t\in[0,T]}|M^{\epsilon,\psi_{\epsilon}}_t|^2\leq e^{C\int_{0}^{T}(4+|\psi_\epsilon(s)|^2)\mathrm{d}s}\left\{C+\sup_{t\in[0,T]}|I_2(t)|\right\}.
			\end{align}
			
			Since $\psi_\epsilon\in\mathcal{S}_m$ $P$-a.s., then for any $\epsilon\in(0,\epsilon_{2}]$ we have
			\begin{align}
				\frac{1}{2}\int_{0}^{T}|\psi_\epsilon(s)|^2\mathrm{d}s\leq m, \quad P\text{-a.s.}.
			\end{align}
			
			Therefore, there exists a constant $\zeta>0$ such that for any $\epsilon\in(0,\epsilon_{2}]$,
			\begin{align}\label{mgron}
				\mathbb{E}\left(\sup_{t\in[0,T]}|M^{\epsilon,\psi_\epsilon}_t|^2\right)\leq \zeta\left\{1+\mathbb{E}\left(\sup_{t\in[0,T]}|I_2(t)|\right)\right\}.
			\end{align}
			
			For $I_2(t)$, by BDG’s inequality, Young's inequality, (\hyperref[b3]{\textbf{B3}}), Lemma \ref{ep2}, (\ref{slim}) and (\ref{mi3}), we have for any $\epsilon\in(0,\epsilon_{2}]$,
			\begin{align}\label{mi2}
				\mathbb{E}\left(\sup_{t\in[0,T]}|I_2(t)|\right) \nonumber
				\leq&\,\, \frac{C\sqrt{\epsilon}}{\lambda(\epsilon)}\mathbb{E}\left(\int_{0}^{T}|M^{\epsilon,\psi_\epsilon}_s|^2\lVert\sigma_{\epsilon}(\lambda(\epsilon)M^{\epsilon,\psi_\epsilon}_s+X^0_s,\mathcal{L}_{\bar{X}^\epsilon_s})\rVert^2\mathrm{d}s\right)^{\frac{1}{2}} \nonumber\\
				\leq&\,\, \frac{C\sqrt{\epsilon}}{\lambda(\epsilon)}\mathbb{E}\left(\sup_{s\in[0,T]}|M^{\epsilon,\psi_\epsilon}_s|^2\right)+\frac{C\sqrt{\epsilon}}{\lambda(\epsilon)}\mathbb{E}\int_{0}^{T}\lVert\sigma_{\epsilon}(\lambda(\epsilon)M^{\epsilon,\psi_\epsilon}_s+X^0_s,\mathcal{L}_{\bar{X}^\epsilon_s})\rVert^2\mathrm{d}s  \nonumber\\
				\leq&\,\, \frac{C\sqrt{\epsilon}}{\lambda(\epsilon)}\mathbb{E}\left(\sup_{s\in[0,T]}|M^{\epsilon,\psi_\epsilon}_s|^2\right)+\frac{C\sqrt{\epsilon}}{\lambda(\epsilon)}\left(C\rho_{\sigma,\epsilon}^2+C\lambda^2(\epsilon)\mathbb{E}\int_{0}^{T}|M^{\epsilon,\psi_\epsilon}_s|^2\mathrm{d}s\right)  \nonumber\\
				&+\, \frac{C\sqrt{\epsilon}}{\lambda(\epsilon)}\left(\int_{0}^{T}\mathbb{W}_2^2(\mathcal{L}_{\bar{X}^\epsilon_s},\mathcal{L}_{X_s^0})\mathrm{d}s+\int_{0}^{T}\lVert\sigma(X^0_s,\mathcal{L}_{X^0_s})\rVert^2\mathrm{d}s\right)  \nonumber \\
				\leq&\,\, C\left(\frac{\sqrt{\epsilon}}{\lambda(\epsilon)}+\sqrt{\epsilon}\lambda(\epsilon)\right)\mathbb{E}\left(\sup_{s\in[0,T]}|M^{\epsilon,\psi_\epsilon}_s|^2\right)+C.
			\end{align}
			
			By substituting (\ref{mi2}) back into (\ref{mgron}), we obtain for any $\epsilon\in(0,\epsilon_{2}]$,
			\begin{align}
				\left(1-\frac{C\sqrt{\epsilon}}{\lambda(\epsilon)}-C\sqrt{\epsilon}\lambda(\epsilon)\right)\mathbb{E}\left(\sup_{s\in[0,T]}|M^{\epsilon,\psi_\epsilon}_s|^2\right)\leq C.
			\end{align}
			
			Then there exists a constant $0\leq\kappa_0\leq \epsilon_2$ such that for any $\epsilon\in(0,\kappa_0]$,
			\begin{align*}
				\left(1-\frac{C\sqrt{\epsilon}}{\lambda(\epsilon)}-C\sqrt{\epsilon}\lambda(\epsilon)\right)\geq \frac{1}{4}.
			\end{align*}
			Thus we have
			\begin{align*}
				\sup_{\epsilon\in(0,\kappa_0]}\mathbb{E}\left(\sup_{t\in[0,T]}|M^{\epsilon,\psi_{\epsilon}}_t|^2\right)<\infty,
			\end{align*}
			which completes the proof.
		\end{proof}
		
	\end{lemma}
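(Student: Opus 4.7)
The plan is to apply It\^o's formula to $|M^{\epsilon,\psi_\epsilon}_t|^2$ and then carefully track how each $\epsilon$-dependent coefficient interacts with the scaling factor $1/\lambda(\epsilon)$. The evolution produces five terms: a drift term of order $\lambda(\epsilon)^{-1}$, a stochastic integral of order $\sqrt{\epsilon}/\lambda(\epsilon)$, a quadratic-variation term of order $\epsilon/\lambda^2(\epsilon)$, the control contribution $\langle M_s^{\epsilon,\psi_\epsilon},\sigma_\epsilon\psi_\epsilon(s)\rangle$, and the multivalued term $-2\int_0^t\langle M_s^{\epsilon,\psi_\epsilon},\mathrm{d}\hat K_s^{\epsilon,\psi_\epsilon}\rangle$. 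The multivalued term is immediately bounded above by Proposition~\ref{multi} applied with $a=0\in\mathrm{Int}(D(A_\epsilon))$, giving a contribution of the form $C\int_0^t|M_s^{\epsilon,\psi_\epsilon}|\mathrm{d}s+C$ that is absorbed into the subsequent Gronwall argument.

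The crucial step is the decomposition of each $\epsilon$-dependent coefficient into three pieces via the triangle inequality:
\begin{align*}
b_\epsilon(\lambda(\epsilon)M+X_s^0,\mathcal{L}_{\bar X_s^\epsilon})-b(X_s^0,\mathcal{L}_{X_s^0})
&=\bigl[b_\epsilon-b\bigr](\lambda(\epsilon)M+X_s^0,\mathcal{L}_{\bar X_s^\epsilon})\\
&\quad+\bigl[b(\lambda(\epsilon)M+X_s^0,\mathcal{L}_{\bar X_s^\epsilon})-b(X_s^0,\mathcal{L}_{\bar X_s^\epsilon})\bigr]\\
&\quad+\bigl[b(X_s^0,\mathcal{L}_{\bar X_s^\epsilon})-b(X_s^0,\mathcal{L}_{X_s^0})\bigr],
\end{align*}
and the analogous splitting for $\sigma_\epsilon$. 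After dividing by $\lambda(\epsilon)$, the first piece is controlled by $\rho_{b,\epsilon}/\lambda(\epsilon)$, which tends to $0$ by \textbf{(B2)}; the second piece is $O(|M^{\epsilon,\psi_\epsilon}_s|)$ by the one-sided Lipschitz property in \textbf{(B0)}; and the third piece is bounded by $L'\mathbb{W}_2(\mathcal{L}_{\bar X_s^\epsilon},\mathcal{L}_{X_s^0})/\lambda(\epsilon)$, which by Remark~\ref{W2} and Lemma~\ref{ep2} is of order $\lambda(\epsilon)^{-1}\sqrt{\epsilon+\epsilon\rho_{\sigma,\epsilon}^2+\rho_{b,\epsilon}^2}$, hence bounded (and in fact small) thanks to \textbf{(B2)} together with $\epsilon/\lambda^2(\epsilon)\to 0$. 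An identical splitting handles the quadratic-variation and control terms, with the linear-growth part of $\sigma$ giving a benign contribution $C\epsilon/\lambda^2(\epsilon)\cdot(1+|M|^2)$.

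For the stochastic integral, I will use the Burkholder--Davis--Gundy inequality followed by Young's inequality to produce a term $\tfrac{1}{2}\mathbb{E}\sup_{s\leq t}|M_s^{\epsilon,\psi_\epsilon}|^2$ that can be absorbed to the left-hand side, plus a term controlled via the same decomposition-plus-Lemma~\ref{ep2} strategy. For the control term $\langle M_s^{\epsilon,\psi_\epsilon},\sigma_\epsilon\psi_\epsilon(s)\rangle$, I apply $2ab\leq a^2(1+|\psi_\epsilon(s)|^2)+b^2/(1+|\psi_\epsilon(s)|^2)$ so that the Gronwall coefficient $1+|\psi_\epsilon(s)|^2$ is integrable by $\psi_\epsilon\in\mathcal{S}_m$, with $\int_0^T|\psi_\epsilon(s)|^2\mathrm{d}s\leq 2m$ $P$-a.s. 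Collecting everything yields an integral inequality
\[
\mathbb{E}\sup_{s\leq t}|M^{\epsilon,\psi_\epsilon}_s|^2\leq C_m+C_m\int_0^t(1+|\psi_\epsilon(s)|^2)\mathbb{E}\sup_{r\leq s}|M_r^{\epsilon,\psi_\epsilon}|^2\mathrm{d}s,
\]
valid for all sufficiently small $\epsilon$, say $\epsilon\leq\kappa_0$, with constants independent of $\epsilon$; stochastic Gronwall then yields~(\ref{minfty}).

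The main obstacle is that naive estimates produce factors of $1/\lambda(\epsilon)$ and $\epsilon/\lambda^2(\epsilon)$ that appear to blow up as $\epsilon\to 0$, and the bound must be genuinely uniform. The resolution is that the only pieces multiplying these singular prefactors are differences $b_\epsilon-b$, $\sigma_\epsilon-\sigma$, and $\mathcal{L}_{\bar X^\epsilon}-\mathcal{L}_{X^0}$, all of which carry a matching smallness: \textbf{(B2)} controls the first two, while the rate $\sqrt{\epsilon+\rho_{b,\epsilon}^2}$ from Lemma~\ref{ep2} compensates the $1/\lambda(\epsilon)$ in the third. Choosing $\kappa_0$ small enough that $\sqrt{\epsilon}/\lambda(\epsilon)$, $\sqrt{\epsilon}\lambda(\epsilon)$, $\rho_{b,\epsilon}/\lambda(\epsilon)$ and $\rho_{\sigma,\epsilon}$ are simultaneously below a prescribed threshold then allows the absorption step and the Gronwall closure to go through.
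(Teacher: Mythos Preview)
Your proposal is correct and follows essentially the same route as the paper: It\^o's formula yields the same five terms, Proposition~\ref{multi} handles the multivalued term, the identical three-piece decomposition of $b_\epsilon-b$ and $\sigma_\epsilon-\sigma$ (with the $\mathcal{L}_{\bar X^\epsilon}$ vs.\ $\mathcal{L}_{X^0}$ piece controlled via Lemma~\ref{ep2}) neutralises the $1/\lambda(\epsilon)$ prefactors, BDG with absorption handles the stochastic integral, and Gronwall with weight $1+|\psi_\epsilon(s)|^2$ closes the estimate using $\psi_\epsilon\in\mathcal{S}_m$. The only cosmetic difference is that the paper applies Gronwall \emph{pathwise} to $I_1,I_3,I_4,I_5$ (leaving $\sup_t|I_2(t)|$ as a forcing term), then takes expectation and bounds $\mathbb{E}\sup_t|I_2(t)|$ separately via BDG, whereas your final displayed inequality mixes $\mathbb{E}\sup$ on the left with the random weight $|\psi_\epsilon(s)|^2$ on the right; either do the pathwise Gronwall first as the paper does, or keep the inequality pathwise and invoke a genuine stochastic Gronwall lemma---both work, but as written your last display is not quite well-formed.
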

	
	By adapting the same procedure in Lemma \ref{yozfin} and Lemma \ref{yosifin}, we have the following results.
	\begin{lemma}
		Assume that \hyperref[h0]{\textbf{(H0)}}, (\hyperref[b0]{\textbf{B0}})-\hyperref[b3]{\textbf{(B3)}} and \hyperref[h4]{\textbf{(H4)}} hold. For any $p\geq 1$ and $0\in\overline{D(A)}$, there exists a constant $C>0$ such that for any $\epsilon\in(0,1)$ and $\alpha>0$,
		\begin{align*}
			\mathbb{E}\sup_{t\in[0,T]}|M^{\epsilon,\psi_\epsilon,\alpha}_t|^{2p}\leq C,
		\end{align*}
		where $C$ may depend on $p,T$ and $m$, but not $\epsilon$.
	\end{lemma}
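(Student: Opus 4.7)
The plan is to combine the Yosida-approximation strategy used for $Z^{\epsilon,h_\epsilon,\alpha}$ in Lemma \ref{yozfin} with the moderate-deviation scaling arguments developed in Lemma \ref{mdp2lemma}. Since $A^\alpha_\epsilon$ is single-valued and Lipschitz, the equation for $M^{\epsilon,\psi_\epsilon,\alpha}$ is a standard SDE without a finite-variation term, so we can apply It\^o's formula directly to $|M^{\epsilon,\psi_\epsilon,\alpha}_t|^{2p}$. The crucial structural fact, exactly as in Lemma \ref{yozfin}, is that monotonicity of $A^\alpha_\epsilon$ together with the uniform bound $\sup_\epsilon |A^\alpha_\epsilon(0)|\leq\sup_\epsilon |A^0_\epsilon(0)|<\infty$ (supplied by \textbf{(H0)}) yields
\begin{equation*}
-2p|M^{\epsilon,\psi_\epsilon,\alpha}_s|^{2p-2}\langle M^{\epsilon,\psi_\epsilon,\alpha}_s, A^\alpha_\epsilon(M^{\epsilon,\psi_\epsilon,\alpha}_s)\rangle \leq C\bigl(1+|M^{\epsilon,\psi_\epsilon,\alpha}_s|^{2p}\bigr),
\end{equation*}
with $C$ independent of $\epsilon$ and $\alpha$. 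This removes the $A^\alpha_\epsilon$ term from the estimate without introducing an $\alpha$-dependent Lipschitz constant.

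Next I would estimate the remaining terms separately. For the drift, I would split $\frac{1}{\lambda(\epsilon)}\bigl[b_\epsilon(\lambda(\epsilon)M+X^0,\mathcal{L}_{\bar X^\epsilon})-b(X^0,\mathcal{L}_{X^0})\bigr]$ into three pieces exactly as in (\ref{mi1}): the $b_\epsilon - b$ difference is controlled by $\rho_{b,\epsilon}/\lambda(\epsilon)\to 0$ from \textbf{(B2)}, the spatial increment by the one-sided Lipschitz part of \textbf{(B0)}, and the measure increment by the Lipschitz-in-$\mu$ part of \textbf{(B0)} together with Remark \ref{W2} and Lemma \ref{ep2}, which bounds $\mathbb{W}_2(\mathcal{L}_{\bar X^\epsilon_s},\mathcal{L}_{X^0_s})$ by a quantity of order $O(\sqrt{\epsilon}+\rho_{b,\epsilon})$. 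For the stochastic integral term, after applying BDG and Young's inequality as in Lemma \ref{yozfin}, the prefactor $\epsilon/\lambda^2(\epsilon)$ from (\ref{lame}) absorbs the $\sigma_\epsilon$ linear growth from \textbf{(B3)}. The quadratic variation term is handled similarly with the same prefactor. For the control term involving $\psi_\epsilon$, I would use Young's inequality to obtain a piece bounded by $C\int_0^t(1+|\psi_\epsilon(s)|^2)\mathbb{E}\sup_{r\in[0,s]}|M^{\epsilon,\psi_\epsilon,\alpha}_r|^{2p}\mathrm{d}s$, together with lower-order $\psi_\epsilon$-integrable remainders that use $\psi_\epsilon\in\mathcal{S}_m$.

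Putting everything together, setting $f(t):=\mathbb{E}\sup_{s\in[0,t]}|M^{\epsilon,\psi_\epsilon,\alpha}_s|^{2p}$, I would obtain for $\epsilon$ sufficiently small an inequality of the form
\begin{equation*}
f(t)\leq \tfrac{1}{2}f(t)+C+C\int_0^t\bigl(1+|\psi_\epsilon(s)|^2\bigr)f(s)\mathrm{d}s,
\end{equation*}
where $C$ depends only on $p,T,m,L'$ and the uniform quantities in \textbf{(H0)}, \textbf{(B0)}--\textbf{(B3)}. Rearranging and applying Gronwall's inequality with $\int_0^T|\psi_\epsilon(s)|^2\mathrm{d}s\leq 2m$ gives $f(T)\leq C\exp(C(T+2m))$, which is exactly the desired $\epsilon$- and $\alpha$-independent bound. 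For the remaining range $\epsilon\in(\kappa_0,1)$, finitely many standard estimates suffice.

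The main obstacle is the careful bookkeeping of the scaling parameters $\epsilon/\lambda^2(\epsilon)$, $\sqrt{\epsilon}/\lambda(\epsilon)$, $\rho_{b,\epsilon}/\lambda(\epsilon)$ and $\lambda(\epsilon)$: each term in the It\^o expansion contains a different combination, and one must verify (using (\ref{lame}) and \textbf{(B2)}) that all these combinations stay bounded—in fact become arbitrarily small—for $\epsilon\leq\kappa_0$, so that the coefficient of $f(t)$ on the right-hand side can be absorbed into $\tfrac{1}{2}f(t)$ on the left. This is the same delicate step that drives Lemma \ref{mdp2lemma}; the only extra work here is tracking higher powers $2p$ (which only costs extra applications of Young's inequality) and confirming that the $A^\alpha_\epsilon$-dependent constant $\sup_\epsilon|A^\alpha_\epsilon(0)|$ can be bounded independently of $\alpha$, which is exactly the property \hyperref[h0]{(H0)} is designed to furnish.
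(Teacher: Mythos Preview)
Your proposal is correct and follows essentially the same approach the paper indicates: the paper does not give a proof but simply says ``By adapting the same procedure in Lemma \ref{yozfin} and Lemma \ref{yosifin}, we have the following results,'' and your outline is precisely that adaptation---applying It\^o's formula to $|M^{\epsilon,\psi_\epsilon,\alpha}_t|^{2p}$, handling the Yosida term via monotonicity and the uniform bound on $A^\alpha_\epsilon(0)$ from \textbf{(H0)} exactly as in Lemma \ref{yozfin}, and managing the moderate-deviation scaling factors exactly as in Lemma \ref{mdp2lemma}. Your identification of the key bookkeeping obstacle (ensuring the various $\epsilon$-dependent prefactors stay bounded so the $f(t)$ coefficient can be absorbed) is on point and matches what the paper does implicitly.
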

	
	\begin{lemma}
		Assume that \hyperref[h0]{\textbf{(H0)}}, (\hyperref[b0]{\textbf{B0}})-\hyperref[b3]{\textbf{(B3)}} and \hyperref[h4]{\textbf{(H4)}} hold. For any $p\geq 1$ and $0\in\overline{D(A)}$, there exists a constant $C>0$ such that for any $\epsilon\in(0,1)$,
		\begin{equation*}
			\mathbb{E}\sup_{t\in[0,T]}|M^{\epsilon,\psi_\epsilon}_t|^{2p}+\mathbb{E}|\hat{K}^{\epsilon,u_{\epsilon}}|^T_0\leq C,
		\end{equation*}
		where $C$ may depend on $p,T$ and $m$, but not $\epsilon$.
	\end{lemma}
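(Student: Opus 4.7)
The plan is to mirror the strategy of Lemma \ref{mdp2lemma} and Lemma \ref{yosifin}, extending the first-moment bound for $M^{\epsilon,\psi_\epsilon}$ to arbitrary $2p$-th moments while simultaneously controlling the total variation $|\hat{K}^{\epsilon,\psi_\epsilon}|_0^T$. The pivotal observation is that since $0 \in \mathrm{Int}(D(A))=\mathrm{Int}(D(A_\epsilon))$ by \hyperref[h0]{\textbf{(H0)}}, Proposition \ref{multi} applies to the pair $(M^{\epsilon,\psi_\epsilon},\hat{K}^{\epsilon,\psi_\epsilon})$ with $a=0$, yielding
\[
\int_s^t \langle M^{\epsilon,\psi_\epsilon}_r,\mathrm{d}\hat{K}^{\epsilon,\psi_\epsilon}_r\rangle \;\geq\; \lambda_1|\hat{K}^{\epsilon,\psi_\epsilon}|_s^t - \lambda_2\int_s^t|M^{\epsilon,\psi_\epsilon}_r|\,\mathrm{d}r - \lambda_3 (t-s),
\]
with constants $\lambda_1,\lambda_2,\lambda_3$ independent of $\epsilon$ thanks to the uniform local boundedness of $A_\epsilon$ at $0$ stated in \hyperref[h0]{\textbf{(H0)}}.

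First I would derive the variation bound $\mathbb{E}|\hat{K}^{\epsilon,\psi_\epsilon}|_0^T \leq C$. Applying It\^o's formula to $|M^{\epsilon,\psi_\epsilon}_t|^2$ exactly as in the proof of Lemma \ref{mdp2lemma}, and then using the above displayed inequality to transfer the $-2\int_0^t\langle M^{\epsilon,\psi_\epsilon}_s,\mathrm{d}\hat{K}^{\epsilon,\psi_\epsilon}_s\rangle$ contribution to the left-hand side, I arrive at
\[
|M^{\epsilon,\psi_\epsilon}_t|^2 + 2\lambda_1|\hat{K}^{\epsilon,\psi_\epsilon}|_0^t \leq I_1(t)+I_2(t)+I_3(t)+I_4(t)+2\lambda_2\int_0^t|M^{\epsilon,\psi_\epsilon}_s|\,\mathrm{d}s+2\lambda_3 t,
\]
where $I_1,\dots,I_4$ are exactly the terms already estimated in Lemma \ref{mdp2lemma}. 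Taking expectations, invoking Lemma \ref{mdp2lemma} for the uniform bound $\sup_{\epsilon\in(0,\kappa_0]}\mathbb{E}\sup_t|M^{\epsilon,\psi_\epsilon}_t|^2<\infty$, and applying BDG to the martingale part controls the right-hand side uniformly in $\epsilon$, yielding the desired $|\hat{K}|$ estimate.

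Next I would handle the $2p$-th moment of $M^{\epsilon,\psi_\epsilon}$. Applying It\^o's formula to $|M^{\epsilon,\psi_\epsilon}_t|^{2p}$ generates a term of the form $-2p\int_0^t|M^{\epsilon,\psi_\epsilon}_s|^{2p-2}\langle M^{\epsilon,\psi_\epsilon}_s,\mathrm{d}\hat{K}^{\epsilon,\psi_\epsilon}_s\rangle$; by monotonicity of $A_\epsilon$ combined with the uniform local boundedness at $0$ in \hyperref[h0]{\textbf{(H0)}} (picking any measurable selection $y_s\in A_\epsilon(0)$ and exploiting $\langle M^{\epsilon,\psi_\epsilon}_s-0,\mathrm{d}\hat{K}^{\epsilon,\psi_\epsilon}_s-y_s\mathrm{d}s\rangle\geq 0$), this contribution is dominated by $C\int_0^t|M^{\epsilon,\psi_\epsilon}_s|^{2p-1}\mathrm{d}s \leq C\int_0^t(1+|M^{\epsilon,\psi_\epsilon}_s|^{2p})\mathrm{d}s$. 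The remaining drift, diffusion and control terms are then decomposed exactly along the lines of Lemma \ref{mdp2lemma}, splitting $b_\epsilon(\lambda(\epsilon)M+X^0,\mathcal{L}_{\bar X^\epsilon})-b(X^0,\mathcal{L}_{X^0})$ and $\sigma_\epsilon(\lambda(\epsilon)M+X^0,\mathcal{L}_{\bar X^\epsilon})$ into three pieces via \hyperref[b0]{\textbf{(B0)}}--\hyperref[b3]{\textbf{(B3)}}, Lemma \ref{ep2}, Remark \ref{W2} and the bound on $\int_0^T\|\sigma(X^0_s,\mathcal{L}_{X^0_s})\|^2\mathrm{d}s$. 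BDG together with the Young inequality on the stochastic integral produces the factor $\tfrac12\mathbb{E}\sup_{s\le t}|M^{\epsilon,\psi_\epsilon}_s|^{2p}$ that can be absorbed on the left, while the interaction of $|M|^{2p-1}$ with $\sigma_\epsilon h_\epsilon$ is handled by Young as in (\ref{mi4}), producing $\int_0^t(1+|\psi_\epsilon(s)|^2)|M^{\epsilon,\psi_\epsilon}_s|^{2p}\mathrm{d}s+C$.

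The main obstacle is ensuring that all constants remain uniform in $\epsilon$ as $\lambda(\epsilon)\downarrow 0$: one must carefully track the scaling factors $\rho_{b,\epsilon}/\lambda(\epsilon)$, $\sqrt{\epsilon}/\lambda(\epsilon)$ and $\epsilon/\lambda^2(\epsilon)$, whose smallness guaranteed by (\ref{lame}) and \hyperref[b2]{\textbf{(B2)}} permits them to be absorbed into the left-hand side once $\epsilon$ is below some threshold $\kappa_0'\leq\kappa_0$. After this absorption, a standard Gronwall argument with exponential factor controlled by $\int_0^T(1+|\psi_\epsilon(s)|^2)\mathrm{d}s\leq T+2m$ (since $\psi_\epsilon\in\mathcal{S}_m$) yields the uniform bound $\mathbb{E}\sup_{t\in[0,T]}|M^{\epsilon,\psi_\epsilon}_t|^{2p}\leq C$ with $C=C(p,T,m)$. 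Combined with the first step, this completes the proof.
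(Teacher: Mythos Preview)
Your proposal is correct and follows essentially the same approach as the paper. In fact, the paper gives no proof at all for this lemma, stating only that it follows ``by adapting the same procedure in Lemma \ref{yozfin} and Lemma \ref{yosifin}''; your plan is precisely such an adaptation, combining the It\^o-plus-Proposition~\ref{multi} argument of those lemmas with the MDP-specific splitting of $b_\epsilon$ and $\sigma_\epsilon$ already carried out in Lemma \ref{mdp2lemma}, and tracking the $\epsilon$-dependent factors via (\ref{lame}) and \hyperref[b2]{\textbf{(B2)}}.
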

	
	Similar to the proof of Lemma \ref{afin}, we deduce the following important lemma.
	\begin{lemma}\label{mfin}
		Assume that \hyperref[h0]{\textbf{(H0)}}, (\hyperref[b0]{\textbf{B0}})-\hyperref[b3]{\textbf{(B3)}} and \hyperref[h4]{\textbf{(H4)}} hold. For $0\in\overline{D(A)}$, we have
		\begin{equation*}
			\lim\limits_{\alpha\to 0}\mathbb{E}\sup_{t\in[0,T]}|M^{\epsilon,u_\epsilon,\alpha}_t-M^{\epsilon,u_\epsilon}_t|^2=0
		\end{equation*}
		uniformly in $\epsilon>0$.
	\end{lemma}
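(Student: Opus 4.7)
The plan is to mirror the three–step Yosida–approximation argument of Lemma \ref{afin}, with the controlled MDP equation (\ref{mdp2m}) playing the role of (\ref{zeue}). First I would isolate the driving ``noise'' of $M^{\epsilon,\psi_\epsilon}$ by setting
\begin{align*}
\hat N_\epsilon(t)
:=&\,\tfrac{1}{\lambda(\epsilon)}\!\int_0^t\!\big[b_\epsilon(\lambda(\epsilon)M^{\epsilon,\psi_\epsilon}_s+X^0_s,\mathcal{L}_{\bar X^\epsilon_s})-b(X^0_s,\mathcal{L}_{X^0_s})\big]\mathrm{d}s\\
&+\int_0^t\!\sigma_\epsilon(\lambda(\epsilon)M^{\epsilon,\psi_\epsilon}_s+X^0_s,\mathcal{L}_{\bar X^\epsilon_s})\psi_\epsilon(s)\mathrm{d}s+\tfrac{\sqrt{\epsilon}}{\lambda(\epsilon)}\!\int_0^t\!\sigma_\epsilon(\lambda(\epsilon)M^{\epsilon,\psi_\epsilon}_s+X^0_s,\mathcal{L}_{\bar X^\epsilon_s})\mathrm{d}W_s,
\end{align*}
so that $(M^{\epsilon,\psi_\epsilon},\hat K^{\epsilon,\psi_\epsilon})$ and $M^{\epsilon,\psi_\epsilon,\alpha}$ solve the Skorohod problem and Yosida approximation, respectively, driven by $\hat N_\epsilon$. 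Using (B0)--(B3), the scaling condition (\ref{lame}), (B2), Lemma \ref{ep2} and Lemma \ref{mdp2lemma} (whose bound (\ref{minfty}) is precisely what makes $b_\epsilon/\lambda(\epsilon)$ manageable after splitting $b_\epsilon-b$ into $(b_\epsilon-b)+(b(\cdot,\mathcal{L}_{\bar X^\epsilon})-b(\cdot,\mathcal{L}_{X^0}))+(b(\lambda(\epsilon)M+X^0,\cdot)-b(X^0,\cdot))$ and applying the Lipschitz estimates of (B0)), I would prove $\sup_{\epsilon}\mathbb{E}\sup_{t\le T}|\hat N_\epsilon(t)|^2<\infty$ and $\sup_{\epsilon}\mathbb{E}\sup_{|t-s|\le\Delta}|\hat N_\epsilon(t)-\hat N_\epsilon(s)|^p\le C_{p,m}\Delta^{p/2-1}$ for any $p\ge 2$.

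Next, following Cépa \cite{cepa1} verbatim as in Step~1 of Lemma \ref{afin}, smooth $\hat N_\epsilon$ by mollification to obtain $\hat N_\epsilon^n$, let $(M^{\epsilon,\psi_\epsilon,n},\hat K^{\epsilon,\psi_\epsilon,n})$ and $\breve M^{\epsilon,\psi_\epsilon,\alpha,n}$ solve the corresponding smoothed Skorohod and Yosida problems, and use Proposition \ref{multi} together with the uniform modulus estimate above to obtain $\sup_{\epsilon,n}\mathbb{E}|\hat K^{\epsilon,\psi_\epsilon,n}|_0^T<\infty$. This gives, uniformly in $\epsilon$,
\begin{equation*}
\lim_{n\to\infty}\mathbb{E}\sup_{t\in[0,T]}\big|M^{\epsilon,\psi_\epsilon,n}_t-M^{\epsilon,\psi_\epsilon}_t\big|=0,\qquad \lim_{n\to\infty}\mathbb{E}\sup_{t\in[0,T]}\big|\breve M^{\epsilon,\psi_\epsilon,\alpha,n}_t-\breve M^{\epsilon,\psi_\epsilon,\alpha}_t\big|=0,
\end{equation*}
and, via the identity $\langle A_\epsilon^\alpha(x)-A_\epsilon^\beta(y),x-y\rangle\le-(\alpha+\beta)\langle A_\epsilon^\alpha(x),A_\epsilon^\beta(y)\rangle$ and the deterministic estimate of \citep[Proposition~4.7]{cepa1}, $\lim_{\alpha\to0}\mathbb{E}\sup_{t\le T}|\breve M^{\epsilon,\psi_\epsilon,\alpha}_t-M^{\epsilon,\psi_\epsilon}_t|^2=0$ uniformly in $\epsilon$.

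Finally, I would apply It\^o's formula to $|\breve M^{\epsilon,\psi_\epsilon,\alpha}_t-M^{\epsilon,\psi_\epsilon,\alpha}_t|^2$, killing the multivalued term via monotonicity of $A_\epsilon^\alpha$. Unlike Lemma \ref{afin}, here (B0)--(B3) provide genuine Lipschitz bounds, so after H\"older, BDG and Young's inequalities I expect an inequality of the form
\begin{equation*}
\mathbb{E}\sup_{s\le t}R(s)\le \tfrac12\mathbb{E}\sup_{s\le t}R(s)+C\!\int_0^t\!\mathbb{E}\sup_{r\le s}R(r)\mathrm{d}s+C\Big(\mathbb{E}\sup_{s\le T}\big|\breve M^{\epsilon,\psi_\epsilon,\alpha}_s-M^{\epsilon,\psi_\epsilon}_s\big|^2\Big)^{1/2},
\end{equation*}
where $R(t):=|M^{\epsilon,\psi_\epsilon,\alpha}_t-M^{\epsilon,\psi_\epsilon}_t|^2$, with constant $C$ independent of $\epsilon,\alpha$ (this is where the uniform moments on $M^{\epsilon,\psi_\epsilon}$, $M^{\epsilon,\psi_\epsilon,\alpha}$ and $\bar X^\epsilon$ from the previous two lemmas and Lemma \ref{ep2} are consumed). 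A classical Gronwall estimate, together with the uniform convergence from the previous step, then yields the claim. The main obstacle will be the uniform-in-$\epsilon$ bound on $\hat N_\epsilon$'s modulus of continuity, because the coefficient $1/\lambda(\epsilon)$ in front of $b_\epsilon-b$ blows up as $\epsilon\to 0$; this is resolved by carefully splitting the drift to exploit the Lipschitz bounds of (B0), the distance estimate of Lemma \ref{ep2}, and the rate condition (B2) $\rho_{b,\epsilon}/\lambda(\epsilon)\to 0$ together with $\sqrt{\epsilon}/\lambda(\epsilon)\to 0$.
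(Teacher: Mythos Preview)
Your proposal is correct and follows exactly the route the paper indicates: the paper does not give a separate proof of this lemma but simply says ``Similar to the proof of Lemma~\ref{afin}, we deduce the following important lemma,'' and your three-step Yosida argument is precisely that adaptation. You have also correctly identified the extra work required in the MDP setting (the $1/\lambda(\epsilon)$ factor in the drift of $\hat N_\epsilon$) and the right tools to handle it---the Lipschitz bounds in (B0), the rate in Lemma~\ref{ep2}, and condition (B2)---as well as the simplification that under (B0)--(B3) one can close with Gronwall rather than Bihari.
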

	
	Consider the related Yosida approximation of equation (\ref{mdp1})
	\begin{align*}
		\begin{cases}
			\mathrm{d}\nu^{\psi,\alpha}(t)=\nabla b(X^0_t,\mathcal{L}_{X^0_t})\nu^{\psi,\alpha}(t)\mathrm{d}t+\sigma(X^0_t,\mathcal{L}_{X^0_t})\psi(t)\mathrm{d}t-A^\alpha_\epsilon(\nu^{\psi,\alpha}_t)\mathrm{d}t, \\
			\nu^{\psi,\alpha}(0)=0.
		\end{cases}
	\end{align*}
	
	Analogously, we can prove the following lemma.
	\begin{lemma}\label{nfin}
		Assume that \hyperref[h0]{\textbf{(H0)}}, (\hyperref[b0]{\textbf{B0}})-\hyperref[b3]{\textbf{(B3)}} and \hyperref[h4]{\textbf{(H4)}} hold. For any $p\geq 1$ and $0\in\overline{D(A)}$, there exists $C>0$ such that for any $\alpha>0$,
		\begin{equation*}
			\sup_{t\in[0,T]}|\nu^{\psi,\alpha}_t|^{2p}\leq C,
		\end{equation*}
		where $C$ may depend on $p,T$ and $m$, but not $\epsilon$ and $\alpha$.
		
		Moreover, for $0\in\overline{D(A)}$, we have
		\begin{equation*}
			\lim\limits_{\alpha\to 0}\sup_{t\in[0,T]}|\nu^{\psi,\alpha}_t-\nu^{\psi}_t|^2=0.
		\end{equation*}
	\end{lemma}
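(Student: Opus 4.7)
The plan is to mirror the strategy of Lemma \ref{afin} and Lemma \ref{ayfin}, but exploiting the crucial simplification that the defining equation (\ref{mdp1}) is \emph{deterministic} (there is no Brownian term and $\psi$, $X^0$ are deterministic), so no stochastic analysis, BDG inequality, or mollification of a random driver is required. The two assertions will be handled separately, with the uniform bound being essentially a Gronwall-type argument and the convergence requiring careful use of the Yosida resolvent together with the monotonicity (\ref{mono}).

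For the uniform $L^{2p}$ bound, I would apply the classical chain rule to $|\nu^{\psi,\alpha}_t|^{2p}$:
\begin{equation*}
\tfrac{d}{dt}|\nu^{\psi,\alpha}_t|^{2p}=2p|\nu^{\psi,\alpha}_t|^{2p-2}\bigl\langle \nu^{\psi,\alpha}_t,\, \nabla b(X^0_t,\mathcal{L}_{X^0_t})\nu^{\psi,\alpha}_t+\sigma(X^0_t,\mathcal{L}_{X^0_t})\psi(t)-A^\alpha(\nu^{\psi,\alpha}_t)\bigr\rangle.
\end{equation*}
The monotonicity of $A^\alpha$ gives $\langle \nu^{\psi,\alpha}_t,A^\alpha(\nu^{\psi,\alpha}_t)\rangle\geq\langle \nu^{\psi,\alpha}_t,A^\alpha(0)\rangle$, and by (\hyperref[h0]{\textbf{H0}}) together with (\ref{yosi0}) the quantity $|A^\alpha(0)|$ is bounded uniformly in $\alpha$ by $|A^0(0)|<\infty$. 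Assumption (\hyperref[b1]{\textbf{B1}}) supplies integrable control on $\nabla b(X^0_t,\mathcal{L}_{X^0_t})$, while $\psi\in\mathcal{S}_m$ combined with (\hyperref[b3]{\textbf{B3}}) and Cauchy--Schwarz handles the $\sigma\psi$ term via $\int_0^T\lVert\sigma(X^0_s,\mathcal{L}_{X^0_s})\psi(s)\rVert\mathrm{d}s\leq(2m)^{1/2}(\int_0^T\lVert\sigma(X^0_s,\mathcal{L}_{X^0_s})\rVert^2\mathrm{d}s)^{1/2}$. Young's inequality reduces everything to the form $\sup_{s\leq t}|\nu^{\psi,\alpha}_s|^{2p}\leq C+C\int_0^t \sup_{r\leq s}|\nu^{\psi,\alpha}_r|^{2p}\mathrm{d}s$, and Gronwall's inequality yields the desired bound with a constant independent of $\alpha$.

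For the convergence statement, set $\phi^\alpha_t:=\nu^{\psi,\alpha}_t-\nu^\psi_t$ and apply the chain rule to $|\phi^\alpha_t|^2$. The nontrivial term is the coupling between $A^\alpha(\nu^{\psi,\alpha}_t)\mathrm{d}t$ and $\mathrm{d}\hat{K}^\psi_t$. Here I would use the Yosida identity (\ref{yosidap}), namely $A^\alpha(\nu^{\psi,\alpha}_t)\in A(J^\alpha(\nu^{\psi,\alpha}_t))$ with $\nu^{\psi,\alpha}_t-J^\alpha(\nu^{\psi,\alpha}_t)=\alpha A^\alpha(\nu^{\psi,\alpha}_t)$, together with $(\nu^\psi,\hat{K}^\psi)\in\mathcal{A}$, to invoke the monotonicity (\ref{mono}) and obtain
\begin{equation*}
\bigl\langle J^\alpha(\nu^{\psi,\alpha}_t)-\nu^\psi_t,\,A^\alpha(\nu^{\psi,\alpha}_t)\mathrm{d}t-\mathrm{d}\hat{K}^\psi_t\bigr\rangle\geq 0.
\end{equation*}
Writing $\phi^\alpha_t=(J^\alpha(\nu^{\psi,\alpha}_t)-\nu^\psi_t)+\alpha A^\alpha(\nu^{\psi,\alpha}_t)$ inside the inner product against $A^\alpha(\nu^{\psi,\alpha}_t)\mathrm{d}t-\mathrm{d}\hat{K}^\psi_t$ produces an error of order $\alpha\bigl(|A^\alpha(\nu^{\psi,\alpha}_t)|^2+|A^\alpha(\nu^{\psi,\alpha}_t)||\mathrm{d}\hat{K}^\psi_t/\mathrm{d}t|\bigr)$, which is controlled using the already-established uniform bound on $\nu^{\psi,\alpha}$ and standard estimates on $A^\alpha$ (as used in the proof of Lemma \ref{afin}, Step 3). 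The remaining $\nabla b$ term is handled by (\hyperref[b1]{\textbf{B1}}) and Gronwall, yielding $\sup_{t\in[0,T]}|\phi^\alpha_t|^2\leq C\alpha\to 0$.

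The main obstacle will be the convergence assertion: not the global structure of the argument, which is deterministic and less involved than the stochastic counterpart in Lemma \ref{afin}, but specifically the need to obtain an $\alpha$-independent bound on $\sup_{t\in[0,T]}|A^\alpha(\nu^{\psi,\alpha}_t)|$. Following the strategy of \cite{cepa1,rwz} adapted to our deterministic equation, this bound may be derived from the fact that the driver $N(t):=\int_0^t\nabla b(X^0_s,\mathcal{L}_{X^0_s})\nu^{\psi,\alpha}_s\mathrm{d}s+\int_0^t\sigma(X^0_s,\mathcal{L}_{X^0_s})\psi(s)\mathrm{d}s$ has bounded time-derivative (in a suitable norm), enabling the estimate $\sup_t|A^\alpha(\nu^{\psi,\alpha}_t)|\leq |A^0(0)|+2\sup_t|\dot N(t)|+T\sup_t|\ddot N(t)|$ after a smoothing procedure. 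Since the statement itself is routine modulo these technical adaptations, the detailed calculations are omitted exactly as the authors indicate.
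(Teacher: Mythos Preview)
The paper gives no proof here, merely stating that the lemma follows analogously to Lemmas \ref{afin} and \ref{ayfin}. Your proposal is correct and consistent with that reference. For the uniform bound, your chain-rule-plus-Gronwall argument is precisely the deterministic specialization of Lemma \ref{yozfin}. For the convergence, your direct comparison of $\nu^{\psi,\alpha}$ with $\nu^\psi$ via the resolvent decomposition $\nu^{\psi,\alpha}_t=J^\alpha(\nu^{\psi,\alpha}_t)+\alpha A^\alpha(\nu^{\psi,\alpha}_t)$ is a slight variant of the paper's template: in Lemma \ref{afin} the argument instead passes through an auxiliary $\breve Z^{\alpha}$ solving the Yosida equation with driver frozen at the limiting (non-Yosida) solution, and only then compares $\breve Z^{\alpha}$ with $Z^{\alpha}$. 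Both routes ultimately rest on C\'epa's $\sup_t|A^\alpha|$ estimate after mollifying the time-driver, which is still needed here since $\psi$ is only $L^2$ in time and hence $\dot N$ is not pointwise bounded; you correctly identify this as the crux. One small caveat: your residual term involves $\alpha\int_0^T|A^\alpha(\nu^{\psi,\alpha}_t)|^2\,\mathrm{d}t$, and making this vanish typically forces one back through a mollified intermediate problem anyway, so the direct route is not a genuine shortcut over the paper's three-step scheme---it repackages the same work.
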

	
	For the sake of brevity, we omit the proofs of the lemmas above. Now we are in the position to verify \textbf{(MDP)}$\bm{_{2}}$.
	
	\begin{proposition}\label{}[\textbf{(MDP)}$\bm{_{2}}$]
		For any given $m\in(0,\infty)$, $\psi_\epsilon\in\mathcal{D}_m$ and $\delta>0$, we have
		\begin{align*}
			\lim\limits_{\epsilon\to 0}P\left(\sup_{t\in[0,T]}|M^{\epsilon,\psi_\epsilon}_t-\nu^{\psi_\epsilon}_t|\geq\delta\right)=0.
		\end{align*}
		
		\begin{proof}	
			Let $\epsilon>0$ be fixed. By triangle inequality we have
			\begin{align}\label{twoo}
				\sup_{t\in[0,T]}|M^{\epsilon,\psi_\epsilon}_t-\nu^{\psi_\epsilon}_t|\leq& \,\sup_{t\in[0,T]}|M^{\epsilon,\psi_\epsilon}_t-M^{\epsilon,\psi_\epsilon,\alpha}_t|
				+\sup_{t\in[0,T]}|M^{\epsilon,\psi_\epsilon,\alpha}_t-\nu^{\psi_\epsilon,\alpha}_t| \nonumber\\
				&+\,\sup_{t\in[0,T]}|\nu^{\psi_\epsilon,\alpha}_t-\nu^{\psi_\epsilon}_t|.
			\end{align}
			Next we deal with the above three terms separately.
			For any $\varpi>0$, by Lemma \ref{mfin} and Lemma \ref{nfin}, there exists some constant $\alpha$ independent of $\epsilon$ such that
			\begin{align}\label{441}
				P\left(\sup_{t\in[0,T]}|M^{\epsilon,\psi_\epsilon}_t-M^{\epsilon,\psi_\epsilon,\alpha}_t|\geq\frac{\delta}{3}\right)\leq \frac{9\mathbb{E}\sup_{t\in[0,T]}|M^{\epsilon,\psi_\epsilon}_t-M^{\epsilon,\psi_\epsilon,\alpha}_t|^2}{\delta^2}\leq \frac{\varpi}{3}
			\end{align}
			and
			\begin{align}\label{442}
				P\left(\sup_{t\in[0,T]}|\nu^{\psi_\epsilon}_t-\nu^{\psi_\epsilon,\alpha}_t|\geq\frac{\delta}{3}\right)\leq \frac{9\mathbb{E}\sup_{t\in[0,T]}|\nu^{\psi_\epsilon}_t-\nu^{\psi_\epsilon,\alpha}_t|^2}{\delta^2}\leq \frac{\varpi}{3}.
			\end{align}
			
			It is left to estimate the second term in the right hand side of \eqref{twoo}. For any $\alpha>0$ and $\psi_\epsilon\in\mathcal{D}_m$, let $Q^\epsilon(t):=M^{\epsilon,\psi_\epsilon,\alpha}(t)-\nu^{\psi_\epsilon,\alpha}(t),$ $\forall t\in[0,T]$, then
			\begin{align} \mathrm{d}Q^\epsilon_t=&\left(\frac{1}{\lambda(\epsilon)}\left(b_\epsilon(\lambda(\epsilon)M^{\epsilon,\psi_\epsilon,\alpha}_t+X^0_t,\mathcal{L}_{\bar{X}^\epsilon_t})-b(X^0_t,\mathcal{L}_{X^0_t})\right)-\nabla b(X^0_t,\mathcal{L}_{X^0_t})\nu^{\psi_\epsilon,\alpha}_t\right)\mathrm{d}t   \nonumber\\
				&+\left(\sigma_{\epsilon}(\lambda(\epsilon)M^{\epsilon,\psi_\epsilon}_t+X^0_t,\mathcal{L}_{\bar{X}^\epsilon_t})-\sigma(X^0_t,\mathcal{L}_{X^0_t})\right)\psi_\epsilon(t)\mathrm{d}t  \nonumber \\
				&+\frac{\sqrt{\epsilon}}{\lambda(\epsilon)}\sigma_{\epsilon}(\lambda(\epsilon)M^{\epsilon,\psi_\epsilon,\alpha}_t+X^0_t,\mathcal{L}_{\bar{X}^\epsilon_t})\mathrm{d}W_t  \nonumber-\left(\mathrm{d}\hat{K}_t^{\epsilon,\psi_\epsilon}-\mathrm{d}\hat{K}_t^{\psi_\epsilon}\right).
			\end{align}
			
			For any $a\in\mathbb{N}$, define the following stopping time
			\begin{align}
				\tau_\epsilon^a=\inf\big\{t\geq 0:|M^{\epsilon,\psi_\epsilon}(t)|\geq a\big\}\wedge T.
			\end{align}
			Due to Lemma \ref{mdp2lemma} and Markov's inequality, we get
			\begin{align}\label{cheby}
				P(\tau_\epsilon^a<T)\leq\frac{\mathbb{E}(\sup_{t\in[0,T]}|M^{\epsilon,\psi_\epsilon}(t)|^2)}{a^2}\leq \frac{C}{a^2}, \quad \forall\epsilon\in(0,\kappa_0].
			\end{align}
			
			By It$\hat{\text{o}}$'s formula, we get
			\begin{align}\label{mdpq}
				|Q^\epsilon_{t\wedge\tau_\epsilon^a}|^2=&2\int_{0}^{t\wedge\tau_\epsilon^a}\left\langle \frac{1}{\lambda(\epsilon)}\left(b_\epsilon(\lambda(\epsilon)M^{\epsilon,\psi_\epsilon,\alpha}_s+X^0_s,\mathcal{L}_{\bar{X}^\epsilon_s})-b(X^0_s,\mathcal{L}_{X^0_s})\right)-\nabla b(X^0_s,\mathcal{L}_{X^0_s})\nu^{\psi_\epsilon,\alpha}_s,Q^\epsilon_s\right\rangle\mathrm{d}s  \nonumber\\
				&+\frac{2\sqrt{\epsilon}}{\lambda(\epsilon)}\int_{0}^{t\wedge\tau_\epsilon^a}\left\langle Q^\epsilon_s,\sigma_{\epsilon}(\lambda(\epsilon)M^{\epsilon,\psi_\epsilon,\alpha}_s+X^0_s,\mathcal{L}_{\bar{X}^\epsilon_s})\mathrm{d}W_s\right\rangle  \nonumber\\
				&+\frac{\epsilon}{\lambda^2(\epsilon)}\int_{0}^{t\wedge\tau_\epsilon^a}\lVert\sigma_{\epsilon}(\lambda(\epsilon)M^{\epsilon,\psi_\epsilon,\alpha}_s+X^0_s,\mathcal{L}_{\bar{X}^\epsilon_s})\rVert^2\mathrm{d}s  \nonumber\\
				&+2\int_{0}^{t\wedge\tau_\epsilon^a}\left\langle \left(\sigma_{\epsilon}(\lambda(\epsilon)M^{\epsilon,\psi_\epsilon,\alpha}_s+X^0_s,\mathcal{L}_{\bar{X}^\epsilon_s})-\sigma(X^0_s,\mathcal{L}_{X^0_s})\right)\psi_\epsilon(s),Q^\epsilon_s\right\rangle\mathrm{d}s  \nonumber\\
				&+2\int_{0}^{t\wedge\tau_\epsilon^a}\left\langle Q^\epsilon_s,A^\alpha(\nu^{\psi_\epsilon,\alpha}_s)-A^\alpha(M^{\epsilon,\psi_\epsilon,\alpha}_s)\right\rangle\mathrm{d}s \nonumber \\
				&+2\int_{0}^{t\wedge\tau_\epsilon^a}\left\langle Q^\epsilon_s,A^\alpha(M^{\epsilon,\psi_\epsilon,\alpha}_s)-A^\alpha_\epsilon(M^{\epsilon,\psi_\epsilon,\alpha}_s)\right\rangle\mathrm{d}s  \nonumber\\
				=:&\,\,\hat{I}_1(t)+\hat{I}_2(t)+\hat{I}_3(t)+\hat{I}_4(t)+\hat{I}_5(t)+\hat{I}_6(t).
			\end{align}
			
			For $\hat{I}_{5}(t)$, by (\ref{mono}) we have for each $\epsilon\in(0,\epsilon_{3}]$,
			\begin{align}\label{hati5}
				\hat{I}_{5}(t)=2\int_{0}^{t\wedge\tau_\epsilon^a}\left\langle Q^\epsilon_s,A^\alpha(\nu^{\psi_\epsilon,\alpha}_s)-A^\alpha(M^{\epsilon,\psi_\epsilon,\alpha}_s)\right\rangle\mathrm{d}s\leq 0.
			\end{align}
			
			By (\ref{nulim}) and $\psi_\epsilon\in\mathcal{D}_m$, there exists some $\Omega^0\in\mathcal{F}$ with $P(\Omega^0)=1$ such that
			\begin{align}\label{gamma}
				\gamma\coloneqq\sup_{\epsilon\in(0,\kappa_0]}\sup_{\omega\in\Omega^0, t\in[0,T]}|\nu^{\psi_\epsilon,\alpha}_t(\omega)|<\infty.
			\end{align}
			
			For $\hat{I}_{6}(t)$, by the same way in Proposition \ref{ld2}, we have
			\begin{align}\label{hati6}
				\hat{I}_{6}(t)\leq C\int_{0}^{t\wedge\tau_\epsilon^a}|Q^{\epsilon,\psi_\epsilon}_s|^2\mathrm{d}s+C\int_{0}^{t\wedge\tau_\epsilon^a}|A_\epsilon^\alpha(M^{\epsilon,\psi_\epsilon,\alpha}_s)-A^\alpha(M^{\epsilon,\psi_\epsilon,\alpha}_s)|^2\mathrm{d}t\leq (a+\gamma)^2T.
			\end{align}
			
			For $\hat{I}_1(t)$, we have
			\begin{align}\label{hati1}
				\hat{I}_1(t)=&\frac{2}{\lambda(\epsilon)}\int_{0}^{t\wedge\tau_\epsilon^a}\left\langle b_\epsilon(\lambda(\epsilon)M^{\epsilon,\psi_\epsilon,\alpha}_s+X^0_s,\mathcal{L}_{\bar{X}^\epsilon_s})-b(\lambda(\epsilon)M^{\epsilon,\psi_\epsilon,\alpha}_s+X^0_s,\mathcal{L}_{\bar{X}^\epsilon_s}),Q^{\epsilon,\psi_\epsilon}_s\right\rangle\mathrm{d}s  \nonumber\\
				&+\frac{2}{\lambda(\epsilon)}\int_{0}^{t\wedge\tau_\epsilon^a}\left\langle b(\lambda(\epsilon)M^{\epsilon,\psi_\epsilon,\alpha}_s+X^0_s,\mathcal{L}_{\bar{X}^\epsilon_s})-b(\lambda(\epsilon)M^{\epsilon,\psi_\epsilon,\alpha}_s+X^0_s,\mathcal{L}_{X^0_s}),Q^{\epsilon,\psi_\epsilon}_s\right\rangle\mathrm{d}s  \nonumber\\
				&+2\int_{0}^{t\wedge\tau_\epsilon^a}\left\langle \frac{1}{\lambda(\epsilon)}\left(b(\lambda(\epsilon)M^{\epsilon,\psi_\epsilon,\alpha}_s+X^0_s,\mathcal{L}_{X^0_s})-b(X^0_s,\mathcal{L}_{X^0_s})\right)-\nabla b(X^0_s,\mathcal{L}_{X^0_s})\nu^{\psi_\epsilon,\alpha}_s,Q^{\epsilon,\psi_\epsilon}_s\right\rangle\mathrm{d}s  \nonumber\\
				=:&\,\,\hat{I}_{1,1}(t)+\hat{I}_{1,2}(t)+\hat{I}_{1,3}(t).
			\end{align}
			Due to (\hyperref[b0]{\textbf{B0}}), (\hyperref[b1]{\textbf{B1}}) and Lemma \ref{ep2}, we have
			\begin{align}\label{hati112}
				\hat{I}_{1,1}(t)+\hat{I}_{1,2}(t)\leq&\, \frac{2\rho_{b,\epsilon}}{\lambda(\epsilon)}\int_{0}^{t\wedge\tau_\epsilon^a}|Q^{\epsilon,\psi_\epsilon}_s|\mathrm{d}s+\frac{2L}{\lambda(\epsilon)}\int_{0}^{t\wedge\tau_\epsilon^a}\left(\mathbb{E}|X^\epsilon_s-X^0_s|^2\right)^{\frac{1}{2}}|Q^{\epsilon,\psi_\epsilon}_s|\mathrm{d}s  \nonumber\\
				\leq& \, \frac{2\rho_{b,\epsilon}+2L(\epsilon+\rho_{b,\epsilon}^2+\epsilon\rho_{\sigma,\epsilon}^2)^{\frac{1}{2}}}{\lambda(\epsilon)}(a+\gamma)T.
			\end{align}
			Let $\epsilon_{3}=\kappa_0\wedge\epsilon_{2}$, then for any $\epsilon\in(0,\epsilon_{3}]$, by using the mean value theorem, (\hyperref[b0]{\textbf{B0}}) and (\hyperref[b1]{\textbf{B1}}), we obtain that there exists $\theta_{\epsilon}(s)\in[0,1]$ such that
			\begin{align*}
				\hat{I}_{1,3}(t)=&\,\,2\int_{0}^{t\wedge\tau_{\epsilon}^{a}}\left\langle \frac{b(\lambda(\epsilon)M^{\epsilon,\psi_\epsilon,\alpha}_s+X^{0}_s,\mathcal{L}_{X^{0}_s})-b(X^{0}_s,\mathcal{L}_{X^{0}_s})}{\lambda(\epsilon)M^{\epsilon,\psi_\epsilon,\alpha}_s}M^{\epsilon,\psi_\epsilon,\alpha}_s-\nabla b(X^{0}_s,\mathcal{L}_{X^{0}_s})\nu^{\psi_\epsilon,\alpha}_s,Q^{\epsilon,\psi_\epsilon}_s\right\rangle\mathrm{d}s \nonumber\\
				\leq&\,\,2\int_{0}^{t\wedge\tau_{\epsilon}^{a}}\left\langle \nabla b(\lambda(\epsilon)M^{\epsilon,\psi_\epsilon,\alpha}_s\theta_{\epsilon}(s)+X^{0}_s,\mathcal{L}_{X^0_s})M^{\epsilon,\psi_\epsilon,\alpha}_s-\nabla b(X^0_s,\mathcal{L}_{X^0_s})\nu^{\psi_\epsilon,\alpha}_s,Q^{\epsilon,\psi_\epsilon}_s\right\rangle\mathrm{d}s  \nonumber\\
				=&\, 2\int_{0}^{t\wedge\tau_{\epsilon}^{a}}\left\langle \nabla b(\lambda(\epsilon)M^{\epsilon,\psi_\epsilon,\alpha}_s\theta_{\epsilon}(s)+X^0_s,\mathcal{L}_{X^0_s})M^{\epsilon,\psi_\epsilon,\alpha}_s-\nabla b(X^0_s,\mathcal{L}_{X^0_s})M^{\epsilon,\psi_\epsilon,\alpha}_s,Q^{\epsilon,\psi_\epsilon}_s\right\rangle\mathrm{d}s  \nonumber\\
				&+\, 2\int_{0}^{t\wedge\tau_{\epsilon}^{a}}\left\langle \nabla b(X^0_s,\mathcal{L}_{X^0_s})M^{\epsilon,\psi_\epsilon,\alpha}_s-\nabla b(X^0_s,\mathcal{L}_{X^0_s})\nu^{\psi_\epsilon}_s,Q^{\epsilon,\psi_\epsilon}_s\right\rangle\mathrm{d}s  \nonumber\\
				\leq&\,\, 2\int_{0}^{t\wedge\tau_{\epsilon}^{a}}| \nabla b(\lambda(\epsilon)M^{\epsilon,\psi_\epsilon,\alpha}_s\theta_{\epsilon}(s)+X^0_s,\mathcal{L}_{X^0_s})-\nabla b(X^0_s,\mathcal{L}_{X^0_s})||M^{\epsilon,\psi_\epsilon,\alpha}_s||Q^{\epsilon,\psi_\epsilon}_s|\mathrm{d}s   \nonumber\\
				&+\, 2\int_{0}^{t\wedge\tau_{\epsilon}^{a}}|\nabla b(X^0_s,\mathcal{L}_{X^0_s})||Q^{\epsilon,\psi_\epsilon}_s|^{2}\mathrm{d}s \nonumber\\
				\leq&\,\, L'\lambda(\epsilon)\int_{0}^{t\wedge\tau_{\epsilon}^{a}}\left(1+|X^0_s|^{q'}+|\lambda(\epsilon)M^{\epsilon,\psi_\epsilon,\alpha}_s+X^0_s|^{q'}\right)|M^{\epsilon,\psi_\epsilon,\alpha}_s|^{2}|Q^{\epsilon,\psi_\epsilon}_s|\mathrm{d}s  \nonumber\\
				&+\, 2\int_{0}^{t\wedge\tau_{\epsilon}^{a}}|\nabla b(X^0_s,\mathcal{L}_{X^0_s})||Q^{\epsilon,\psi_\epsilon}_s|^{2}\mathrm{d}s.
			\end{align*}
			
			Denote $C_a=L'\left(1+\sup_{s\in[0,T]}|X_{s}^{0}|^{q'}+|a+\sup_{s\in[0,T]}|X_{s}^{0}||^{q'}\right)a^{2}(a+\gamma)T$.  Notice that $C_a$ is independent of $\epsilon$, thus we get
			\begin{align}\label{hati132}
				\hat{I}_{1,3}(t)\leq\, C_a\lambda(\epsilon)+2\int_{0}^{t}|\nabla b(X^0_{s\wedge\tau_\epsilon^a},\mathcal{L}_{X^0_{s\wedge\tau_\epsilon^a}})||Q^{\epsilon,\psi_\epsilon}_{s\wedge\tau_\epsilon^a}|^2\mathrm{d}s.
			\end{align}
			
			Substituting (\ref{hati5}), (\ref{hati6}), (\ref{hati112}) and (\ref{hati132}) back into (\ref{mdpq}), and by Gronwall's inequality we have for any $\epsilon\in(0,\epsilon_{3}],\,\, t\in[0,T]$,
			
			\begin{align}\label{hatgron}
				\sup_{t\in[0,T]}|Q^{\epsilon,\psi_\epsilon}_{t\wedge\tau_\epsilon^a}|^2\leq e^{2\int_{0}^{T}\lVert\nabla b(X^0_s,\mathcal{L}_{X^0_s})\rVert\mathrm{d}s}\left[C_a(\frac{\rho_{b,\epsilon}+\lambda^2(\epsilon)+(\epsilon+\rho_{b,\epsilon}^2
					+\epsilon\rho_{\sigma,\epsilon})^{\frac{1}{2}}}{\lambda(\epsilon)})+\sum_{i=2}^{4}\sup_{t\in[0,T]}\hat{I}_i(t)\right].
			\end{align}
			By (\hyperref[b1]{\textbf{B1}}) we have
			\begin{align}
				\Delta:=e^{2\int_{0}^{T}\lVert\nabla b(X^0_s,\mathcal{L}_{X^0_s})\rVert\mathrm{d}s}<\infty.
			\end{align}
			
			For $\hat{I}_{2}(t)$ and $\hat{I}_{3}(t)$, by BDG’s inequality and (\ref{slim}), we obtain that
			\begin{align}\label{hati23}
				&\mathbb{E}\left(\sup_{t\in[0,T]}|\hat{I}_{2}(t)|\right)+\mathbb{E}\left(\sup_{t\in[0,T]}|\hat{I}_{3}(t)|\right)  \nonumber\\
				\leq&\,\frac{2\sqrt{\epsilon}}{\lambda(\epsilon)}\mathbb{E}\left(\int_{0}^{T\wedge\tau_\epsilon^a} |Q^{\epsilon,\psi_\epsilon}_s|^2\lVert\sigma_{\epsilon}(\lambda(\epsilon)M^{\epsilon,\psi_\epsilon,\alpha}_s+X^0_s,\mathcal{L}_{\bar{X}^\epsilon_s})\rVert^2\mathrm{d}s\right)^\frac{1}{2}  \nonumber\\
				\leq&\,\, \frac{1}{4}\mathbb{E}\left(\sup_{s\in[0,T]}|Q^{\epsilon,\psi_\epsilon}_{s\wedge\tau_\epsilon^a}|^2\right)
				+\frac{C\epsilon}{\lambda^2(\epsilon)}\mathbb{E}\int_{0}^{T\wedge\tau_\epsilon^a}\lVert\sigma_{\epsilon}(\lambda(\epsilon)M^{\epsilon,\psi_\epsilon,\alpha}_s+X^0_s,\mathcal{L}_{\bar{X}^\epsilon_s})\rVert^2\mathrm{d}s \nonumber\\
				\leq&\,\, \frac{1}{4}\mathbb{E}\left(\sup_{s\in[0,T]}|Q^{\epsilon,\psi_\epsilon}_{s\wedge\tau_\epsilon^a}|^2\right)+\frac{C\epsilon\rho_{\sigma,\epsilon}^2}{\lambda^2(\epsilon)}+\frac{C\epsilon}{\lambda^2(\epsilon)}\mathbb{E}\int_{0}^{T\wedge\tau_\epsilon^a}|M^{\epsilon,\psi_\epsilon,\alpha}_s|^2\mathrm{d}s  \nonumber\\
				&+\,\, \frac{C\epsilon}{\lambda^2(\epsilon)}\mathbb{E}\int_{0}^{T\wedge\tau_\epsilon^a}\mathbb{E}(|\bar{X}^\epsilon_s-X^0_s|^2)\mathrm{d}s+\frac{C\epsilon}{\lambda^2(\epsilon)}\int_{0}^{T}\lVert\sigma(X^0_s,\mathcal{L}_{X^0_s})\rVert^2\mathrm{d}s  \nonumber\\
				\leq&\,\, \frac{1}{4}\mathbb{E}\left(\sup_{s\in[0,T]}|Q^{\epsilon,\psi_\epsilon}_{s\wedge\tau_\epsilon^a}|^2\right)+\frac{C_a\epsilon}{\lambda^2(\epsilon)}(\rho_{\sigma,\epsilon}^2+\epsilon+\epsilon\rho_{\sigma,\epsilon}^2+\rho_{b,\epsilon}^2+T).
			\end{align}
			
			For $\hat{I}_{4}(t)$, by \hyperref[b3]{\textbf{(B3)}}, (\ref{gamma}), Lemma \ref{ep2} and $\psi_\epsilon\in\mathcal{D}_m$, and the similar arguments in the proof of (\ref{mi4}), we have for any $\epsilon\in(0,\epsilon_{3}]$,
			\begin{align}\label{hati4}
				\mathbb{E}\left(\sup_{t\in[0,T]}|\hat{I}_{4}(t)|\right) \leq&\, C\rho_{\sigma,\epsilon}\mathbb{E}\int_{0}^{T\wedge\tau_\epsilon^a}|Q^{\epsilon,\psi_\epsilon}_{s}||\psi_\epsilon(s)|\mathrm{d}s  \nonumber\\
				&+\,C\mathbb{E}\int_{0}^{T\wedge\tau_\epsilon^a}\left(\lambda(\epsilon)|M^{\epsilon,\psi_\epsilon,\alpha}_{s}|+\mathbb{W}_2(\mathcal{L}_{\bar{X}^\epsilon_s},\mathcal{L}_{X^0_s})+\lVert\sigma(X^0_s,\mathcal{L}_{X^0_s})\rVert\right)|\psi_\epsilon(s)||Q^{\epsilon,\psi_\epsilon}_s|\mathrm{d}s  \nonumber\\
				\leq&\,\, CT(a+\gamma)^2\left(\rho_{\sigma,\epsilon}+\lambda(\epsilon)+(\epsilon+\rho_{b,\epsilon}^2+\epsilon\rho_{\sigma,\epsilon}^2)^{\frac{1}{2}}\right)\mathbb{E}\left(\int_{0}^{T}|\psi_\epsilon|^2\mathrm{d}s\right)^{\frac{1}{2}}   \nonumber\\
				\leq&\,\, C_a\left(\rho_{\sigma,\epsilon}+\lambda(\epsilon)+(\epsilon+\rho_{b,\epsilon}^2+\epsilon\rho_{\sigma,\epsilon}^2)^{\frac{1}{2}}\right).
			\end{align}
			
			Inserting the above inequalities (\ref{hati23}) and (\ref{hati4}) into (\ref{hatgron}), we deduce that for any $\epsilon\in(0,\epsilon_{3}]$,
			\begin{align}
				\frac{3}{4}\mathbb{E}\left(\sup_{t\in[0,T]}|Q^{\epsilon,\psi_\epsilon}_{t\wedge\tau_\epsilon^a}|^2\right)\leq C_a\left(\rho_{\sigma,\epsilon}+\lambda(\epsilon)+\frac{\rho_{b,\epsilon}}{\lambda(\epsilon)}+\frac{\epsilon}{\lambda^2(\epsilon)}+\frac{1+\lambda(\epsilon)}{\lambda(\epsilon)}(\epsilon+\rho_{b,\epsilon}^2+\epsilon\rho_{\sigma,\epsilon}^2)^{\frac{1}{2}}\right).
			\end{align}
			
			Due to (\hyperref[b2]{\textbf{B2}}), we  obtain
			\begin{align}
				\lim\limits_{\epsilon\to 0}\mathbb{E}\left(\sup_{t\in[0,T]}|M^{\epsilon,\psi_\epsilon,\alpha}_{t\wedge\tau_\epsilon^a}-\nu^{\psi_\epsilon,\alpha}_{t\wedge\tau_\epsilon^a}|^2\right)=\lim\limits_{\epsilon\to 0}\mathbb{E}\left(\sup_{t\in[0,T]}|Q^{\epsilon,\psi_\epsilon}_{t\wedge\tau_\epsilon^a}|^2\right)=0.
			\end{align}
			
			Thus for any $\delta>0$, $\epsilon\in(0,\epsilon_{3}]$, $a\in\mathbb{N}$, we have
			\begin{align}
				&P\left(\sup_{t\in[0,T]}|M^{\epsilon,\psi_\epsilon,\alpha}_t-\nu^{\psi_\epsilon,\alpha}_t|\geq\frac{\delta}{3}\right) \nonumber\\
				\leq&\, P\left(\bigg(\sup_{t\in[0,T]}|M^{\epsilon,\psi_\epsilon,\alpha}_{t\wedge\tau_\epsilon^a}-\nu^{\psi_\epsilon,\alpha}_{t\wedge\tau_\epsilon^a}|\geq\frac{\delta}{3}\bigg)\cap(\tau_\epsilon^a\geq T)\right)+P(\tau_\epsilon^a< T) \nonumber\\
				\leq&\,\, \frac{9}{\delta^2}\mathbb{E}\left(\sup_{t\in[0,T]}|M^{\epsilon,\psi_\epsilon,\alpha}_{t\wedge\tau_\epsilon^a}-\nu^{\psi_\epsilon,\alpha}_{t\wedge\tau_\epsilon^a}|^2\right)+\frac{C}{a^2}.
			\end{align}
			
			For any $\alpha$ fixed, we can choose $\epsilon$ and $a\to\infty$ such that for any $\varpi>0$
			\begin{align}\label{443}
				P\left(\sup_{t\in[0,T]}|M^{\epsilon,\psi_\epsilon,\alpha}_t-\nu^{\psi_\epsilon,\alpha}_t|\geq\frac{\delta}{3}\right)\leq \frac{\varpi}{3}.
			\end{align}
			
			Then combining (\ref{441}), (\ref{442}) and (\ref{443}) we get the desired result
			\begin{align*}
				\lim\limits_{\epsilon\to 0}P\left(\sup_{t\in[0,T]}|M^{\epsilon,\psi_\epsilon}_t-\nu^{\psi_\epsilon}_t|\geq\delta\right)=0.
			\end{align*}
		\end{proof}
		
	\end{proposition}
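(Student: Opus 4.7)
The plan is to prove convergence in probability via a three-term Yosida decomposition. By the triangle inequality,
\[
\sup_{t\in[0,T]}|M^{\epsilon,\psi_\epsilon}_t-\nu^{\psi_\epsilon}_t|
\le \sup_{t\in[0,T]}|M^{\epsilon,\psi_\epsilon}_t-M^{\epsilon,\psi_\epsilon,\alpha}_t|
+\sup_{t\in[0,T]}|M^{\epsilon,\psi_\epsilon,\alpha}_t-\nu^{\psi_\epsilon,\alpha}_t|
+\sup_{t\in[0,T]}|\nu^{\psi_\epsilon,\alpha}_t-\nu^{\psi_\epsilon}_t|.
\]
Using Lemmas \ref{mfin} and \ref{nfin}, I can first pick $\alpha>0$ small (uniformly in $\epsilon$) so that the outer two terms are each bounded by $\delta/3$ with probability at least $1-\varpi/3$ (Markov on the $L^2$ estimates). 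The remaining task is to show that, for this fixed $\alpha$, $P(\sup_t|M^{\epsilon,\psi_\epsilon,\alpha}_t-\nu^{\psi_\epsilon,\alpha}_t|\ge\delta/3)\to 0$ as $\epsilon\to 0$.

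Since $\psi_\epsilon$ is only a.s.\ in $\mathcal{S}_m$ and $M^{\epsilon,\psi_\epsilon}$ need not be uniformly bounded, I would localize by the stopping time
\[
\tau_\epsilon^a := \inf\{t\ge 0 : |M^{\epsilon,\psi_\epsilon}_t|\ge a\}\wedge T,
\]
using Lemma \ref{mdp2lemma} and Markov's inequality to get $P(\tau_\epsilon^a<T)\le C/a^2$ uniformly for small $\epsilon$. On $[0,\tau_\epsilon^a]$ set $Q^\epsilon_t:=M^{\epsilon,\psi_\epsilon,\alpha}_t-\nu^{\psi_\epsilon,\alpha}_t$, and apply It\^o's formula to $|Q^\epsilon_{t\wedge\tau^a_\epsilon}|^2$. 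The key algebraic step is to rewrite the drift difference
\[
\tfrac{1}{\lambda(\epsilon)}\bigl[b_\epsilon(\lambda(\epsilon)M^{\epsilon,\psi_\epsilon,\alpha}_s+X^0_s,\mathcal{L}_{\bar X^\epsilon_s})-b(X^0_s,\mathcal{L}_{X^0_s})\bigr]-\nabla b(X^0_s,\mathcal{L}_{X^0_s})\nu^{\psi_\epsilon,\alpha}_s
\]
as (i) a $b_\epsilon-b$ discrepancy, controlled by $\rho_{b,\epsilon}/\lambda(\epsilon)\to 0$ via (B2); (ii) a measure-shift $b(\cdot,\mathcal{L}_{\bar X^\epsilon_s})-b(\cdot,\mathcal{L}_{X^0_s})$, handled by Lemma \ref{ep2} which gives $\mathbb{W}_2(\mathcal{L}_{\bar X^\epsilon_s},\mathcal{L}_{X^0_s})/\lambda(\epsilon)=O\bigl(\sqrt{\epsilon+\rho_{b,\epsilon}^2+\epsilon\rho_{\sigma,\epsilon}^2}/\lambda(\epsilon)\bigr)$; and (iii) a Taylor-type remainder $[b(\lambda(\epsilon)M^{\epsilon,\psi_\epsilon,\alpha}_s+X^0_s,\mathcal{L}_{X^0_s})-b(X^0_s,\mathcal{L}_{X^0_s})]/\lambda(\epsilon)-\nabla b(X^0_s,\mathcal{L}_{X^0_s})M^{\epsilon,\psi_\epsilon,\alpha}_s$ plus $\nabla b(X^0_s,\mathcal{L}_{X^0_s})Q^\epsilon_s$. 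Using (B0) the remainder contributes $C_a\lambda(\epsilon)$ on $\{t\le\tau^a_\epsilon\}$, while the $\nabla b\cdot Q^\epsilon$ term feeds Gronwall with exponent $2L_{\nabla b}$ finite by (B1). Monotonicity of $A^\alpha$ gives $\langle Q^\epsilon_s,A^\alpha(\nu^{\psi_\epsilon,\alpha}_s)-A^\alpha(M^{\epsilon,\psi_\epsilon,\alpha}_s)\rangle\le 0$, and the $A^\alpha-A^\alpha_\epsilon$ residual vanishes on the compact set $\{|M^{\epsilon,\psi_\epsilon,\alpha}|\le a\}$ by (H4) (same truncation trick as in \eqref{Psii}).

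For the diffusion and control terms I would estimate the stochastic integral by BDG and Young to absorb $\tfrac14\mathbb{E}\sup|Q^\epsilon_{t\wedge\tau^a_\epsilon}|^2$, and bound the $\sigma_\epsilon\psi_\epsilon$ term using $\psi_\epsilon\in\mathcal{D}_m$ together with the split $\sigma_\epsilon(\lambda(\epsilon)M^{\epsilon,\psi_\epsilon,\alpha}_s+X^0_s,\mathcal{L}_{\bar X^\epsilon_s})-\sigma(X^0_s,\mathcal{L}_{X^0_s})$ giving factors $\rho_{\sigma,\epsilon}+\lambda(\epsilon)+\mathbb{W}_2(\mathcal{L}_{\bar X^\epsilon_s},\mathcal{L}_{X^0_s})$, each vanishing. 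Gronwall then yields
\[
\mathbb{E}\Bigl(\sup_{t\in[0,T]}|Q^\epsilon_{t\wedge\tau^a_\epsilon}|^2\Bigr)\le C_a\left(\rho_{\sigma,\epsilon}+\lambda(\epsilon)+\tfrac{\rho_{b,\epsilon}}{\lambda(\epsilon)}+\tfrac{\epsilon}{\lambda^2(\epsilon)}+\tfrac{1+\lambda(\epsilon)}{\lambda(\epsilon)}\sqrt{\epsilon+\rho_{b,\epsilon}^2+\epsilon\rho_{\sigma,\epsilon}^2}\right)\xrightarrow{\epsilon\to 0}0,
\]
and Markov together with $P(\tau_\epsilon^a<T)\le C/a^2$ gives the middle-term bound once we let $\epsilon\to 0$ then $a\to\infty$.

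The main obstacle will be the bookkeeping of small parameters in step (ii): the factor $\sqrt{\epsilon+\rho_{b,\epsilon}^2+\epsilon\rho_{\sigma,\epsilon}^2}/\lambda(\epsilon)$ must be shown to vanish, which is not obvious from (\ref{lame}) alone — it requires (B2) and the strengthened form of (H1)/(H2) embedded in (B0), (B3). The Taylor remainder in (iii) is another delicate point: controlling it uniformly over the random path $M^{\epsilon,\psi_\epsilon,\alpha}$ is precisely why the localization via $\tau_\epsilon^a$ is needed, since the estimate $|\nabla b(y,\mu)-\nabla b(x,\mu)|\le L'(1+|x|^{q'}+|y|^{q'})|x-y|$ would otherwise blow up.
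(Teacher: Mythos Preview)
Your proposal is correct and follows essentially the same route as the paper's proof: the three-term Yosida decomposition, Markov on Lemmas~\ref{mfin} and~\ref{nfin} for the outer terms, localization via $\tau_\epsilon^a$, It\^o on $|Q^\epsilon_{t\wedge\tau_\epsilon^a}|^2$, the same three-way split of the drift into $b_\epsilon-b$, measure-shift and Taylor remainder, monotonicity of $A^\alpha$ plus the $(A^\alpha-A^\alpha_\epsilon)$ truncation argument, BDG/Young on the stochastic integral, Gronwall, and the same final rate bound and $\epsilon\to 0$ then $a\to\infty$ conclusion. The obstacles you flag (vanishing of $\sqrt{\epsilon+\rho_{b,\epsilon}^2+\epsilon\rho_{\sigma,\epsilon}^2}/\lambda(\epsilon)$ via (B2) and \eqref{lame}, and the need for localization to control the $\nabla b$-remainder under (B0)) are exactly the ones the paper handles in the same way.
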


	\section{Functional Iterated Logarithm Law}
	
	
	Let $ W $ be a k-dimensional Brownian motion and $T$ be fixed. Let $H$ be an open set of $\mathbb{R}^d$ and denote the set of continuous path $\hbar:[0,T]\to H$ by $C([0,T];H)$, endowed with the norm
	$\lVert \hbar\rVert=\sup_{t\in[0,T]}|\hbar(t)|.$
	For $\hbar_1,\,\hbar_2\in C([0,T];H)$ and $B\in \mathcal{B}(C([0,T];H))$, we denote
	\begin{align*}
		d(\hbar_1,\hbar_2)=\lVert \hbar_1-\hbar_2\rVert,\quad d(\hbar_1,B)=\inf_{\hbar_2\in B}d(\hbar_1,\hbar_2).
	\end{align*}
	
	Let
	\begin{align*}
		\mathcal{H}:=\left\{\hbar_.=\int_{0}^{.}h(s)\mathrm{d}s\in C([0,T];\mathbb{R}^d);\hbar(0)=0,\int_{0}^{T}|h(s)|^2\mathrm{d}s<\infty\right\},
	\end{align*}
	
	then $\mathcal{H}$ is a Hilbert space endowed with the scalar product $\langle \hbar_1, \hbar_2\rangle:=\int_{0}^{T}\langle h_1(s),h_2(s)\rangle\mathrm{d}s $.

	\subsection{The Large Time Functional Iterated Logarithm Law}

	For $u>e$, define $$\psi(u):=\sqrt{u\log\log u}\,.$$
	
	Suppose that $\tilde{A}$ is a multivalued maximal monotone operator with $0\in Int(D(\tilde{A}))$,  $\tilde{b}:\overline{D(A)}\times\mathcal{P}_2\to\mathbb{R}^d$ and $\tilde{\sigma}:\overline{D(A)}\times\mathcal{P}_2\to \mathbb{R}^d\otimes\mathbb{R}^{d}$ are progressively measurable functions such that there exists a unique solution to
	\begin{align*}\label{IILS}
		\begin{cases}
			\mathrm{d}Y_t\in\tilde{b}(Y_t,\mathcal{L}_{Y_t})\mathrm{d}t+\tilde{\sigma}(Y_t,\mathcal{L}_{Y_t})\mathrm{d}W_t-\tilde{A}(Y_t)\mathrm{d}t, \\
			Y_0=x\in\overline{D(A)}.
		\end{cases}
	\end{align*}
	
	To show our main result, we first give the following definition introduced by Baldi in \cite{Ba1986}.
	
	\begin{definition}\label{gadef}
		Let $U$ be an open set of $\mathbb{R}^d$. For any $a\in\mathbb{R}^{+}$, let $\Gamma_a:U\to U$ be a $C^2$ bijective transformation having continuous derivatives up to order 2. The family $\Gamma_a=\left\{\Gamma_a\right\}_{a>0}$ is said to be a system of contractions centered at x if
		\begin{enumerate}[(a)]
			\item $\Gamma_a(x)=x$ for every $a\in\mathbb{R}^{+},$
			\item if $ a\geq\theta $ then $ |\Gamma_a(y)-\Gamma_a(z)|\leq |\Gamma_\theta(y)-\Gamma_\theta(z)| $ for every $ y,z\in U,$
			\item $ \Gamma_1 $ is identity and $ \Gamma^{-1}_a=\Gamma_{a^{-1}} $,
			\item for every compact set $\mathcal{K}$ of $U$, $f\in\mathcal{K}$ and $\epsilon>0$, there exists $\delta>0$, such that if $|a\theta-1|<\delta$ then
			$$\lvert\Gamma_a\circ\Gamma_\theta(f)-f\rvert<\epsilon,\quad \forall a,\theta\in\mathbb{R}^{+}.$$
		\end{enumerate}
	\end{definition}
	We assume $\overline{D(A)}\in U$ in the following.\\
	
	\textbf{Notation}. Let $u>e$, we set $\tilde{b}_u:\mathbb{R}^d\times \mathcal{P}_2\to\mathbb{R}^d$ and $\tilde{\sigma}_u:\mathbb{R}^d\times\mathcal{P}_2\to\mathbb{R}^d\otimes\mathbb{R}^d$ such that
	\begin{align*}
		&\tilde{b}_u(y,\mu):=u\tilde{L}(y,\mu)[\Gamma_{\psi(u)}](\Gamma_{\psi(u)}^{-1}(y)), \\
		&\tilde{\sigma}_u(y,\mu):=\psi(u)\nabla[\Gamma_{\psi(u)}](\Gamma_{\psi(u)}^{-1}(y))^{T}\tilde{\sigma}(\Gamma_{\psi(u)}^{-1}(y),\mu\circ\Gamma_{\psi(u)}), \\
		&\tilde{A}_u(y):=u\nabla[\Gamma_{\psi(u)}](\Gamma_{\psi(u)}^{-1}(y))^T\tilde{A}(\Gamma_{\psi(u)}^{-1}(y)),
	\end{align*}
	where for $\tilde{a}=\tilde{\sigma}^T\tilde{\sigma}$, the operator $\tilde{L}$ is given as
	\begin{align*}
		\tilde{L}(y,\mu)[f](q)|_{q=\Gamma_{\psi(u)}^{-1}(y)}=&\sum_{i=1}^{d}\frac{\partial f}{\partial y_i}(\Gamma_{\psi(u)}^{-1}(y))\tilde{b}_i(\Gamma_{\psi(u)}^{-1}(y),\mu\circ\Gamma_{\psi(u)}) \\
		&+\frac{1}{2}\sum_{i,j=1}^{d}\tilde{a}_{i,j}(\Gamma_{\psi(u)}^{-1}(y),\mu\circ\Gamma_{\psi(u)})\frac{\partial^2f}{\partial y_i\partial y_j}(\Gamma_{\psi(u)}^{-1}(y)).
	\end{align*}

	\begin{assumption}\label{Cn}
		For $\forall y\in\mathbb{R}^d$ and $\forall \mu\in\mathcal{P}_2$, we make the following assumptions.
		\begin{enumerate}
			\item[\textbf{(C0)}] There exist $\hat{b}:\mathbb{R}^d\times\mathcal{P}_2\to\mathbb{R}^d$ and $\hat{\sigma}:\mathbb{R}^d\times\mathcal{P}_2\to\mathbb{R}^d\otimes\mathbb{R}^d$ such that
			\begin{align*}
				\lim\limits_{u\to\infty}\tilde{b}_u(y,\mu)=\hat{b}(y,\mu), \quad \lim\limits_{u\to\infty}\tilde{\sigma}_u(y,\mu)=\hat{\sigma}(y,\mu),
			\end{align*}
			where $\hat{b}$ and $\hat{\sigma}$ satisfy \hyperref[h1]{\textbf{(H1)}} and \hyperref[h2]{\textbf{(H2)}} respectively. \\
			\item[\textbf{(C1)}] There exists a multivalued maximal monotone operator $\hat{A}:\mathbb{R}^d\to 2^{\mathbb{R}^d}$ such that for every $\epsilon>0$
			\begin{align*}
				\lim\limits_{u\to\infty}(1+\epsilon\tilde{A}_u)^{-1}(y)=(1+\epsilon A)^{-1}(y).
			\end{align*}
			\item[\textbf{(C2)}] $\tilde{A}_u$, $\tilde{A}$ and $\hat{A}$ are maximal monotone operators with $\overline{D(\tilde{A}_u)}=\overline{D(A)}$ and nonempty interior. Moreover, for $a\in Int(D(\tilde{A}_u))$, $\tilde{A}_u$ is locally bounded at $a$ uniformly in $u$. \\
			\item[\textbf{(C3)}] If $b_\epsilon=\tilde{b}_{1/\epsilon}$, $\sigma_\epsilon=\tilde{\sigma}_{1/\epsilon}$ and $A_\epsilon=\tilde{A}_{1/\epsilon}$, the system of small random perturbation $(b_\epsilon,\sigma_\epsilon,A_\epsilon)$ satisfies \hyperref[h1]{\textbf{(H1)}}, \hyperref[h2]{\textbf{(H2)}} and \hyperref[h4]{\textbf{(H4)}}.
		\end{enumerate}
		
	\end{assumption}

	For $u>e$ and $t\in\mathbb{R}^+$, define
	\begin{align*}
		Q_u(t):=\Gamma_{\psi(u)}(Y_{ut}),
	\end{align*}
	recall that $Y_0=x$ and $\Gamma_{u}(x)=x$, by definition we have $\Gamma_{u}(0)=x$. Since $\Gamma_{\psi(u)}(\cdot)$ is twice differentiable, by It$\hat{\text{o}}$'s formula we have
	\begin{align}\label{qut}
		\mathrm{d}Q_u(t)&=\mathrm{d}\Gamma_{\psi(u)}(Y_{ut}) \nonumber\\
		&=\nabla[\Gamma_{\psi(u)}](Y_{ut})^T\mathrm{d}Y_{ut}+\frac{(\mathrm{d}Y_{ut})^T}{2}Hess[\Gamma_{\psi(u)}](Y_{ut})\mathrm{d}Y_{ut}.
	\end{align}
	Rewriting $Y_{ut}=\Gamma_{\psi(u)}^{-1}(Q_u(t))$ and substituting it in (\ref{qut}), we get
	\begin{align*}
		\mathrm{d}Q_u(t)=&u\sum_{i=1}^{d}\frac{\partial\Gamma_{\psi(u)}}{\partial y_i}(\Gamma_{\psi(u)}^{-1}(Q_u(t)))\tilde{b}_i(\Gamma_{\psi(u)}^{-1}(Q_u(t)),\mathcal{L}_{Q_u(t)}\circ\Gamma_{\psi(u)})\mathrm{d}t \\
		&+\frac{u}{2}\sum_{i,j=1}^{d}\frac{\partial^2\Gamma_{\psi(u)}}{\partial y_i\partial y_j}(\Gamma_{\psi(u)}^{-1}(Q_u(t)))\sum_{k=1}^{d}\tilde{\sigma}_{k,i}\tilde{\sigma}_{j,k}(\Gamma_{\psi(u)}^{-1}(Q_u(t)),\mathcal{L}_{Q_u(t)}\circ\Gamma_{\psi(u)})\mathrm{d}t \\
		&+\sum_{i=1}^{d}\frac{\partial\Gamma_{\psi(u)}}{\partial y_i}(\Gamma_{\psi(u)}^{-1}(Q_u(t)))\sum_{k=1}^{d}\tilde{\sigma}_{i,k}(\Gamma_{\psi(u)}^{-1}(Q_u(t)),\mathcal{L}_{Q_u(t)}\circ \Gamma_{\psi(u)})\mathrm{d}W^k_{ut} \\
		&-u\sum_{i=1}^{d}\frac{\partial\Gamma_{\psi(u)}}{\partial y_i}(\Gamma_{\psi(u)}^{-1}(Q_u(t)))\tilde{A}(\Gamma_{\psi(u)}^{-1}(Q_u(t)))\mathrm{d}t.
	\end{align*}
	By time change, $\mathcal{W}^{(u)}_t=\frac{W_{ut}}{\sqrt{u}}$ is a Brownian motion. With initial condition $Q_u(0)=x$, $Q_u(t)$ is the solution to the following multivalued McKean-Vlasov SDE:
	\begin{align*}
		\mathrm{d}Q_u(t)\in \tilde{b}_u(Q_u(t),\mathcal{L}_{Q_u(t)})\mathrm{d}t+\frac{1}{\sqrt{\log\log u}}\tilde{\sigma}_u(Q_u(t),\mathcal{L}_{Q_u(t)})\mathrm{d}\mathcal{W}^{(u)}_t-\tilde{A}_u(Q_u(t))\mathrm{d}t.
	\end{align*}
	Under Assumption \ref{Cn} and by Theorem \ref{thmldp}, we have for every closed subset $C$ of $\mathcal{B}(C([0,T],\mathbb{R}^d))$,
	\begin{align}\label{ldpc}
		\limsup\limits_{u\to\infty}\frac{1}{\log\log u}\log P(Q_u\in C)\leq -\inf_{q\in C} I(q),
	\end{align}
	and for every open subset $O$ of $\mathcal{B}(C([0,T],\mathbb{R}^d))$,
	\begin{align*}\label{ldpo}
		\liminf\limits_{u\to\infty}\frac{1}{\log\log u}\log P(Q_u\in O)\geq -\inf_{q\in O} I(q).
	\end{align*}
	
	Then we present the main theorem of this section.
	\begin{theorem}\label{ILL}
		Under Assumption \ref{Cn}, the family $\left\{Q_u\right\}_{u>e}$ is relatively compact and the set $\Lambda:=\left\{g;I(g)\leq 1\right\}$ is the limit set of $\left\{Q_u\right\}_{u>e}$ when $u\to\infty$ $a.s.$, where
		\begin{align*}
			I(g)=\frac{1}{2}\inf_{\hbar\in\mathcal{H};g=Y^h}\int_{0}^{T}|h(s)|^2\mathrm{d}s,
		\end{align*}
		and $Y^h$ solve the following multivalued McKean-Vlasov SDE:
		\begin{align*}
			\begin{cases}
				\mathrm{d}Y^h_t\in b(Y^h_t,\mathcal{L}_{Y^h_t})\mathrm{d}t+\sigma(Y^h_t,\mathcal{L}_{Y^h_t})h(t)\mathrm{d}t-A(Y^h_t)\mathrm{d}t, \\
				Y^h_0=x.
			\end{cases}
		\end{align*}
		
		\begin{proof}
			The proof of relatively compactness and the limit set will be given in Proposition \ref{ILLP1} and Proposition \ref{ILLP2} respectively in section \ref{ILLMP}.
		\end{proof}
		
	\end{theorem}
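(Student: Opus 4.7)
The plan is to combine the large deviation bound in (\ref{ldpc}) (with speed $1/\log\log u$) for $\{Q_u\}_{u>e}$ with a classical Strassen-type Borel--Cantelli argument along a geometric subsequence, in the spirit of \cite{rwz} but adapted to the McKean--Vlasov setting via Proposition \ref{multi} and Assumption \ref{Cn}. The two halves of the theorem correspond to the upper and lower LDP bounds respectively.

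For relative compactness (the content of the forthcoming Proposition \ref{ILLP1}), I would fix $\delta>0$, set $u_n:=\theta^n$ for some $\theta>1$, and consider the closed set $F_\delta:=\{g;\, d(g,\Lambda)\geq\delta\}$. Since $I$ is a good rate function with sublevel set $\Lambda$, one has $\alpha_\delta:=\inf_{F_\delta}I>1$. The LDP upper bound (\ref{ldpc}) then gives
\[
P(Q_{u_n}\in F_\delta)\leq \theta^{-n\alpha'} \quad \text{for some } \alpha'\in(1,\alpha_\delta) \text{ and all large } n,
\]
which is summable, so the first Borel--Cantelli lemma yields $d(Q_{u_n},\Lambda)\to 0$ a.s. To pass from $\{u_n\}$ to all $u>e$ I would establish an oscillation estimate
\[
\limsup_{n\to\infty}\sup_{u\in[u_n,u_{n+1}]}\lVert Q_u-Q_{u_n}\rVert \leq \eta(\theta) \xrightarrow[\theta\downarrow 1]{} 0, \quad \text{a.s.},
\]
using Proposition \ref{multi} to bound the total variation of the finite-variation part of $Q_u$, BDG for the rescaled Brownian integral, and Assumption \ref{Cn} (\textbf{C2}) for uniform-in-$u$ local boundedness of $\tilde{A}_u$. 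Letting $\theta\downarrow 1$ concludes this half, and relative compactness follows from compactness of $\Lambda$.

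For the cluster-set identification (Proposition \ref{ILLP2}), I would show each $g\in\Lambda$ is almost surely a limit point of $\{Q_u\}$. For $g$ with $I(g)<1$ and $\varepsilon>0$, the LDP lower bound gives $P(Q_{u_n}\in O_\varepsilon(g))\geq u_n^{-I(g)-\varepsilon/2}$ for large $n$, where $O_\varepsilon(g)$ is the open $\varepsilon$-ball around $g$. Choosing a sparse subsequence such as $u_n=e^{n^2}$, these probabilities fail to sum, and a conditional second Borel--Cantelli applied to the events $\{Q_{u_n}\in O_\varepsilon(g)\}$ produces $Q_{u_n}\in O_\varepsilon(g)$ infinitely often a.s. The required asymptotic independence is obtained by exploiting that the future segment $(Y_{u_{n-1}T+s})_{s\geq 0}$ depends on the past only through $Y_{u_{n-1}T}$, while the McKean--Vlasov feedback through $\mathcal{L}_{Y_t}$ is purely deterministic. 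Points $g\in\Lambda$ with $I(g)=1$ are obtained by density, approximating $g$ by $g^{(k)}$ with $I(g^{(k)})<1$ via a rescaling of the optimal control $h$ realizing $I(g)$. A countable cover of the compact set $\Lambda$ by balls $O_\varepsilon(g_k)$ (using separability of $C([0,T];\overline{D(A)})$) then intersected with the first-half full-probability event yields the theorem.

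The main obstacle will be the second Borel--Cantelli step: the conditional distribution of $Q_{u_n}$ given $\mathcal{F}_{u_{n-1}T}$ depends on the random initial value $Y_{u_{n-1}T}$, so to secure enough independence one needs a continuous dependence estimate on the initial condition uniformly across $n$, together with an a priori moment bound on $\mathbb{E}\sup_{t\leq u_n T}|Y_t|^2$ derived from the monotonicity identity (\ref{mono}) and Proposition \ref{multi}. Fortunately the McKean--Vlasov coefficients cause no additional obstruction here, since $\mathcal{L}_{Y_t}$ is deterministic and therefore does not interact with the conditioning; the remaining work is a path-by-path affair, and once the oscillation and initial-value-dependence estimates from the first half are in hand, the second half follows by the standard Strassen scheme.
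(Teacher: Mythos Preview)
Your outline for the first half (relative compactness along $u_n=\theta^n$ via the LDP upper bound and first Borel--Cantelli) matches the paper's Lemma \ref{item1}. However, the oscillation control you propose---direct SDE estimates via Proposition \ref{multi}, BDG, and (\textbf{C2})---is not what the paper does. The paper never estimates the SDE for $Q_u$ directly on $[u_n,u_{n+1}]$. Instead it writes, for $c^{j-1}\le u\le c^j$,
\[
d(Q_u,Q_{c^j})\le \big\|\Gamma_{\psi(u)}\circ\Gamma_{\psi(c^j)}^{-1}(Q_{c^j})-Q_{c^j}\big\|
+\sup_{\substack{0\le s\le T\\ s/c\le t\le s}}|Q_{c^j}(t)-Q_{c^j}(s)|+\text{(similar terms)},
\]
controls the first piece by property (d) of the contraction system (Definition \ref{gadef}) since $\psi(u)/\psi(c^j)\to 1$, and controls the modulus-of-continuity piece by the \emph{already established} fact that $Q_{c^j}$ is within $\epsilon$ of the compact set $\Lambda$, together with Ascoli--Arzel\`a equicontinuity of $\Lambda$. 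No BDG, no Proposition \ref{multi}. Your direct route may be workable but is harder and does not exploit the contraction structure that the whole setup is built around.

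The more serious divergence is in the second half. The paper does \emph{not} use the LDP lower bound plus a conditional second Borel--Cantelli. Instead it proves a transfer estimate (Lemma \ref{ILL2}): for $g=Y^h$ with $I(g)\le a$,
\[
P\big(d(\sqrt{\epsilon}\,W,\hbar)<\eta,\ d(X^\epsilon,g)>\beta\big)\le e^{-R/\epsilon},
\]
obtained by applying the LDP to the \emph{joint} diffusion $Z^\epsilon=(\sqrt{\epsilon}\,W,X^\epsilon)$ together with the continuous-dependence Lemma \ref{hcon}. With $E_j=\{d(\mathcal W^{(c^j)}/\sqrt{\log\log c^j},\hbar)<\eta\}$ and $F_j=\{d(Q_{c^j},g)<\epsilon\}$ this gives $\sum_j P(E_j\setminus F_j)<\infty$, and since Strassen's law for Brownian motion already yields $P(\limsup_j E_j)=1$, one concludes $P(\limsup_j F_j)=1$ by elementary set inclusion---no independence of the $F_j$ is ever needed. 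Your plan, by contrast, requires a lower bound on $P(F_n\mid\mathcal F_{u_{n-1}T})$ uniformly in the random initial value $Y_{u_{n-1}T}$, i.e.\ an LDP uniform over initial conditions, which is not available from the paper's Theorem \ref{thmldp}; moreover the path $(Y_{u_nt})_{t\in[0,T]}$ starts at $t=0$, so it is not measurable with respect to the post-$u_{n-1}T$ filtration even for sparse $u_n$. The obstacle you flag in your last paragraph is therefore genuine and is precisely what the paper's Strassen-plus-transfer argument is designed to bypass.
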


	\subsection{The Small Time Iterated Logarithm Law}
	
	For $0<u<e^{-1}$, define $$\varphi(u):=\sqrt{u\log\log u^{-1}}\,.$$
	
	Let $\tilde{b}:\overline{D(A)}\times\mathcal{P}_2\to\mathbb{R}^d$, $\tilde{\sigma}:\overline{D(A)}\times\mathcal{P}_2\to \mathbb{R}^d\otimes\mathbb{R}^{d}$ and $\tilde{A}$ be a multivalued maximal monotone operator with $0\in Int(D(\tilde{A}))$.
	
	\textbf{Notation}. Let $0<u<e^{-1}$, we also set $\tilde{b}_u:\mathbb{R}^d\times \mathcal{P}_2\to\mathbb{R}^d$ and $\tilde{\sigma}_u:\mathbb{R}^d\times\mathcal{P}_2\to\mathbb{R}^d\otimes\mathbb{R}^d$ such that
	\begin{align*}
		&\tilde{b}_u(y,\mu):=u\tilde{L}(y,\mu)[\Gamma_{\varphi(u)}](\Gamma_{\varphi(u)}^{-1}(y)), \\
		&\tilde{\sigma}_u(y,\mu):=\psi(u)\nabla[\Gamma_{\varphi(u)}](\Gamma_{\varphi(u)}^{-1}(y))^{T}\tilde{\sigma}(\Gamma_{\varphi(u)}^{-1}(y),\mu\circ\Gamma_{\varphi(u)}), \\
		&\tilde{A}_u(y):=u\nabla[\Gamma_{\varphi(u)}](\Gamma_{\varphi(u)}^{-1}(y))^T\tilde{A}(\Gamma_{\varphi(u)}^{-1}(y)),
	\end{align*}
	where for $\tilde{a}=\tilde{\sigma}^T\tilde{\sigma}$, the operator $\tilde{L}$ is given as
	\begin{align*}
		\tilde{L}(y,\mu)[f](q)|_{q=\Gamma_{\varphi(u)}^{-1}(y)}=&\sum_{i=1}^{d}\frac{\partial f}{\partial y_i}(\Gamma_{\varphi(u)}^{-1}(y))\tilde{b}_i(\Gamma_{\varphi(u)}^{-1}(y),\mu\circ\Gamma_{\varphi(u)}) \\
		&+\frac{1}{2}\sum_{i,j=1}^{d}\tilde{a}_{i,j}(\Gamma_{\varphi(u)}^{-1}(y),\mu\circ\Gamma_{\varphi(u)})\frac{\partial^2f}{\partial y_i\partial y_j}(\Gamma_{\varphi(u)}^{-1}(y)).
	\end{align*}
	
	
	\begin{assumption}\label{Cn1}
		For $\forall y\in\mathbb{R}^d$ and $\forall \mu\in\mathcal{P}_2$, we make the following assumption.
		\begin{enumerate}
			\item[\textbf{(C$'$0)}] There exists $\hat{b}:\mathbb{R}^d\times\mathcal{P}_2\to\mathbb{R}^d$ and $\hat{\sigma}:\mathbb{R}^d\times\mathcal{P}_2\to\mathbb{R}^d\otimes\mathbb{R}^d$ such that
			\begin{align*}
				\lim\limits_{u\to 0^+}\tilde{b}_u(y,\mu)=\hat{b}(y,\mu), \quad \lim\limits_{u\to 0^+}\tilde{\sigma}_u(y,\mu)=\hat{\sigma}(y,\mu),
			\end{align*}
			where $\hat{b}$ and $\hat{\sigma}$ satisfy \hyperref[h1]{\textbf{(H1)}} and \hyperref[h2]{\textbf{(H2)}} respectively. \\
			\item[\textbf{(C$'$1)}] There exists a multivalued maximal monotone operator $\hat{A}:\mathbb{R}^d\to 2^{\mathbb{R}^d}$ such that for every $\epsilon>0$
			\begin{align*}
				\lim\limits_{u\to 0^+}(1+\epsilon\tilde{A}_u)^{-1}(y)=(1+\epsilon A)^{-1}(y).
			\end{align*}
			\item[\textbf{(C$'$2)}] $\tilde{A}_u$, $\tilde{A}$ and $\hat{A}$ are maximal monotone operators with $\overline{D(\tilde{A}_u)}=\overline{D(A)}$ and nonempty interior. Moreover, for $a\in Int(D(\tilde{A}_u))$, $\tilde{A}_u$ is locally bounded at $a$ uniformly in $u$. \\
			\item[\textbf{(C$'$3)}] The system of small random perturbation $(\tilde{b}_u,\tilde{\sigma}_u,\tilde{A}_u)$ satisfies \hyperref[h1]{\textbf{(H1)}}, \hyperref[h2]{\textbf{(H2)}} and \hyperref[h4]{\textbf{(H4)}}.
		\end{enumerate}
		
	\end{assumption}
	
	Let $Y_t$ be the solution of multivalued McKean-Vlasov SDE (\ref{IILS}), and define for $0<u<e^{-1}$,
	\begin{align*}
		Q^{'}_u(t):=\Gamma_{\varphi(u)}(Y_{ut}).
	\end{align*}
	
	Then similarily we have the following theorem.
	
	\begin{theorem}
		Under Assumption \ref{Cn1}, the family $\left\{Q^{'}_u\right\}_{0<u<e^{-1}}$ is relatively compact and the set $\Lambda:=\left\{g;I(g)\leq 1\right.\}$ is the limit set of $\left\{Q^{'}_u\right\}_{0<u<e^{-1}}$ when $u\to 0^+$ $a.s.$, where
		\begin{align*}
			I(g)=\frac{1}{2}\inf_{\hbar\in\mathcal{H};g=Y^h}\int_{0}^{T}|h(s)|^2\mathrm{d}s,
		\end{align*}
		and $Y^h$ solve the following multivalued McKean-Vlasov SDE:
		\begin{align*}
			\begin{cases}
				\mathrm{d}Y^h_t\in b(Y^h_t,\mathcal{L}_{Y^h_t})\mathrm{d}t+\sigma(Y^h_t,\mathcal{L}_{Y^h_t})h(t)\mathrm{d}t-A(Y^h_t)\mathrm{d}t, \\
				Y^h_0=x.
			\end{cases}
		\end{align*}
		
		\begin{proof}
			The proof is analogous to that of Theorem 5.3, and is therefore omitted here.
		\end{proof}
	\end{theorem}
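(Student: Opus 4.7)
The plan is to mirror the strategy of Theorem \ref{ILL}, adapting it to the regime $u\to 0^+$. First, I would apply It\^o's formula to $Q'_u(t)=\Gamma_{\varphi(u)}(Y_{ut})$ and perform the time change $\mathcal{W}^{(u)}_t=W_{ut}/\sqrt{u}$, which is again a standard Brownian motion. A direct computation, entirely parallel to the large time case leading to equation for $Q_u$, shows that $Q'_u$ solves the controlled multivalued McKean-Vlasov SDE
\begin{equation*}
\mathrm{d}Q'_u(t)\in \tilde b_u(Q'_u(t),\mathcal{L}_{Q'_u(t)})\,\mathrm{d}t+\frac{1}{\sqrt{\log\log u^{-1}}}\,\tilde\sigma_u(Q'_u(t),\mathcal{L}_{Q'_u(t)})\,\mathrm{d}\mathcal{W}^{(u)}_t-\tilde A_u(Q'_u(t))\,\mathrm{d}t,
\end{equation*}
with $Q'_u(0)=x$. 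Under Assumption \ref{Cn1}, the coefficients $(\tilde b_u,\tilde\sigma_u,\tilde A_u)$ satisfy the hypotheses \hyperref[h1]{\textbf{(H1)}}--\hyperref[h4]{\textbf{(H4)}} needed to invoke Theorem \ref{thmldp}, so $\{Q'_u\}_{0<u<e^{-1}}$ satisfies the LDP on $C([0,T],\overline{D(A)})$ with speed $1/\log\log u^{-1}$ and rate function $I$ governed by the limiting coefficients $(\hat b,\hat\sigma,\hat A)$ and therefore expressible via the solution map $h\mapsto Y^h$.

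Next, I would split the iterated logarithm statement into the standard two halves. For the \emph{upper bound} $\limsup_{u\to 0^+}d(Q'_u,\Lambda)=0$ a.s., fix $\delta>0$ and note $\Lambda^\delta:=\{g:d(g,\Lambda)\geq\delta\}$ is closed with $\inf_{g\in\Lambda^\delta}I(g)>1$. Choosing a geometric subsequence $u_n=\theta^n$ with $\theta\in(0,1)$, the LDP upper bound yields
\begin{equation*}
P(Q'_{u_n}\in\Lambda^\delta)\leq \exp\!\left(-\frac{\inf_{g\in\Lambda^\delta}I(g)-\eta}{1/\log\log u_n^{-1}}\right)
\end{equation*}
for $\eta>0$ small, and the right-hand side is summable in $n$. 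A Borel--Cantelli argument then gives the conclusion along $\{u_n\}$; to upgrade to a continuous limit I would control $\sup_{u\in[u_{n+1},u_n]}d(Q'_u,Q'_{u_n})$ using a modulus-of-continuity estimate for the underlying process, which forces $\theta$ close to $1$. This gives relative compactness and the inclusion of the limit set in $\Lambda$.

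For the \emph{lower bound} -- that every $g\in\Lambda$ is almost surely a limit of $Q'_{u}$ as $u\to 0^+$ -- the plan is to use independence of Brownian increments. For $\theta\in(0,1)$ and any $g\in\Lambda$ with $I(g)<1$, define events $E_n=\{\|Q'_{u_n}-g\|<\delta\}$ along $u_n=\theta^n$. Unlike the large time case, here the increments $W|_{[u_{n+1},u_n]}$ are not fully independent across $n$, so I would introduce a coupling / decoupling step: freeze the law argument $\mathcal{L}_{Q'_u}$ via the decoupling strategy used earlier in the paper, reduce to a classical multivalued SDE driven by a genuinely Markovian noise, and then apply the LDP lower bound on disjoint time intervals $[u_{n+1},u_n]$ to make the $E_n$ essentially independent with $P(E_n)\gtrsim\exp(-(I(g)+\eta)\log\log u_n^{-1})$, which is not summable. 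The Borel--Cantelli converse and a perturbation argument to pass from the frozen law back to the true law then give that $g$ is a cluster point.

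The main obstacle will be the lower bound: handling the McKean--Vlasov nonlinearity together with the set-valued drift $\tilde A_u$ simultaneously prevents a direct application of independence of Brownian increments. The decoupling strategy from Section 4 (reducing to the frozen-measure equation and controlling the Wasserstein perturbation via Proposition \ref{multi} and Bihari's inequality, Lemma \ref{bhr}) is exactly the tool needed, and combining it with the Yosida approximations to pass from $\tilde A_u$ to $\hat A$ should close the gap. Relative compactness of $\{Q'_u\}$ follows from the upper bound together with tightness, itself a consequence of the uniform moment estimates analogous to Lemma \ref{yosifin}.
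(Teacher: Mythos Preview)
Your upper bound argument matches the paper's: LDP upper bound along a geometric subsequence, first Borel--Cantelli, then interpolation to continuous $u$ via the contraction system $\Gamma$ (Definition~\ref{gadef}) and equicontinuity on the compact $\Lambda$.

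For the lower bound you diverge substantially, and in a direction that creates the very obstacle you flag. The paper does \emph{not} try to make the events $F_j=\{\lVert Q'_{u_j}-g\rVert<\epsilon\}$ approximately independent. Instead it introduces the Brownian events $E_j=\{\lVert \mathcal W^{(u_j)}/\sqrt{\log\log u_j^{-1}}-\hbar\rVert<\eta\}$, where $g=Y^h$, and uses Lemma~\ref{ILL2} (applied with $\epsilon=1/\log\log u_j^{-1}$) to obtain $P(E_j\cap F_j^c)\le C/j^2$. Strassen's classical functional LIL for Brownian motion already gives $P(\limsup_j E_j)=1$; since $\sum_j P(E_j\setminus F_j)<\infty$, the first Borel--Cantelli lemma yields $P(\limsup_j(E_j\setminus F_j))=0$ and hence $P(\limsup_j F_j)=1$. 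All the independence structure is consumed inside Strassen's theorem for the \emph{driving} Brownian motion; nothing has to be decoupled at the level of the McKean--Vlasov solution or the multivalued drift.

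The key ingredient you are missing is precisely Lemma~\ref{ILL2}: a quantitative comparison $P\bigl(d(\sqrt{\epsilon}W,\hbar)<\eta,\ d(X^\epsilon,Y^h)>\beta\bigr)\le e^{-R/\epsilon}$, proved by applying the LDP (Theorem~\ref{thmldp}) to the \emph{coupled} diffusion $Z^\epsilon=(\sqrt{\epsilon}W,X^\epsilon)$ and using the continuity result of Lemma~\ref{hcon}. Your decoupling/freezing route might in principle be pushed through, but it is considerably harder than needed and the uniform control of the Wasserstein perturbation along a lacunary sequence is not supplied; the paper's comparison-to-Brownian-motion trick sidesteps this entirely.
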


	\subsection{Proofs}\label{ILLMP}
	
	We first state the following continuous dependence result which plays an important role in the proof of Lemma \ref{ILL2}.
	
	\begin{lemma}\label{hcon}
		Suppose that $\tilde{A}$ is a maximal monotone multivalued operator with $0\in Int(D(\tilde{A}))$, $\tilde{b}$ and $\tilde{\sigma}$ are progressive measurable functions that satisfy \hyperref[h1]{\textbf{(H1)}} and \hyperref[h2]{\textbf{(H2)}}. Then $ B_R\ni \hbar\to Z^h $ is continuous with respect to the distance $ d $, where $ B_R:=\left\{\hbar\in\mathcal{H};\frac{1}{2}\int_{0}^{T}|h(s)|^2\mathrm{d}s\leq 2R\right\} $ and $(Z^h,K^h)$ solves the following multi-valued equation:
		\begin{align*}
			\begin{cases}
				\mathrm{d}Z^h_t\in \tilde{b}(Z^h_t,\mathcal{L}_{Z^h_t})\mathrm{d}t+\tilde{\sigma}(Z^h_t,\mathcal{L}_{Z^h_t})h(t)\mathrm{d}t-\tilde{A}(Z^h_t)\mathrm{d}t, \\
				Z^h_0=x\in\overline{D(\tilde{A})}.
			\end{cases}
		\end{align*}
		\begin{proof}
			For any fixed $\hbar_0\in B_R$, let $\hbar\in B_R$ such that $\sup_{t\in [0,T]}|\hbar(t)-\hbar_0(t)|\to 0,$ then
			\begin{align*}
				&|Z^h_t-Z^{h_0}_t|^2\\
				=&-2\,\int_{0}^{t}\langle Z^h_s-Z^{h_0}_s, \mathrm{d}K^h_s-\mathrm{d}K^{h_0}_s \rangle \mathrm{d}s+
				2\,\int_{0}^{t}\langle Z^h_s-Z^{h_0}_s, \tilde{b}(Z^h_s,\mathcal{L}_{Z^h_s})-\tilde{b}(Z^{h_0}_s,\mathcal{L}_{Z^{h_0}_s}) \rangle\mathrm{d}s \\
				& +2\,\int_{0}^{t}\langle Z^h_s-Z^{h_0}_s,  \tilde{\sigma}(Z^h_s,\mathcal{L}_{Z^h_s})h(s)-\tilde{\sigma}(Z^{h_0}_s,\mathcal{L}_{Z^{h_0}_s})h_0(s) \rangle\mathrm{d}s \\
				\leq &\,2\,\int_{0}^{t} \langle Z^h_s-Z^{h_0}_s, \tilde{b}(Z^h_s,\mathcal{L}_{Z^h_s})-\tilde{b}(Z^{h_0}_s,\mathcal{L}_{Z^{h_0}_s})  \rangle\mathrm{d}s   \\
				&+2\,\int_{0}^{t}\langle Z^h_s-Z^{h_0}_s,  \tilde{\sigma}(Z^h_s,\mathcal{L}_{Z^h_s})h(s)-\tilde{\sigma}(Z^{h_0}_s,\mathcal{L}_{Z^{h_0}_s})h(s) \rangle\mathrm{d}s \\
				&+2\,\int_{0}^{t}\langle Z^h_s-Z^{h_0}_s,  \tilde{\sigma}(Z^{h_0}_s,\mathcal{L}_{Z^{h_0}_s})h(s)-\tilde{\sigma}(Z^{h_0}_s,\mathcal{L}_{Z^{h_0}_s})h_0(s) \rangle\mathrm{d}s,
			\end{align*}
			where the last inequality follows from \eqref{mono}.
			
			By using Young's inequality, \textbf{(H1)} and \textbf{(H2)}, we have
			\begin{align*}
				&\sup_{s\in[0,t]}|Z^h_s-Z^{h_0}_s|^2 \\
				\leq&2\int_{0}^{t}\varrho(|Z^h_s-Z^{h_0}_s|^2)\mathrm{d}s+2\,\int_{0}^{t} |Z^h_s-Z^{h_0}_s|\cdot \lVert \tilde{\sigma}(Z^h_s,\mathcal{L}_{Z^h_s})-\tilde{\sigma}(Z^{h_0}_s,\mathcal{L}_{Z^{h_0}_s})\rVert\cdot|h(s)|\mathrm{d}s \\
				&+\sup_{t'\in[0,t]}2\int_{0}^{t'}\langle Z^h_s-Z^{h_0}_s,  \tilde{\sigma}(Z^{h_0}_s,\mathcal{L}_{Z^{h_0}_s})(h(s)-h_0(s)) \rangle\mathrm{d}s \\
				\leq & \,2\int_{0}^{t}\varrho(|Z^h_s-Z^{h_0}_s|^2)\mathrm{d}s+4R^{1/2}\left( \int_{0}^{t}|Z^h_s-Z^{h_0}_s|^2\cdot\lVert \tilde{\sigma}(Z^h_s,\mathcal{L}_{Z^h_s})-\tilde{\sigma}(Z^{h_0}_s,\mathcal{L}_{Z^{h_0}_s})\rVert^2 \mathrm{d}s\right)^{1/2} \\
				&+\sup_{t'\in[0,t]}2\int_{0}^{t'}\langle Z^h_s-Z^{h_0}_s,  \tilde{\sigma}(Z^{h_0}_s,\mathcal{L}_{Z^{h_0}_s})(h(s)-h_0(s)) \rangle\mathrm{d}s,
			\end{align*}
			which implies
			\begin{align}\label{511}
				\sup_{s\in[0,t]}|Z^h_s-Z^{h_0}_s|^2\leq & \,\frac{1}{2}\sup_{s\in[0,t]}|Z^h_s-Z^{h_0}_s|^2+(8R+2)\int_{0}^{t}\varrho(\sup_{t'\in[0,s]}|Z^h_{t'}-Z^{h_0}_{t'}|^2)\mathrm{d}s \nonumber\\
				&+ \sup_{t'\in[0,t]}2\int_{0}^{t'}\langle Z^h_s-Z^{h_0}_s,  \tilde{\sigma}(Z^{h_0}_s,\mathcal{L}_{Z^{h_0}_s})(h(s)-h_0(s)) \rangle\mathrm{d}s.
			\end{align}
			Here we denote the last term of (\ref{511}) as $J(t)$ and define
			\begin{align*}
				\nu(t):=\int_{0}^{t}\tilde{\sigma}(Z^{h_0}_s,\mathcal{L}_{Z^{h_0}_s})(h(s)-h_0(s))\mathrm{d}s.
			\end{align*}
			Using integration by parts, we have
			\begin{align*}
				&\int_{0}^{t}\langle\, Z^h_s-Z^{h_0}_s,  \tilde{\sigma}(Z^{h_0}_s,\mathcal{L}_{Z^{h_0}_s})(h(s)-h_0(s)) \rangle\mathrm{d}s \\
				=&\langle\, \nu(t), Z^h_t-Z^{h_0}_t\rangle-\int_{0}^{t}\langle \,\nu(s),\tilde{b}(Z^h_s,\mathcal{L}_{Z^h_s})-\tilde{b}(Z^{h_0}_s,\mathcal{L}_{Z^{h_0}_s})\rangle\mathrm{d}s \\
				&-\int_{0}^{t}\langle \,\nu(s),\tilde{\sigma}(Z^h_s,\mathcal{L}_{Z^h_s})h(s)-\tilde{\sigma}(Z^{h_0}_s,\mathcal{L}_{Z^{h_0}_s})h_0(s)\rangle\mathrm{d}s +\int_{0}^{t}\langle \,\nu(s),\mathrm{d}K^h_s-\mathrm{d}K^{h_0}_s\rangle.
			\end{align*}
			Following the proof of Lemma \ref{yosifin}, we have for $h,h_0\in B_R$ and $p\geq 1$
			\begin{align*}
				\sup_{t\in[0,T]}|Z^h_t|^p+|K^h|^0_T\leq C_R \text{ and }\sup_{t\in[0,T]}|Z^{h_0}_t|^p+|K^{h_0}|^0_T\leq C_R.
			\end{align*}
			By \textbf{(H1)} and \textbf{(H2)} we can deduce that
			\begin{align*}
				\sup_{t\in[0,T]} \int_{0}^{t}\langle\, Z^h_s-Z^{h_0}_s,  \tilde{\sigma}(Z^{h_0}_s,\mathcal{L}_{Z^{h_0}_s})(h(s)-h_0(s)) \rangle\mathrm{d}s\leq C_R\sup_{t\in[0,T]}|\nu(t)|.
			\end{align*}
			
			Next we will prove that
			\begin{align*}
				\sup_{t\in[0,T]}|\nu(t)|\to 0\quad\textit{as } \sup_{s\in[0,T]}|\hbar(s)-\hbar_0(s)|\to 0.
			\end{align*}
			Let $\varphi$ be a mollifier with $\textit{supp}\,\varphi\subset(-1,1),\,\,\varphi\in C^{\infty}$ and $\int_{-1}^{1}\varphi(t)\mathrm{d}t=1$. Consider the following function
			\begin{align*}
				\rho_n(t)=\int_{0}^{T}\tilde{\sigma}(Z^{h_0}_{u},\mathcal{L}_{Z^{h_0}_{u}})\varphi_n(t-u)\mathrm{d}u,
			\end{align*}
			where $\varphi_n(u):=n\varphi(nu)$.
			We can easily get that $t\to\rho_n(t)$ is differentiable and
			\begin{align*}
				\int_{0}^{T}\lVert \tilde{\sigma}(Z^{h_0}_t,\mathcal{L}_{Z^{h_0}_t})-\rho_n(t)\rVert^2\mathrm{d}t\to 0\quad\textit{as} \,\,\, n\to\infty.
			\end{align*}
			Hence
			\begin{align*}
				\sup_{t\in[0,T]} |\nu(t)|=&\sup_{t\in[0,T]}\lvert \int_{0}^{t}\tilde{\sigma}(Z^{h_0}_s,\mathcal{L}_{Z^{h_0}_s})(h(s)-h_0(s))\mathrm{d}s \rvert \\
				\leq& \sup_{t\in[0,T]}\lvert\int_{0}^{t}(\tilde{\sigma}(Z^{h_0}_s,\mathcal{L}_{Z^{h_0}_s})-\rho_n(s))(h(s)-h_0(s))\mathrm{d}s \rvert+\sup_{t\in[0,T]}\lvert\int_{0}^{t}\rho_n(s)(h(s)-h_0(s))\mathrm{d}s \rvert \\
				=:&J_1+J_2.
			\end{align*}
			By using H\"older's inequality, for any $\delta>0$, there exists $n_0\in\mathbb{N}$ such that
			\begin{align*}
				J_1\leq&\left(\int_{0}^{T}\lVert \tilde{\sigma}(Z^{h_0}_s,\mathcal{L}_{Z^{h_0}_s})-\rho_{n_0}(s)\rVert^2\mathrm{d}s\right)^{1/2}\left( \int_{0}^{T}|h(s)-h_0(s)|^2\mathrm{d}s \right)^{1/2} \\
				\leq& \,\,4R^{1/2}\delta^{1/2}.
			\end{align*}
			For $J_2$, denote the derivative of $\rho_{n_0}(t)$ by $\rho'_{n_0}(t)$, using integration by parts, we obtain
			\begin{align*}
				J_2\leq \sup_{t\in[0,T]}|\hbar(t)-\hbar_0(t)|\left( \sup_{t\in[0,T]}\lVert \rho_{n_0}(t)\rVert+\int_{0}^{T}\lVert\rho'_{n_0}(t)\rVert\mathrm{d}t \right),
			\end{align*}
			Thus we obtain that $ J_2\to 0 $ as $\sup_{t\in[0,T]}|\hbar(t)-\hbar_0(t)|\to 0$. By Lemma \ref{bhr} we get
			\begin{align*}
				\sup_{t\in[0,T]}|Z^h_t-Z^{h_0}_t|\to 0 \quad \textit{as}\quad \sup_{t\in[0,T]}|\hbar(t)-\hbar_0(t)|\to 0.
			\end{align*}
		\end{proof}
	\end{lemma}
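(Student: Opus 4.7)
The plan is to derive a Bihari-type inequality for $\sup_{s\in[0,t]}|Z^h_s-Z^{h_0}_s|^2$ and close it via Lemma \ref{bhr}. Since no Brownian integral appears (the dynamics are driven only by the control $h$ and the multivalued operator $\tilde{A}$), the chain rule gives
\begin{align*}
|Z^h_t-Z^{h_0}_t|^2 =\,&2\int_0^t\langle Z^h_s-Z^{h_0}_s,\,\tilde{b}(Z^h_s,\mathcal{L}_{Z^h_s})-\tilde{b}(Z^{h_0}_s,\mathcal{L}_{Z^{h_0}_s})\rangle\,\mathrm{d}s\\
&+2\int_0^t\langle Z^h_s-Z^{h_0}_s,\,\tilde{\sigma}(Z^h_s,\mathcal{L}_{Z^h_s})h(s)-\tilde{\sigma}(Z^{h_0}_s,\mathcal{L}_{Z^{h_0}_s})h_0(s)\rangle\,\mathrm{d}s\\
&-2\int_0^t\langle Z^h_s-Z^{h_0}_s,\,\mathrm{d}K^h_s-\mathrm{d}K^{h_0}_s\rangle.
\end{align*}
The $\mathrm{d}K$-term is non-positive by the monotonicity property (\ref{mono}), and the drift term is controlled by $2L\int_0^t\varrho(|Z^h_s-Z^{h_0}_s|^2)\,\mathrm{d}s$ using (\textbf{H1}) together with Remark \ref{W2}.

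For the diffusion--control term I would split
\[
\tilde{\sigma}(Z^h_s,\mathcal{L}_{Z^h_s})h(s)-\tilde{\sigma}(Z^{h_0}_s,\mathcal{L}_{Z^{h_0}_s})h_0(s) = \bigl[\tilde{\sigma}(Z^h_s,\mathcal{L}_{Z^h_s})-\tilde{\sigma}(Z^{h_0}_s,\mathcal{L}_{Z^{h_0}_s})\bigr]h(s) + \tilde{\sigma}(Z^{h_0}_s,\mathcal{L}_{Z^{h_0}_s})(h(s)-h_0(s)).
\]
The first piece is bounded by Cauchy--Schwarz together with the energy estimate $\int_0^T|h(s)|^2\,\mathrm{d}s\leq 4R$ and (\textbf{H2}); this produces a further $\varrho$-integral plus a fraction of $\sup_{s\leq t}|Z^h_s-Z^{h_0}_s|^2$ that can be absorbed on the left. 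The second piece is the main obstacle: we only know $\sup_s|\hbar(s)-\hbar_0(s)|\to 0$, while $h-h_0$ converges at best weakly in $L^2$, so no direct pointwise estimate on $h-h_0$ is available.

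To handle the second piece, I would exploit $(h-h_0)(s)\,\mathrm{d}s=\mathrm{d}(\hbar-\hbar_0)(s)$ and integrate by parts, transferring the differential onto $\langle Z^h_\cdot-Z^{h_0}_\cdot,\,\tilde{\sigma}(Z^{h_0}_\cdot,\mathcal{L}_{Z^{h_0}_\cdot})\rangle$. The boundary term is then dominated by $\sup_{s\in[0,T]}|\hbar(s)-\hbar_0(s)|$ times a controlled quantity, and the new integrals produced involve $\mathrm{d}Z^h,\mathrm{d}Z^{h_0}$ and $\mathrm{d}K^h,\mathrm{d}K^{h_0}$, which all reduce to terms handled above. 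The residual difficulty is that $\tilde{\sigma}(Z^{h_0}_\cdot,\mathcal{L}_{Z^{h_0}_\cdot})$ has no a priori regularity in $s$, so the integration by parts is not directly legitimate. I would circumvent this by mollifying in time: choose a smooth $\rho_n$ with $\int_0^T\lVert\tilde{\sigma}(Z^{h_0}_s,\mathcal{L}_{Z^{h_0}_s})-\rho_n(s)\rVert^2\,\mathrm{d}s\to 0$, perform the integration by parts against $\rho_n$, and absorb the mollification error using Cauchy--Schwarz together with $\int_0^T|h(s)-h_0(s)|^2\,\mathrm{d}s\leq 4R$.

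Assembling all contributions yields an inequality of the form
\[
\sup_{s\in[0,t]}|Z^h_s-Z^{h_0}_s|^2 \leq C_R\int_0^t\varrho\Bigl(\sup_{r\in[0,s]}|Z^h_r-Z^{h_0}_r|^2\Bigr)\,\mathrm{d}s + \varepsilon(\hbar,\hbar_0),
\]
where $\varepsilon(\hbar,\hbar_0)\to 0$ as $\sup_s|\hbar(s)-\hbar_0(s)|\to 0$. Lemma \ref{bhr} combined with $\int_{0^+}1/\varrho=\infty$ then delivers the continuity of $\hbar\mapsto Z^h$. The main technical hurdle is the integration-by-parts/mollification step, since this is the only way to promote uniform convergence of $\hbar$ into usable control of an integral involving $h-h_0$; every other estimate is a routine consequence of the non-Lipschitz conditions already used throughout the paper.
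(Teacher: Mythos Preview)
Your proposal is correct and follows essentially the same route as the paper: drop the $\mathrm{d}K$-term via monotonicity, control the drift and the $[\tilde{\sigma}(Z^h)-\tilde{\sigma}(Z^{h_0})]h$ piece using (\textbf{H1})--(\textbf{H2}) and the energy bound on $B_R$, then handle the critical term $\int\langle Z^h-Z^{h_0},\tilde{\sigma}(Z^{h_0})(h-h_0)\rangle\,\mathrm{d}s$ by an integration-by-parts/mollification argument that converts uniform closeness of $\hbar$ to $\hbar_0$ into smallness of this integral, and close with Bihari's inequality (Lemma \ref{bhr}). The only organisational difference is that the paper first integrates by parts against $\nu(t):=\int_0^t\tilde{\sigma}(Z^{h_0}_s,\mathcal{L}_{Z^{h_0}_s})(h(s)-h_0(s))\,\mathrm{d}s$ to reduce everything to $C_R\sup_t|\nu(t)|$, and only then mollifies $\tilde{\sigma}(Z^{h_0}_\cdot,\mathcal{L}_{Z^{h_0}_\cdot})$ to prove $\sup_t|\nu(t)|\to 0$; you describe mollifying first and then integrating by parts in $\mathrm{d}(\hbar-\hbar_0)$, which amounts to the same computation.
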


	The following two results will play an important role in the proof of Proposition 5.7.
	
	\begin{lemma}\label{ILL2}
		Assume that \hyperref[h0]{\textbf{(H0)}}-\hyperref[h4]{\textbf{(H4)}} hold. Let $a$ be a fixed positive number, then for every $\beta>0$ and $R>0$, there exist $\epsilon_{0},\eta_0>0$ such that for any $0<\epsilon<\epsilon_{0}$ and $0<\eta<\eta_0$,
		\begin{align*}
			P\left\{d(\sqrt{\epsilon} W,\hbar)<\eta,\,d(X^\epsilon,g)>\beta\right\}\leq \exp(-R/\epsilon)
		\end{align*}
		holds for every $\hbar$ with $I(g)\leq a$ and $g=Y^h$.
		
		\begin{proof}
			Consider the diffusion
			\begin{align*}
				Z^\epsilon_t:=
				\begin{pmatrix}
					\sqrt{\epsilon} W_t \\
					X^\epsilon_t
				\end{pmatrix} ,
			\end{align*}
			then $Z^\epsilon_t$ is the solution to the following MMVSDE:
			\begin{align*}
				\begin{cases}
					\mathrm{d}Z^\epsilon_t\in \tilde{b}_\epsilon(Z^\epsilon_t,\mathcal{L}_{Z^\epsilon_t})\mathrm{d}t+\sqrt{\epsilon}\tilde{\sigma}_\epsilon(Z^\epsilon_t,\mathcal{L}_{Z^\epsilon_t})\mathrm{d}W_t-\tilde{A}_\epsilon(Z^\epsilon_t)\mathrm{d}t, \\
					Z^\epsilon_0=\left(^0_x\right),
				\end{cases}
			\end{align*}
			where
			\begin{align*}
				&\tilde{b}_\epsilon(x',\mu')=\tilde{b}_\epsilon\left(\begin{pmatrix}y \\ x \end{pmatrix},\begin{pmatrix}\nu \\ \mu \end{pmatrix}\right)=\begin{pmatrix}0 \\ b_\epsilon(x,\mu) \end{pmatrix}:\mathbb{R}^{d+d}\times \mathcal{P}_2(\mathbb{R}^{d+d})\to \mathbb{R}^{d+d}, \\ &\tilde{\sigma}_\epsilon(x',\mu')=\tilde{\sigma}_\epsilon\left(\begin{pmatrix}y \\ x \end{pmatrix},\begin{pmatrix}\nu \\ \mu \end{pmatrix}\right)=\begin{pmatrix}I \\ \sigma_\epsilon(x,\mu) \end{pmatrix}:\mathbb{R}^{d+d}\times \mathcal{P}_2(\mathbb{R}^{d+d})\to \mathbb{R}^{d+d}\otimes \mathbb{R}^d
			\end{align*}
			and
			\begin{align*}
				\tilde{A}_\epsilon(x')=\tilde{A}_\epsilon\left(\begin{pmatrix}y \\ x \end{pmatrix}\right)=\begin{pmatrix}0 \\ A_\epsilon(x) \end{pmatrix}:\mathbb{R}^{d+d}\to 2^{\mathbb{R}^{d+d}}
			\end{align*}
			for any $x,y\in\mathbb{R}^d$ and $\mu,\nu\in\mathcal{P}_2(\mathbb{R}^d)$.
			
			For each $g\in C([0,T];\mathbb{R}^{d+d})$, define
			\begin{align*}
				\tilde{I}(g)\coloneqq\frac{1}{2}\inf_{\hbar\in\mathcal{H};g=Z^h}\int_{0}^{T}|h(t)|^2\mathrm{d}t,
			\end{align*}
			where $(Z^h,K^h)$ is the solution to the following equation:
			\begin{align}\label{izh}
				\begin{cases}
					\mathrm{d}Z^h_t\in \tilde{b}(Z^h_t,\mathcal{L}_{Z^h_t})\mathrm{d}t+\tilde{\sigma}(Z^h_t,\mathcal{L}_{Z^h_t})h(t)\mathrm{d}t-\tilde{A}(Z^h_t)\mathrm{d}t, \\
					Z^h_0=\left(^0_x\right).
				\end{cases}
			\end{align}
			Here $\tilde{b}, \tilde{\sigma}$ and $\tilde{A}$ are defined similarly as before.		
			
			It is obvious to note that $\tilde{b},\tilde{b}_\epsilon,\tilde{\sigma},\tilde{\sigma}_\epsilon,\tilde{A}$ and $\tilde{A}_\epsilon$ also satisfy \textbf{(H0)}-\textbf{(H4)}. Thus (\ref{izh}) is decomposed into two systems, i.e.
			\begin{align*}
				Z^h=\begin{pmatrix}
					\hbar \\
					Y^h
				\end{pmatrix},
			\end{align*}
			where $Y^h$ is the solution to the following equation:
			\begin{align*}
				\begin{cases}
					\mathrm{d}Y^h_t\in b(Y^h,\mathcal{L}_{Y^h_t})\mathrm{d}t+\sigma(Y^h,\mathcal{L}_{Y^h_t})h(t)\mathrm{d}t-A(Y^h_t)\mathrm{d}t, \\
					Y^h_0=x.
				\end{cases}
			\end{align*}
			
			Fix $\beta>0$ and $R>0$. Then
			\begin{align*}
				P\left\{d(\sqrt{\epsilon} W,\hbar)<\eta,\,d(X^\epsilon,Y^h)>\beta\right\}=P(Z^\epsilon\in \digamma_\eta),
			\end{align*}
			where
			\begin{align*}
				\digamma_\eta:=\left\{\chi=\begin{pmatrix} \chi_1 \\ \chi_2 \end{pmatrix}\in C([0,T];\mathbb{R}^{d+d});\,d(\chi_1,\hbar)<\eta,\,d(\chi_2,Y^h)>\beta\right\}.
			\end{align*}
			By Lemma \ref{hcon} we know that $\hbar\to Z^h$ is continuous with respect the distance $d$ when $\hbar\in B_R$. Thus there exists $\eta>0$ such that if $\chi_1\in B_R$ and $d(\chi_1,\hbar)\leq \eta$, then
			\begin{align*}
				d(Y^{\chi_1},Y^h)\leq \beta.
			\end{align*}
			This implies that for such $\eta$, there does not exist any trajectory $\chi\in \digamma_\eta$ such that
			\begin{align*}
				\frac{1}{2}\int_{0}^{T}|\chi_1(t)|^2\mathrm{d}t\leq 2R,\quad  \chi=\begin{pmatrix} \chi_1 \\  Y^{\chi_1} \end{pmatrix}
			\end{align*}
			which means
			\begin{align*}
				\inf_{g\in\digamma_\eta} \tilde{I}(g)\geq 2R.
			\end{align*}
			Thus by Theorem \ref{thmldp} and Definition \ref{ldpdefin}, we get the desired result.
			
		\end{proof}

	\end{lemma}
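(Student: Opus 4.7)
The plan is to lift $(\sqrt{\epsilon}W, X^\epsilon)$ to a joint diffusion on $\mathbb{R}^{2d}$, apply Theorem \ref{thmldp} to it, and use the continuous dependence result Lemma \ref{hcon} to control the rate function on the relevant set. Concretely, I would introduce
\[
Z^{\epsilon}_t := \begin{pmatrix} \sqrt{\epsilon}W_t \\ X^{\epsilon}_t \end{pmatrix}
\]
and check that $Z^{\epsilon}$ satisfies a multivalued McKean--Vlasov SDE driven by $W$ with augmented coefficients
$
\tilde{b}_\epsilon(y,x;\mu) = (0, b_\epsilon(x,\mu))^{T}, \ \tilde{\sigma}_\epsilon(y,x;\mu) = (I, \sigma_\epsilon(x,\mu))^{T}, \ \tilde{A}_\epsilon(y,x) = (0, A_\epsilon(x))^{T},
$
and that these augmented coefficients again satisfy \hyperref[h0]{(\textbf{H0})}--\hyperref[h4]{(\textbf{H4})} since the extra identity/zero blocks are trivial.

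The second step is to apply Theorem \ref{thmldp} to $\{Z^{\epsilon}\}$. This gives a large deviation principle on $C([0,T];\mathbb{R}^{2d})$ with speed $\epsilon$ and good rate function
\[
\tilde{I}(g) = \tfrac{1}{2}\inf\Bigl\{\int_0^T |h(s)|^2 \, \mathrm{d}s : g = Z^{h}\Bigr\},
\]
where $Z^{h}$ solves the controlled skeleton associated with the enlarged system. The skeleton decouples into the two components $Z^{h} = (\hbar, Y^{h})$, where $\hbar(t)=\int_0^t h(s)\,\mathrm{d}s$ and $Y^{h}$ solves (\ref{yy}); in particular $\tilde{I}(\hbar, Y^{h}) = \tfrac{1}{2}\int_0^T|h|^2\mathrm{d}s = I(Y^{h})$ on the relevant trajectories.

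The third step is a purely deterministic argument. The event in the lemma coincides with $\{Z^{\epsilon}\in\digamma_\eta\}$, where
\[
\digamma_\eta := \bigl\{\chi = (\chi_1,\chi_2) : d(\chi_1, \hbar) < \eta,\ d(\chi_2, Y^{h}) > \beta\bigr\}.
\]
I would argue that $\inf_{\chi\in\overline{\digamma_\eta}} \tilde{I}(\chi) \geq 2R$ for $\eta$ small enough. Any $\chi$ in that closure with finite rate must be of the form $\chi = (\hbar', Y^{h'})$ with $\tfrac{1}{2}\int_0^T|h'|^2\mathrm{d}s = \tilde{I}(\chi)$; if $\tilde{I}(\chi) \leq 2R$ then $\hbar' \in B_R$ in the notation of Lemma \ref{hcon}, and by that lemma, continuity of $h' \mapsto Y^{h'}$ on $B_R$ forces $d(Y^{h'}, Y^{h}) \leq \beta$ once $d(\hbar', \hbar) \leq \eta$ with $\eta$ sufficiently small, contradicting membership in $\digamma_\eta$. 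The upper bound in the LDP then yields
\[
\limsup_{\epsilon\to 0}\epsilon\log P(Z^{\epsilon}\in\overline{\digamma_\eta}) \leq -2R,
\]
so for $\epsilon < \epsilon_0$ small enough the probability is bounded by $\exp(-R/\epsilon)$.

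The main obstacle is the uniformity in $\hbar$ (and hence $g$) in the conclusion: the continuity modulus produced by Lemma \ref{hcon} a priori depends on the base point $\hbar_0$. To overcome this I would exploit that $\{\hbar : \tfrac{1}{2}\int_0^T|h|^2\mathrm{d}s \leq a\}$ is a weakly compact subset of $\mathcal{H}$ and (by Proposition \ref{Yu}) the image $\{Y^{h}\}$ is uniformly bounded; combined with the fact that the estimates in Lemma \ref{hcon} depend only on $R$ (the bound on the energy) and not on the particular choice of $\hbar_0$, uniform continuity in $\hbar$ on this sublevel set follows. A secondary technical point is verifying that the augmented operator $\tilde{A}_\epsilon$ still has nonempty interior of its domain and satisfies (\textbf{H4}); both are immediate from the product structure, which is why the lifting trick works cleanly.
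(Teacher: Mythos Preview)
Your proposal is essentially the same as the paper's own proof: lift to the joint process $Z^\epsilon=(\sqrt{\epsilon}W,X^\epsilon)$, apply Theorem~\ref{thmldp} to the augmented system, identify the skeleton as $(\hbar,Y^h)$, and use Lemma~\ref{hcon} to show $\inf_{\chi\in\digamma_\eta}\tilde I(\chi)\ge 2R$ before invoking the LDP upper bound. You are in fact slightly more careful than the paper in flagging the uniformity of $\eta$ over $\{\hbar:I(Y^h)\le a\}$ and the nonemptiness of $\mathrm{Int}(D(\tilde A_\epsilon))$, both of which the paper passes over as ``obvious.''
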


	\begin{lemma}\label{item1}
		For $\forall c>1$ and $\forall \epsilon>0$, there exists a positive integer $j_0(\omega)$ almost surely finite such that $\forall j>j_0$,
		\begin{align*}
			d(Q_{c^j},\Lambda)<\epsilon, \quad \text{where}\,\, d(x,B)=\inf_{y\in B}\lVert x-y\rVert.
		\end{align*}
		
		\begin{proof}
			Consider the set $\Lambda_\epsilon:=\left\{g;d(g,\Lambda)\geq \epsilon\right\}$. By definition and the lower semicontinuity of $I(g)$ we have that $\inf_{g\in \Lambda_\epsilon}I(g)>1$, so there exists a positive number $\eta$ such that $\inf_{g\in  \Lambda_\epsilon}I(g)>1+\eta$. Therefore, by (\ref{ldpc}) we have
			\begin{align*}
				\limsup\limits_{u\to\infty} \frac{1}{\log\log u}P(Q_u\in\Lambda_\epsilon)\leq -(1+\eta).
			\end{align*}
			Thus for $j$ large enough and for every $c>1$, there exists a constant $C>0$ such that
			\begin{align*}
				P(Q_{c^j}\in\Lambda_\epsilon)\leq \exp\left\{-(1+\eta)\log\log(c^j)\right\}=\frac{C}{j^{1+\eta}}.
			\end{align*}
			Since $\sum_{j=1}^{\infty}P(Q_{c^j}\in\Lambda_\epsilon)<\infty$, by Borel-Cantelli lemma we get $P(\limsup_j[d(Q_{c^j},\Lambda)\geq\epsilon])=0$, i.e. $\lim_{j\to\infty}d(Q_{c^j},\Lambda)=0\,$ a.s..
		\end{proof}
		
	\end{lemma}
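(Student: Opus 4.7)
My plan is to combine the upper bound half of the large deviation principle for $\{Q_u\}_{u>e}$ established via the estimate \eqref{ldpc} with the classical Borel--Cantelli argument along the geometric subsequence $u_j=c^j$. The key structural input is that $\Lambda=\{g:I(g)\le 1\}$ is a closed (in fact compact) level set because $I$ is a good rate function, so the ``far-from-$\Lambda$'' sets $\Lambda_\epsilon:=\{g:d(g,\Lambda)\ge \epsilon\}$ are closed subsets of $C([0,T],\overline{D(A)})$ on which the LDP upper bound can be applied directly.

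First, I would show that $\inf_{g\in\Lambda_\epsilon}I(g)>1$ strictly. Suppose for contradiction that this infimum equals $1$; then one could choose a minimizing sequence $\{g_n\}\subset \Lambda_\epsilon$ with $I(g_n)\to 1$. Since $I$ is a good rate function, the sublevel set $\{I\le 2\}$ is compact, so $\{g_n\}$ has a convergent subsequence $g_{n_k}\to g_*$. Lower semicontinuity of $I$ and closedness of $\Lambda_\epsilon$ give $I(g_*)\le 1$ and $d(g_*,\Lambda)\ge \epsilon$, which contradicts $g_*\in\Lambda$. Hence there exists $\eta>0$ with $\inf_{g\in\Lambda_\epsilon}I(g)\ge 1+\eta$.

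Next, applying the LDP upper bound \eqref{ldpc} with speed $1/\log\log u$ to the closed set $\Lambda_\epsilon$, I get
\begin{equation*}
\limsup_{u\to\infty}\frac{1}{\log\log u}\log P(Q_u\in\Lambda_\epsilon)\le -(1+\eta).
\end{equation*}
Fix any $c>1$ and evaluate along $u=c^j$. For every $\eta'\in(0,\eta)$ and all sufficiently large $j$, this upper bound yields
\begin{equation*}
P(Q_{c^j}\in\Lambda_\epsilon)\le \exp\bigl(-(1+\eta')\log\log c^j\bigr)=\frac{1}{(j\log c)^{1+\eta'}}\le \frac{C}{j^{1+\eta'}}
\end{equation*}
for a constant $C=C(c,\eta')>0$. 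Because $\sum_{j\ge 1}j^{-(1+\eta')}<\infty$, the Borel--Cantelli lemma gives
\begin{equation*}
P\bigl(\limsup_j\{d(Q_{c^j},\Lambda)\ge\epsilon\}\bigr)=0,
\end{equation*}
so for almost every $\omega$ there is some finite $j_0(\omega)$ beyond which $d(Q_{c^j},\Lambda)<\epsilon$ for all $j>j_0$, which is exactly the claim.

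The only genuinely nontrivial step is the strict separation $\inf_{\Lambda_\epsilon}I>1$; this is where goodness of the rate function (i.e.\ compactness of sublevel sets, which follows from Theorem \ref{ldlap} applied to $\{Q_u\}$ under Assumption \ref{Cn}) is essential, and without it one could only conclude $\ge 1$ and fail to get a summable bound. Everything else is a routine Borel--Cantelli packaging of the LDP upper bound along the subsequence $\{c^j\}$.
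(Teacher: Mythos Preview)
Your proposal is correct and follows essentially the same route as the paper: define the closed set $\Lambda_\epsilon=\{g:d(g,\Lambda)\ge\epsilon\}$, use goodness/lower semicontinuity of $I$ to get $\inf_{\Lambda_\epsilon}I\ge 1+\eta$, apply the LDP upper bound \eqref{ldpc} along $u=c^j$, and conclude by Borel--Cantelli. Your justification of the strict separation $\inf_{\Lambda_\epsilon}I>1$ via compactness of sublevel sets is in fact more detailed than the paper's one-line appeal to lower semicontinuity, but the overall argument is the same.
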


	We are now able to present the proof of Theorem 5.3, which will be divided into two propositions.

	\begin{proposition}[Relatively Compactness]\label{ILLP1}
		Under Assumption \ref{Cn}, for every $\epsilon>0$ there exists almost surely a positive number $u_0$ such that for every $u>u_0$,
		\begin{align*}
			d(Q_u,\Lambda)<\epsilon.
		\end{align*}

		\begin{proof}
			Let $c> 1$. For any $u>1$, there exists some $j\in\mathbb{N}$ sufficiently large such that $c^{j-1}\leq u\leq c^j$. Then
			\begin{align*}
				d(Q_u,\Lambda)\leq&\, d(Q_{c^j},\Lambda)+d(Q_u,Q_{c^j}) \\
				\leq&\, d(Q_{c^j},\Lambda)+\lVert Q_u-\Gamma_{\psi(u)}\circ \Gamma_{\psi(c^j)}^{-1}(Q_{c^j})\rVert \\
				&+\lVert \Gamma_{\psi(u)}\circ \Gamma_{\psi(c^j)}^{-1}(Q_{c^j})-Q_{c^j}\rVert \\
				=:& J^d_1+J^d_2+J^d_3.
			\end{align*}
			For arbitrarily small $\epsilon>0$, Lemma \ref{item1} ensures that $J^d_1$ is bounded by ${\epsilon}$ and $Q_{c^j}$ is almost surely bounded with $j$ large enough. Moreover, for every $u\in[c^{j-1},c^j]$, $\forall \delta>0$
			\begin{align*}
				1\geq\frac{\psi(u)}{\psi(c^j)}\geq\frac{\psi(c^{j-1})}{\psi(c^j)}\geq \frac{1-\delta}{\sqrt{c}},
			\end{align*}
			so that if we choose $c>1$ small enough, then by using $(d)$ of Definition \ref{gadef} we get
			\begin{align*}
				J^d_3=\lVert \Gamma_{\psi(u)}\circ \Gamma_{\psi(c^j)}^{-1}(Q_{c^j})-Q_{c^j}\rVert\leq\epsilon.
			\end{align*}
			By definition of $Q_u(t)$ and $(b)$ of Definition \ref{gadef}, we have
			\begin{align*}
				J^d_2=&\sup_{t\in[0,T]}|\Gamma_{\psi(u)}(Y_{ut})-\Gamma_{\psi(u)}(Y_{c^jt})| \\
				\leq& \sup_{t\in[0,T]}|\Gamma_{\psi(c^{j-1})}(Y_{ut})-\Gamma_{\psi(c^{j-1})}(Y_{c^jt})|.
			\end{align*}
			Thus we rewrite
			\begin{align*}
				d(Q_u,\Lambda)<&\,2\epsilon+\sup_{t\in[0,T]}|\Gamma_{\psi(c^{j-1})}(Y_{ut})-\Gamma_{\psi(c^{j-1})}(Y_{c^jt})| \\
				\leq&\, 2\epsilon+\sup_{\substack{0\leq s\leq T \\ s/c\leq t\leq s}}|\Gamma_{\psi(c^{j-1})}(Y_{c^jt})-\Gamma_{\psi(c^{j-1})}(Y_{c^js})|.
			\end{align*}
			By adding and substracting the terms $Q_{c^{j}}(t)$ and $Q_{c^{j}}(s)$ and using (d) of Definition \ref{Cn}, we obtain
			\begin{align*}
				d(Q_u,\Lambda)<&\,2\epsilon+\sup_{\substack{0\leq s\leq T}}|\Gamma_{\psi(c^{j-1})}\circ\Gamma_{\psi(c^j)}^{-1}(Q_{c^j}(s))-Q_{c^j}(s)| \\
				&+\,\sup_{\substack{0\leq t\leq T}} |\Gamma_{\psi(c^{j-1})}\circ\Gamma_{\psi(c^j)}^{-1}(Q_{c^j}(t))-Q_{c^j}(t)|+\,\sup_{\substack{0\leq s\leq T \\ s/c\leq t\leq s}} |Q_{c^j}(s)-Q_{c^j}(t)| \\
				\leq &\,4\epsilon+\sup_{\substack{0\leq s\leq T \\ s/c\leq t\leq s}} |Q_{c^j}(s)-Q_{c^j}(t)|.
			\end{align*}
			Let $g\in\Lambda$ such that $\lVert Q_{c^j}-g \rVert<d(Q_{c^j},\Lambda)+\epsilon$. Then in particular, $\lVert Q_{c^j}-g\rVert <2\epsilon$ and we have
			\begin{align*}
				d(Q_u,\Lambda)\leq 8\epsilon+\sup_{g\in\Lambda}\sup_{\substack{0\leq s\leq T \\ s/c\leq t\leq s}}|g(t)-g(s)|.
			\end{align*}
			Since $\Lambda$ is compact, using Ascoli-Arzel\`a's theorem we can deduce that there exists a constant $c>1$ such that
			\begin{align*}
				\sup_{g\in\Lambda}\sup_{\substack{0\leq s\leq T \\ s/c\leq t\leq s}}|g(t)-g(s)|\leq \epsilon.
			\end{align*}
			Therefore we get the desired result
			\begin{align*}
				d(Q_u,\Lambda)<\epsilon.
			\end{align*}
			
		\end{proof}
		
	\end{proposition}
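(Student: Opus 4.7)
The plan is to prove relative compactness in two stages: first establish the conclusion along a geometric subsequence $u_n = c^n$ by the Borel--Cantelli lemma combined with the large deviation upper bound, then interpolate between consecutive points of this subsequence using the structural properties of the contraction system $\{\Gamma_a\}$.

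For the subsequence step, fix $\epsilon>0$ and set $\Lambda_\epsilon = \{g : d(g,\Lambda)\geq \epsilon\}$. Since $I$ is a good rate function and $\Lambda$ is precisely its sublevel set at height $1$, the lower semicontinuity of $I$ together with the closedness of $\Lambda_\epsilon$ yields some $\eta>0$ with $\inf_{g\in\Lambda_\epsilon} I(g) \geq 1+\eta$. Applying the large deviation upper bound \eqref{ldpc} to the closed set $\Lambda_\epsilon$ gives, for any fixed $c>1$ and all $n$ large,
\begin{equation*}
P(Q_{c^n}\in\Lambda_\epsilon) \leq \exp\bigl(-(1+\eta/2)\log\log c^n\bigr) \leq \frac{C}{n^{1+\eta/2}},
\end{equation*}
which is summable. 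The first Borel--Cantelli lemma then delivers $d(Q_{c^n},\Lambda)<\epsilon$ for all $n$ eventually, almost surely.

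For the interpolation step, given $u\in[c^{n-1},c^n]$ write
\begin{equation*}
d(Q_u,\Lambda) \leq d(Q_{c^n},\Lambda) + \bigl\|Q_u - \Gamma_{\psi(u)}\!\circ\!\Gamma_{\psi(c^n)}^{-1}(Q_{c^n})\bigr\| + \bigl\|\Gamma_{\psi(u)}\!\circ\!\Gamma_{\psi(c^n)}^{-1}(Q_{c^n}) - Q_{c^n}\bigr\|.
\end{equation*}
The first term is already $<\epsilon$ by Step 1. For the third term, I would use property (d) of Definition \ref{gadef}: since $\psi(u)/\psi(c^n)\to 1$ as $c\downarrow 1$ uniformly in $u\in[c^{n-1},c^n]$, and the $Q_{c^n}$ are eventually confined to a compact neighborhood of $\Lambda$, the composition $\Gamma_{\psi(u)}\circ \Gamma_{\psi(c^n)}^{-1}$ becomes arbitrarily close to the identity on that compact set; hence this term is $<\epsilon$ once $c$ is sufficiently close to $1$. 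For the middle term, rewriting $Y_{ut} = \Gamma_{\psi(c^n)}^{-1}(Q_{c^n}(ut/c^n))$ and applying property (b) of Definition \ref{gadef} bounds it by
\begin{equation*}
\sup_{0\leq s\leq T,\ s/c\leq t\leq s}|Q_{c^n}(s) - Q_{c^n}(t)|,
\end{equation*}
modulo two further uses of the ``close to identity'' estimate above. Thus everything reduces to an equicontinuity statement for $Q_{c^n}$ on time intervals of ratio $c$.

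The last bound is controlled by comparing $Q_{c^n}$ with its nearest companion in $\Lambda$ (at distance $<\epsilon$) and then invoking Arzel\`a--Ascoli: $\Lambda$ is compact in $C([0,T];\mathbb{R}^d)$ (as a sublevel set of a good rate function), so the family $\Lambda$ is equicontinuous, meaning that for $c$ close enough to $1$, $\sup_{g\in\Lambda}\sup_{s/c\leq t\leq s}|g(t)-g(s)|<\epsilon$. The main delicate point is that one cannot choose $c$ independently of $\epsilon$: it must be tuned simultaneously against the modulus of continuity of $\Lambda$ and against the rate at which $\Gamma_{\psi(u)}\circ\Gamma_{\psi(c^n)}^{-1}$ approaches the identity. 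I expect this balancing of constants, together with verifying that the compact sets on which the contraction system behaves well can be chosen to contain $\{Q_{c^n}\}$ eventually (which again uses Step 1), to be the most technical part; the large deviation input itself is essentially a direct application of the LDP established earlier.
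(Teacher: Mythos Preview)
Your proposal is correct and follows essentially the same approach as the paper: the subsequence step via Borel--Cantelli and the LDP upper bound is exactly the content of Lemma \ref{item1}, and your interpolation argument---the three-term splitting using $\Gamma_{\psi(u)}\circ\Gamma_{\psi(c^n)}^{-1}$, properties (b) and (d) of Definition \ref{gadef}, and the final reduction to equicontinuity of $\Lambda$ via Arzel\`a--Ascoli---mirrors the paper's proof line by line. The only organizational difference is that the paper isolates the subsequence result as a separate lemma, whereas you fold it into the same argument.
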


	\begin{proposition}[The set of limit points]\label{ILLP2}
		Let $g\in \Lambda$ be such that $I(g)<1$, Then for any $\epsilon>0$ there exists $c_\epsilon>1$ such that for every $c>c_\epsilon$,
		\begin{align*}
			P\left\{d(Q_{c^j},g)<\epsilon\,\,\, i.o.\right\}=1.
		\end{align*}
		
		\begin{proof}
			For any $g\in \Lambda$ with $I(g)<1$, let $\hbar_\cdot=\int_0^\cdot h(s)ds$ with $g=Z^h$. Recall that $\mathcal{W}^{(u)}_t=\frac{W_{ut}}{\sqrt{u}}$.  For any $\epsilon>0$ and $\eta>0$, define
			\begin{align*}
				E_j=\left\{\big\lVert \frac{\mathcal{W}^{(c^j)}}{\sqrt{\log\log c^j}}-\hbar\big\rVert< \eta\right\},\quad
				F_j=\left\{ \lVert Q_{c^j}-g \rVert<\epsilon\right\}.
			\end{align*}
			Using Lemma \ref{ILL2} we have that for $j$ large enough and $\eta$ small enough, there exists some constant $C>0$ such that
			\begin{align}\label{lp1}
				P(E_j)-P(F_j)=P(E_j\cap F_j^c)\leq \exp\left\{-2\log\log c^j\right\}=\frac{C}{j^2}.
			\end{align}
			Since $P(\limsup_{j\to\infty} E_j)=1$ by Strassen's law. Therefore, $\sum_{j}P(E_j)=\infty$. However, by (\ref{lp1}) we also have
			\begin{align*}
				\sum_j(P(E_j)-P(F_j))< \infty.
			\end{align*}
			Thus we obtain $\sum_j P(F_j)=\infty$, by Borel-Cantelli lemma we get our result
			\begin{align*}
				P\left\{\lVert Q_{c^j}-g\rVert<\epsilon\,\,\, i.o.\right\}=1.
			\end{align*}
			
		\end{proof}
		
	\end{proposition}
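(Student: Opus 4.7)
The plan is to realize this as the standard Strassen-type lower bound for the limit set in a functional iterated logarithm law, transported through the small-random-perturbation LDP developed earlier. Since $I(g)<1$, I would first select a control $\hbar=\int_0^{\cdot}h(s)\mathrm{d}s\in\mathcal{H}$ with $\frac{1}{2}\int_0^{T}|h(s)|^2\mathrm{d}s<1$ such that $g$ is (up to the augmented-system decomposition used in Lemma~\ref{ILL2}) the solution of the skeleton equation driven by $h$. Note that under the time change $\mathcal{W}^{(u)}_t=W_{ut}/\sqrt{u}$, the process $Q_{c^j}$ is exactly the solution $X^{\epsilon_j}$ of the small-perturbation system in Theorem~\ref{thmldp} with $\epsilon_j=1/\log\log c^{j}$ and driving Brownian motion $\mathcal{W}^{(c^j)}$, because Assumption~\ref{Cn} ensures $b_{\epsilon_j}=\tilde b_{c^j}$, $\sigma_{\epsilon_j}=\tilde\sigma_{c^j}$, $A_{\epsilon_j}=\tilde A_{c^j}$ fit into the LDP framework.

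Next, I would introduce, for fixed $\epsilon>0$ and a small $\eta>0$ to be chosen, the two events
\begin{align*}
E_j=\Bigl\{\bigl\|\mathcal{W}^{(c^j)}/\sqrt{\log\log c^j}-\hbar\bigr\|<\eta\Bigr\},\qquad
F_j=\bigl\{\|Q_{c^j}-g\|<\epsilon\bigr\},
\end{align*}
and show separately that (a) $\sum_{j}P(E_j\cap F_j^{c})<\infty$ and (b) $P(\limsup_{j}E_j)=1$. For (a) I would apply Lemma~\ref{ILL2} with $R=2$ (any $R>1$ works since $I(g)<1$) to the augmented diffusion $(\sqrt{\epsilon_j}W,Q_{c^j})$: the lemma yields, for all sufficiently large $j$ (equivalently, sufficiently small $\epsilon_j$) and sufficiently small $\eta$,
\begin{align*}
P(E_j\cap F_j^{c})\leq \exp\bigl(-R/\epsilon_j\bigr)=\exp\bigl(-R\log\log c^{j}\bigr)=C\,j^{-R},
\end{align*}
which is summable since $R>1$. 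For (b) I would appeal to Strassen's functional law of the iterated logarithm applied to the rescaled Brownian motions $\mathcal{W}^{(c^j)}/\sqrt{\log\log c^{j}}$ along the geometric subsequence $u=c^{j}$: Strassen's theorem guarantees that the set of limit points of these rescaled paths contains every $\hbar$ with $\frac{1}{2}\int_0^T|h|^2\mathrm{d}s<1$, so $\{\hbar\}$-neighborhoods are hit infinitely often almost surely, giving $P(\limsup_j E_j)=1$.

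Finally I would combine the two ingredients. By the first Borel--Cantelli lemma applied to $\{E_j\cap F_j^c\}$, almost surely there exists $j_0(\omega)$ such that for every $j\ge j_0$ the occurrence of $E_j$ forces the occurrence of $F_j$. Consequently $\limsup_j E_j\subset \limsup_j F_j$ almost surely, and (b) then yields $P(\limsup_j F_j)=1$, which is exactly $P(\{d(Q_{c^j},g)<\epsilon\ \text{i.o.}\})=1$. The constant $c_\epsilon>1$ enters only through the choice of $\eta=\eta(\epsilon)$ in Lemma~\ref{ILL2} (and the continuity of $\hbar\mapsto Z^h$ from Lemma~\ref{hcon}, which controls how small $\eta$ has to be to force $\|Z^{\chi_1}-Z^{h}\|<\epsilon$ when $\|\chi_1-\hbar\|<\eta$); once $\eta$ is fixed, any $c>1$ works for $j$ large.

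The main technical obstacle I anticipate is not the Borel--Cantelli bookkeeping but matching the two rates correctly in step~(a): one must verify that Lemma~\ref{ILL2}, which was proven for the small-perturbation family $X^{\epsilon}$, applies uniformly to the sequence $\epsilon_j=1/\log\log c^{j}$, and that the contraction-system rescaling in Assumption~\ref{Cn} (items \textbf{(C0)}--\textbf{(C3)}) guarantees the rate function governing $Q_{c^j}$ is exactly the $I(\cdot)$ appearing in the statement of $\Lambda$. Once this identification is made explicit, the combinatorial argument above closes the proof.
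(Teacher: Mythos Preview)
Your proposal is correct and follows the same overall line as the paper: both define the events $E_j$ (Brownian scaling close to $\hbar$) and $F_j$ (solution close to $g$), use Lemma~\ref{ILL2} to bound $P(E_j\cap F_j^c)$ by a summable sequence, and invoke Strassen's law for $\{E_j\}$.

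The one difference worth flagging is in the last step. The paper argues: $P(\limsup_j E_j)=1$ implies $\sum_j P(E_j)=\infty$; since $\sum_j(P(E_j)-P(F_j))<\infty$, also $\sum_j P(F_j)=\infty$; then ``by Borel--Cantelli'' $P(\limsup_j F_j)=1$. That final implication tacitly uses the second Borel--Cantelli lemma, which requires (some form of) independence of the $F_j$, a point the paper does not address. Your route avoids this entirely: you apply the \emph{first} Borel--Cantelli lemma to $\{E_j\cap F_j^c\}$, conclude that almost surely $E_j\subset F_j$ for all large $j$, and then deduce $\limsup_j E_j\subset\limsup_j F_j$ a.s.\ directly. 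This is the standard rigorous way to close such arguments and is in fact cleaner than what the paper writes. Your remark that ``any $c>1$ works once $\eta$ is fixed'' is also accurate; the $c_\epsilon$ in the statement is not genuinely needed for this half of the proof.
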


	{\bf Acknowledgements}
	
	Cheng L. is supported by Natural Science Foundation of China (12001272, 12171173). Liu W. is supported by Natural Science Foundation of China (No. 12471143, 12131019). Qiao H. is supported by Natural Science Foundation of China (No. 12071071) and  the Jiangsu Provincial Scientific Research Center of Applied Mathematics (No. BK20233002).

\end{document}